\newcommand{\G}{\Gamma (G, X\cup\mathcal H)}
\newcommand{\Hl}{\{ H_\lambda\}_{\lambda \in \Lambda}}
\newcommand{\Lab}{{\bf Lab}}
\newcommand{\e}{\varepsilon}
\newtheorem{thm}{Theorem}[section]
\newtheorem{cor}[thm]{Corollary}
\newtheorem{lem}[thm]{Lemma}
\newtheorem{prop}[thm]{Proposition}
\newtheorem{q}[thm]{Question}
\theoremstyle{definition}
\newtheorem{defn}[thm]{Definition}
\theoremstyle{remark}
\newtheorem{rem}[thm]{Remark}
\newtheorem{ex}[thm]{Example}
\newcommand{\la}{\langle}
\newcommand{\ra}{\rangle}
\newfont{\eufm}{eufm10}
\begin{document}

\title{$C^\ast$-simple groups without free subgroups}

\author{A.Yu. Olshanskii, D.V. Osin\thanks{This work was partially supported by the RFBR grant 11-01-00945.
The first author was also supported by
the NSF grants DMS-1161294. The second author was also supported by
the the NSF grant DMS-1308961.}}

\date{}

\maketitle

\begin{abstract}
We construct first examples of non-trivial groups without non-cyclic free subgroups whose reduced $C^\ast$-algebra is simple and has unique trace. This answers a question of de la Harpe. Both torsion and torsion free examples are provided. In particular, we show that the reduced $C^\ast$-algebra of the free Burnside group $B(m,n)$ of rank $m\ge 2$ and any sufficiently large odd exponent $n$ is simple and has unique trace.
\end{abstract}

\renewcommand{\thefootnote}{\fnsymbol{footnote}}
\footnotetext{{\bf 2000 Mathematics Subject Classification:} 20F06, 20F67, 22D25, 47L05.}
\renewcommand{\thefootnote}{\arabic{footnote}}

\tableofcontents


\section{Introduction}


Over the past few decades, there has been considerable interest in simple $C^\ast$-algebras in both group theoretic and  operator algebraic communities. For a comprehensive survey of the recent research in this area we refer to \cite{Har07} and \cite{Ror}. The main goal of our paper is to provide new examples of groups whose reduced $C^\ast$-algebras are simple and have unique trace. We begin by recalling some basic definitions and relevant results.

The \emph{reduced $C^\ast$-algebra} of a group $G$, denoted $C^\ast_{red}(G)$, is the closure of the linear span of $\{\lambda_G(g)\mid g\in G\}$ with respect to the operator norm, where $\lambda_G\colon G\to U(\ell^2(G))$ denotes the left regular representation. A (non-zero) $C^\ast $-algebra is said to be \emph{simple} if it contains no proper non-trivial two-sided closed ideals. A group $G$ is called \emph{$C^\ast$-simple} if $C^\ast_{red}(G)$ is simple.

$C^\ast$-simplicity of a group is essentially a representation theoretic property. Indeed it can be characterized in terms of weak containment of unitary representations introduced by Fell \cite{Har07}. For non-trivial groups, $C^\ast$-simplicity can be also thought of as a strong negation of amenability. Indeed it is not hard to show that the amenable radical of $G$ is trivial whenever $G$ is $C^\ast$-simple; in particular, if $G$ is both $C^\ast$-simple and amenable, then $G=\{ 1\}$.

Closely related to $C^\ast$-simplicity is the uniqueness of trace on $C^\ast_{red}(G)$.  Recall that a (normalized) \emph{trace} on a unitary C$^\ast $-algebra $A$ is a positive linear functional $\tau\colon A\to \mathbb C$ such that $\tau (1)=1$ and $\tau(ab)=\tau (ba)$ for all $a, b \in A$. The reduced $C^\ast$-algebra of every group $G$ has a canonical trace (see Section 2.1). If it is the only trace, $G$ has several nice dynamical and group theoretic properties, e.g., shift minimality  and the absence of non-trivial amenable invariant random subgroups \cite{TD}. In turn, any one of these two properties implies triviality of the amenable radical. For the sake of completeness, we should mention that the exact relation between uniqueness of trace on $C^\ast_{red}(G)$, $C^\ast$-simplicity, and triviality of the amenable radical of $G$ is rather mysterious; in particular, it is still unknown whether all of these properties are equivalent or not. For more details we refer to \cite{TD}

The class of groups having simple reduced $C^\ast$-algebras with unique trace includes many examples acting ``nontrivially" on a hyperbolic space: non-virtually cyclic  hyperbolic and relatively hyperbolic groups without non-trivial finite normal subgroups \cite{AM,Har88}, centerless mapping class groups of closed surfaces, $Out(F_n)$ for $n\ge 2$ \cite{BriHar}, and many $3$-manifold groups and fundamental groups of graphs of groups \cite{HP}. Most of these results can be generalized in the context of acylindrically hyperbolic groups (see \cite{DGO,Osi13} for details). Examples of completely different nature are provided by $PSL_n(\mathbb Z)$ and, more generally, lattices in connected semi-simple centerless Lie groups without compact factors \cite{BCH}. Finally we mention a recent result from \cite{PT} stating that torsion free groups satisfying a weak form of the Atiyah Conjecture  and having positive first $\ell^2$-Betti number are $C^\ast$-simple.

Simplicity of $C^\ast_{red}(G)$ and uniqueness of trace are usually derived from certain algebraic or dynamical properties of the group $G$. Typical examples include the Powers property (and its weak versions) \cite{Har07,TD} and  property (PH) of Promislow \cite{Pro}; Brin and Picioroaga noticed that these properties also imply the existence of non-cyclic free subgroups. The proof of the latter fact has appeared in \cite{Har07} (see the remark following Question 15) and in \cite[Theorem 5.4]{TD}. More algebraic approaches, such as the one suggested by Akemann and Lee \cite{AL} or the property $P_{nai}$ introduced by Bekka, Cowling, and de la Harpe in \cite{BCH}, imply the existence of non-cyclic free subgroups immediately.

This motivates the following questions asked by de la Harpe \cite[Question 15]{Har07}:

\begin{q}\label{dlHQ}
\begin{enumerate}
\item[(a)] Does there exist a (non-trivial) $C^\ast$-simple group without non-cyclic free subgroups?
\item[(b)] Is the free Burnside group of rank at least $2$ and sufficiently large odd exponent $C^\ast$-simple?
\end{enumerate}
\end{q}

One can think of these questions as variations of the classical Day--von Neumann problem, which asks whether every non-amenable group contains a non-cyclic free subgroup. The negative answer was obtained by the first author in \cite{Ols80}. Later Adyan \cite{A} proved non-amenability of free Burnside groups of sufficiently large odd exponent and since then many other counterexamples have been found.

Although the original Day--von Neumann question has negative answer, one can still hope to obtain a positive result by strengthening the non-amenability assumption or by relaxing the ``no non-cyclic free subgroups" condition. In the last 15 years, many results -- both negative and affirmative -- were obtained in this direction \cite{GR,LO,MonOza,OS,Osi11,Osi09,W}. Since $C^\ast$-simplicity can be regarded as a strong negation of amenability, Question \ref{dlHQ} fits well in this context.

\paragraph{Main results.} The main goal of our paper is to give the affirmative answer to both parts of the de la Harpe's question. Let $B(m,n)$ denote the free Burnside group of rank $m$ and exponent $n$. That is, $B(m,n)$ is the free group of rank $m$ in the variety of groups satisfying the identity $X^n=1$. In Section 3, we prove the following.

\begin{thm}\label{main1}
For every $m\ge 2$ and every sufficiently large odd $n$, the reduced $C^\ast$-algebra of $B(m,n)$ is simple and has unique trace.
\end{thm}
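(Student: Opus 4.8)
The plan is to deduce Theorem~\ref{main1} by verifying, for $G=B(m,n)$, a Powers-type averaging property that classically implies (see Section~2) both simplicity of $C^\ast_{red}(G)$ and uniqueness of trace: for every finite subset $\mathcal F\subseteq G\setminus\{1\}$ and every $\varepsilon>0$ there are $h_1,\dots,h_N\in G$ with
\[
\Big\|\frac1N\sum_{i=1}^N\lambda_G\bigl(h_i g h_i^{-1}\bigr)\Big\|<\varepsilon\qquad\text{for all }g\in\mathcal F .
\]
Conjugating only by group elements and letting $N\to\infty$ is exactly what keeps one clear of the Powers, $P_{nai}$, and Promislow phenomena, from which the ping-pong argument of Brin--Picioroaga extracts a non-cyclic free subgroup. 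Thus the whole proof reduces to a purely group-theoretic task inside $G=B(m,n)$: given finitely many non-trivial elements and $\varepsilon>0$, produce conjugators realising the displayed estimate.

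Fix first a single $g\neq1$ and set $a_i=h_igh_i^{-1}$. Since each $a_i$ lies in $L:=\langle a_1,\dots,a_N\rangle$ and the restriction of $\lambda_G$ to $L$ is a multiple of $\lambda_L$, the norm above equals $\bigl\|\frac1N\sum_i\lambda_L(a_i)\bigr\|$, a quantity of free-probability type. The idea is to choose the $h_i$ so that, first, the $a_i$ generate the freest subgroup of exponent $n$ they can --- i.e. $L$ is a free Burnside group $B(N,n)$ when $g$ has order $n$, and the analogous relatively free group (generators of order $\operatorname{ord}(g)$ in the variety of exponent $n$) in general --- and, second, in that group the average of $\lambda$ of the canonical generators has norm tending to $0$ as $N\to\infty$. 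In the free group this average has norm $O(N^{-1/2})$ by the Akemann--Ostrand/Haagerup estimate, and the second requirement says that passing to the Burnside quotient does not spoil this.

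The choice of the $h_i$ rests on the structure theory of free Burnside groups of large odd exponent (Adyan; Olshanskii): every non-trivial element is conjugate to a cyclically geodesic word, centralisers are finite (so conjugacy classes of non-trivial elements are infinite), the natural map $F_m\to B(m,n)$ is injective on balls of radius $\sim n$, and every defining relation is long and periodic. One draws the $h_i$ from a large supply of pairwise independent aperiodic words of length $\gg|g|$, arranged so that each $a_i=h_igh_i^{-1}$ is cyclically geodesic in $B(m,n)$ and the symmetrised family $\{a_i^{\pm1}\}$, together with all its cyclic conjugates, satisfies a small-cancellation condition $C'(\lambda)$ with $\lambda$ as small as one pleases. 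Feeding this into the graded van~Kampen diagram machinery over $B(m,n)$ rules out unexpected relations among the $a_i$, which gives the first requirement; the same control, together with the fact that short products of the $a_i$ reproduce the corresponding free-group products, gives the second by a cogrowth estimate --- the number of $\{a_i^{\pm1}\}$-loops of any given length in $L$ is comparable to the number of such loops in the free group on the $a_i$, so the spectral radius of the associated random walk is $O(1/N)$. Finally, since $\mathcal F$ is finite and independent aperiodic words are plentiful, the $h_i$ can be chosen once and for all to work simultaneously for every $g\in\mathcal F$; taking $N$ large then completes the verification of the criterion.

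The main obstacle is this middle step --- controlling the defining relations of $B(m,n)$. Showing that carefully chosen conjugates remain geodesic and satisfy a strong small-cancellation condition inside $B(m,n)$, and that no collapse occurs when they are multiplied, is precisely where the deep combinatorics of periodic words and graded diagrams is indispensable; the operator-algebraic layer on top of it is comparatively thin. A secondary difficulty is the quantitative non-amenability input behind the second requirement: one needs the spectral radius of the standard walk on $B(N,n)$ to tend to $0$ as $N\to\infty$, which is genuinely stronger than the bare non-amenability supplied by Adyan's theorem, so the tree-likeness of $B(m,n)$ below scale $n$ has to be exploited with some care.
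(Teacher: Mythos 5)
Your overall strategy — verify a Powers/Akemann--Lee averaging criterion by choosing conjugators so that the resulting elements generate a subgroup whose random walk has small spectral radius, and reduce the problem to Burnside-group combinatorics via graded diagrams — is exactly the paper's. However, there is a genuine gap in the middle of your argument that the paper resolves with a trick you have not noticed.

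You work directly with the conjugates $a_i=h_igh_i^{-1}$, and correctly observe that when $\operatorname{ord}(g)=d<n$ the best one can hope for is that $\langle a_1,\ldots,a_N\rangle$ is the free product of $N$ cyclic groups of order $d$ in the variety $\mathcal{B}_n$ (this is precisely the content of the paper's Theorem 3.18). But you then appeal to the free-group estimate $O(N^{-1/2})$ and to a ``cogrowth comparison with the free group on the $a_i$'' to conclude that the norm of the average goes to $0$. When $d<n$ the $a_i$ satisfy $a_i^d=1$, so the relevant model is not the free group but the free product $\star_N\mathbb{Z}/d$ in $\mathcal{B}_n$, and your proposal supplies no spectral-radius estimate for those groups. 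The machinery the paper develops in Section 2 (Lemma on varieties, Corollary 2.7) is specifically for \emph{free} groups in a variety, i.e.\ for $B(N,n)$; it does not apply as stated to free products of proper cyclic groups in $\mathcal{B}_n$, whose non-amenability and cogrowth behaviour would require a separate argument (and even the $\ell^2$-Betti number route fails, since $\beta_1^{(2)}(\star_N\mathbb{Z}/d)/N\to 1-1/d<1$).

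The paper avoids this entirely by a unitary trick you should add: since $\lambda_G(g^{-1})$ is unitary, $\bigl\|\sum_y\lambda_G(ygy^{-1})\bigr\|=\bigl\|\sum_y\lambda_G([y,g])\bigr\|$, so one may replace the conjugates $ygy^{-1}$ by the commutators $[y,g]$. The hard algebraic theorem (Theorem 3.21 of the paper) is then that suitably chosen conjugators make the family $\{[y,g]\}$ a \emph{basis of a free Burnside subgroup $B(N,n)$} — of full exponent $n$, regardless of $\operatorname{ord}(g)$. This puts the spectral-radius estimate squarely in the case already covered by Corollary 2.7, and it is exactly what lets the operator-algebraic layer stay ``thin.'' Without this step your proof would need an additional non-trivial analytic lemma about free products of finite cyclic groups in $\mathcal{B}_n$, which is not in the paper and not obviously easier than the commutator version of the combinatorial theorem.
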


The basic idea behind the proof of Theorem \ref{main1} is that simplicity and uniqueness of trace of $C^\ast_{red}(B(m,n))$ can be derived from certain algebraic properties of $B(m,n)$. This part of our paper essentially uses the technique developed by the first author in \cite{book} as well as some new ideas (e.g., towers and repelling sections in van Kampen diagrams, see Section \ref{trs}). As a by-product, we obtain some groups theoretic facts about free Burnside groups which seem to be of independent interest (e.g., Corollary \ref{Burns-cor}).

In Section 4, we suggest another way of constructing $C^\ast$-simple groups without non-cyclic free subgroups. It makes use of the methods from \cite{Osi10} and \cite{Osi07}, namely small cancellation theory and Dehn filling in relatively hyperbolic groups.  This approach is independent of the results about free Burnside groups and allows us to construct both torsion and torsion free examples. It is noteworthy that, unlike in the case of the original Day--von Neumann problem, there is no obvious way of getting torsion free $C^\ast$-simple groups without non-cyclic free subgroups from torsion ones. Indeed if one has a non-amenable group $G$ without non-cyclic free subgroups and $G=F/N$, where $F$ is a free group, then it is easy to show that the group $F/[N,N]$ is torsion free, has no non-cyclic free subgroups, and is non-amenable. However $F/[N,N]$ is never $C^\ast$-simple as its amenable radical is non-trivial.

\begin{thm}\label{main3}
For any non-virtually cyclic  hyperbolic group $H$ and any countable group $C$ without non-cyclic free subgroups, there exists a quotient group $G$ of $H$ such that
\begin{enumerate}
\item[(a)] $G$ has no non-cyclic free subgroups;
\item[(b)] $C$ embeds in $G$;
\item[(c)] $C_{red}^\ast (G)$ is simple and has unique trace.
\end{enumerate}
Moreover, if $C$ is torsion (respectively, $C$ and $H$ are torsion free), then $G$ can be made torsion (respectively, torsion free) as well.
\end{thm}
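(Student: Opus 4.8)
The plan is to realize $G$ as the direct limit of an infinite chain of quotients of $H$, built by Dehn filling and iterated small cancellation over relatively hyperbolic groups in the sense of \cite{Osi07, Osi10}, with $C$ carried along as a fixed peripheral subgroup while the relators added at successive stages are used simultaneously to kill free subgroups and to force the algebraic property from Section 2 that guarantees simplicity of $C^\ast_{red}(G)$ and uniqueness of trace.

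First I would fix a convenient starting group. Since $H$ is hyperbolic and not virtually cyclic, it contains a quasiconvex malnormal free subgroup $F$ of rank $2$, and then $H$ is hyperbolic relative to $\{ F\}$. By the embedding theorems of \cite{Osi10} one may choose a normal subgroup $N\trianglelefteq F$ such that the quotient $Q=F/N$ contains an isomorphic copy of $C$, has no non-cyclic free subgroups, is torsion when $C$ is and torsion free when $C$ is, and such that $N$ avoids the finite subset of $F\setminus\{1\}$ required by the Dehn filling theorem of \cite{Osi07}. The corresponding peripheral filling $G_0=H(N)$ is then a quotient of $H$, hyperbolic relative to $\{ Q\}$, into which $Q$ --- and hence $C$ --- embeds, and which is torsion free when $H$ and $Q$ are. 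This reduces the problem to modifying $G_0$ without disturbing the peripheral copy of $Q$.

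Next I would construct inductively $G_0\twoheadrightarrow G_1\twoheadrightarrow G_2\twoheadrightarrow\cdots$ with $G=\varinjlim_i G_i$, where $G_{i+1}$ is obtained from $G_i$ by adjoining a single relator $R_i$ satisfying a strong graded small cancellation condition over $G_i$ relative to $\{ Q\}$. The properties inherited from \cite{Osi07, Osi10} are that each $G_{i+1}$ is again hyperbolic relative to the image of $\{ Q\}$, that the natural map $Q\to G_i$ stays injective for every $i$, and that one retains precise control over which elements of $G_i$ survive and over the loxodromic elements with pairwise independent axes available at each finite stage. Via a diagonal enumeration the $R_i$ are prescribed to accomplish three tasks. \emph{Killing free subgroups}: in the torsion case $R_i$ runs over high powers of all loxodromic elements, forcing every element of $G$ to have finite order; in the torsion free case $R_i$ runs over suitable words, not proper powers, in pairs $(u,v)$ that still generate a free subgroup of $G_i$, so that no torsion is created and no non-cyclic free subgroup survives. \emph{Installing the $C^\ast$-criterion}: at the appropriate stages $R_i$ is chosen so that $G$ satisfies the property of Section 2 --- informally, that for every finite $S\subseteq G\setminus\{1\}$ there are $t_1,\dots,t_m\in G$ with $\left\|\tfrac1m\sum_{j=1}^m\lambda_G(t_jst_j^{-1})\right\|$ small for every $s\in S$ --- which needs only a bounded amount of ``free-like'' interaction among conjugates and so is compatible with the previous task. \emph{Keeping $Q$ embedded}: automatic from injectivity of $Q\to G_i$, which in turn gives $C\le Q\le G$. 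That the construction converges, i.e. that $G$ does not collapse onto $Q$ or onto anything smaller, follows from the graded small cancellation machinery of \cite{book}, using that every $G_i$ remains a genuine non-elementary group hyperbolic relative to $\{ Q\}$.

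Finally one reads off the theorem. Parts (b), (c) and the torsion/torsion free alternative hold by construction. For (a): in the torsion case $G$ is torsion and hence has no non-cyclic free subgroup; in the torsion free case, since any non-cyclic free group contains a free subgroup of rank $2$ and any $2$-generated group surjecting onto such a group is itself free of rank $2$, a non-cyclic free subgroup of $G$ would force a free subgroup of rank $2$ to persist in every $G_i$, which is impossible once the free-subgroup-killing relators have been imposed on it. The main obstacle is the ``installing the $C^\ast$-criterion'' task in conjunction with convergence: one must show that the algebraic property forcing $C^\ast$-simplicity and uniqueness of trace can be produced by small cancellation relators that coexist with the infinitely many relators killing free subgroups, all while the relative hyperbolicity over $\{ Q\}$ never degenerates. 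This is precisely where the refined van Kampen diagram techniques of the paper --- towers and repelling sections, Section \ref{trs} --- are needed, playing the part that the group-theoretic facts underlying Corollary \ref{Burns-cor} play in the Burnside case.
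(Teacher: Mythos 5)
The broad framework you sketch---an inductive tower of relatively hyperbolic quotients constructed by Dehn filling and small cancellation over $\Hl$, with $C$ carried along as a peripheral subgroup, and with free subgroups collapsed along the way---is indeed the framework of Section~4 of the paper. But there is a genuine gap at the step you label ``installing the $C^\ast$-criterion,'' which is exactly the crux of the theorem, and your remark that it ``needs only a bounded amount of `free-like' interaction among conjugates'' is where the argument breaks. If the conjugates $t_j s t_j^{-1}$ had genuinely free-like interaction, they would generate a free subgroup and part (a) would fail. The paper's key idea, which your proposal omits entirely, is to force the subgroups $\langle u(g,i)\,y g y^{-1} : y\in Y_i\rangle$ to be isomorphic, together with these generating sets, to members of a prescribed family $\{(P_{ij},X_{ij})\}$ (Proposition~\ref{Pij}) of groups that have \emph{infinitesimal spectral radius} yet \emph{no} non-cyclic free subgroups (free Burnside groups $B(i,665)$, or the solvable-by-torsion groups $F(X_i)/N_i^{(j)}$, etc.). Concretely, one passes to the free product $G\ast F(X)\ast F(Y)\ast F(Z)$ and exploits Corollary~\ref{mfs}: the elements $y_a x_i a x_i^{-1} z_a$ form a basis of a \emph{malnormal} free subgroup $F$, so that $\{G,F\}$ is a peripheral structure, and a Dehn filling of the new peripheral factor $F_a$ by the kernel of $F_a\to P_{nk}$ installs a genuine copy of $P_{nk}$ as a peripheral subgroup that is then preserved at all later stages. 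The auxiliary letters $y_a,z_a$ are indispensable (they become the unitary correction $u(g,i)$ in the norm computation) and the malnormality supplied by Corollary~\ref{mfs} is what makes Theorem~\ref{Q} applicable; neither appears in your outline, and without them it is not clear how to manufacture, inside a group with no free subgroups, the families of conjugates needed for Lemma~\ref{AL}.

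Two smaller issues. First, you write that the towers and repelling sections of Section~\ref{trs} ``are needed'' here; they are not---that machinery is used only for the Burnside case in Section~3, while Section~4 relies on Theorems~\ref{sct} and \ref{Df} and Corollary~\ref{mfs}, not on graded diagram analysis over the Burnside presentation. Second, your mechanism for killing free subgroups in the torsion free case (adding a non-power relator $w(u,v)=1$ to each free pair) is more delicate than you suggest, since such a relator need not prevent $\langle u,v\rangle$ from regenerating a free pair, nor does it control free pairs appearing only at later stages; the paper's device is cleaner: it enumerates rank-$2$ free subgroups $F_n$ of $G_0$ and, via the ``suitable subgroup'' mechanism of Theorem~\ref{sct}, forces the image of $F_n$ in $G_n$ either to be cyclic or to surject onto all of $G_n$, so that in the limit any non-cyclic free subgroup would equal $G$, contradicting $C\le G$.
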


This theorem can be used to construct groups without non-cyclic free subgroups that combine $C^\ast$-simplicity with other strong negations of amenability. For example, taking $H$ to be a hyperbolic group with Kazhdan property (T) (e.g., a lattice in $Sp(n,1)$) and $C$ to be a non-unitarizable group without non-cyclic free subgroups, we obtain a group $G$ which is also non-unitarizable and has property (T) in addition to all properties listed in Theorem \ref{main3}. For examples of non-unitarizable groups without non-cyclic free subgroups and the discussion of the related Dixmier problem we refer to \cite{Osi09} or \cite{MonOza}.

In another direction, using uncountability of the sets of all finitely generated torsion and torsion free groups without non-cyclic free subgroups we obtain the following.

\begin{cor}\label{main2}
\begin{enumerate}
\item[(a)] There exist $2^{\aleph_0}$ non-isomorphic finitely generated torsion groups $G$ such that $C^\ast_{red}(G)$ is simple with unique trace.
\item[(b)] There exist $2^{\aleph_0}$ non-isomorphic finitely generated torsion free groups $G$ without non-cyclic free subgroups such that $C^\ast_{red}(G)$ is simple with unique trace.
\end{enumerate}
\end{cor}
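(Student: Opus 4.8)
The plan is to deduce Corollary \ref{main2} from Theorem \ref{main3} by choosing the ingredient groups $C$ from known uncountable families of finitely generated groups without non-cyclic free subgroups. For part (a), I would take $H$ to be a fixed non-virtually cyclic torsion hyperbolic group — for instance, a free Burnside group $B(m,n)$ of rank $m \ge 2$ and large odd exponent $n$, which is hyperbolic by Ivanov's and Lysenok's work (or one may simply invoke that such groups exist) — and let $C$ range over an uncountable family of pairwise non-isomorphic finitely generated infinite torsion groups. Such a family exists: by the Grigorchuk construction (or by Ol'shanskii-type constructions), there are $2^{\aleph_0}$ pairwise non-isomorphic finitely generated torsion groups. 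Applying the ``moreover'' clause of Theorem \ref{main3} with this $H$ and each such $C$, we obtain for each $C$ a torsion quotient $G = G(C)$ of $H$ that is $C^\ast$-simple with unique trace and into which $C$ embeds.

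The only thing left is to see that the resulting family $\{G(C)\}$ contains $2^{\aleph_0}$ pairwise non-isomorphic groups. This is where a small counting argument is needed: a priori, many different $C$'s could give isomorphic $G$'s. The point is that each $G(C)$ is finitely generated (being a quotient of the finitely generated group $H$), hence countable, and therefore has only countably many finitely generated subgroups up to isomorphism; in particular each $G(C)$ can contain only countably many of the groups from our uncountable family $\{C\}$ (which are pairwise non-isomorphic). Since $C \hookrightarrow G(C)$, if $G(C_1) \cong G(C_2)$ then $C_2$ embeds in $G(C_1)$; so each isomorphism class among the $G(C)$ accounts for only countably many indices $C$. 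As the index set has cardinality $2^{\aleph_0}$, there must be $2^{\aleph_0}$ distinct isomorphism classes among the $G(C)$. This proves (a).

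Part (b) is entirely parallel. Here I would take $H$ to be a fixed non-virtually cyclic \emph{torsion free} hyperbolic group (e.g., a torsion free hyperbolic group obtained from a surface group or any non-elementary torsion free hyperbolic group), and let $C$ range over an uncountable family of pairwise non-isomorphic finitely generated torsion free groups without non-cyclic free subgroups; the existence of such a family follows from the Ol'shanskii-type constructions referenced in the paper (e.g., the torsion free analogues of the Burnside-like groups, or quotients of hyperbolic groups with prescribed properties as in \cite{Osi10}). Since both $C$ and $H$ are torsion free, the ``moreover'' clause of Theorem \ref{main3} yields a torsion free quotient $G(C)$ of $H$, without non-cyclic free subgroups, with $C^\ast_{red}(G(C))$ simple and of unique trace, and with $C \hookrightarrow G(C)$. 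The same counting argument — each finitely generated $G(C)$ embeds only countably many of the $C$'s — produces $2^{\aleph_0}$ pairwise non-isomorphic such $G$.

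The main obstacle, such as it is, is not analytic but purely bookkeeping: one must be sure that genuinely uncountable families of the relevant input groups $C$ exist and that they consist of \emph{finitely generated} groups (so that the embedding-counting argument applies), and one must correctly invoke the torsion/torsion-free refinement of Theorem \ref{main3}. All the hard work — simplicity of the reduced $C^\ast$-algebra, uniqueness of trace, control of torsion — is already packaged in Theorem \ref{main3}, so the corollary is a short deduction.
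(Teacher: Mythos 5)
Your overall strategy is exactly the paper's: produce an uncountable family of pairwise non-isomorphic finitely generated input groups $C$, feed each into Theorem \ref{main3} to get $G(C)$, and then observe that each countable $G(C)$ has only countably many finitely generated subgroups up to isomorphism, so the map $C \mapsto G(C)$ cannot collapse $2^{\aleph_0}$ inputs to fewer than $2^{\aleph_0}$ outputs. The paper's proof is precisely this counting argument, with the uncountable torsion family attributed to \cite{Ols79}, \cite{Gri}, \cite{Phi}, or \cite{OO08}, and the torsion free family taken to be finitely generated solvable groups of derived length $3$ (Hall \cite{Hall}).

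There is, however, a genuine factual error in the way you set up part (a). You propose to take $H$ to be ``a fixed non-virtually cyclic torsion hyperbolic group --- for instance, a free Burnside group $B(m,n)$.'' No such group exists: every infinite hyperbolic group contains an element of infinite order, so there are no infinite torsion hyperbolic groups at all, and in particular $B(m,n)$ is \emph{not} hyperbolic for large $n$ (the Ivanov reference \cite{Iva} you have in mind shows that $B(m,n)$ is a \emph{limit} of hyperbolic groups, not that it is itself hyperbolic). Fortunately the error is not load-bearing: the ``moreover'' clause of Theorem \ref{main3} only requires that $C$ be torsion to make $G$ torsion; no condition is imposed on $H$ in that case. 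So you should take $H$ to be any non-virtually cyclic hyperbolic group (e.g.\ a free group of rank $2$, or a surface group) for both parts, and rely on the torsion or torsion free choice of $C$ --- together with the theorem's machinery --- to control the torsion of $G$. With that correction, your deduction matches the paper's.
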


All $C^\ast$-simple groups constructed in this paper are inductive (co)limits of sequences of $C^\ast$-simple hyperbolic (or relatively hyperbolic) groups and epimorphisms $H_1\to H_2\to \ldots$. It is worth noting that simplicity of such limits is not automatic. Indeed, there exist such sequences of $C^\ast$-simple hyperbolic groups whose limits are even amenable (see Example \ref{non-simple}).

\paragraph{Structure of the paper and advice to the reader.}
We begin by recalling basic analytic definitions in Section 2.1. In Section 2.2 we introduce the notion of a sequence of groups with infinitesimal spectral radius and exhibit some examples.  The main results are Corollary \ref{bisr} and Proposition \ref{Pij}, which are necessary for the proof of Theorems \ref{main1} and \ref{main3}, respectively. The proofs of all new results in this section are quite elementary.

Theorems \ref{main1} and \ref{main3} are proved in Sections 3 and 4, respectively, by using a sufficient condition for $C^\ast$-simplicity and uniqueness of trace formulated by Akemann and Lee (see Lemma \ref{AL}). To apply this condition to a group $G$ we need to make sure that norms of certain sequences of elements in $C^\ast_{red}(G)$ converge to $0$. The latter condition can be deduced from purely algebraic properties, namely Theorem 3.21 and Proposition 4.15. Most of Sections 3 and 4 is devoted to the proof of these two results.

The proof of Theorem 3.21 is given in Section 3 and essentially relies on the geometric technique developed by the first author in \cite{book}. We provide a brief introduction and include (a simplified versions of) basic definitions and main technical lemmas in Section 3.1. We believe that this introduction is sufficient to understand  Sections 3.2 and 3.3. For a more detailed account we refer to Chapters 5 and 6 of \cite{book}. The reader who is willing to believe in technical group theoretic results and only wants to understand the main idea of the proof of Theorem \ref{main1} can go directly to the statement of Theorem 3.21 and read the rest of Section 3.3; all arguments there are self-contained and elementary modulo Theorem 3.21 and results of Section 2.

The proof of Proposition 4.15 is given in Section 4 and is based on papers \cite{Osi10} and \cite{Osi07}. We include a brief review of necessary definitions and results from these papers. The final argument that derives Theorem \ref{main3} from Proposition 4.15 and results of Section 2 is essentially the same as the corresponding argument in the proof of Theorem \ref{main1}.

Finally we note that Sections 3 and 4 are completely independent and can be read separately.

\paragraph{Acknowledgments.} We would like to thank P. he la Harpe and the referee for corrections and useful comments.

\section{$C^\ast$-simplicity and sequences of groups with infinitesimal spectral radius}


\subsection{Analytic preliminaries}
Given a (discrete) group $G$, we denote by $\ell^2(G)$ the set of all square-summable functions $f\colon G\to \mathbb C$ and by $\lambda_G\colon G\to \mathcal U(\ell^2(G))$ its left regular representation. The left regular representation extends by linearity to the representation of the group algebra $\mathbb CG\to B(\ell^2(G))$, where $B(\ell^2(G))$ denotes the algebra of all bounded operators on $\ell^2(G)$; we keep the same notation $\lambda_G$  for this extension.

For a function $f\colon G\to \mathbb C$,  $\| f\|_p$  denotes its $\ell^p$-norm. We also denote by $\| A\| $ the operator norm of $A \in  B(\ell^2(G))$. It is well-known and easy to prove that
\begin{equation}\label{2n1}
\| a\| _2\le \|\lambda_G(a)\| \le \|a\|_1
\end{equation}
for every $a\in \mathbb CG$.

Recall that the \emph{reduced $C^\ast $-algebra of a group $G$}, denoted $C^\ast_{red}(G)$, is the closure of $\lambda_G (\mathbb CG)$ in $ B(\ell^2(G))$ with respect to the operator norm. The involution on $C^\ast_{red}(G)$ is induced by the standard involution on $\mathbb CG$:
$$\left(\sum_{g\in G} \alpha_g g\right)^\ast = \sum_{g\in G} \bar\alpha_g g^{-1},$$ where $\alpha _g\in \mathbb C$ for all $g\in G$.

A group $G$ is called \emph{$C^\ast$-simple} if $C^\ast_{red}(G)$ has no non-trivial proper two-sided ideals. Since $C^\ast_{red}(G)$ is unital, this is equivalent to the absence of non-trivial proper two-sided closed ideals.

A (normalized) \emph{trace} on a unital $C^\ast $-algebra $\mathcal A$ is a linear functional $\theta\colon \mathcal A \to \mathbb C$ which satisfies $\theta (1)=1$, $\theta (A^\ast A)\ge 0$, and $\theta (AB)=\theta(BA)$ for all $A,B\in \mathcal A$. For every group $G$, $C^\ast_{red} (G)$ has a canonical trace $\tau\colon C^\ast_{red} \to \mathbb C$, which extends the map $\mathbb CG\to \mathbb C$ given by
\begin{equation}\label{tauCG}
\tau \left(\sum\limits_{g\in G} \alpha_gg\right) = \alpha_1.
\end{equation}
For a general element $A\in C^\ast_{red}(G)$, the trace can be defined by using the inner product in $\ell^2(G)$ as follows:
$$
\tau (A)=\langle A\delta_1, \delta_1\rangle,
$$
where $\delta _1\in \ell^2(G)$ is the characteristic function of $\{ 1\}$.

It is well-known that for every $a\in \mathbb CG$, one has
\begin{equation}\label{Kes}
\left\| \lambda_G (a)\right\| = \limsup\limits_{n\to\infty} \sqrt[2n]{\tau(\lambda_G(a^\ast a)^n)}
\end{equation}
(see \cite{Kes}). The following results are immediate consequences of (\ref{tauCG}) and (\ref{Kes}). By $\mathbb R_+$ we denote the set of all non-negative real numbers.

\begin{lem}\label{norms}
Let $G$ be a group.
\begin{enumerate}
\item[(a)] Let $H\le G$, $a\in \mathbb CH$. Then $\left\| \lambda_H(a)\right\|= \left\| \lambda_G(a)\right\|$.
\item[(b)] Let $\e\colon G\to Q$ be a homomorphism. We keep the same notation for the natural extension $\mathbb CG\to \mathbb CQ$. Then for every $a\in \mathbb R_+G$, we have $\left\| \lambda_G(a)\right\| \le \left\| \lambda _Q (\e(a))\right\|$.
\item[(c)] Let $a=\sum_{g\in G} \alpha_g g$, and $b= \sum_{g\in G} \beta_g g$ be elements of $\mathbb R_+G$  such that $\alpha_g\le \beta_g$ for every $g\in G$. Then we have $\left\| \lambda_G(a)\right\| \le \left\| \lambda_G(b)\right\|$.
\end{enumerate}
\end{lem}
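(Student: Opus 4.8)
The plan is to derive all three parts from the two fundamental facts recalled just before the statement: the formula \eqref{tauCG} for the canonical trace, and the Kesten-type spectral radius formula \eqref{Kes} expressing $\|\lambda_G(a)\|$ as $\limsup_{n\to\infty}\sqrt[2n]{\tau(\lambda_G((a^\ast a)^n))}$. The key observation is that $\tau(\lambda_G(b))$ for $b=\sum_g\beta_g g$ equals $\beta_1$, the coefficient of the identity, and crucially, if $b\in\mathbb R_+G$ then every coefficient of every power $b^n$ is non-negative and is computed by the same combinatorial formula (sum over ways of writing $1$ as a product) regardless of which ambient group we work in, as long as the relevant products behave the same way.

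For part (a), I would argue as follows. Fix $a\in\mathbb CH$. For each $n$, $(a^\ast a)^n$ is again an element of $\mathbb CH$, so its coefficient at $1\in H$ is the same whether we regard it as an element of $\mathbb CH$ or of $\mathbb CG$ (the inclusion $H\le G$ identifies $1\in H$ with $1\in G$, and no cancellation across the inclusion can occur since $\mathbb CH\to\mathbb CG$ is injective). Hence $\tau_H(\lambda_H((a^\ast a)^n))=\tau_G(\lambda_G((a^\ast a)^n))$ for all $n$, and applying \eqref{Kes} on both sides gives $\|\lambda_H(a)\|=\|\lambda_G(a)\|$.

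For part (c), let $a=\sum\alpha_g g$ and $b=\sum\beta_g g$ with $0\le\alpha_g\le\beta_g$. Write out $(a^\ast a)^n$ as a sum: its coefficient at $1$ is $\sum_{(g_1,\dots,g_{2n})} \bar\alpha_{g_1}\alpha_{g_2}\cdots$ over all tuples whose alternating product (with appropriate inverses) equals $1$; since $a\in\mathbb R_+G$ all these summands are products of non-negative reals, and termwise $\alpha$-products are bounded above by the corresponding $\beta$-products. Therefore $\tau(\lambda_G((a^\ast a)^n))\le\tau(\lambda_G((b^\ast b)^n))$ for every $n$, and \eqref{Kes} yields $\|\lambda_G(a)\|\le\|\lambda_G(b)\|$. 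For part (b), the homomorphism $\e$ induces $\mathbb CG\to\mathbb CQ$ sending $(a^\ast a)^n$ to $(\e(a)^\ast\e(a))^n$; the coefficient of $1\in Q$ in the image is the sum of the coefficients of $a^\ast a)^n$ over the whole fiber $\e^{-1}(1)$, which contains $1\in G$. Since $a\in\mathbb R_+G$, all these coefficients are non-negative, so the coefficient at $1\in Q$ is at least the coefficient at $1\in G$; thus $\tau(\lambda_G((a^\ast a)^n))\le\tau(\lambda_Q((\e(a)^\ast\e(a))^n))$, and \eqref{Kes} finishes it.

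The main point to be careful about is the non-negativity hypothesis $a\in\mathbb R_+G$ in parts (b) and (c): it is exactly what guarantees that passing to a quotient (where fibers get collapsed and coefficients added) or increasing coefficients can only increase the trace of the $n$-th power, with no cancellation spoiling the inequality. Part (a) needs no positivity because it is an equality coming from injectivity of $\mathbb CH\hookrightarrow\mathbb CG$. Everything else is the routine bookkeeping of tracking the coefficient of the identity through products, which I would not spell out in full.
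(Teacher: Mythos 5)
Your proof is correct and is precisely the argument the paper has in mind: the paper does not write out a proof but states that the lemma is an immediate consequence of the trace formula (\ref{tauCG}) and Kesten's spectral-radius formula (\ref{Kes}), and your write-up simply fills in that derivation -- including the correct observation that positivity of the coefficients is what prevents cancellation under the quotient map in (b) and under coefficient majorization in (c), while (a) is an equality requiring no positivity.
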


Given a unital $C^\ast$-algebra $\mathcal A$, let $U(\mathcal A)=\{ u\in \mathcal A\mid uu^*=u^*u=1_A\} $ denote the group of unitary elements.
Recall that a unital $C^\ast$-algebra $\mathcal A$ has the \emph{Dixmier property} if
\begin{equation}\label{DC}
\overline{conv}\{ uau^*\mid u\in U(\mathcal A) \} \cap \mathbb C 1_A \ne \emptyset
\end{equation}
for every $a\in \mathcal A$, where $\overline{conv}$ denotes the closure of the convex hull. It is not hard to show that if a unital $C^\ast $-algebra $\mathcal A$ satisfies the Dixmier property and has a trace, then it is simple and has unique trace \cite{Dix}. Conversely, if a unital $C^\ast $-algebra is simple with unique trace, then it satisfies the Dixmier property \cite{HZ}.

To prove uniqueness of trace and simplicity of the reduced $C^\ast$-algebra of a group $G$ it suffices to verify (\ref{DC}) for $a=\lambda_G (g)$ for every $g\in G$. More precisely, one has the following lemma.

\begin{lem}\label{AL}
Let $G$ be a countable group. Suppose that there exists a sequence of finite subsets $\mathcal Y=\{ Y_i\}_{i\in \mathbb N}$ of $G$ such that
\begin{equation}\label{eq:AL}
\lim\limits_{i\to \infty}\frac1{|Y_i|} \left\|\sum\limits_{y\in Y_i}\lambda_G(ygy^{-1})\right\| =0
\end{equation}
for every $g\in G\setminus \{ 1\}$. Then $C^\ast _{red}(G)$ is simple with unique trace.
\end{lem}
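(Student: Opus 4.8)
The plan is to use the Dixmier property criterion: it suffices to show that $C^\ast_{red}(G)$ satisfies (\ref{DC}), since $C^\ast_{red}(G)$ carries the canonical trace $\tau$. Moreover, by the linearity and continuity of $a\mapsto uau^\ast$ and a standard density argument, it is enough to verify (\ref{DC}) for $a=\lambda_G(g)$ for each $g\in G$ individually, and then promote this to arbitrary $a\in C^\ast_{red}(G)$ by approximating $a$ by finite linear combinations $\sum_g \alpha_g \lambda_G(g)$; the convex averaging operations for the finitely many $g$ appearing can be composed. So fix $g\in G$. If $g=1$ then $\lambda_G(g)=1_{C^\ast_{red}(G)}\in\mathbb C 1$ already, so assume $g\neq 1$.

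For $g\neq 1$, I would use the given sets $Y_i$ directly. Set
$$
a_i=\frac{1}{|Y_i|}\sum_{y\in Y_i}\lambda_G(ygy^{-1}).
$$
Each $a_i$ is a convex combination of unitary conjugates $u\lambda_G(g)u^\ast$ with $u=\lambda_G(y)$, so $a_i\in \operatorname{conv}\{u\lambda_G(g)u^\ast\mid u\in U(C^\ast_{red}(G))\}$. By hypothesis (\ref{eq:AL}), $\|a_i\|\to 0$, so $a_i\to 0$ in operator norm, and hence $0\in\overline{\operatorname{conv}}\{u\lambda_G(g)u^\ast\mid u\in U(C^\ast_{red}(G))\}$. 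Since $0\in\mathbb C 1_A$, this establishes (\ref{DC}) for $a=\lambda_G(g)$.

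Now I would assemble the general case. Given an arbitrary $a\in C^\ast_{red}(G)$ and $\delta>0$, pick $b=\sum_{g\in S}\alpha_g\lambda_G(g)$ with $S\subset G$ finite and $\|a-b\|<\delta$. Write $b=\alpha_1 1 + b'$ where $b'$ is supported on $S\setminus\{1\}$. Applying the averaging maps $T_i^{(g)}(c)=\frac{1}{|Y_i|}\sum_{y\in Y_i}\lambda_G(y)c\,\lambda_G(y)^{-1}$ successively for each $g\in S\setminus\{1\}$ (using possibly a common large index $i$), one drives each term $\alpha_g\lambda_G(g)$ of $b'$ to something of small norm, as above, while leaving $\alpha_1 1$ fixed (conjugation fixes scalars) and only mildly perturbing $a-b$ (each averaging map is a contraction in operator norm). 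The net effect is an element of $\operatorname{conv}\{uau^\ast\mid u\in U(C^\ast_{red}(G))\}$ within $O(\delta)$ of $\alpha_1 1=\tau(a)1$; letting $\delta\to 0$ yields a point of $\overline{\operatorname{conv}}\{uau^\ast\}$ equal to $\tau(a)1\in\mathbb C 1_A$. This verifies (\ref{DC}) for every $a$, so $C^\ast_{red}(G)$ has the Dixmier property. Combined with the existence of the trace $\tau$ and the cited result \cite{Dix}, we conclude that $C^\ast_{red}(G)$ is simple with unique trace.

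The only delicate point — and the step I expect to require the most care — is the passage from the single-element statement to arbitrary $a\in C^\ast_{red}(G)$, i.e.\ verifying that the convex-averaging maps $T_i^{(g)}$ can be composed over the finitely many non-identity group elements appearing in an approximant of $a$ without the accumulated perturbation of the $1$-component or of the approximation error getting out of control. This is handled by the observations that each $T_i^{(g)}$ is a unital, norm-contractive, trace-preserving completely positive map fixing $\mathbb C 1$, and that one may choose a single index $i$ large enough to handle all $g\in S\setminus\{1\}$ simultaneously; the bookkeeping is routine once these facts are in place.
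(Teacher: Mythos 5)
Your proof is correct and follows the standard Powers/Akemann--Lee route via the Dixmier property, which is exactly the framework the paper sets up in the paragraph preceding Lemma~\ref{AL} and the approach implicit in the references \cite{Pow,AL} that the paper cites instead of giving a proof. One small clarification worth making: the operator $T_i(c)=\frac{1}{|Y_i|}\sum_{y\in Y_i}\lambda_G(y)\,c\,\lambda_G(y)^{-1}$ does not actually depend on $g$, so no successive composition over $g\in S\setminus\{1\}$ is needed --- a \emph{single} application of $T_i$ simultaneously shrinks every non-identity term of the approximant $b$, and one merely chooses $i$ large enough (using finiteness of $S$) so that $\|T_i(\lambda_G(g))\|$ is small for all $g\in S\setminus\{1\}$ at once; you note this parenthetically, but the ``successive $T_i^{(g)}$'' phrasing slightly obscures that the argument is a one-shot estimate rather than an iteration whose errors need controlling.
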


The proof is quite elementary and the idea goes back to the Powers' paper \cite{Pow}. To the best of our knowledge, this lemma was first explicitly stated by Akemann and Lee in \cite{AL}. In one form or another, it is a part of most existent proofs of $C^\ast$-simplicity and our paper is not an exception.

In general, computing the operator norm of an element of $C^\ast_{red}(G)$ is rather difficult. The standard way to overcome this problem is the following. First note that by part (a) of Lemma \ref{norms}, to compute the operator norm in (\ref{eq:AL}) we can only look at the subgroup generated by the set $\{ ygy^{-1}\mid y\in Y_i\}$ and ignore the rest of $G$. Further, most standard approaches utilize certain algebraic or dynamical properties of $G$ to show that the subsets $Y_i$ can be chosen so that, loosely speaking, the set $\{ ygy^{-1}\mid y\in Y_i\}$ is a basis of a free subgroup of $G$. Then (\ref{eq:AL}) follows from the well-known formula $$\frac1i\left\| \lambda_{F_i}(x_1)+\cdots +\lambda_{F_i}(x_i)\right\|=\frac{2\sqrt{i-1}}i \to 0$$ as $i\to \infty$, where ${F_i}$ is the free group with basis $x_1, \ldots, x_i$, and $i\ge 2$ \cite{AO}.

Of course, this approach cannot be used to construct a $C^\ast$-simple group without free subgroups. However a similar idea can be implemented if instead of free groups $F_i$ and their bases one uses sequences of (non-free) groups $G_i$ and their generating sets $X_i$ such that $$\frac1{|X_i|}\left\| \sum \limits_{x\in X_i} \lambda_{G_i}(x)\right\| \to 0$$ as $i\to \infty$. This leads to the notion of a sequence of groups with infinitesimal spectral radius discussed below.

\subsection{Sequences of groups with infinitesimal spectral radius}
Given a group $G$ and a finite subset $X=\{ x_1, \ldots, x_n\} \subseteq G$, let
\begin{equation}\label{AS}
A_X=\frac1n \big(\lambda_G (x_1)+\cdots+\lambda_G(x_n)\big).
\end{equation}
If $X$ generates $G$, the spectral radius of $$A_{X^{\pm 1}}:=\frac12(A_X+ A_{X^{-1}}),$$ where $X^{-1}=\{ x^{-1}\mid x\in X\}$, is usually denoted by $\rho(G,X)$ and called the \emph{spectral radius of the simple random walk} on $G$ with respect to $X$. Since the operator $A_{X^{\pm 1}}$ is self-adjoint, we have $\rho(G,X)=\left\| A_{X^{\pm 1}}\right\| $.

Further let $G=F(X)/N$, where $F(X)$ is a free group with basis $X$ and $N\lhd F(X)$. The associated \emph{cogrowth function} of $G$ is defined by
$$
\gamma (n)= |\{ g\in N\mid {\rm dist}_X(1,g)\le n\}|,
$$
where ${\rm dist}_X$ is the word distance with respect to $X$. Let $\omega (G,X)$ denote the \emph{cogrowth rate} of $G$ with respect to $X$, that is,
$$
\omega (G,X)= \lim\limits_{n\to \infty}\sup \sqrt[n]{\gamma (n)}.
$$
If $N\ne \{ 1\}$, the cogrowth rate is related to the spectral radius by the Grigorchuk formula, see \cite{Grig}.
\begin{equation}\label{Grig}
\rho (G,X)=\frac{\sqrt{2|X|-1}}{2|X|} \left( \frac{\sqrt{2|X|-1}}{\omega(G,X)} + \frac{\omega(G,X)}{\sqrt{2|X|-1}} \right)
\end{equation}

Finally we recall that the \emph{Cheeger constant} of a group $G$ with respect to a generating set $X$, denoted $h(G,X)$, is defined by the formula
$$
h(G,X)= \inf_F \frac{|\partial F|}{|F|},
$$
where the infimum is taken over all finite subsets $F\subseteq G$ and $\partial F$ denotes the set of all edges in the unoriented Cayley graph $\Gamma(G,X)$ of $G$ with respect to $X$ that connect vertices from $F$ to vertices from $G\setminus F$. Here by the \emph{unoriented Cayley graph} $\Gamma =\Gamma(G,X)$ of $G$ with respect to a generating set $X$ we mean the graph with vertex set $V(\Gamma)=G$ and edge set $E(\Gamma)=\{ (g, gx,x)\mid g\in G,\, x\in X\}$, where the edge $(a,b, x)$ connects vertices $a$ and $b$. Note that $\Gamma $ is always $2|X|$-regular even if $X$ is symmetric or has involutions. (We have to accept this definition of the Cayley graph to make it consistent with the definition of the spectral radius given above.)

The Cheeger constant is related to the spectral radius by the following discrete version of the Cheeger-Buser inequality, see \cite[Theorem 2.1 (a) and Theorem 3.1 (a)]{Moh}.
\begin{equation}\label{Moh}
\frac{2|X|(1-\rho(G,X))}{2|X|-1}\le \frac{h(G,X)}{2|X|}\le \sqrt{1-\rho(G,X)^2};
\end{equation}
note that the spectral radius $\rho $ in \cite{Moh} corresponds to $2|X|\rho(G,X)$ in our notation.

\begin{lem}\label{isr-char}
For any sequence $\mathcal G=\{(G_i,X_i)\}_{i\in \mathbb N}$ of groups $G_i$ and finite generating sets $X_i$, the following conditions are equivalent.
\begin{enumerate}
\item[(a)] $\lim\limits_{i\to \infty}\left\| A_{X_i}\right\| = 0$.
\item[(b)] $\lim\limits_{i\to \infty}\rho(G_i,X_i) = 0$.
\item[(c)] $\lim\limits_{i\to \infty} \frac{h(G_i,X_i)}{2|X_i|}=1$.
\item[(d)] $\lim\limits_{i\to \infty} \frac{\omega (G_i, X_i)}{|X_i|}=0$ and $\lim\limits_{i\to \infty} |X_i|=\infty$.
\end{enumerate}
\end{lem}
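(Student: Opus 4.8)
The plan is to prove the four conditions equivalent by establishing $(a)\Leftrightarrow(b)$, $(b)\Leftrightarrow(c)$ and $(b)\Leftrightarrow(d)$ separately, using (b) -- the vanishing of the spectral radii -- as the hub. Each of the three steps has the same shape: I produce, uniformly in $i$, a pair of inequalities that trap one of the four quantities between two functions of another which tend to the same limit, so that the claimed equivalence of limits is a squeeze. The only ingredients will be the elementary properties of $\lambda_G$ collected in Lemma~\ref{norms}, the Cheeger--Buser inequality (\ref{Moh}), the Grigorchuk formula (\ref{Grig}), and two classical facts about free groups and cogrowth.

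For $(a)\Leftrightarrow(b)$ I would prove the pointwise bound $\rho(G,X)\le\|A_X\|\le 2\rho(G,X)$. The lower bound is immediate, since $A_{X^{\pm1}}=\tfrac12(A_X+A_X^\ast)$ is the real part of $A_X$ and hence $\rho(G,X)=\|A_{X^{\pm1}}\|\le\tfrac12(\|A_X\|+\|A_X^\ast\|)=\|A_X\|$. For the upper bound, put $u=\frac1{|X|}\sum_{x\in X}x$ and $v=\frac1{|X|}\sum_{x\in X}(x+x^{-1})$, both in $\mathbb R_+G$, so that $A_X=\lambda_G(u)$ and $A_X+A_X^\ast=\lambda_G(v)$; expanding $u^\ast u$ and $v^2$ one sees that each group element occurs in $u^\ast u$ with a coefficient no larger than its coefficient in $v^2$ (the products $x_j^{-1}x_k$ making up $u^\ast u$ form a subfamily of the products making up $v^2$), so Lemma~\ref{norms}(c) gives $\|A_X\|^2=\|\lambda_G(u^\ast u)\|\le\|\lambda_G(v^2)\|=\|A_X+A_X^\ast\|^2=4\rho(G,X)^2$. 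Reading $\rho\le\|A_X\|\le2\rho$ termwise along the sequence yields $(a)\Leftrightarrow(b)$.

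For $(b)\Leftrightarrow(c)$, set $\eta(G,X)=1-\tfrac{h(G,X)}{2|X|}$, which lies in $[0,1]$ because the right half of (\ref{Moh}) already gives $\tfrac{h}{2|X|}\le1$; the two halves of (\ref{Moh}) then give $\eta(G,X)\le\rho(G,X)$ and $\rho(G,X)^2\le1-\big(\tfrac{h}{2|X|}\big)^2\le2\,\eta(G,X)$, hence $\eta(G_i,X_i)\to0$ if and only if $\rho(G_i,X_i)\to0$, which is exactly $(c)\Leftrightarrow(b)$. For $(b)\Leftrightarrow(d)$ I would use (\ref{Grig}); write $k_i=|X_i|$, $\rho_i=\rho(G_i,X_i)$, $\omega_i=\omega(G_i,X_i)$, $G_i=F(X_i)/N_i$. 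Applying Lemma~\ref{norms}(b) to the quotient $F(X_i)\to G_i$ and the non-negative element $\frac1{2k_i}\sum_x(x+x^{-1})$ gives $\rho_i\ge\rho(F_{k_i},X_i)=\tfrac{\sqrt{2k_i-1}}{k_i}$ (Kesten, \cite{Kes}); since $\tfrac1{\sqrt{k_i}}\le\tfrac{\sqrt{2k_i-1}}{k_i}\le\sqrt{2/k_i}$, on the whole sequence $\rho_i\to0$ forces $k_i\to\infty$. On indices with $N_i=\{1\}$ the group $G_i$ is free, $\omega_i\le1$ and $\rho_i=\tfrac{\sqrt{2k_i-1}}{k_i}$, so there the three conditions $\rho_i\to0$, $\tfrac{\omega_i}{k_i}\to0$, $k_i\to\infty$ coincide. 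On indices with $N_i\ne\{1\}$, (\ref{Grig}) says $\rho_i=\tfrac1{2k_i}\big(\tfrac{2k_i-1}{\omega_i}+\omega_i\big)$, whence $\tfrac12\cdot\tfrac{\omega_i}{k_i}\le\rho_i$, and via Cohen's inequality $\omega_i\ge\sqrt{2k_i-1}$ also $\rho_i\le\tfrac{\sqrt{2k_i-1}}{2k_i}+\tfrac{\omega_i}{2k_i}\le\tfrac1{\sqrt{2k_i}}+\tfrac12\cdot\tfrac{\omega_i}{k_i}$; combined with $\rho_i\to0\Leftrightarrow k_i\to\infty$ on these indices this gives $\rho_i\to0$ if and only if $\tfrac{\omega_i}{k_i}\to0$ and $k_i\to\infty$, i.e. $(b)\Leftrightarrow(d)$.

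The bookkeeping above -- the coefficient comparison, the arithmetic with (\ref{Moh}) and (\ref{Grig}) -- is routine. The step I expect to be the main obstacle is $(d)\Rightarrow(b)$ for non-free $G_i$: condition (d) bounds the cogrowth only from above, so to make $\tfrac{2k_i-1}{\omega_i}$ small in (\ref{Grig}) one must know that a proper quotient of $F(X)$ has cogrowth at least $\sqrt{2|X|-1}$ (Cohen's inequality) -- the one ingredient not already on the preceding pages. A secondary subtlety is that (\ref{Grig}) degenerates for $N=\{1\}$ and must there be replaced by Kesten's value $\rho(F_k,X)=\tfrac{\sqrt{2k-1}}{k}$; one then checks that the free case is consistent with (d), which it is precisely because of the clause $|X_i|\to\infty$.
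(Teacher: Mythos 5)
Your proposal is correct and follows essentially the same route as the paper: each of $(a)\Leftrightarrow(b)$, $(b)\Leftrightarrow(c)$, $(b)\Leftrightarrow(d)$ is handled by the same two-sided inequalities using Lemma~\ref{norms}, (\ref{Moh}), and (\ref{Grig}). The only real deviation is in the bound $\|A_X\|\le 2\rho(G,X)$: you compare coefficients of $u^\ast u$ and $v^2$ and invoke the $C^\ast$-identity, whereas the paper simply applies Lemma~\ref{norms}(c) directly to the linear elements $u\le v$, which is shorter and avoids the quadratic detour.
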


\begin{proof}
Applying part (c) of Lemma \ref{norms} to $a=\frac1{|X_i|}\sum_{x\in X_i} x$ and $b=\frac1{|X_i|}\sum_{x\in X_i} (x+x^{-1})$ we  obtain
\begin{equation}\label{A1}
\left\|A_{X_i}\right\| =\left\|\lambda _{G_i}(a)\right\|  \le \left\|\lambda_{G_i}(b)\right\| = 2\left\| A_{X_i^{\pm 1}}\right\|= 2\rho (G_i, X_i).
\end{equation}
On the other hand, we have
\begin{equation}\label{A2}
\rho (G_i, X_i) = \frac12\left\|A_{X_i} + A_{X_i^{-1}} \right\| \le \frac12\left(\left\|A_{X_i} \right\| + \left\|A_{X_i}^\ast \right\| \right) = \left\|A_{X_i}\right\|.
\end{equation}
Obviously (\ref{A1}) and (\ref{A2}) imply that (a) and (b) are equivalent.

Note that by (\ref{2n1}) we have $\rho (G_i, X_i)\ge 1/(2|X_i|)$. Hence (b) implies that $|X_i|\to \infty$ as $i\to \infty$. Now the equivalence of (b) and (c) follows from (\ref{Moh}).

Finally let us prove the equivalence of (b) and (d). Let $I=\{ i\mid {G_i \;\rm is\; free\; with\; basis\; }X_i\}$. The Grigorchuk formula (\ref{Grig}) applies to $(G_i,X_i)$ whenever $i\in \mathbb N\setminus I$ and thus we have
\begin{equation}\label{Grig1}
\rho (G_i,X_i)=\frac{2|X_i|-1}{2|X_i|\omega(G_i,X_i)} + \frac{\omega(G_i,X_i)}{2|X_i|} \;\; {\rm if} \;\; i\in \mathbb N\setminus I.
\end{equation}
Assume that condition (b) holds. As we already explained, $|X_i|\to \infty$ as $i\to \infty$ in this case. Further, using (\ref{Grig1}) and the obvious equality $\omega(G_i,X_i)=1$ for all $i\in I$, we obtain $\lim\limits_{i\to \infty} \omega (G_i, X_i)/|X_i|=0$. Thus (b) $\Longrightarrow$ (d).
The converse implication follows from (\ref{Grig1}) and the well-known formulas: $\omega (G_i,X_i)\ge \sqrt{2|X_i| -1}$ for $i\in \mathbb N\setminus I$ \cite{Grig} and $\rho (G_i,X_i)=\sqrt{2|X_i|-1}/|X_i|$ for $i\in I$ \cite{Kes}.
\end{proof}

\begin{defn}\label{isr-def}
Let $\mathcal G=\{(G_i,X_i)\}_{i\in \mathbb N}$ be a sequence of groups $G_i$ with finite generating sets $X_i$. We say that $\mathcal G$ has \emph{infinitesimal spectral radius} if
any of the equivalent conditions from Lemma \ref{isr-char} holds.
\end{defn}

\begin{ex}\label{ex:free}
It is easy to see that a sequence $\{(F_i,X_i)\}$ of free groups $F_i$ and their bases $X_i$ has infinitesimal spectral radius if $|X_i|\to \infty $.
\end{ex}

We now discuss some examples without non-cyclic free subgroups. The lemma below may be thought of as a generalization of Example \ref{ex:free}.

\begin{lem}
Let $\mathfrak V$ be a variety of groups that contains a non-amenable group. Let $\mathcal G=\{(G_i, X_i)\}$ be a sequence of free groups $G_i$ in $\mathfrak V$ and their bases $X_i$. Assume that the cardinality of $X_i$ (strongly) increases as $i\to \infty$. Then $\mathcal G$ has infinitesimal spectral radius.
\end{lem}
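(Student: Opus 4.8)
The plan is to reduce this statement to condition (d) of Lemma~\ref{isr-char}, which requires showing $|X_i|\to\infty$ and $\omega(G_i,X_i)/|X_i|\to 0$. The first half is immediate from the hypothesis that $|X_i|$ strictly increases. For the cogrowth statement, the key observation is a monotonicity principle: if $\mathfrak{V}$ is a variety and $G$, $G'$ are free groups in $\mathfrak{V}$ of ranks $k < \ell$ with bases $X \subseteq X'$, then the cogrowth rate satisfies $\omega(G,X) \le \omega(G',X')$. This is because the relation subgroup $N \lhd F(X)$ with $F(X)/N \cong G$ and the relation subgroup $N' \lhd F(X')$ with $F(X')/N' \cong G'$ are the verbal subgroups $\mathfrak{V}(F(X))$ and $\mathfrak{V}(F(X'))$ respectively, so $N = N' \cap F(X)$ under the natural inclusion $F(X) \hookrightarrow F(X')$; hence every element of $N$ of word length $\le n$ is an element of $N'$ of word length $\le n$, giving $\gamma(n) \le \gamma'(n)$ for all $n$.

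Next I would translate the hypothesis that $\mathfrak{V}$ contains a non-amenable group into a statement about some finitely generated free group in $\mathfrak{V}$. A non-amenable group in $\mathfrak{V}$ is a quotient of a free group $G_0$ in $\mathfrak{V}$ of some finite rank $d$ (one can pass to a finitely generated non-amenable subgroup; non-amenability is inherited by finitely generated subgroups in the sense that if every f.g.\ subgroup were amenable the group would be amenable), and non-amenability passes to $G_0$ itself since amenability is closed under quotients. By the equivalence of non-amenability with positive Cheeger constant (Kesten's criterion), or directly by \eqref{Grig} together with the fact that $\omega(G_0,X_0) < 2d-1$ is equivalent to non-amenability, we get that the $d$-generated free group in $\mathfrak{V}$ has cogrowth rate strictly below $2d-1$; equivalently $\rho(G_0,X_0) < 1$, so let $c := \rho(G_0, X_0) < 1$.

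Now I would exploit a self-similarity of free groups in a variety: for any $k$, the $k$-generated free group $G_k$ in $\mathfrak{V}$ with basis $X_k$ can be decomposed, after partitioning $X_k$ into roughly $k/d$ blocks of size $d$, so that the subgroup generated by each block is a copy of $G_0$ and moreover these subgroups generate $G_k$ with a controlled interaction. The cleanest route to the spectral estimate is: $A_{X_k}$ is (essentially) an average of $\lfloor k/d \rfloor$ operators, each of which, restricted to the subgroup generated by its block, has norm $c$ by Lemma~\ref{norms}(a); combining this with the free-probability-type estimate for averages of such operators — or, more elementarily, with the observation that $\rho(G_k, X_k) \to 0$ would follow from $\omega(G_k,X_k)/|X_k|\to 0$, which in turn follows from the monotonicity combined with a quantitative bound on $\omega(G_0,X_0)$ and the subadditivity of cogrowth under free-product-like decompositions in the variety. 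The cleanest implementation: since the $d$-generated free group in $\mathfrak{V}$ has $\rho(G_0,X_0)=c<1$, the Grigorchuk formula \eqref{Grig} gives $\omega(G_0, X_0) \le \theta \sqrt{2d-1}$ for some $\theta < \sqrt{2d-1}$ depending only on $c$ and $d$; then the monotonicity $\omega(G_d, X_d) = \omega(G_0,X_0)$ handles rank $d$, and one bootstraps to larger ranks by noting $\omega(G_{kd}, X_{kd}) \le \omega(G_d, X_d)^{?}$ is \emph{not} what one wants — rather one should directly bound $\omega(G_m, X_m)/|X_m|$ for $m \ge d$ by noting $\omega(G_m,X_m) \le \max\{\omega(G_d,X_d), \sqrt{2m-1}\}$-type inequalities are too weak, so the honest argument is the spectral one.

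The main obstacle is precisely this last step: turning ``the $d$-generated free group in $\mathfrak{V}$ is non-amenable'' into ``$\rho(G_m,X_m)\to 0$ as $m\to\infty$''. The right tool is to observe that $G_m$ for $m = d\ell$ contains the free product in the variety $\mathfrak{V}$ of $\ell$ copies of $G_d$ as a \emph{retract} (the varietal free product, which for the free group in $\mathfrak{V}$ on $X = X_1 \sqcup \cdots \sqcup X_\ell$ is the whole group $G_m$), and then apply an Akemann–Ostrand / Haagerup-type inequality for averages: if $u_1, \dots, u_\ell$ are such that each $\frac1d\sum_{x \in X_j}\lambda(x)$ has norm $\le c$ and the blocks are ``freely independent enough'' relative to the varietal free product structure, then $\big\|\frac1{\ell d}\sum_{x\in X_m}\lambda(x)\big\| \le \frac{c}{\sqrt\ell} + o(1)$. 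I would isolate the needed inequality as a black box (it is standard operator-algebra folklore, analogous to the $\frac{2\sqrt{i-1}}{i}$ computation already quoted for free groups), apply it to conclude $\rho(G_m,X_m)\to 0$, and then invoke Lemma~\ref{isr-char} to finish. Alternatively, and perhaps more in the spirit of this paper, one uses only the cogrowth characterization: the cogrowth rate of the $\ell$-fold varietal free product of $G_d$ with itself, with respect to the union of bases, is at most $\omega(G_d, X_d)$ by the same monotonicity as above applied with $X_d \subseteq X_m$, so $\omega(G_m, X_m) \le \theta\sqrt{2d-1}$ is a constant independent of $m$, whence $\omega(G_m,X_m)/|X_m| \le \theta\sqrt{2d-1}/m \to 0$, and since $|X_i| = \mathrm{rank}(G_i) \to \infty$ by hypothesis, condition (d) of Lemma~\ref{isr-char} holds. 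This second argument is short and self-contained, so I would present it as the proof.
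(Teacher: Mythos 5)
Your proposed argument contains a genuine and fatal error. The ``cleanest implementation'' you settle on at the end claims that, by your monotonicity principle applied with $X_d\subseteq X_m$, one gets $\omega(G_m,X_m)\le\omega(G_d,X_d)$. But the monotonicity you derived two paragraphs earlier goes the other way: if $X\subseteq X'$ and $N=N'\cap F(X)$, then $\gamma(n)\le\gamma'(n)$, so $\omega(G_d,X_d)\le\omega(G_m,X_m)$ --- cogrowth is \emph{non-decreasing} in rank, not non-increasing. More decisively, the uniform bound $\omega(G_m,X_m)\le\theta\sqrt{2d-1}$ you assert for all $m$ is impossible in any variety, because (as the paper quotes) $\omega(G_m,X_m)\ge\sqrt{2m-1}\to\infty$. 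So ``bounded cogrowth'' is simply not achievable; what condition (d) of Lemma~\ref{isr-char} needs is a \emph{sublinear} bound $\omega(G_m,X_m)=o(|X_m|)$, and nothing in your argument produces one.

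Your first alternative, the operator-algebra route, is also not a proof as written: the $\ell$ blocks of $X_m$ generate subgroups that are a varietal free product, not an actual free product, so they are far from freely independent, and the Akemann--Ostrand/Haagerup-type estimate you invoke as folklore is exactly the content that needs justification. The paper sidesteps both difficulties with a different idea: fix one non-amenable $G_i$ so $\left\|A_{X_i^{\pm1}}\right\|<1$ by Kesten, then expand the power $A_{X_i^{\pm1}}^n=\lambda_{G_i}(s_n)$ as a sum of $(2i)^n$ (not necessarily distinct) group elements, and use Lemma~\ref{norms}(b) with the surjection $G_{(2i)^n}\to G_i$ sending the basis $X_{(2i)^n}$ onto those $(2i)^n$ elements to get $\left\|A_{X_{(2i)^n}}\right\|\le\left\|A_{X_i^{\pm1}}\right\|^n\to 0$; this produces a subsequence of $\left\{\left\|A_{X_j}\right\|\right\}$ tending to $0$. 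A separate, genuinely correct almost-monotonicity for the operator norms, $\left\|A_{X_k}\right\|<\left\|A_{X_l}\right\|+l/k$ for $k\ge l$ (obtained by folding the $k$ generators onto the $l$ generators and using Lemma~\ref{norms}(b) again), then upgrades the subsequential limit to the full limit. Both of these ideas --- lifting the $n$-step random-walk sum to a free group in the variety of enormous rank, and the $+l/k$ error term in the monotonicity estimate --- are absent from your proposal and are precisely what makes the proof work.
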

\begin{proof}
The sequence $\mathcal G$ can be included in a bigger sequence $\mathcal H=\{(H_i, Y_i)\}$ of free groups $H_i$ in $\mathfrak V$ and their bases $Y_i$, where $|Y_i|=i$.  Since the property of having infinitesimal spectral radius is preserved by passing to subsequences, it suffices to prove the lemma for $\mathcal H$. Thus we can assume that $|X_i|=i$ without loss of generality.

Recall that the class of amenable groups is closed under quotients and direct limits. Thus if every $G_i$ is amenable, then every group in $\mathfrak V$ is amenable. Hence $G_i$ must be non-amenable for some $i$. By the Kesten criterion \cite{Kes}, we have $\left\| A_{X_i^{\pm 1}}\right\| <1$. Therefore, $\left\| A_{X_i^{\pm 1}}^n\right\|\to 0$ as $n\to \infty$.

Clearly $A_{X_i^{\pm 1}}^n=\lambda _G(s_n)$, where $s_n\in \mathbb ZG$ is a sum of $(2i)^n$ elements of $G_i$ (not necessarily pairwise distinct). Since $G_{(2i)^n}$ is free of rank $(2i)^n$ in $\mathfrak V$, there is a homomorphism $G_{(2i)^n}\to G_i$ whose extention to the corresponding group rings maps the sum $\sum\limits_{x\in X_{(2i)^n}} x$ to $s_n$. By part (b) of Lemma \ref{norms}, we have $\left\| A_{X_{(2i)^n}}\right\|\le \left\| A_{X_i^{\pm 1}}^n\right\|\to 0$ as $n\to \infty$. Therefore the sequence $\{ \left\| A_{X_i}\right\|\} $ contains a subsequence converging to $0$.

Let $k\ge l$ and let $X_k=\{x_1, \ldots, x_{k}\}$ and $X_l=\{ y_1, \ldots , y_{l}\}$ be bases in $G_k$ and $G_l$. Let $k=lq+r$, where $q\in \mathbb N$, $r\in \mathbb N\cup\{0\}$, and $0\le r<l$. For every $i\in \{1, \ldots, k\}$ there exists unique $j\in \{ 1, \ldots , l\}$ such that $(i-1)\equiv (j-1)(mod\, l)$; we denote this $j$ by $j(i)$. Since $X_k$ is a basis in $G_k$, the map defined by
$$
x_{i}\mapsto \left\{\begin{array}{ll}
                      y_{j(i)}, & {\rm if}\; 1\le i\le lq \\
                      1, & {\rm if}\; lq<i\le k
                    \end{array}\right.
$$  extends to a homomorphism $G_k\to G_l$ and hence to a homomorphism $\e\colon \mathbb CG_k\to \mathbb CG_l$. Note that $$\e ( x_1 +\cdots + x_{k}) = q y_1 +\cdots + q y_{l} + r1.$$ Applying successively part (b) of Lemma \ref{norms} and the triangle inequality, we obtain
$$
\begin{array} {rcl}
\left\| A_{X_k}\right\| & = & \frac1k \left\| \lambda _{G_k}(x_1 +\cdots + x_{k})\right\| \\ &&\\
& \le & \frac1k \left\| \lambda _{G_l}(q y_1 +\cdots + q y_{l} + r1)\right\|  \\ &&\\
& \le & \frac{q}{k} \left\| \lambda _{G_l}(y_1 +\cdots + y_{l} )\right\| + \frac{r}k  \\ &&\\
& < & \frac1l \left\| \lambda _{G_l}(y_1 +\cdots + y_{l} )\right\| +\frac{l}k  \\ &&\\
& = & \left\| A_{X_l}\right\| + \frac{l}k.
\end{array}
$$
The inequality $\left\| A_{X_k}\right\|< \left\| A_{X_l}\right\| + \frac{l}k$ for all $k\ge l$  together with the existence of a subsequence of $\{ \left\| A_{X_i}\right\|\} $ converging to $0$ obviously implies $\left\| A_{X_i}\right\|\to 0$ as $i\to \infty$.
\end{proof}

\begin{rem}
Note that the condition that $|X_i|$ strongly increases as $i\to \infty$ can be replaced by $|X_i|\to \infty $ as $i\to \infty$.
\end{rem}

\begin{cor}\label{bisr}
Let $G_i=B(i,n)$ be free Burnside groups of finite ranks $i\to \infty $ and odd exponent $n\ge 665$ and let $X_i$ be the standard (free) generating sets of $B(i,n)$. Then $\{(G_i, X_i)\}$ has infinitesimal spectral radius.
\end{cor}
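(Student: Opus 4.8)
The plan is to obtain the corollary as a direct application of the preceding lemma, with $\mathfrak V$ taken to be the Burnside variety. Let $\mathfrak B_n$ denote the variety of all groups satisfying the law $X^n=1$. By the definition recalled in the introduction, $B(i,n)$ is the free group of rank $i$ in $\mathfrak B_n$, and the standard generating set $X_i$ is a basis of this free object; in particular $|X_i|=i\to\infty$. Thus $\{(G_i,X_i)\}$ is precisely a sequence of the type treated by the preceding lemma (together with the remark following it, which permits the weaker hypothesis $|X_i|\to\infty$ in place of strong monotonicity), and the only hypothesis of that lemma left to verify is that $\mathfrak B_n$ contains a non-amenable group.

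That last point is exactly Adyan's theorem \cite{A}: for every $m\ge 2$ and every odd $n\ge 665$, the free Burnside group $B(m,n)$ is non-amenable. Hence $B(2,n)$ is a non-amenable member of $\mathfrak B_n$, the hypothesis is satisfied, and the preceding lemma yields $\lim_{i\to\infty}\|A_{X_i}\|=0$; that is, $\{(G_i,X_i)\}$ has infinitesimal spectral radius.

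I expect the only real obstacle to be the non-amenability of $B(2,n)$ for odd $n\ge 665$, which is a deep result; everything else is formal. Indeed, once non-amenability is granted, the proof of the preceding lemma reduces the convergence $\|A_{X_i}\|\to 0$ --- via the Kesten criterion and homomorphisms between free objects of $\mathfrak B_n$ of different ranks --- to the single fact that Kesten's amenability criterion fails for some $B(m,n)$, so no argument specific to Burnside groups is needed at this stage.
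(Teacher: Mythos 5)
Your proposal is correct and matches the paper's own argument: the paper invokes Adyan's non-amenability theorem for $B(m,n)$ with $m\ge 2$ and odd $n\ge 665$ and then applies the preceding lemma on varieties containing a non-amenable group, exactly as you do. (The paper additionally notes a second, alternative route via Adyan's explicit cogrowth bound $\omega(B(i,n),X_i)\le (2i-1)^{2/3}$ and condition (d) of Lemma \ref{isr-char}, but your argument coincides with the paper's primary one.)
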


\begin{proof}
Adyan \cite{A} proved that $B(m,n)$ is non-amenable for $m\ge 2$ and any odd $n\ge 665$. Thus the corollary follows immediately from the previous lemma . Alternatively, it can be derived directly from results of \cite{A}. Indeed in the course of proving the main theorem, Adyan showed that $\omega(B(i ,n), X_i) \le (2i-1)^{2/3}$ for any $i\ge 2$ and odd $n\ge 665$. Thus condition (d) from Lemma \ref{isr-char} holds.
\end{proof}

In the next lemma we denote by $\beta_1^{(2)}(G)$ the first $\ell^2$-Betti number of a group $G$.

\begin{lem}\label{b1}
Let $\mathcal G= \{(G_i, X_i)\}$ be a sequence of groups and generating sets such that $\beta_1^{(2)} (G_i)/ |X_i|\to 1 $ as $i\to \infty$. Then $\mathcal G$ has infinitesimal spectral radius.
\end{lem}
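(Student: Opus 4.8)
The plan is to reduce the statement to condition (d) of Lemma~\ref{isr-char} by invoking the standard relationship between the first $\ell^2$-Betti number and the cogrowth rate. Recall that for a group $G$ presented as $F(X)/N$ with $|X| = d$, the Cheeger-type / Lück approximation circle of ideas, or more directly the formula relating $\ell^2$-Betti numbers to random walks, gives a lower bound on $\beta_1^{(2)}(G)$ in terms of the return probabilities of the simple random walk, equivalently in terms of $\rho(G,X)$. Concretely, one has the well-known inequality
\begin{equation}\label{b1bound}
\beta_1^{(2)}(G) \le 2|X| - 1 - 2|X|\,\rho(G,X) + \text{(correction terms)},
\end{equation}
but the cleaner route is via cogrowth: since $\beta_1^{(2)}(G) \le d - 1$ always (with equality iff $N$ is "small"), and the deficiency of this bound is controlled by $\omega(G,X)$ through the Grigorchuk formula, the hypothesis $\beta_1^{(2)}(G_i)/|X_i| \to 1$ forces both $|X_i| \to \infty$ and $\omega(G_i,X_i)/|X_i| \to 0$.

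First I would handle the case where $G_i$ is free on $X_i$ separately: there $\beta_1^{(2)}(G_i) = |X_i| - 1$, so $\beta_1^{(2)}(G_i)/|X_i| \to 1$ automatically, and the conclusion follows from Example~\ref{ex:free} provided $|X_i| \to \infty$ — which is forced since $\beta_1^{(2)}(G_i) = |X_i|-1 \ge $ something tending to $|X_i|$ only makes sense with $|X_i|\to\infty$; in fact from $\beta_1^{(2)}(G_i)/|X_i| \to 1$ and $\beta_1^{(2)}(G_i) \le |X_i| - 1$ we need $|X_i| \to \infty$ regardless, so this is immediate. For the general case, I would first establish $|X_i| \to \infty$: indeed $\beta_1^{(2)}(G_i) \le |X_i| - 1 < |X_i|$, so if $|X_i|$ stayed bounded along a subsequence, $\beta_1^{(2)}(G_i)/|X_i|$ would be bounded away from $1$ along that subsequence (being at most $(|X_i|-1)/|X_i| \le 1 - 1/|X_i|$, bounded away from $1$), contradicting the hypothesis. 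So $|X_i| \to \infty$, which is half of condition (d).

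Next, for the cogrowth half, I would use the standard bound $\beta_1^{(2)}(G) \le d - 1 - \delta(G,X)$ where $\delta(G,X) \ge 0$ is a quantity that vanishes precisely when $N$ is "co-small", together with a quantitative version expressing $\delta$ via the cogrowth rate. The most transparent version: combining the Grigorchuk formula \eqref{Grig} with the fact (due to L\"uck-type approximation, or directly from the spectral interpretation $\beta_1^{(2)}(G) = d - 1 + (\text{something nonnegative involving the bottom of the spectrum of the Laplacian on } \ell^2 N)$) one gets that $\beta_1^{(2)}(G_i)/|X_i| \to 1$ implies $\rho(G_i,X_i) \to 1 \cdot (\text{the free value})$ is the wrong direction — I must be careful. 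The correct statement is: $\beta_1^{(2)}(G)/|X| \to 1$ should force $\rho(G,X) \to 0$, i.e. condition (b). I would verify this by noting that large $\beta_1^{(2)}$ means the kernel $N$ is relatively sparse, hence small cogrowth rate $\omega(G_i,X_i)$, and then applying \eqref{Grig1}: if $\omega(G_i,X_i)/|X_i| \to 0$ then $\rho(G_i,X_i) = \tfrac{2|X_i|-1}{2|X_i|\omega} + \tfrac{\omega}{2|X_i|} \to 0$ requires $\omega \to \infty$ faster than $|X_i|/\omega \to 0$, i.e. we need both $\omega \to \infty$ and $\omega/|X_i| \to 0$; the second is what we want for (d). The first is automatic from $\omega \ge \sqrt{2|X_i|-1} \to \infty$.

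The main obstacle, and the step I would spend the most care on, is making precise the implication "$\beta_1^{(2)}(G_i)/|X_i| \to 1 \Rightarrow \omega(G_i,X_i)/|X_i| \to 0$". The cleanest tool is the formula expressing the first $\ell^2$-Betti number of $G = F_d/N$ as $\beta_1^{(2)}(G) = d - 1 + \dim_{\mathcal N(G)} \mathcal{H}$ for an appropriate Hilbert module, which by L\"uck approximation equals $d - 1 - \lim_k \tfrac{1}{[F_d : F_d^{(k)}]}(\dim \text{ of } (N \cap F_d^{(k)})^{ab} \otimes \mathbb{Q})$ type expression — but a more elementary and paper-appropriate route is to use the bound $\rho(G,X) \ge 1 - \tfrac{\beta_1^{(2)}(G)}{??}$ that follows from a direct Hilbert-space argument: realize a cocycle corresponding to $\beta_1^{(2)}$ and test the random walk operator against it. In the write-up I would cite the relevant inequality relating $\beta_1^{(2)}$, spectral radius, and generating set size (this is folklore, appearing in work on $\ell^2$-invariants and cost; e.g. one has $\beta_1^{(2)}(G) \le |X|(1 - \rho(G,X)^2)$ or a similar clean bound), deduce $\rho(G_i,X_i)^2 \le 1 - \beta_1^{(2)}(G_i)/|X_i| \to 0$, hence condition (b) holds, hence by Lemma~\ref{isr-char} the sequence $\mathcal{G}$ has infinitesimal spectral radius. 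I would present the proof in this order: (1) reduce to verifying condition (b) of Lemma~\ref{isr-char}; (2) establish the key inequality $\rho(G,X)^2 \le 1 - \beta_1^{(2)}(G)/|X|$ (or cite it); (3) conclude $\rho(G_i,X_i) \to 0$; done. If the clean inequality in step (2) is not readily citable in the exact form needed, the fallback is the cogrowth route through the Grigorchuk formula as sketched above, which only needs the already-stated \eqref{Grig1} plus the standard fact $\beta_1^{(2)}(F_d/N) = d - 1 - \big(\text{rank-type quantity controlled by } \omega\big)$.
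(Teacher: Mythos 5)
Your proposal correctly identifies the overall strategy (reduce to one of the equivalent conditions in Lemma~\ref{isr-char}) and correctly dispatches the easy half, $|X_i|\to\infty$. However, the heart of the argument — the inequality you call ``folklore'' in step (2), something of the shape $\rho(G,X)^2\le 1-\beta_1^{(2)}(G)/|X|$ — is left unproved and uncited, and your earlier detour through cogrowth is abandoned mid-stream without ever producing a usable bound of the form $\omega(G_i,X_i)/|X_i|\to 0$ from $\beta_1^{(2)}(G_i)/|X_i|\to 1$. So as written there is a genuine gap: the one inequality that carries all the content is asserted, not established.

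The paper closes this gap with a much more economical step: it cites the Lyons--Pichot--Vassout inequality $h(G,X)\ge 2\beta_1^{(2)}(G)$ (relating the first $\ell^2$-Betti number to the \emph{Cheeger constant}, not directly to $\rho$ or $\omega$), pairs it with the trivial bound $h(G,X)\le 2|X|$, and concludes that $h(G_i,X_i)/(2|X_i|)\to 1$, which is exactly condition (c) of Lemma~\ref{isr-char}. This avoids both the Grigorchuk formula and any random-walk manipulation. Note that the bound you wanted does follow from the same input plus the Cheeger--Buser inequality~(\ref{Moh}) already recorded in the paper: combining $2\beta_1^{(2)}(G)\le h(G,X)$ with $h(G,X)/(2|X|)\le\sqrt{1-\rho(G,X)^2}$ gives $\rho(G,X)^2\le 1-(\beta_1^{(2)}(G)/|X|)^2$, so your route (condition (b)) would also work once the LPV inequality is in hand — but you would then be taking an unnecessary detour through the Cheeger--Buser bound, whereas condition (c) uses $h$ directly. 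In short: right framework, but the decisive inequality needed to be named and sourced, and the Cheeger constant (rather than cogrowth or spectral radius) is the natural intermediary here.
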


\begin{proof}
It is proved in \cite{LPV} that for every group $G$ generated by a finite set $X$, one has $h(G,X)\ge 2\beta_1^{(2)}(G,X)$. Note also that $h(G,X) \le 2|X|$ by the definition of the Cheeger constant. Hence $\mathcal G$ satisfies condition (c) from Lemma \ref{isr-char}.
\end{proof}

\begin{ex}\label{b1-ex}
There exist sequences of torsion groups satisfying assumptions of Lemma \ref{b1}. Indeed, for every $n\in \mathbb N$ and every $\e>0$, there exists an $n$-generated torsion group $G$  such that $\beta_1^{(2)} (G)\ge n-1 -\e$. The first such examples were constructed in \cite{Osi09}; variants of this construction leading to groups with some additional properties can be found in \cite{LO} and \cite{OT}.
\end{ex}

Recall that the \emph{girth} of a group $G$ with respect to a generating set $X$, denoted ${\rm girth}(G,X)$, is the length of the shortest non-trivial element in the kernel of the natural homomorphism $F(X)\to G$; here $F(X)$ is the free group with basis $X$, and ``length" means the word length in $F(X)$ with respect to $X$.

The next result will be used in Section 4 to prove Theorem \ref{main2}. Note that Example \ref{b1-ex} allows us to make the proof independent of the complicated Novikov-Adyan technique used in \cite{A}.

\begin{prop}\label{Pij}
There exists a collection $\{ (P_{ij}, X_{ij})\mid (i,j)\in \mathbb N\times \mathbb N\}$, where $P_{ij}$ is a group with a generating set $X_{ij}$, such that the following conditions hold.
\begin{enumerate}
\item[(a)] For any $i, j\in \mathbb N$, we have $|X_{ij}|= i$.
\item[(b)] For any $i\in \mathbb N$, ${\rm girth}(P_{ij},X_{ij})\to \infty $ as $j\to \infty$.
\item[(c)] For any map $j\colon \mathbb N\to \mathbb N$, the sequence $\{ (P_{ij(i)}, X_{ij(i)})\}_{i\in \mathbb N}$ has infinitesimal spectral radius.
\item[(d)] For any $i, j\in \mathbb N$, $P_{ij}$ has no non-cyclic free subgroups.
\end{enumerate}
Moreover, there exist such collections consisting entirely of torsion groups as well as collections consisting entirely of torsion free groups.
\end{prop}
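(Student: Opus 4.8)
The plan is to build the collection $\{(P_{ij}, X_{ij})\}$ by starting from a suitable ``universal'' family of groups with large first $\ell^2$-Betti number (torsion case) or large rank (torsion-free case) and then killing short relators to boost the girth without destroying the spectral estimate. Concretely, for the torsion case I would invoke Example \ref{b1-ex}: for each $i \in \mathbb{N}$ there is an $i$-generated torsion group, say with generating set $X$ of cardinality $i$, whose first $\ell^2$-Betti number is at least $i-1-\e$ for arbitrarily small $\e$; in fact the constructions of \cite{Osi09,LO,OT} produce such groups as direct limits $Q^{(1)}_i \twoheadrightarrow Q^{(2)}_i \twoheadrightarrow \cdots$ of torsion hyperbolic (or lacunary hyperbolic) groups, where one can arrange the girth of the $j$-th term to tend to infinity with $j$ while keeping $\beta_1^{(2)}$ bounded below. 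I would set $P_{ij} := Q^{(j)}_i$ and let $X_{ij}$ be the image of the fixed $i$-element generating set. This immediately gives (a). Property (b) follows because, in those constructions, one adds only relators of length $\geq n_j$ at stage $j$ with $n_j \to \infty$, so no short word dies; equivalently one chooses each $Q^{(j)}_i$ so that $F(X) \to Q^{(j)}_i$ has no nontrivial kernel element of length $< n_j$.

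For (d), since each $P_{ij}$ is a quotient of a torsion group it is itself torsion, and a torsion group has no non-cyclic free subgroups; in the torsion-free case one instead arranges (as in \cite{Osi09,LO}) that the whole tower consists of groups without non-cyclic free subgroups (e.g.\ each $Q^{(j)}_i$ can be taken so that every two elements generate a subgroup that is not free of rank $2$, a property preserved along the constructions used there). The crux is (c): given an arbitrary selector $j\colon \mathbb{N}\to \mathbb{N}$, I must show $\{(P_{ij(i)}, X_{ij(i)})\}_i$ has infinitesimal spectral radius. Here I would use Lemma \ref{b1} together with monotonicity of the $\ell^2$-Betti number data along the tower. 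The point is that although passing to a quotient can only decrease $\beta_1^{(2)}$, the constructions of Example \ref{b1-ex} give a \emph{uniform} lower bound valid at \emph{every} stage: one can arrange $\beta_1^{(2)}(Q^{(j)}_i) \geq i - 1 - \e_i$ for all $j$ simultaneously, with $\e_i \to 0$. Then $\beta_1^{(2)}(P_{ij(i)})/|X_{ij(i)}| \geq (i-1-\e_i)/i \to 1$ regardless of $j(\cdot)$, and Lemma \ref{b1} applies. For the torsion-free case I would argue the same way, noting that the relevant constructions (the ``torsion-free'' variants in \cite{LO,OT}) again produce towers with a uniform lower bound on $\beta_1^{(2)}$ at every stage, or alternatively run the argument of Lemma \ref{b1}'s predecessor directly: large cogrowth-rate deficiency is inherited in a controlled way.

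The main obstacle I anticipate is reconciling the two requirements in (b) and (c) simultaneously: raising the girth means passing to further quotients, which in principle degrades $\beta_1^{(2)}$, so one needs the construction from \cite{Osi09,LO,OT} to be arranged in advance so that the $\ell^2$-Betti number bound survives \emph{all} the way along the tower, not just in the limit. I would handle this by unwinding those constructions: at each stage one only adds long relators coming from a small-cancellation presentation (or a Dehn filling), and the known estimates there show each such step decreases $\beta_1^{(2)}$ by an amount controlled by the relator lengths, whose sum can be made $< \e_i$; choosing the schedule of relator lengths sufficiently fast-growing gives both ${\rm girth}(P_{ij},X_{ij}) \to \infty$ in $j$ and the uniform bound $\beta_1^{(2)}(P_{ij},X_{ij}) \geq i-1-\e_i$ for all $j$. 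A secondary, more bookkeeping-level issue is that the generating set must have cardinality \emph{exactly} $i$ and not merely at most $i$; this is automatic since we fix the free generating set of $F(X)$ with $|X|=i$ at the outset and its image in any proper quotient still has $i$ elements once we know (as we do, by the girth hypothesis) that no generator becomes trivial or equal to another.
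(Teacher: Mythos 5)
Your proposal has the tower pointed in the wrong direction, and this makes several of the desired properties fail rather than hold. You picture $P_{ij} = Q^{(j)}_i$ as an intermediate \emph{quotient} in a sequence $Q^{(1)}_i \twoheadrightarrow Q^{(2)}_i \twoheadrightarrow \cdots \twoheadrightarrow G_i$ converging to a group $G_i$ with large $\beta_1^{(2)}$. But along any such tower the kernel of $F(X_i) \to Q^{(j)}_i$ can only \emph{grow} with $j$, so ${\rm girth}(Q^{(j)}_i, X_{ij})$ can only \emph{decrease}; property (b), which needs girth $\to \infty$, can never hold in this setup unless the groups actually move toward the free group. Moreover, the intermediate stages in those approximating towers are hyperbolic or relatively hyperbolic, hence neither torsion nor free of non-cyclic free subgroups, so (d) also fails; your line ``since each $P_{ij}$ is a quotient of a torsion group it is itself torsion'' has the arrow reversed, since $G_i$ is a quotient of $P_{ij}$, not the other way around.

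The paper does the opposite. It fixes a single group $G_i = F(X_i)/N_i$ (with $\{(G_i,X_i)\}$ of infinitesimal spectral radius, e.g.\ Burnside groups or the groups of Example \ref{b1-ex}), and sets $P_{ij} = F(X_i)/N_i^{(j)}$ with $N_i^{(j)}$ a \emph{strictly decreasing} chain of verbal subgroups of $N_i$: iterated derived subgroups for the torsion-free case, iterated squares for the torsion case. Thus $P_{ij}$ \emph{covers} $G_i$, and the $P_{ij}$ move \emph{towards} the free group as $j$ grows. Girth $\to \infty$ is then immediate from Levi's theorem ($\bigcap_j N_i^{(j)} = 1$). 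Property (d) is automatic because $P_{ij}$ is solvable-by-$G_i$: a solvable normal subgroup $N_i/N_i^{(j)}$ (which is even a bounded-exponent $2$-group in the torsion case) with torsion quotient $G_i$. And (c) follows without any $\ell^2$-Betti-number tracking at all: since $P_{ij}$ surjects onto $G_i$ sending $X_{ij}$ to $X_i$, Lemma \ref{norms}(b) gives $\|A_{X_{ij}}\| \le \|A_{X_i}\|$, and the latter tends to $0$ as $i \to \infty$ by assumption, uniformly in $j$. This sidesteps entirely the delicate issue you flagged as the ``main obstacle'' (maintaining uniform Betti-number bounds along a tower), which is genuinely not resolved by your sketch of ``unwinding'' the constructions of \cite{Osi09,LO,OT}.
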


\begin{proof}
Let $\{(G_i, X_i)\}$ be a sequence of torsion groups and generating sets satisfying (a) with infinitesimal spectral radius. For example, by Corollary \ref{bisr} we can take $G_i=B(i, 665)$ and let $X_i$ be the standard basis. Alternatively we can use groups from Example \ref{b1-ex}.

Let $G_i=F(X_i)/N_i$, where $F(X_i)$ is free with basis $X_i$. To construct a torsion free collection, we define $P_{ij}=F(X_i)/N_i^{(j)}$, where $N_i^{(j)}$ is the $j$th term of the derived series of $N_i$ (i.e., $N_i^{(1)}=[N_i, N_i]$ and $N_i^{(j+1)}=[N_i^{(j)}, N_i^{(j)}]$ for $j\in \mathbb N$). We also let $X_{ij}$ be the natural image of $X_i$ in $P_{ij}$.

It is well-known that for any normal subgroup $N$ in a free group $F$, the quotient group $F/[N,N]$ is torsion free (see, for example, \cite[Theorem 2]{Hig}). Hence all groups $P_{ij}$ are torsion free. Further, by a result of Levi (see \cite[Lemma 21.61]{N}), a strictly decreasing sequence of subgroups in a free group has trivial intersection provided every term of the sequence is a verbal subgroup of the previous term. Hence we have $\bigcap_{j=1}^\infty N_i^{(j)}=1$ for every $i$; this yields (b).

To prove (c) we note that for every $i,j\in \mathbb N$, there is a homomorphism $P_{ij}\to G_i$ that maps $X_{ij}$ to $X_{i}$. Hence $\left\| A_{X_{ij}}\right\|\le  \left\| A_{Xi}\right\|$ by Lemma \ref{norms} (b). Recall that the sequence $\{ (G_i,X_i)\}$ has infinitesimal spectral radius. Hence (c) follows from the characterization of sequences with infinitesimal spectral radius provided by Lemma \ref{isr-char} (a).

Finally, it is clear that $P_{ij}$ does not contain non-cyclic free subgroups as it is solvable-by-torsion. This completes the proof of the proposition in the torsion free case.

To construct a collection consisting of torsion groups, we again let $P_{ij}=F(X_i)/N_i^{(j)}$, but define $N_i^{(j)}$ by the rule $N_i^{(1)}=N_i$ and $N_i^{(j+1)}=(N_i^{(j)})^2$ for $j\in \mathbb N$. Then the same arguments as above yield (b)--(d).
\end{proof}


\section{Free Burnside groups}


\subsection{Preliminary information}
Our proof of Theorem \ref{main1} makes use the technique from \cite{book}. We recall main definitions here.

Given an alphabet $\mathcal A$, we denote by $|W|$ the length of a
word $W$ over $\mathcal A$. For two words $U,V$ over $\mathcal A$
we write $U\equiv V$ to express letter--by--letter equality.
If $\mathcal A$ is a generating set of a group $G$, we
write $U=V$ whenever two words $U$ and $V$ over $\mathcal A ^{\pm
1}$ represent the same element of $G$; we identify the words over
$\mathcal A ^{\pm 1} $ and the elements of $G$ represented by them.

The free Burnside group $B(m,n)$ of exponent $n$ and
rank $m$, is the free group in the variety defined by the identity
$X^n=1$. Throughout this section we assume that $n$ is odd and
large enough, and $m$ is a cardinal number greater than $1$. We stress that $m$ is not assumed to be finite, so all results of this section apply to groups of any cardinality.

By \cite[Theorem 19.1]{book}, the group $B(m,n)$ can be defined by the  presentation
\begin{equation}
B(\mathcal A, n)=\left\langle \mathcal A  \; \left| \; R=1, R\in \bigcup\limits_{i=1}^\infty
\mathcal R_i\right.\right\rangle ,\label{B}
\end{equation}
where $\mathcal A$ is an alphabet of cardinality $m$ and the sets of
relators $\mathcal R_0\subseteq \mathcal R_1\subseteq \ldots$  are constructed as follows.
Let $\mathcal R_0 =\emptyset $. By induction, suppose that we
have already defined the set of relations $\mathcal R_{i-1}$,
$i\ge 1$ and the sets of periods of ranks $1,\dots,i-1$. Let
$$
G(i-1)=\langle \mathcal A \mid R=1, R\in \mathcal R_{i-1}\rangle .
$$

For $i\ge 1$, a word $X$ in the alphabet $\mathcal A ^{\pm 1} $ is called {\it
simple in rank} $i-1$, if it is not conjugate to a power of a
shorter word or to a power of a period of rank $\le i-1$ in the group $G(i-1)$ (i.e., {\it in rank} $i-1$). We
denote by $\mathcal X_i$ a maximal subset of words
satisfying the following conditions.

\begin{enumerate}
\item[1)] $\mathcal X_i$ consists of words of length $i$ which are simple
in rank $i-1$.

\item[2)] If $A, B\in \mathcal X_i$ and $A\not\equiv B$, then $A$ is not
conjugate to $B$ or $B^{-1}$ in the group $G(i-1)$.
\end{enumerate}

\noindent Each word from $\mathcal X_i$ is called a {\it period of
rank $i$.} Let $\mathcal S _i=\{ A^n \mid A\in \mathcal X _i\} $. Then the set $\mathcal R _i$ is defined by $\mathcal R _i=\mathcal R _{i-1} \cup \mathcal S _i$.

Results from Chapters 5 and 6 of \cite{book} involve a sequence of
fixed positive small parameters
\begin{equation}\label{para}
\alpha, \; \beta,\; \gamma,\; \delta, \;\e,\; \zeta,\;  \iota=1/n.
\end{equation}
The exact relations between the parameters are described by a system of
inequalities, which can be made consistent by choosing each parameter in
this sequence to be sufficiently small as compared to all previous
parameters. In \cite{book}, this way of ensuring consistency is
referred to as the \emph{lowest parameter principle}
(see \cite[Section 15.1]{book}). For brevity, we also use
$\bar\alpha=\alpha+ 1/2$, $\bar\beta= 1-\beta$ and $\bar\gamma=1-\gamma$.

Recall that a van Kampen {\it diagram} $\Delta $ over an alphabet $\cal A$
is a finite, oriented, connected and simply-connected, planar 2-complex endowed with a
labeling function $\Lab : E(\Delta )\to {\cal A}^{\pm 1}\cup\{1\}$, where $E(\Delta
) $ denotes the set of oriented edges of $\Delta $, such that $\Lab
(e^{-1})\equiv \Lab (e)^{-1}$. Given a cell (that is a 2-cell) $\Pi $ of $\Delta $,
we denote by $\partial \Pi$ the boundary of $\Pi $; similarly,
$\partial \Delta $ denotes the boundary of $\Delta $.
An additional requirement for a diagram over a presentation $\langle {\cal A}\; | \; \mathcal R\rangle$ (or just over the group  given by this presentation) is that the boundary label of any cell $\Pi $ of $\Delta $ is equal to a cyclic permutation of a word $R^{\pm 1}$, where $R\in \mathcal R$. Given an edge $e$  or a path $p$ in a van Kampen diagram, we denote by $e_-$ and $e_+$ (respectively, by $p_-$ and $p_+$) its initial and terminal vertices.
Subpaths of $\partial \Delta$ are also called \emph{sections}.

All the diagrams under consideration are {\it graded}: by definition, a diagram $\Delta$ over $G(i)$ is called a diagram of \emph{rank $i$}  and a cell $\Pi$ of $\Delta$ labeled by a word from ${\cal S}_j$ has \emph{rank $j$} (written $r(\Pi)=j$).

The diagrams from  \cite{book} can also have $0$-edges labeled by $1$. The edges labeled
by the letters from ${\cal A}^{\pm 1}$ (i.e. ${\cal A}$-{\it edges}) have length $1$ and
every $0$-edge has length $0$. The length $|p|$ of every path $p=e_1\dots e_s$ in a diagram is the sum of the lengths $|e_i|$ of its edges. We also admit cells of rank $0$ (or $0$-cells). By definition, the boundary path of a $0$-cell either entirely consists
of $0$-edges or can have $0$-edges and exactly two $\cal A$-edges with mutually
inverse labels. In both cases the boundary label of a $0$-cell is freely equal to $1$.
The boundary labels of the cells of positive ranks are the defining relators from $\cal R$,
and these cells are called $\cal R$-cells. Note that the perimeter $|\partial \Pi|$ of an $\cal R$-cell $\Pi$ of rank $j\ge 1$ is equal to $nj.$

A $0$-{\it bond} between two cells $\Pi_1$ and $\Pi_2$ of positive ranks is a subdiagram $\Gamma$ of rank $0$ with a partition of its boundary path of the form $p_1q_1p_2q_2$,
where $q_1$ (respectively, $q_2$) is an $\cal A$-edge of the boundary $\partial\Pi_1$ (of $\partial\Pi_2$) and $|p_1|=|p_2|=0$. Similarly one defines a $0$-bond between a cell and a section $q$  or between two sections of $\partial\Delta$ .

For $j\ge 1$, a pair of distinct cells $\Pi_1$ and $\Pi_2$ of rank
$j$ of a diagram $\Delta$ is said to be a $j$-{\it
pair}, if their counterclockwise contours $p_1$ and $p_2$ are
labeled by $A^n$ and $A^{-n}$ for a period $A$ of rank $j$ and
there is a path $t$ from $(p_1)_-$ to $(p_2)_-$ without self-intersections
such that $\Lab(t)$ is equal to $1$ in rank $j-1$ (i.e., in $G(j-1)$). Then
the subdiagram with contour $p_1tp_2t^{-1}$ has label equal to $1$ in rank $j-1$ and so it
can be replaced in $\Delta$ by a diagram of rank $j-1$.

This surgery lexicographically decreases the {\it type} $\tau(\Delta)=(\tau_1,\tau_2,\dots)$ of the diagram, where $\tau_k$ is the number of the cells of rank $k$ in $\Delta.$
As result, we obtain a diagram with the same boundary label as
$\Delta$ but having no $j$-pairs for $j=1,2,\dots$. Such a diagram is called
{\it reduced}.

A similar transformation can be performed for a cell $\Pi_1$ and a section $q$ of $\partial\Delta$. Namely, let $A^{\pm 1}$ denote the period of rank $j$ corresponding to the cell $\Pi_1$ and let $q$ be a section of $\partial\Delta$ with a $A^{\pm 1}$-periodic label. (A word is called \emph{$B$-periodic} if it is a subword of a power of $B$.) Suppose that $q=q_1q_2$, where the word $\Lab(q_1)$  (the word $\Lab(q_2)$) is a subword of a power of $A$ ending with $A^{\pm 1}$ (respectively, starting with $A^{\pm 1}$). As above, let $t$ be a path without self-intersections such that $t_-=(p_1)_-$, $t_+=(q_1)_+=(q_2)_-$, and $\Lab (t)$ equals $1$ in rank $j-1$. Then the cell $\Pi_1$ is called {\it compatible} with $q$. Note that $\Lab(q_1t^{-1}p_1tq_2)$ is equal in rank $j-1$ to the $A^{\pm 1}$-periodic word $\Lab(q_1)A^{\pm n}\Lab(q_2)$.  Therefore one can decrease the type of $\Delta$  replacing $\Pi_1$
by a subdiagram of rank $\le j-1$ and replacing the section $q$ by another section
with another $A^{\pm 1}$-periodic label.

The inductive definition of a {\it contiguity subdiagram} $\Gamma$ of an $\cal R$-cell
$\Pi_1$ to an $\cal R$-cell $\Pi_2$ (or to a section $q$ of $\partial\Delta$) depends
on the parameters (\ref{para}) and takes 3 pages in \cite{book}. Since we do not use
the details of that definition here, it suffices to say that $\Gamma$ is given with the partition $p_1q_1p_2q_2$ of its boundary $\partial\Gamma$, where $q_1$ and $q_2$ are some
sections of $\partial\Pi_1$ and $\partial\Pi_2$ (or, respectively, of $q$), and $\Gamma$
contains neither $\Pi_1$ nor $\Pi_2$. We write $\partial(\Pi_1,\Gamma,\Pi_2)=p_1q_1p_2q_2$
(respectively, $\partial(\Pi_1,\Gamma,q)=p_1q_1p_2q_2$) to distinguish the {\it contiguity arcs} $q_1$ and $q_2$ of $\Gamma$ and its {\it side arcs} $p_1$ and $p_2$.

The ratio $|q_1|/|\partial\Pi_1|$ (the ratio $|q_1|/|q|$) is called the
{\it degree of contiguity} of the cell $\Pi_1$ to $\Pi_2$ (to the section $q$).
It is denoted by $(\Pi_1,\Gamma,\Pi_2)$ (respectively, $(\Pi_1,\Gamma,q)$).

 A path $p$ in $\Delta$ is called {\it geodesic} if $|p|\le |p'|$ for any path $p'$
with the same endpoints. A diagram $\Delta$ is said to be an \emph{$A$-map
if} (1) any subpath of length $\le \max(j,2)$ of the contour of
an arbitrary cell of rank $j\ge 1$  is geodesic in
$\Delta$ and (2) if
$\pi, \Pi$ are $\cal R$-cells and $\Gamma$ is a diagram of contiguity
of $\pi$ to $\Pi$ with standard contour $p_1q_1p_2q_2$, where $|q_1|\ge \varepsilon|\partial\pi|$, then $|q_2|<(1+\gamma)r(\Pi)$. We note that by \cite[Lemma 19.4]{book}, every reduced diagram $\Delta$ over the presentation (\ref{B}) is an $A$-map and hence we can apply all lemmas formulated for $A$-maps in Chapter 5 of \cite{book} to reduced diagrams.

 A section $q$ of $\partial\Delta$ is called a
{\it smooth section of rank} $k > 0$ (we write
$r(q)=k$) if (1) every subpath of $q$ of length $\le \max(k,2)$  is
geodesic in $\Delta$ and
(2) for each contiguity subdiagram $\Gamma$ of a cell $\pi$ to $q$ with
$\partial(\pi,\Gamma,q)=p_1q_1p_2q_2$ and $(\pi, \Gamma,q)\ge \varepsilon$, we have $ |q_2|<(1+\gamma)k$. Note that by \cite[Lemma 19.5]{book}, if the label of a section $q$ of the boundary in a reduced diagram $\Delta$ is an $A$-periodic word, where $A^{\pm 1}$
is a period of some rank $j\ge 1$, and $\Delta$ has no cells compatible with $q$,
then $q$ is a smooth section of rank $j$ in $\partial\Delta$.

The following can be easily derived from the definitions (or see \cite[Lemma 15.1]{book}).

\begin{lem}\label{151} Let $\Pi$ be a cell of rank $k\ge 1$ in a reduced diagram $\Delta$.
If a subpath $q$ of $\partial\Pi$ is a section of the boundary of a subdiagram $\Gamma$ of $\Delta$, and $\Gamma$ does not contain $\Pi$, then $q$ is a smooth section of rank $k$ in $\partial\Gamma$.
\end{lem}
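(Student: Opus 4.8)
The plan is to verify directly the two conditions in the definition of a smooth section of rank $k$, applied to $q$ viewed as a section of $\partial\Gamma$, and to observe that each of them is inherited from the corresponding defining condition of an $A$-map applied to the cell $\Pi$ inside the ambient diagram $\Delta$. The two underlying facts are: (i) $\Delta$ is reduced, hence an $A$-map by \cite[Lemma 19.4]{book}, so both $A$-map conditions hold for the rank-$k$ cell $\Pi$; and (ii) both the property of a path being geodesic and the property of a subdiagram being a contiguity subdiagram descend from $\Delta$ to the subdiagram $\Gamma$.

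For the first condition of a smooth section, since $r(\Pi)=k\ge 1$ the first $A$-map condition says that every subpath of $\partial\Pi$ of length $\le\max(k,2)$ is geodesic in $\Delta$. As $q$ is a subpath of $\partial\Pi$, every subpath of $q$ of length $\le\max(k,2)$ is geodesic in $\Delta$; and since every path of $\Gamma$ is also a path of $\Delta$, such a subpath is a fortiori geodesic in $\Gamma$. This is precisely the first condition for $q$ to be a smooth section of rank $k$ of $\partial\Gamma$.

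For the second condition, let $\Gamma_0$ be a contiguity subdiagram of an $\mathcal R$-cell $\pi$ to $q$ in $\Gamma$, with $\partial(\pi,\Gamma_0,q)=p_1q_1p_2q_2$ and $(\pi,\Gamma_0,q)\ge\varepsilon$, i.e. $|q_1|\ge\varepsilon|\partial\pi|$. The cell $\pi$ belongs to $\Gamma$, so $\pi\ne\Pi$ because $\Pi\notin\Gamma$; likewise $\Gamma_0\subseteq\Gamma$ does not contain $\Pi$. Since the contiguity arc $q_2$ lies on $q$, hence on $\partial\Pi$, unwinding the inductive definition of a contiguity subdiagram shows that $\Gamma_0$, with the very same partition $p_1q_1p_2q_2$ of its boundary, is also a contiguity subdiagram of $\pi$ to $\Pi$ in $\Delta$; in particular the inequality $|q_1|\ge\varepsilon|\partial\pi|$ is exactly the hypothesis of the second $A$-map condition for the pair $\pi,\Pi$, which therefore yields $|q_2|<(1+\gamma)r(\Pi)=(1+\gamma)k$. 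This is the second condition for $q$ to be a smooth section of rank $k$, completing the argument.

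The only point requiring genuine attention is the identification in the last paragraph of a contiguity subdiagram of $\pi$ to the section $q$ with a contiguity subdiagram of $\pi$ to the cell $\Pi$; this is where one must actually consult the (three-page) definition of contiguity from \cite{book}. I do not expect a real obstacle there: that definition concerns only the subdiagram $\Gamma_0$ together with its contiguity and side arcs and the requirement that $\Gamma_0$ avoid the cells in question, so replacing the section $q$ by the portion of $\partial\Pi$ carrying it changes nothing once $\Pi\notin\Gamma_0$. Everything else is immediate from the definitions of ``geodesic'', ``$A$-map'', and ``smooth section'' together with \cite[Lemma 19.4]{book}.
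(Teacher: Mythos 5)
Your proof is correct and takes the only natural route: match the two clauses in the definition of a smooth section of rank $k$ against the two clauses of the $A$-map definition applied to the rank-$k$ cell $\Pi$, and use \cite[Lemma 19.4]{book} to know that the reduced diagram $\Delta$ is an $A$-map. The paper itself gives no argument here, stating only that the lemma ``can be easily derived from the definitions (or see [Lemma 15.1]{book})'', so your write-up is more detailed than the source. Both of your verifications are sound: geodesics in $\Delta$ are \emph{a fortiori} geodesic in the subdiagram $\Gamma$ because $\Gamma$ has fewer competing paths, and the transfer of a contiguity subdiagram of $\pi$ to the section $q$ into a contiguity subdiagram of $\pi$ to the cell $\Pi$ is indeed unproblematic: the subdiagram $\Gamma_0$, its cells, its boundary partition $p_1q_1p_2q_2$, and the fact that $q_2$ lies on $\partial\Pi$ are all unchanged, the inductive bond structure is a local property of $\Gamma_0$, and the requirement that the contiguity subdiagram avoid $\Pi$ is guaranteed since $\Gamma_0\subseteq\Gamma$ and $\Pi\notin\Gamma$. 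Your flagged uncertainty about the three-page definition is honest caution rather than an actual gap.
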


The main property
of smooth sections is that they are almost geodesic.

\begin{lem}[{\cite[Theorem 17.1]{book}}]\label{smoo}If $qt$ is the boundary path of a reduced diagram $\Delta$, where the section $q$ is smooth, then $\bar\beta|q|\le |t|$
\end{lem}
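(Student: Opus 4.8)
The plan is to argue that a smooth section $q$ cannot be "too long" relative to the complementary part $t$ of the boundary, by analyzing a minimal-type counterexample diagram and exploiting the contiguity structure forced on it. First I would reduce to the case where $\Delta$ is a reduced diagram with boundary $qt$ in which $q$ is smooth of some rank $k$, $t$ is chosen geodesic, and the type $\tau(\Delta)$ is lexicographically minimal among all counterexamples with $\bar\beta|q|>|t|$; in particular I may assume $q$ itself is geodesic (since subpaths of $q$ of length $\le\max(k,2)$ are geodesic, and a standard cut-and-paste along a geodesic shortcut would produce a smaller counterexample) and that $\Delta$ contains at least one $\mathcal R$-cell (if there are no $\mathcal R$-cells then $\Delta$ is a diagram over the free group, $q$ and $t$ are both geodesic words representing mutually inverse elements, and $|q|=|t|$, contradicting $\bar\beta|q|>|t|$ since $\bar\beta<1$).

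Next I would invoke the combinatorial core of the $A$-map machinery from Chapter 5 of \cite{book}: in a reduced diagram (hence an $A$-map, by \cite[Lemma 19.4]{book}) whose boundary has few long sections, there must exist an $\mathcal R$-cell $\Pi$ whose degree of contiguity to $\partial\Delta$ is close to $1$ — concretely, a cell $\Pi$ and a contiguity subdiagram $\Gamma$ to $q$ (or to $t$) with $(\Pi,\Gamma,\cdot)$ bounded below by a parameter close to $1$. This is precisely the kind of statement proved in \cite{book} (the quantitative isoperimetric/contiguity lemmas for $A$-maps with a bounded number of sections), so I would cite it rather than reprove it. The smoothness hypothesis on $q$ says exactly that any contiguity arc of a cell to $q$ of degree $\ge\e$ has its $q$-side of length $<(1+\gamma)k$; hence a cell with large contiguity degree to $q$ would force $|\partial\Pi|=nk$ to be comparable to $(1+\gamma)k$, impossible for $n$ large (lowest-parameter principle). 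Therefore the long contiguity must be to $t$.

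Then the endgame: a cell $\Pi$ with contiguity arc $q_1\subseteq t$ of relative length close to $1$ means a subpath of $t$ of length roughly $|\partial\Pi|=nk$ runs alongside $\Pi$; using that $q$ is geodesic and $t$ is geodesic and comparing the two routes around $\Pi$, one cuts $\Pi$ (together with $\Gamma$) out of $\Delta$, replacing the long subarc of $t$ by the short side arcs $p_1,p_2$ plus the opposite part of $\partial\Pi$. This strictly decreases the type while only shortening $t$ by a controlled amount and leaving $q$ (and its smoothness) essentially intact — contradicting minimality, or directly yielding the inequality $\bar\beta|q|\le|t|$ by accounting for the perimeters removed. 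I expect the main obstacle to be the bookkeeping in this last step: one must verify that after the surgery $q$ remains a smooth section of rank $k$ (or can be re-straightened to one) and that the length accounting closes with the specific constant $\bar\beta=1-\beta$ rather than some weaker bound — this is exactly where the precise inequalities among the parameters (\ref{para}) enter, and it is the reason the proof in \cite{book} is delicate. Since the statement is quoted verbatim as \cite[Theorem 17.1]{book}, in the paper itself this is simply cited, and the sketch above is the shape of the argument underlying that citation.
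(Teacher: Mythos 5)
The paper does not prove this lemma; it is quoted verbatim from \cite[Theorem~17.1]{book} and used as a black box, so there is no ``paper's own proof'' to compare against. Your sketch does capture the broad shape of the argument in \cite{book} --- a minimal counterexample, the $\gamma$-cell lemma to locate a cell $\Pi$ with total contiguity degree close to $1$ to the boundary, the observation that smoothness of $q$ prevents $\Pi$ from being mostly contiguous to $q$, and a surgery along the contiguity to $t$ that contradicts minimality.

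There is, however, a genuine gap in the pivotal step. You argue that a cell with large contiguity degree to $q$ would ``force $|\partial\Pi|=nk$ to be comparable to $(1+\gamma)k$, impossible for $n$ large.'' This does not follow. If $(\Pi,\Gamma,q)=\psi\ge\e$, then the smoothness condition bounds the $q$-side arc $q_2$ by $(1+\gamma)r(q)$, and Lemma~\ref{cont}(b) gives $|q_1|<(1+2\beta)|q_2|$, which combined yields $\psi\,n\,r(\Pi)<(1+2\beta)(1+\gamma)\,r(q)$. This bounds the \emph{rank} $r(\Pi)$ in terms of $r(q)$ (essentially Lemma~\ref{cont}(d)), but it puts no upper bound on $\psi$ itself: a tiny cell can in principle have contiguity degree $1$ to $q$ without violating these inequalities. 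The upper bound $\psi<\bar\alpha=1/2+\alpha$ for contiguity to a smooth section is a separate, nontrivial result --- it is Lemma~15.8 of \cite{book}, quoted in this paper as Lemma~\ref{cont}(e) --- and it is precisely what is needed to conclude that the $\gamma$-cell's contiguity to $t$ exceeds $\bar\gamma-\bar\alpha>0$. Your sketch needs to invoke that lemma rather than attempt to rederive it from the length estimate, which is not strong enough. The final surgery-and-accounting step is also only gestured at, but you flag that yourself; the missing Lemma~15.8 input is the more serious defect.
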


Another assertion with similar proof is the following.

\begin{lem}[{\cite[Corollary 17.1]{book}}]\label{beta} If a reduced diagram $\Delta$ contains an $\cal R$-cell $\Pi$, then $|\partial\Delta|>\bar\beta|\partial\Pi|$.
\end{lem}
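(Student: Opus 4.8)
The plan is to derive this from Lemma~\ref{smoo} (Theorem~17.1 of \cite{book}): the boundary $\partial\Pi$ of the cell, viewed as a section of the diagram obtained from $\Delta$ by deleting $\Pi$, is smooth of rank $r(\Pi)$ by Lemma~\ref{151}, hence almost geodesic by Lemma~\ref{smoo}, so the complementary part of the boundary -- which is essentially $\partial\Delta$ -- cannot be shorter than $\bar\beta|\partial\Pi|$.

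Concretely, set $k=r(\Pi)\ge 1$, so $|\partial\Pi|=nk$, and delete the open cell $\Pi$ from $\Delta$. The result is a reduced diagram $\Delta'$ over the presentation (\ref{B}) not containing $\Pi$ (a $j$-pair in $\Delta'$ would be a $j$-pair in $\Delta$); if $\partial\Pi\cap\partial\Delta=\emptyset$ it is annular, with outer contour $\partial\Delta$ and inner contour a reversed copy of $\partial\Pi$, and otherwise it is already a disk diagram. By Lemma~\ref{151}, $\partial\Pi$ is a smooth section of rank $k$ of $\partial\Delta'$. To apply Lemma~\ref{smoo} in the annular case, pass to a disk diagram $\Gamma$ by cutting $\Delta'$ along a geodesic path $s$ from a vertex of $\partial\Pi$ to a vertex of $\partial\Delta$ (such a path exists and, being geodesic, does not run through the interior of any cell). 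Then $\Gamma$ is still reduced, $\partial\Gamma=qt$ with $q$ a copy of $\partial\Pi$ that is still smooth of rank $k$, and $t$ assembled from $\partial\Delta$ and the two copies of $s$ produced by the cut, so $|t|\le|\partial\Delta|+2|s|$. Lemma~\ref{smoo} now gives
\[
\bar\beta\, nk=\bar\beta|q|\le |t|\le |\partial\Delta|+2|s|.
\]

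The genuinely finicky step -- the rest is either packaged in Lemma~\ref{smoo} or is routine topology -- is removing the auxiliary term $2|s|$ (and upgrading $\le$ to $<$). I see two routes. The first, which is presumably the meaning of ``another assertion with similar proof'', is that the estimate proving Theorem~17.1 is local (a summation over the contiguity subdiagrams of $\mathcal R$-cells to the smooth section and over the remaining portions of it), and that run directly for the annular diagram $\Delta'$ with $\partial\Pi$ as the smooth boundary component it never involves the arc $s$ at all. The second is first to replace $\Delta$, by a Greendlinger-type surgery valid for the presentation (\ref{B}) (cf.\ \cite{book}), by a reduced diagram with the same contour in which some $\mathcal R$-cell of rank $\ge k$ abuts $\partial\Delta$, so that one may take $|s|=0$. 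Either way one reaches $\bar\beta|\partial\Pi|\le|\partial\Delta|$. The strict inequality then follows from the non-tightness of the bound in Lemma~\ref{smoo} -- inspection of its proof yields $\bar\beta'|q|\le|t|$ for some $\bar\beta'>\bar\beta=1-\beta$, which is consistent with the lowest parameter principle for (\ref{para}) -- or, in the degenerate case where $\Gamma$ has rank $0$, directly, since then the contour label of $\Gamma$ is freely trivial and $|\partial\Delta|\ge|t|\ge|q|=nk>\bar\beta\,nk$. Hence $|\partial\Delta|>\bar\beta|\partial\Pi|$.

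I expect the main obstacle to be exactly this bookkeeping around the cut path and the annular-versus-disk distinction, together with verifying that the smoothness of $\partial\Pi$ in $\partial\Delta'$ (equivalently, in $\partial\Gamma$) really is covered by Lemma~\ref{151} when $q$ is the full closed contour of $\Pi$ rather than a proper subpath; if needed this can be circumvented by deleting one edge of $\partial\Pi$ from $q$ and absorbing it into $t$, which only changes the estimate by an additive constant and does not affect the conclusion.
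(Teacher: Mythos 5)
The paper cites this statement verbatim from Corollary~17.1 of \cite{book} and does not reprove it, so there is no in-text argument to compare against; I am evaluating your attempt on its own. Your overall plan --- excise $\Pi$, regard $\partial\Pi$ as a smooth section of rank $r(\Pi)$ of the complement via Lemma~\ref{151}, and invoke Lemma~\ref{smoo} --- is indeed the right one and matches the remark preceding the statement that this ``assertion has a similar proof'' to Theorem~17.1. You also correctly flag the two technical points (the cut path and the full-cyclic-contour version of Lemma~\ref{151}).

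The genuine gap is that neither route past the additive $2|s|$ term is actually executed, and one of them fails. Route~2 (Greendlinger-type surgery to a reduced diagram with the same contour in which a cell of rank $\ge k$ abuts $\partial\Delta$) does not work as stated: ``reduced'' here means only the absence of $j$-pairs and does \emph{not} mean minimal type, and a reduced diagram with the same boundary word as $\Delta$ need not contain any cell of rank $\ge k$ at all (the word may already be trivial in $G(k-1)$); so no reshuffling is guaranteed to keep a cell of rank $\ge k$, let alone bring one to $\partial\Delta$. Route~1 is the correct idea --- the estimate underlying Theorem~17.1 does extend to the annulus $\Delta\setminus\Pi$ with $\partial\Pi$ as the smooth boundary component, so the cut path never enters --- but your write-up only asserts this ``locality''; it is not a consequence of the black-boxed statement of Lemma~\ref{smoo}, and you do not open the proof. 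Finally, the fallback of absorbing $2|s|$ into a marginally larger constant $\bar\beta'$ cannot work on its own: $|s|$ is a geodesic distance from $\partial\Pi$ to $\partial\Delta$ and is a priori bounded only in terms of $|\partial\Delta|$, not in terms of $\zeta|\partial\Pi|$, so this route is lossy or circular. To close the argument one needs either the annular form of Theorem~17.1, or a separate device (e.g.\ a chain of contiguity subdiagrams reaching $\partial\Delta$) that produces a cut path of length $O(\zeta|\partial\Pi|)$.
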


We also need some (simplified versions of)  properties of contiguity subdiagrams proved in \cite{book}.

\begin{lem}\label{cont}
Let $\Delta$ be a reduced diagram
and $\Gamma$ a contiguity subdiagram of a cell $\pi$ to
a cell $\Pi$ (or to a section $q$ of $\partial\Delta$), let
$\partial(\pi,\Gamma,\Pi)=p_1q_1p_2q_2$ (respectively, $\partial(\pi,\Gamma, q)=p_1q_1p_2q_2$)
 and $\psi=(\pi,\Gamma, \Pi)$ (respectively, $\psi=(\pi,\Gamma, q)$). Then the following conditions hold:
\begin{enumerate}
\item[(a)] $\max (|p_1|,|p_2|)<\zeta n r(\pi)=\zeta|\partial\pi|$ \cite[Lemma 15.3]{book};

\item[(b)] if $\psi\ge \varepsilon$, then $|q_1|<(1+2\beta)|q_2|$ \cite[Lemma 15.4]{book};

\item[(c)] if $\psi\ge \varepsilon$ and $q$ is a smooth section, then $|q_1|>(1-2\beta)|q_2|$ \cite[Lemma 15.4]{book};

\item[(d)] if $\psi\ge \varepsilon$, then $|\partial\pi|<\zeta|\partial\Pi|$
(or $r(\pi)<\zeta r(q)$ if $q$ is smooth) \cite[Lemma 15.5]{book};

\item[(e)] $\psi<\bar\alpha$ if $q$ is smooth \cite[Lemma 15.8]{book}.
\end{enumerate}
\end{lem}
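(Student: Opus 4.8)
The plan is that Lemma~\ref{cont} is not to be proved from scratch: it is a compendium of results from Chapter~15 of \cite{book}, deliberately stated with cruder constants than the originals, and the only work is to check that our present framework is literally that of \cite{book} and that the weakened bounds displayed here follow from the sharp ones. So first I would fix the ``dictionary'': the presentation~(\ref{B}) is the one supplied by \cite[Theorem 19.1]{book}; the graded van Kampen diagrams, $0$-cells, $0$-bonds, the parameter sequence~(\ref{para}) with its lowest parameter principle (\cite[Section 15.1]{book}), and the inductive notion of a contiguity subdiagram $\Gamma$ with its partition $\partial(\pi,\Gamma,\Pi)=p_1q_1p_2q_2$ (side arcs $p_1,p_2$, contiguity arcs $q_1,q_2$) and degree of contiguity $\psi=(\pi,\Gamma,\Pi)$ are all taken verbatim from \cite{book}. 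By \cite[Lemma 19.4]{book}, every reduced diagram $\Delta$ over~(\ref{B}) is an $A$-map, so every lemma of Chapter~5 and Chapter~15 proved for $A$-maps applies to such $\Delta$; and by Lemma~\ref{151} together with \cite[Lemma 19.5]{book}, the sections $q$ that play the role of ``smooth sections'' in the hypotheses are smooth in the precise sense used in \cite{book}. Once this correspondence is in place the whole statement is a matter of citation.

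With the dictionary fixed, the five items are read off directly. Item~(a) is \cite[Lemma 15.3]{book}: the side arcs satisfy $\max(|p_1|,|p_2|)<\zeta|\partial\pi|$, and $|\partial\pi|=nr(\pi)$ since $\pi$ is an $\mathcal R$-cell. Items~(b) and~(c) are the two halves of \cite[Lemma 15.4]{book}: when $\psi\ge\varepsilon$ the contiguity arcs have comparable lengths, the one-sided bound $|q_1|<(1+2\beta)|q_2|$ being unconditional while the reverse bound $|q_1|>(1-2\beta)|q_2|$ requires $q$ smooth. Item~(d) is \cite[Lemma 15.5]{book}: a cell that is deeply contiguous ($\psi\ge\varepsilon$) to another cell, or to a smooth section, is much smaller, namely $|\partial\pi|<\zeta|\partial\Pi|$, respectively $r(\pi)<\zeta r(q)$. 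Item~(e) is \cite[Lemma 15.8]{book}: the degree of contiguity to a smooth section is bounded away from $1$ by $\bar\alpha=\alpha+1/2$. In each case I would note that the constant appearing here is either exactly the original one or a rougher one that the lowest parameter principle lets us substitute by absorbing lower-order terms.

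There is no genuinely hard step; the ``main obstacle,'' such as it is, is purely bookkeeping: one must confirm that the simplified constants $1\pm 2\beta$, $\zeta$, $\bar\alpha$ stated here are indeed consequences of the (sometimes more intricate) estimates in \cite{book} under the inequalities linking the parameters~(\ref{para}), and that the two formulations of contiguity — to a cell $\Pi$ versus to a section $q$ of $\partial\Delta$ — are both covered, which they are, since the cited lemmas of \cite{book} treat contiguity to a section on the same footing as contiguity to a cell. Consequently the proof is a single short paragraph listing the references above, with no new geometric argument about $A$-maps required.
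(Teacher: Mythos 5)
Your proposal matches the paper exactly: Lemma~\ref{cont} is stated with each item explicitly citing its source in \cite{book} (Lemmas 15.3, 15.4, 15.5, 15.8), and the paper offers no proof beyond these citations together with the earlier remarks (via \cite[Lemmas 19.4, 19.5]{book} and Lemma~\ref{151}) that reduced diagrams over~(\ref{B}) are $A$-maps with smooth sections in the sense of \cite{book}. Your account of the dictionary and of the weakened constants is the correct and intended justification, so nothing is missing.
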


\begin{lem}[{\cite[Corollary 16.1]{book}}]\label{gamma}
If $q^1\dots q^l$ ($\l\le 4$) is the boundary path of a reduced diagram $\Delta$
having $\cal R$-cells, then
$\Delta$ has an $\cal R$-cell $\Pi$ and disjoint contiguity subdiagrams $\Gamma_1,\dots, \Gamma_l$ of $\Pi$
to $q^1,\dots,q^l$, respectively (some of them may be absent), with
$\sum_{k=1}^l(\Pi,\Gamma_k,q^k)>\bar\gamma=1-\gamma$ (which is close to $1$).

Such a cell $\Pi$ is called a {\rm $\gamma$-cell} of $\Delta$.
\end{lem}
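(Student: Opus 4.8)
The plan is to derive this from the structure theory of $A$-maps developed in \cite[Chapters 15--16]{book}. Since every reduced diagram over the presentation (\ref{B}) is an $A$-map by \cite[Lemma 19.4]{book}, it suffices to prove the statement for an arbitrary $A$-map $\Delta$ that contains $\mathcal R$-cells and whose boundary is partitioned into $l\le 4$ sections $q^1,\dots,q^l$. Following \cite{book}, I would argue by induction on the type $\tau(\Delta)=(\tau_1,\tau_2,\dots)$, ordered lexicographically. The mechanism that makes the induction work is that cutting $\Delta$ along the \emph{side arcs} of a contiguity subdiagram produces diagrams of strictly smaller type, and — because side arcs are short — their boundaries can again be partitioned into at most $4$ sections.

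For the base case, suppose $\Delta$ has exactly one $\mathcal R$-cell $\Pi$. Then $\Delta\setminus\Pi$ is a diagram of rank $0$, so $\partial\Pi$ matches up piecewise with $\partial\Delta$ and we obtain contiguity subdiagrams $\Gamma_1,\dots,\Gamma_l$ of $\Pi$ to $q^1,\dots,q^l$ for which the only portions of $\partial\Pi$ not covered by a contiguity arc are the side arcs. By Lemma \ref{cont}(a) each side arc has length $<\zeta|\partial\Pi|$, and there are at most $2l\le 8$ of them, whence $\sum_{k=1}^l(\Pi,\Gamma_k,q^k)>1-8\zeta>\bar\gamma$ by the lowest parameter principle. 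For the inductive step, if some $\mathcal R$-cell of $\Delta$ already has total degree of contiguity to $q^1,\dots,q^l$ exceeding $\bar\gamma$, we are done. Otherwise, the planar ``estimating graph'' of contiguity subdiagrams of $\Delta$ (obtained by contracting cells to vertices and maximal contiguity subdiagrams to edges, and analyzed via Euler's formula together with the $A$-map bounds on degrees of contiguity) yields an $\mathcal R$-cell $\pi$ and a contiguity subdiagram $\Gamma$, with side arcs $p_1,p_2$, to another $\mathcal R$-cell $\Pi'$ of degree $\ge\varepsilon$, chosen so that $\Gamma$ together with $\pi$ and $\Pi'$ splits off a subdiagram $\Delta'$ of strictly smaller type that still contains $\mathcal R$-cells. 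Since $|p_1|,|p_2|<\zeta|\partial\pi|$ by Lemma \ref{cont}(a), the boundary of $\Delta'$ is partitioned into at most $4$ sections obtained from the $q^k$ by adjoining these short arcs and short subpaths of $\partial\pi$ and $\partial\Pi'$. Applying the induction hypothesis to $\Delta'$ produces a $\gamma$-cell $\Pi$ of $\Delta'$ whose contiguity arcs, after deleting the short arcs introduced by the cut, lie on $\partial\Delta$; the cut changes each of the relevant contiguity degrees only by an $O(\zeta)$-fraction of $|\partial\Pi|$, which the gap $\gamma=1-\bar\gamma$ absorbs, provided (as in \cite{book}) the cutting is organized so that these errors do not accumulate. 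Hence $\Pi$ is a $\gamma$-cell of $\Delta$.

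The genuinely hard part is the step in the induction that produces the contiguity subdiagram $\Gamma$: one needs the assertion that an $A$-map with cells and at most $4$ boundary sections always contains a cell that is either contiguous to the boundary with large total degree or contiguous to another cell with degree $\ge\varepsilon$ across a subdiagram one can cut along. This is exactly where the full strength of the $A$-map axioms is used, through the planar graph of contiguity subdiagrams and the Euler-formula counting that occupies the bulk of \cite[Chapters 15--16]{book}. Once that input is granted, the remainder — keeping the number of boundary sections at most $4$ after each cut, and absorbing the $O(\zeta)$ perturbations of the contiguity degrees via the lowest parameter principle — is routine, and the base case is essentially immediate from Lemma \ref{cont}(a).
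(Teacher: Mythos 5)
The paper itself does not prove this lemma: it is cited verbatim as \cite[Corollary 16.1]{book}, where the statement follows from the main technical theorem of Chapter 16 and its proof occupies most of that chapter. Your proposal captures the general \emph{shape} of that argument (induction on type, cutting along side arcs of cell-to-cell contiguity subdiagrams, Euler-formula analysis of the graph of contiguity subdiagrams), but it does not amount to a proof. The central missing piece is the one you flag yourself in the last paragraph: the assertion that an $A$-map with $\mathcal R$-cells and $\le 4$ boundary sections contains either a cell with total contiguity $>\bar\gamma$ to the boundary sections, or a cell-to-cell contiguity subdiagram of degree $\ge\varepsilon$ along which one can cut while keeping the number of boundary sections $\le 4$ and strictly decreasing the type. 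That assertion \emph{is} the content of Chapter 16; invoking ``the full strength of the $A$-map axioms'' and Euler's formula is a pointer to the book, not an argument. As written, the proposal is essentially a re-statement of the citation, not an independent proof.

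The base case has a genuine gap as well. For a diagram with a single $\mathcal R$-cell $\Pi$, it is not automatic that the contiguity arcs of $\Pi$ to $q^1,\dots,q^l$ account for all but $O(\zeta)|\partial\Pi|$ of $\partial\Pi$. You need to rule out $0$-bonds connecting $\partial\Pi$ to itself (such bonds are excluded by the $A$-map condition that short subpaths of $\partial\Pi$ are geodesic, but this has to be said and used, not assumed), and you need to deal with the fact that $\Pi$ may have \emph{several} disjoint contiguity subdiagrams to a single $q^k$. The lemma asks for at most one $\Gamma_k$ per $q^k$, so when there are several you must show that one of them already carries almost the entire contiguity degree — equivalently, that the number of maximal contiguity subdiagrams of $\Pi$ to $\partial\Delta$ is bounded by $l$. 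That bound is not obvious; it is one of the things the Chapter 16 machinery proves. Finally, the bookkeeping in the inductive step (that the $O(\zeta)$ errors ``do not accumulate'' over multiple cuts) also needs an explicit argument, since the number of cuts is not \emph{a priori} bounded; the book handles this by the particular way it organizes the Euler-formula counting, which your sketch does not reproduce.
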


\subsection{Repelling sections and towers in van Kampen diagrams}\label{trs}

By default, all diagrams discussed in this section are over the presentation (\ref{B}).

\begin{defn} \label{c} Let $q$ be a section of the boundary $\partial\Delta$ of a reduced diagram $\Delta$. We say
that an $\cal R$-cell $\Pi$ of $\Delta$ is $c$-close to $q$ if there is a subdiagram $\Gamma$ with a
contour $s_1t_1s_2t_2$ such that $\Gamma$ does not contain $\Pi$, $t_1$ is a subpath of $\partial\Pi$
of length at least $c |\partial\Pi|$, $t_2$ is a subpath of $q$, and $|s_1|, |s_2| <\zeta |\partial\Pi|$.
\end{defn}

\begin{rem} \label{close} Note that if there is a contiguity subdiagram $\Gamma$ of $\Pi$ to $q$ with $(\Pi,\Gamma,q)\ge c$,
then $\Pi$ is $c$-close to $q$ by Lemma \ref{cont} (a).
\end{rem}

\begin{defn} A section $q$ of $\partial\Delta$ whose label is a reduced word is called
$c$-repelling  if the reduced diagram $\Delta$ has no $\cal R$-cells $c$-close to $q$.

Below we say that $q$ is {\it repelling} if it is $(1-\alpha)$-repelling.
\end{defn}

The following lemma is a modification of \cite[Lemma 17.2]{book}; the smoothness
of the section $q_1$ is replaced here by the repelling condition.

\begin{lem} \label{rep} Let $\Delta$ be a reduced diagram with contour $p_1q_1p_2q_2$,
where $q_1$ is a repelling section, $q_2$ is a smooth one, $|q_2|>0$, and $|p_1|+|p_2|\le\gamma |q_2|$.
Then either there is a $0$-bond between $q_1$ and $q_2$ or there is a cell $\Pi$ in $\Delta$
and disjoint contiguity subdiagrams $\Gamma_1$ and $\Gamma_2$ of $\Pi$ to $q_1$ and $q_2$, respectively,
such that $(\Pi,\Gamma_1,q_1)+ (\Pi,\Gamma_2,q_2)>\bar\beta=1-\beta$ (we call $\Pi$ a {\it $\beta$-cell} ).
\end{lem}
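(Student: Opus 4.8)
The plan is to adapt the proof of \cite[Lemma 17.2]{book} essentially verbatim, making the single substitution of ``smooth'' by ``repelling'' for the section $q_1$. First I would dispose of the case $|\partial\Delta|$ contains no $\mathcal R$-cell: then $\Delta$ has rank $0$, and since the labels of $q_1$ and $q_2$ are reduced words, the subdiagram of rank $0$ forces a $0$-bond between $q_1$ and $q_2$ (any rank-$0$ diagram with two positive-length $\mathcal A$-sections on its boundary, after collapsing $0$-cells, must identify edges of the two sections). So assume $\Delta$ has an $\mathcal R$-cell. Applying Lemma \ref{gamma} to the boundary decomposition $p_1q_1p_2q_2$ (here $l=4$, with two of the four sides being $p_1,p_2$), I obtain a $\gamma$-cell $\Pi$ with disjoint contiguity subdiagrams $\Gamma_1,\Gamma_2$ to $q_1,q_2$ and possibly $\Gamma_1',\Gamma_2'$ to $p_1,p_2$, with $(\Pi,\Gamma_1,q_1)+(\Pi,\Gamma_2,q_2)+(\Pi,\Gamma_1',p_1)+(\Pi,\Gamma_2',p_2)>\bar\gamma=1-\gamma$.

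The key step is then to bound the contiguity degrees to the ``small'' sides $p_1,p_2$ and to $q_1$. For the sides $p_1$ and $p_2$: by Lemma \ref{cont}(a), if a contiguity subdiagram of $\Pi$ to $p_i$ had degree $\ge\varepsilon$, then $|p_i|\ge(\text{something comparable to})\,|\partial\Pi|$ minus the side arcs, which are each $<\zeta|\partial\Pi|$; combined with the hypothesis $|p_1|+|p_2|\le\gamma|q_2|$ and the almost-geodesic bound $\bar\beta|q_2|\le|p_1q_1p_2|$ from Lemma \ref{smoo} (applied to the smooth section $q_2$), one sees that $|\partial\Pi|$ cannot be too large relative to $|q_1|$, and in fact $(\Pi,\Gamma_i',p_i)<\varepsilon$ for $i=1,2$ by the lowest parameter principle. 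The crucial new input is the bound on $(\Pi,\Gamma_1,q_1)$: here I use the repelling hypothesis. By Remark \ref{close}, if $(\Pi,\Gamma_1,q_1)\ge 1-\alpha$, then $\Pi$ would be $(1-\alpha)$-close to $q_1$, i.e.\ $c$-close with $c=1-\alpha$, contradicting that $q_1$ is repelling. Hence $(\Pi,\Gamma_1,q_1)<1-\alpha$. Feeding these three bounds into the Lemma \ref{gamma} inequality gives
$$
(\Pi,\Gamma_2,q_2) > \bar\gamma - (1-\alpha) - 2\varepsilon = \alpha - \gamma - 2\varepsilon,
$$
which is positive by the lowest parameter principle, so in particular $\Gamma_2$ is present; and then $(\Pi,\Gamma_1,q_1)+(\Pi,\Gamma_2,q_2)>\bar\gamma-2\varepsilon$, which is $>1-\beta=\bar\beta$ again by the lowest parameter principle (since $\gamma,\varepsilon$ are chosen far smaller than $\beta$). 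This exhibits $\Pi$ as the desired $\beta$-cell, provided $\Gamma_1$ is also present; if $\Gamma_1$ happened to be absent the inequality $(\Pi,\Gamma_2,q_2)>\bar\gamma-2\varepsilon>\bar\beta$ already gives the conclusion with $\Gamma_1$ trivial.

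The main obstacle I anticipate is making the case analysis for the side arcs $p_1,p_2$ and the ``absent contiguity subdiagram'' bookkeeping fully rigorous: one must be careful that when $\Gamma_1$ or $\Gamma_1'$ degenerates, the corresponding contiguity degree is $0$ and the inequalities still go through, and one must correctly invoke Lemma \ref{smoo} for $q_2$ to control $|q_2|$ against the rest of the contour before estimating $|\partial\Pi|$. A secondary subtlety is verifying that $\Pi$ does not coincide with any cell inside $\Gamma_1$ or $\Gamma_2$ — but this is automatic since contiguity subdiagrams by definition contain neither $\pi$ nor $\Pi$. All of these are routine given the framework of \cite{book}; the genuinely new observation is simply that the repelling condition on $q_1$ supplies exactly the bound $(\Pi,\Gamma_1,q_1)<1-\alpha$ that the smoothness of $q_1$ supplied (via Lemma \ref{cont}(e), $\psi<\bar\alpha$) in the original \cite[Lemma 17.2]{book}.
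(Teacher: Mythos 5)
Your identification of the genuinely new ingredient — that the repelling hypothesis on $q_1$, via Remark \ref{close}, supplies the bound $(\Pi,\Gamma_1,q_1) < 1-\alpha$ that smoothness supplied via Lemma \ref{cont}(e) in \cite[Lemma 17.2]{book} — is exactly right, and the treatment of the rank-$0$ case is also fine. However, the central step of your proposal contains a real gap: you claim that $(\Pi,\Gamma_i',p_i)<\varepsilon$ for $i=1,2$ follows from the lowest parameter principle, and this is false. The hypothesis $|p_1|+|p_2|\le\gamma|q_2|$, together with Lemma \ref{smoo}, bounds $|p_i|$ relative to $|q_1|$ and $|q_2|$, but it gives no control of $|p_i|$ relative to $|\partial\Pi|$, because $|\partial\Pi|$ has no a priori lower bound in terms of the perimeter of $\Delta$. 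A $\gamma$-cell $\Pi$ can be small enough that $|p_i|$ is a substantial fraction (even more than half) of $|\partial\Pi|$, so the contiguity degree of $\Pi$ to $p_i$ can be as large as $\bar\alpha$ or beyond. There is no parameter inequality that rules this out; the situation is genuinely possible and must be confronted, not excluded.

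The paper's proof does not attempt to bound those contiguity degrees a priori. Instead it sets up a minimal counterexample (minimal number of $\mathcal R$-cells) and splits into four cases according to how the missing contiguity of $\Pi$ is distributed among $p_1,p_2,q_1,q_2$. In the cases where $\Pi$ has substantial contiguity to $p_1$ or $p_2$ (Cases 2, 3, 4), one cuts $\Delta$ along arcs of $\partial\Pi$ and the side arcs of the contiguity subdiagrams to produce a subdiagram $\bar\Delta$ with contour $\bar p_1 q_1' p_2 q_2'$ (or similar) that still satisfies the hypotheses of the lemma, but with strictly fewer $\mathcal R$-cells — contradicting minimality. This descent is the mechanism that disposes of exactly the scenario your argument wrongly dismisses. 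Without introducing the minimality hypothesis and performing these cut-and-compare estimates (which is most of the work in the actual proof, involving inequalities such as $|p_1|-|\bar p_1|>(\frac34\alpha-2\zeta-\gamma)|\partial\Pi|>0$), your plan does not close.
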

\proof If $r(\Delta)=0$, then there is a $0$-bond between $q_1$ and $q_2$ because $|p_1|+|p_2|<|q_2|$
and there are no $0$-bonds connecting $q_2$ to itself by Lemma \ref{smoo} with $|t|=0$. Hence proving
by contradiction, we may assume that $r(\Delta)>0$ and $\Delta$ is a counter-example with minimal number of $\cal R$-cells denoted
$|\Delta(2)|$.

By Lemma \ref{gamma}, there is a $\gamma$-cell $\Pi$ in $\Delta$. By Lemma \ref{cont} (e), its degree of contiguity to $q_2$ is less than $\bar\alpha=1/2+\alpha$. Since the section $q_1$ is
repelling and $\bar\gamma=1-\gamma>1-\alpha$, it it easy to see from  Remark \ref{close}, that $\Pi$ satisfies one of the following four conditions, as in \cite[Lemma 17.2]{book}.

{\bf 1.} The degree of contiguity of $\Pi$ to $p_1$ or to $p_2$ is greater than $\bar\alpha$.

{\bf 2.} The sum of the degrees of contiguity of $\Pi$ to $p_1$ (or to $p_2$) and to $q_1$ (or to $q_2$)
is greater than $\bar\gamma$.

{\bf 3.} There are disjoint contiguity subdiagrams $\Gamma$, $\Gamma_1$, and $\Gamma_2$ of $\Pi$
to $p_1$ (or to $p_2$), $q_1$, and $q_2$, resp., with the sum of degrees $>\bar \gamma$ and
with $(\Pi,\Gamma,p_1)>\beta-\gamma$.

{\bf 4.} There are disjoint contiguity subdiagrams $\Gamma_1$ and $\Gamma_2$ of $\Pi$ to
$p_1$ and $p_2$, respectively, such that $(\Pi,\Gamma_1,p_1)+ (\Pi,\Gamma_2,p_2)>\beta-\gamma$.

\begin{figure}
 \centering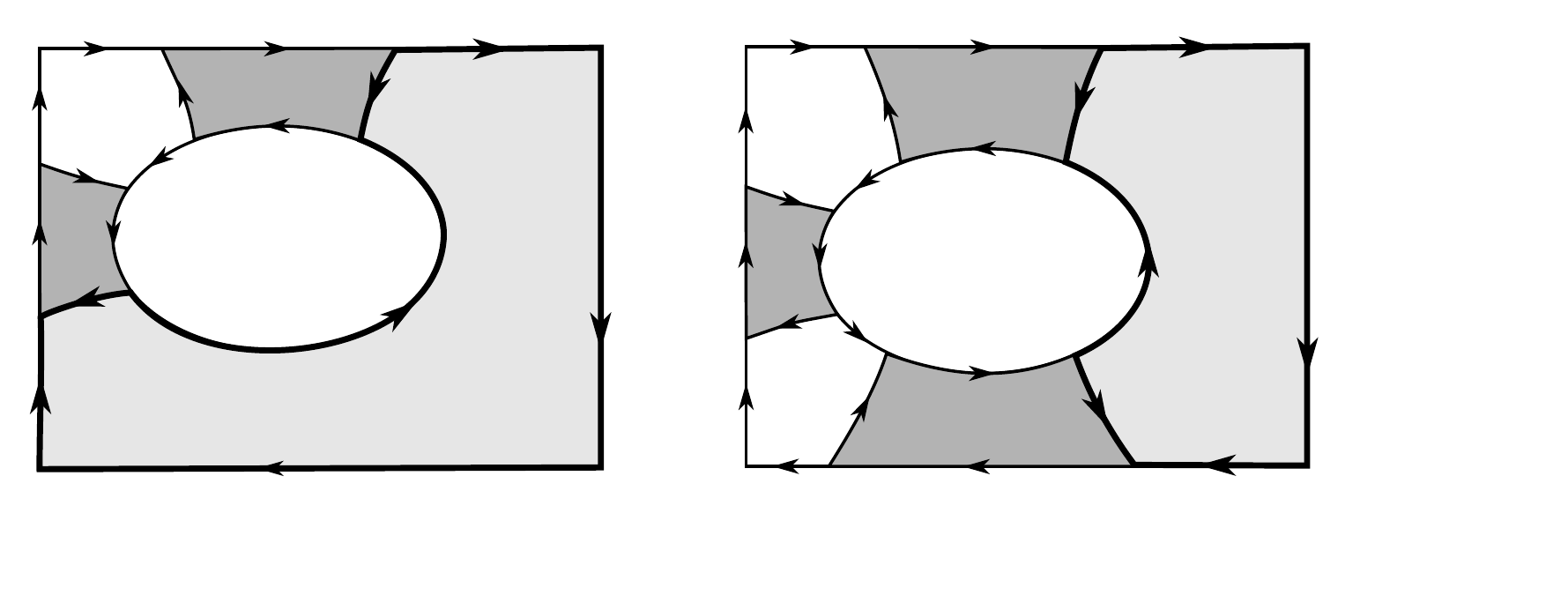\\
 \caption{Cases 2 and 3 in the proof of Lemma \ref{rep}}\label{figAB}
\end{figure}

{\bf Case 1.} This case is eliminated exactly as Case 1) in the proof of \cite[Lemma 17.2]{book} since
the condition on $q_1$ is not used there.

{\bf Case 2.} Without loss of generality we can assume that the sum of contiguity degrees of $\Pi $ to $p_1$ and one of $q_1$, $q_2$ is greater than $\bar \gamma$. Let $\Gamma_p$ ($\Gamma_q$) be the contiguity subdiagram of $\Pi$ to $p_1$
(respectively, to $q_1$ or to $q_2$). If it is a contiguity subdiagram to $q_2$, then we just repeat
the argument from Case 2) in the proof of \cite[Lemma 17.2]{book} since $q_2$ is smooth as in \cite{book}.
So we may assume that $\Gamma_q$ is a contiguity subdiagram of $\Pi$ to $q_1$.

We set $\partial(\Pi,\Gamma_p,p_1)=s_1t_1s_2t_2$,  $\partial(\Pi,\Gamma_q,q_1)=s^1t^1s^2t^2$,
write $\partial\Pi$ in the form $t_1w_1t^1w^1$, and factorize $q_1= ut^2v$,
$p_1=\bar vt_2\bar u$
(Fig. \ref{figAB} a)). Note that $|w_1|+|w^1|<\gamma |\partial\Pi|$.

Since the section $q_1$ is repelling, we have $(\Pi,\Gamma_q,q_1)\le 1-\alpha$ by Remark \ref{close}, whence $(\Pi,\Gamma_p,p_1)>\bar\gamma-(1-\alpha)>\alpha-\beta>\varepsilon,$ so by Lemma 1 \ref{cont} (b),

\begin{equation}\label{tt2} |t_2|> (1+2\beta)^{-1}(\alpha-\beta)|\partial\Pi|>3\alpha|\partial\Pi|/4
\end{equation}

We define $\bar p_1=\bar vs_2^{-1}w_1(s^1)^{-1}$. By Lemma \ref{cont} (a),
we have $|s_j|, |s^j| < \zeta|\partial\Pi|$. It therefore follows from (\ref{tt2}) that

\begin{equation}\label{barp1}
|p_1|-|\bar p_1|\ge |t_2|-|s_2|-|w_1|-|s^1|>(\frac34\alpha - 2\zeta -\gamma)|\partial\Pi|> 0
\end{equation}

By (\ref{barp1}), the hypothesis of the lemma holds for the subdiagram $\bar\Delta$ with the contour
$\bar p_1 v p_2 q_2$. Since $|\bar\Delta(2)|<|\Delta(2)|$ we come to a contradiction
with the minimality of $\Delta$.

{\bf Case 3.} We introduce the following notation: $\partial(\Pi,\Gamma,p_1)=s_1t_1s_2t_2$,
$\partial(\Pi,\Gamma_i, q_i)=p_1^iq_1^ip_2^iq_2^i$ ($i=1,2$), and $t_1wq_1^2w'q_1^1w''$ is
the contour of $\partial\Pi$. We also factorize $p_1=\bar p t_2\bar{\bar p}$, $q_1=\bar q_1 q_2^1 \bar{\bar q}_1$, and $q_2= \bar{\bar q}_2 q_2^2 \bar q_2$ (Fig. \ref{figAB} b)).

The path $p'= (p_2^2)^{-1}w'(p_1^1)^{-1}$ cuts up $\Delta$. By Lemma \ref{cont} (a), $|p'|<(2\zeta+\gamma)|\partial\Pi|.$ Since $\beta-\gamma>\varepsilon$, it follows from Lemma \ref{cont} (b) applied to $\Gamma$ that
$|t_2|>(1+2\beta)^{-1}|t_1|>(1-2\beta)(\beta-\gamma)|\partial\Pi|$. Therefore

\begin{equation}\label{p'} |p_1|-|p'|>|\bar p|+ ((1-2\beta)(\beta-\gamma)- 2\zeta-\gamma)|\partial\Pi|>|\bar p|+\frac{\beta}{2}|\partial\Pi|
\end{equation}

By Lemma \ref{smoo}, we have $|q_2^2\bar q_2|\le\bar\beta^{-1}|\bar ps_2^{-1}wq_1^2p_2^2|$, and using Lemma \ref{cont} (a), we obtain:

\begin{equation}\label{q2} |q_2^2\bar q_2|<\bar\beta^{-1}|\bar p|+\bar\beta^{-1}(1+2\zeta)|\partial\Pi|
\end{equation}
 It follows from (\ref{q2}) and (\ref{p'}) that
$|p_1|-|p'|>\gamma |q_2^2\bar q_2|=\gamma (|q_2|-|\bar{\bar q}_2|)$ because $\gamma \bar\beta^{-1}<1$ and $\gamma\bar\beta^{-1}(1+2\zeta)< \frac{\beta}{2}$. Together with the assumption of the lemma, this implies that
$|p'|+|p_2|<\gamma|\bar{\bar q}_2|$, and so the subdiagram $\Delta'$ with the contour $p'\bar{\bar q}_1p_2\bar{\bar q}_2$
is a smaller counter-example (since $|\Delta'(2)|<|\Delta(2)|$), a contradiction.

\begin{figure}
 \centering\hspace*{10mm}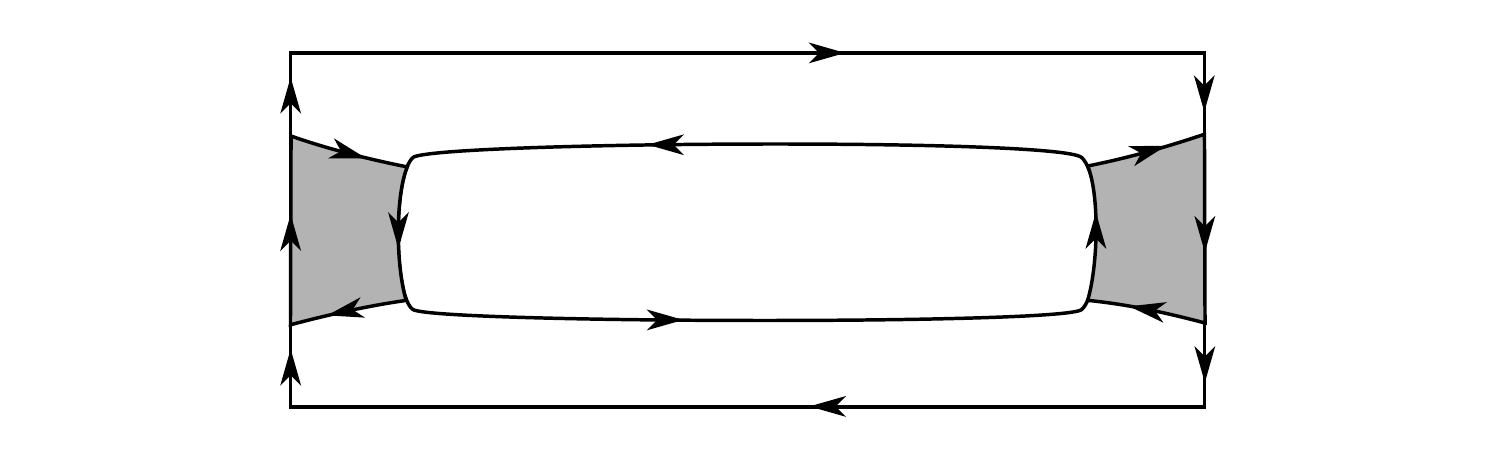\\
 \caption{Case 4 in the proof of Lemma \ref{rep}}\label{figC}
\end{figure}

{\bf Case 4.} We set $s_1^it_1^is_2^it_2^i=\partial(\Pi,\Gamma_i,p_i)$, $p_i=\bar p_it^i_2\bar{\bar p}_i$ ($i=1,2$),
and let $w_1t_1^1w_2t_1^2$ be the contour of $\Pi$ (Fig. \ref{figC}). Then exactly as in the proof of \cite[Lemma 17.2]{book} (see inequality (9) is Subsection 17.3), we have

\begin{equation}\label{9} |p_1|+|p_2|>|\bar p_1|+  |\bar{\bar p}_2|+(\bar\beta(\beta-\gamma)-4\zeta)|\partial\Pi|
\end{equation}

On the other hand, we compare $q_2$ with the homotopic path $\bar{\bar p}_2^{-1}s_1^2w_2^{-1}s_2^1\bar p_1^{-1}$
and obtain by   Lemmas \ref{smoo} and \ref{cont} (a):
\begin{equation}\label{10}
|q_2|<\bar\beta^{-1}(|\bar p_1|+  |\bar{\bar p}_2|+(1+2\zeta)|\partial\Pi|)
\end{equation}
Since $\gamma\bar\beta^{-1}(1+2\zeta)< \bar\beta(\beta-\gamma)-4\zeta$, the inequalities (\ref{9}) and (\ref{10})
imply that $\gamma|q_2|<|p_1|+|p_2|$ contrary to the assumption of the lemma.

The lemma is proved by contradiction.
\endproof

\begin{defn} Let $p$ be a section of the boundary of a cell $\Pi$ or of a diagram. Assume that there is a subdiagram $\Gamma$ of rank $0$ with a contour $p_1pp_2q'$, where $q'$ is a subpath
of $q$ and $|p_1|=|p_2|=0$
Then we say  that $p$ is {\it immediately close} to $q$.
\end{defn}

\begin{lem} \label{nobeta}
\begin{enumerate}
\item[(a)] Let $\Delta$ be a reduced diagram with contour $p_1q_1p_2q_2$,
where $q_1$ is a repelling section, $q_2$ is a smooth one, and $|p_1|+|p_2|<\gamma |q_2|$,
but $\Delta$ has no $\beta$-cells. Then there is a subpath $q$ of $q_2$ of length $>|q_2|-\gamma^{-1}(|p_1|+|p_2|)$
which is immediately close to $q_1$.

\item[(b)] Let $\Delta$ be a reduced diagram of positive rank with boundary contour $p$ having reduced label. Then there is an ${\cal R}$-cell $\Pi$ in $\Delta$ and a subpath $q$ of $\partial\Pi$ of length greater than $(1/2-\alpha-2\beta)|\partial\Pi|> 2/5|\partial\Pi|$ which is immediately close to $p$.

\item[(c)] If $p$ is a section of a reduced diagram
$\Delta$ and there is an $\cal R$-cell $\Pi$
with $(\Pi,\Gamma, p)> c$ for some $c\in(\alpha+\beta, 1/2-\alpha],$ then there is an ${\cal R}$-cell $\pi$ in $\Delta$ and a subpath $q$ of $\partial\pi$ of length $>(c-\alpha)|\partial\pi| $ which is immediately close to $p$.
\end{enumerate}
\end{lem}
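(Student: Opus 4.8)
The three parts of Lemma \ref{nobeta} are all consequences of Lemma \ref{rep} together with a ``collapsing'' argument: once a $0$-bond is available, one can repeatedly absorb adjacent cells of a strip into a rank-$0$ subdiagram and push forward the immediate-closeness conclusion. For part (a), the plan is to apply Lemma \ref{rep} directly. If a $0$-bond between $q_1$ and $q_2$ exists, one cuts $\Delta$ along it into two subdiagrams, each again satisfying the hypotheses of the lemma (the new side paths still have length $0$ plus a portion of $p_1,p_2$, so the total over both pieces is still bounded by $\gamma|q_2|$); induction on $|\Delta(2)|$ then furnishes, in each piece, a subpath of $q_2$ immediately close to $q_1$, and gluing these along the $0$-bond gives a single subpath $q$ with the claimed length estimate $|q|>|q_2|-\gamma^{-1}(|p_1|+|p_2|)$. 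If instead Lemma \ref{rep} produces a $\beta$-cell, that contradicts the hypothesis ``$\Delta$ has no $\beta$-cells'', so this branch cannot occur. I would be careful to track the bookkeeping of lengths: each time we split, the ``lost'' portion of $q_2$ is controlled by the portion of $p_1\cup p_2$ consumed, and summing over the recursion gives the $\gamma^{-1}(|p_1|+|p_2|)$ term.

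For part (b), the plan is to reduce to the situation of Lemma \ref{gamma} (or Lemma \ref{nobeta}(a)) by introducing an auxiliary decomposition of the boundary. Pick a $\gamma$-cell $\Pi$ of $\Delta$ (using Lemma \ref{gamma} with $l=1$): there is a contiguity subdiagram $\Gamma$ of $\Pi$ to $p$ with $(\Pi,\Gamma,p)>\bar\gamma=1-\gamma$. Then inside the contiguity subdiagram $\Gamma$, whose boundary is $s_1t_1s_2t_2$ with $t_1\subseteq\partial\Pi$ a long arc and $t_2\subseteq p$, I want to find a cell $\pi$ of $\Gamma$ (possibly $\Pi$ itself is not in $\Gamma$, so $\pi$ is genuinely interior to $\Gamma$) together with an immediate-closeness conclusion; this is really an instance of part (c). More directly: take $\pi$ to be a $\gamma$-cell of $\Gamma$ viewed with its four-section boundary $t_1, s_2^{-1}, t_2, s_1^{-1}$; by Lemma \ref{gamma} the sum of its contiguity degrees to these four sections exceeds $\bar\gamma$, and since $|s_1|,|s_2|$ are tiny (order $\zeta$) while the contiguity degree to $t_1$ is at most $\bar\alpha$ by Lemma \ref{cont}(e) applied to the smooth section $t_1$ (which is smooth by Lemma \ref{151}), the contiguity degree of $\pi$ to $t_2\subseteq p$ must be at least roughly $\bar\gamma-\bar\alpha-2\zeta>1/2-\alpha-2\beta$. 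Then part (c) (applied with this $\pi$, or its analogue) converts a large contiguity degree into immediate closeness, yielding a subpath $q$ of $\partial\pi$ of length $>(1/2-\alpha-2\beta)|\partial\pi|>\tfrac25|\partial\pi|$ immediately close to $p$. The relabeling of $\pi$ as the desired cell $\Pi$ of the statement is cosmetic.

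For part (c), the plan is an induction on the number of $\cal R$-cells of $\Delta$, peeling off a minimal contiguity subdiagram. Given the $\cal R$-cell $\Pi$ with contiguity subdiagram $\Gamma$ to $p$ of degree $>c$, with $\partial(\Pi,\Gamma,p)=s_1t_1s_2t_2$, where $|t_1|>c|\partial\Pi|$ and $|s_1|,|s_2|<\zeta|\partial\Pi|$ by Lemma \ref{cont}(a). If $r(\Gamma)=0$, then $\Gamma$ is itself a rank-$0$ subdiagram witnessing that $t_1\subseteq\partial\Pi$ is immediately close to $p$ — up to absorbing the $0$-length $s_1,s_2$ into the $0$-bond structure — and we may take $\pi=\Pi$ and $q=t_1$ (whose length exceeds $c|\partial\Pi|>(c-\alpha)|\partial\Pi|$). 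If $r(\Gamma)>0$, apply Lemma \ref{gamma} to $\Gamma$ (with its boundary written as at most four sections) to get a $\gamma$-cell $\pi'$ of $\Gamma$; arguing as in part (b), the contiguity degree of $\pi'$ to the subpath of $p$ (namely $t_2$) is at least $c-\alpha$ after subtracting the contiguity to the short sides and to the smooth arc $t_1$ (smooth by Lemma \ref{151}, with contiguity $<\bar\alpha$, but here we only need that the contiguity to $t_1$ plus to the side arcs loses at most $\alpha$ overall in the range of $c$ considered). Since $\Gamma$ has strictly fewer $\cal R$-cells than $\Delta$ (it does not contain $\Pi$), the inductive hypothesis applied to $\Gamma$ and $\pi'$ finishes the argument, producing the cell $\pi$ and subpath $q$ of length $>(c-\alpha)|\partial\pi|$ immediately close to $p$.

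The main obstacle will be part (a): getting the length bound $|q|>|q_2|-\gamma^{-1}(|p_1|+|p_2|)$ cleanly through the induction, since the splitting along a $0$-bond must be set up so that the side-path lengths in the two pieces genuinely add up to at most $|p_1|+|p_2|$ (no double counting of the $0$-bond, which has length $0$), and so that the ``immediately close'' subpaths in the two halves can be concatenated into one subpath of $q_2$ rather than two. Establishing that the hypotheses of Lemma \ref{rep} really are inherited by each piece after cutting — in particular the inequality relating $|p_1|+|p_2|$ to $\gamma|q_2|$ restricted to the relevant portion of $q_2$ — is the delicate bookkeeping step; everything else is a routine application of the parameter inequalities via the lowest parameter principle.
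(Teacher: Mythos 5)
Your overall architecture is close to the paper's — (a) comes from Lemma~\ref{rep} and the absence of $\beta$-cells, and (b),(c) reduce to (a) via a contiguity subdiagram — but both of your reductions contain genuine gaps that the paper's proof is specifically designed to avoid.

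For (a), the paper does \emph{not} cut along a $0$-bond and induct. Instead it chooses the $0$-bond $E$ whose $\cal A$-edge on $q_2$ is closest to $(q_2)_-$, applies Lemma~\ref{rep} to the end subdiagram (whose side paths are $p_2$ and the $0$-bond) to conclude that the initial piece $q'$ of $q_2$ has length $<\gamma^{-1}|p_2|$ or $0$ (otherwise one would find a $0$-bond closer to $(q_2)_-$), and symmetrically bounds $q''$ at the other end; the middle subdiagram $\Gamma$, bounded by $q$, the two $0$-bonds and a subpath $t$ of $q_1$, is then shown to have rank $0$ because a $\gamma$-cell of $\Gamma$ would have contiguity to $q\cup t$ summing to $>\bar\gamma>\bar\beta$ (as the side arcs have length $0$), i.e., would be a $\beta$-cell of $\Delta$. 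Your induction-after-cutting plan has exactly the problems you flag and do not resolve: the two halves need not inherit the inequality $|p_1|+|p_2|<\gamma|q_2|$, the $\cal R$-cell count need not strictly decrease in both halves (the paper's argument even shows one side may well be rank $0$), and — most importantly — no mechanism is given for gluing the two immediate-closeness regions into a single subpath $q$ of $q_2$ across the $0$-bond. The paper's extremal choice of $0$-bonds circumvents all of this.

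For (b) and (c), the central step in your plan — applying Lemma~\ref{gamma} inside the contiguity subdiagram and then subtracting off ``the contiguity to the short sides $s_1,s_2$'' — does not go through: $(\pi,\Gamma_k,s_j)$ is measured as (arc length)$/|\partial\pi|$, not $/|\partial\Pi|$, so the bound $|s_j|<\zeta|\partial\Pi|$ gives no control when $\pi$ is a cell of low rank. (Cases 1--4 in the proof of Lemma~\ref{rep} exist precisely because a $\gamma$-cell can press heavily against the side arcs.) The paper instead chooses $\Pi$ so that its $(1/2-\alpha-\beta)$-closeness subdiagram $\Gamma$ contains no $\cal R$-cell $(1/2-\alpha-\beta)$-close to $t_2$; this makes $t_2$ repelling in $\Gamma$, and, combined with Lemma~\ref{cont}(e) applied to the smooth arc $t_1$, rules out $\beta$-cells in $\Gamma$ entirely, so that (a) applies directly with $|s_1|+|s_2|<2\zeta|\partial\Pi|<\gamma|t_1|$. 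No degree-subtraction on the side arcs is needed, and the constant $(1/2-\alpha-\beta-2\zeta\gamma^{-1})>1/2-\alpha-2\beta$ falls out. Your plan for (c) also implicitly degrades the constant at each inductive step without keeping $c$ in the admissible range; the paper simply repeats the argument of (b) with $c$ in place of $1/2-\alpha$, in one step.
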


\proof (a) By Lemma \ref{rep}, there is a $0$-bond between $q_2$ and $q_1$. Let $E$ be  such a bond closest to the
vertex $(q_2)_-$, i.e., let $e$ be the first $\cal A$-edge of $q_2$ connected by such $0$-bond with an edge of $q_1$.
The subpath $q'$ of $q_2$ going from $(q_2)_-$ to $e_-$ has length $< \gamma^{-1}|p_2|$ or $0$ since otherwise
by Lemma \ref{rep}, one could find a $0$-bond between $q'$ and $q_1$. Similarly we define the path $q''$ of length
$<\gamma^{-1}|p_1|$ such that $q_2=q'qq''$. Hence $|q|> |q_2|-\gamma^{-1}(|p_1|+|p_2|)>0$ and we have
a subdiagram $\Gamma$ with contour $qp_1'tp'_2$, where $|p'_1|=|p'_2|=0$ and $t$ is a subpath of $q_1$.
The diagram $\Gamma$ has rank $0$. Indeed, otherwise it should have a $\gamma$-cell by Lemma \ref{gamma}, and
since $\gamma<\beta$, that $\gamma$-cell has to be a $\beta$-cell in $\Delta$, which contradicts the assumption
of the lemma. So $\Gamma$ is the required contiguity subdiagram.

(b) By Lemma \ref{gamma}, there is a cell $\Pi$ in $\Delta$ and a contiguity subdiagram $\Delta_0$
such that $(\Pi,\Delta_0,q)>\bar\gamma>1/2-\alpha-\beta$. By Remark \ref{close}, $\Pi$ is $(1/2-\alpha-\beta)$-close
to $q$. Thus we may choose a cell $\Pi$ such that the subdiagram $\Gamma$ of its $(1/2-\alpha-\beta)$-closeness
with contour $s_1t_1s_2t_2$ (as in Definition \ref{c})
 has no $\cal R$-cells $\pi$ $(1/2-\alpha-\beta)$-close to $t_2$.
 This implies that
the section $t_2$ is repelling in $\Gamma$.  Furthermore, $\Gamma$ has no $\beta$-cells $\pi$ since the degree of contiguity
of $\pi$ to $\Pi$ is less than $1/2+\alpha$ by Lemma \ref{cont} (e), and so the degree of
contiguity of $\pi$ to $t_2$ should be greater than $(1-\beta)-(1/2+\alpha)=1/2-\alpha-\beta$, which is impossible
by the choice of $\Gamma$ and Remark \ref{close}. Hence we may apply the
statement (a) to $\Gamma$ taking into account that $|s_1|,|s_2|<\zeta|\partial\Pi| $ by Lemma \ref{cont} (a).
We obtain the required
subarc $q$ of $t_1$ of length  at least $$|t_2|-\gamma^{-1}(2\zeta |\partial\Pi|)> (1/2-\alpha-\beta -2\zeta\gamma^{-1})|\partial\Pi|>(1/2-\alpha-2\beta)|\partial\Pi|,$$
that is immediately close to $t_2$, as desired.

(c) It suffices to repeat the argument from (b) replacing the constant $1/2-\alpha$ by $c$.
\endproof

\begin{rem}\label{55} The claim (b) of Lemma \ref{nobeta} is a modification of Lemma 5.5 \cite{Ol82}.
\end{rem}

\begin{lem}\label{dlina} Let $q$ be a repelling section of a reduced diagram $\Delta$ with boundary
path $pq$. Then $|q|\le |p|^2$.
\end{lem}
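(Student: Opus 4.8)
The plan is to argue by induction on the number of $\mathcal R$-cells of $\Delta$, using Lemma \ref{nobeta}(b) to peel off one cell at a time. If $r(\Delta)=0$ then $\Delta$ is a diagram of rank $0$ and $q$ is freely close to $p$, so $|q|\le|p|\le|p|^2$; this is the base case. Suppose now $r(\Delta)>0$. By Lemma \ref{nobeta}(b) applied to $\Delta$ (whose boundary label $\mathrm{Lab}(pq)$ is reduced since $q$ is repelling and $p$ is geodesic up to the obvious normalization), there is an $\mathcal R$-cell $\Pi$ and a subpath $r$ of $\partial\Pi$ with $|r|>\tfrac25|\partial\Pi|$ that is immediately close to $p$ — wait, I should be careful: Lemma \ref{nobeta}(b) as stated gives a subpath of $\partial\Pi$ immediately close to the \emph{whole} boundary contour $p$ of $\Delta$; but since $q$ is repelling, no $\mathcal R$-cell is $(1-\alpha)$-close, hence in particular not $\tfrac25$-close, to $q$, so the subpath produced must in fact lie immediately close to the section $p$. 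This is the key point that makes the repelling hypothesis do its job.

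Next I would cut $\Pi$ out along this immediate-closeness subdiagram. Writing $\partial\Pi = r\,w$ with $|w|<\tfrac35|\partial\Pi| = \tfrac35 n\,r(\Pi)$, and $p = p'\,r'\,p''$ where $r'$ is the subpath of $p$ that $r$ is glued to through the $0$-rank contiguity region (so $|r'|=|r|$ after collapsing $0$-edges), removing $\Pi$ together with that $0$-subdiagram produces a reduced diagram $\Delta'$ with strictly fewer $\mathcal R$-cells and boundary $p_1 q p_2$ where $p_1p_2$ is obtained from $p$ by replacing $r'$ with $w^{-1}$. Crucially $q$ remains a repelling section of $\Delta'$ (removing cells and $0$-subdiagrams cannot create new $\mathcal R$-cells close to $q$), so by the induction hypothesis $|q|\le (|p_1|+|p_2|)^2 = (|p|-|r|+|w|)^2$. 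Since $|r|>\tfrac25|\partial\Pi|$ and $|w|<\tfrac35|\partial\Pi|$ we get $|r|-|w|>-\tfrac15|\partial\Pi|$, which is not quite enough; I must instead use that $p$ is long enough relative to $\Pi$. Here is where Lemma \ref{beta} enters: since $\Delta$ contains the $\mathcal R$-cell $\Pi$, we have $|\partial\Delta| = |p|+|q| > \bar\beta|\partial\Pi|$, and combining with $|q|\le|p|^2$ (the very statement being proved, applied one level down) — no, cleaner: after the surgery $|p|-|r|+|w| \le |p| + |w| < |p| + \tfrac35|\partial\Pi|$, and separately $|p|>\bar\beta|\partial\Pi| - |q|$.

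The honest way to close the induction is to track the quantity more carefully: set $N=|\partial\Pi|$. We have $|p_1|+|p_2| = |p|-|r|+|w|$. Using $|r|>\tfrac25 N$ and $|w| = N - |r| < \tfrac35 N$, so $|w|-|r| < \tfrac35 N - \tfrac25 N = \tfrac15 N$, giving $|p_1|+|p_2| < |p| + \tfrac15 N$. I then need $(|p|+\tfrac15 N)^2 \le |p|^2$, which is false; so a single peeling step with this crude bound does not suffice, and the real argument must exploit that $r$ is immediately close to $p$, forcing $|p|\ge |r| - (\text{short side arcs}) \gtrsim \tfrac25 N$, hence $N \le \tfrac52|p| + O(\zeta N)$, i.e. $N < 3|p|$ say. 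Then $|p_1|+|p_2| < |p| + \tfrac15\cdot 3|p| $ still exceeds $|p|$. This tells me the right induction hypothesis is not $|q|\le|p|^2$ peeled straight, but rather one should induct so that the \emph{total} contribution of all cells is controlled: each of the at most (something like) $|p|$ cells that are immediately close to $p$ contributes a bounded piece, and summing geometrically yields the quadratic bound. Concretely, I expect the clean proof to run: by Lemma \ref{nobeta}(b) repeatedly, $\partial\Delta$ decomposes so that $q$ is covered by immediate-closeness regions to at most $|p|$ cells, each of perimeter $<3|p|$ (by the above), contributing length $< |p|$ to $q$, whence $|q| < |p|\cdot|p| = |p|^2$. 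The main obstacle, and the step I would spend the most care on, is exactly this bookkeeping: ensuring the cells peeled off are genuinely distinct and that their total boundary-contributions to $q$ telescope to give the clean quadratic estimate, rather than the naive per-step inequality which degrades. The geometric inputs — Lemma \ref{nobeta}(b), Lemma \ref{beta}, and the $0$-bond/immediate-closeness surgery — are all in hand; the difficulty is purely in organizing the induction so the constant works out.
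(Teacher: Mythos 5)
There is a genuine gap, and in fact a logical error at the pivot point of your argument. You write that ``since $q$ is repelling, no $\mathcal R$-cell is $(1-\alpha)$-close, hence in particular not $\tfrac25$-close, to $q$.'' This reverses the implication: $(1-\alpha)$ is close to $1$ and in particular much larger than $2/5$, so being $(1-\alpha)$-close is a \emph{stronger} condition than being $2/5$-close. The repelling hypothesis forbids the high degree of closeness $1-\alpha$ but says nothing about cells that are only $2/5$-close to $q$. Consequently, there is no reason the cell produced by Lemma \ref{nobeta}(b) must be glued to $p$ rather than to $q$, and the whole ``peel a cell off the $p$-side'' strategy is not licensed by the repelling assumption. (Indeed, one of the main points of the whole section is precisely to control what happens near $q$, which \emph{does} have contiguity subdiagrams of moderate degree from cells of $\Delta$.)

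You honestly flag the second problem yourself: even if the peeling were available, the per-step estimate $(|p|+\tfrac15 N)^2$ does not close the induction, and the fix you sketch (``the total contribution of all cells telescopes'') is precisely the step you have not carried out — and it is where all the work lies. The paper's proof does not peel cells near $p$ at all. Instead it takes a minimal counterexample and applies Lemma \ref{gamma} to get a $\gamma$-cell $\Pi$ that simultaneously has contiguity subdiagrams $\Gamma_p$ to $p$ and $\Gamma_q$ to $q$, with both degrees bounded above and below by (\ref{psi}). The two ``arms'' of $\Pi$ cut $\Delta$ into $\Delta'$ and $\Delta''$ (to which the induction hypothesis applies because the $p$-side of each has been shortened, cf.\ (\ref{t2})) plus the middle subdiagram $\Gamma_q$, where the real difficulty is. For $\Gamma_q$ the paper invokes the new Lemma \ref{rep} — not Lemma \ref{nobeta}(b) — repeatedly, slicing $\Gamma_q$ along $0$-bonds and short paths provided by $\beta$-cells into pieces $\Delta_1,\dots,\Delta_m$ of controlled width (\ref{rough}), applies the induction hypothesis to each piece, and sums up in (\ref{sum})–(\ref{q}) to recover the bound $|q|\le|p|^2$. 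So the missing content is (i) the observation that the correct decomposition comes from a $\gamma$-cell touching \emph{both} $p$ and $q$, and (ii) the bounded-width slicing of $\Gamma_q$ via Lemma \ref{rep}, which is exactly the tool built for repelling sections. Without (ii) your proposal has no mechanism to control the part of $q$ facing the cut cell, and without correcting the $(1-\alpha)$ vs.\ $2/5$ inequality the starting step is already invalid.
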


\begin{figure}
 \centering\hspace*{10mm}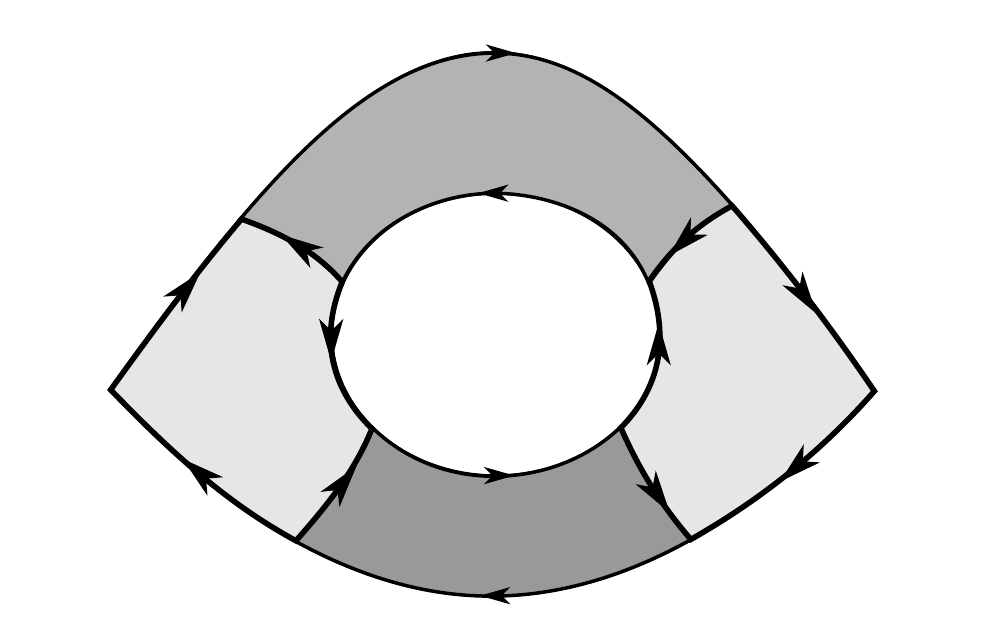\\
 \caption{}\label{figD}
\end{figure}

\proof Assume that $\Delta$ is a counter-example with minimal $|p|$. If $r(\Delta)=0$, then every edge of $q$ is connected by a $0$-bond with an edge of $p$ since $\Lab(q)$
is a reduced word. Hence $|q|\le |p|\le |p|^2,$ a contradiction. Therefore $r(\Delta)>0$, and so by Lemma \ref{gamma}, $\Delta$ has a $\gamma$-cell $\Pi$,
i.e., there are disjoint contiguity subdiagrams $\Gamma_p$ and $\Gamma_q$ of $\Pi$ to $p$ and $q$, respectively
(one of them may be absent) with $\psi_p+\psi_q >\bar\gamma,$
where $\psi_p = (\Pi,\Gamma_p,p)$, and $\psi_q = (\Pi,\Gamma_q,q)$. By Remark \ref{close}, $\psi_q < 1-\alpha$.
Also $\psi_p<\bar\alpha$ since otherwise $\Delta$ could be replaced by a subdiagram with boundary $\bar p q$,
where $|\bar p|<|p|$ (as this was shown in the proof of \cite[Theorem 17.1]{book}). Hence both $\Gamma_p$ and $\Gamma_q$ exist and

\begin{equation}\label{psi}
\alpha-\gamma < \psi_p <\bar\alpha\;\;\; and \;\;\;  \bar\gamma - \bar\alpha < \psi_q < 1-\alpha
\end{equation}

Let $\partial(\Pi,\Gamma_p,p)=s_1t_1s_2t_2$, $\partial(\Pi,\Gamma_q,q)=p_1q_1p_2q_2$
and suppose the contour of $\Pi$ is factorized as $q_1wt_1u$ (Fig. \ref{figD}). We denote $P=|\partial\Pi|$.
Then $|w|+|u|<\gamma P$ by the definition of $\gamma$-cell, and $\Pi$ gives two paths connecting $p$ and $q$ with
\begin{equation}\label{conn}
|p_1u^{-1}s_2|<(\gamma+2\zeta)P<2\gamma P\;\;\; and\;\;\;
|s_1w^{-1}p_2|<2\gamma P
\end{equation}
by Lemma \ref{cont} (a).

We factorize $p= p''t_2p'$ and $q=q'q_2q''$. Then $|t_2|>(1+2\beta)^{-1}|t_1|$ by Lemma \ref{cont} (b) and (\ref{psi}) since $\alpha-\gamma >\varepsilon$, and therefore
\begin{equation}\label{t2}
|t_2|>(1+2\beta)^{-1}(\alpha-\gamma)P>\frac 34 \alpha P
\end{equation}

  The path $s_1w^{-1}p_2$ is shorter than $t_2$ by (\ref{conn}, \ref{t2}), and so
$p_2^{-1}w s_1^{-1}p'$ is shorter than $p$. Hence the statement of the lemma holds for the subdiagram $\Delta'$ with the contour $(p_2^{-1}ws_1^{-1}p')q'$, and also for the subdiagram $\Delta''$ bounded by $(p''s_2^{-1}up_1^{-1})q''$, that is by (\ref{conn}),
\begin{equation}\label{shtrihi}
|q'|\le (|p'|+2\gamma P)^2 \;\;\; and\;\;\; |q''|\le (|p''|+2\gamma P)^2
\end{equation}

Now consider the subdiagram $\Gamma_q$. The section $q_1$ of it is smooth by Lemma \ref{151},
and $q_2$ is repelling in $\Gamma_q$ since $q$ is repelling in $\Delta$. By Lemma \ref{cont} (a)
and by (\ref{psi}), we have $|p_1|+|p_2|\le 2\zeta P< \gamma (\bar\gamma - \bar\alpha)P<\gamma |q_1|$.
Therefore we may apply Lemma \ref{rep} to $\Gamma_q$. It gives either a $0$-bond between $q_1$ and $q_2$
or a $\beta$-cell $\pi$ in $\Gamma_q$. Since the degree of contiguity of $\pi$ to $q_1$ is at least
$\bar\beta-(1-\alpha)=\alpha-\beta$ by Remark \ref{close}, and $\alpha-\beta>\varepsilon$, we have $|\partial\pi|<\zeta P$ by Lemma \ref{cont} (d).

As above for $\Pi$, the contiguity subdiagrams $\Gamma_1$ and $\Gamma_2$ of the $\beta$-cell $\pi$ to $q_1$ and $q_2$, respectively, give us two paths $x$ and $x'$
connecting  $q_1$ and $q_2$ of length $< (\beta+2\zeta)|\partial\pi|<2\beta\zeta P<\zeta P$ (Fig. \ref{figE} a)).
Thus using paths of length $<\zeta P$ one can cut $\Gamma_q$ in several subdiagrams $\Gamma^1,  \dots\Gamma^l,$
where $\partial\Gamma^i= x_i y_ix_{i+1}^{-1}z_i$ with $|x_i|<\zeta P$, $q_1=y_1\dots y_l$, $q_2=z_l\dots z_1$.
The cutting process stops only if $|y_i|\le \gamma^{-1}(|x_i|+|x_{i+1}|)< 2\zeta\gamma^{-1}P$ for all $i$-s.
Indeed, if $|y_i| > 2 \zeta\gamma^{-1} P$ and there are no $0$-bonds between $y_i$ and $q_2$, then again  there is  a cell $\pi'$ which is a $\beta$-cell of  $\Gamma^i$. Then the contiguity
subarc $v$ of $y_i$ to $\pi'$ could not be the entire $y_i$ since by Lemma 15.4 \ref{cont} (c),
$|v|<(1-2\beta)^{-1}|\partial\pi'|< (1-2\beta)^{-1}\zeta P< 2\zeta P <|y_i|;$
and so $\pi'$ provides   a further cut of $\Gamma^i$ decreasing $|y_i|$.

\begin{figure}
 \centering\hspace*{10mm}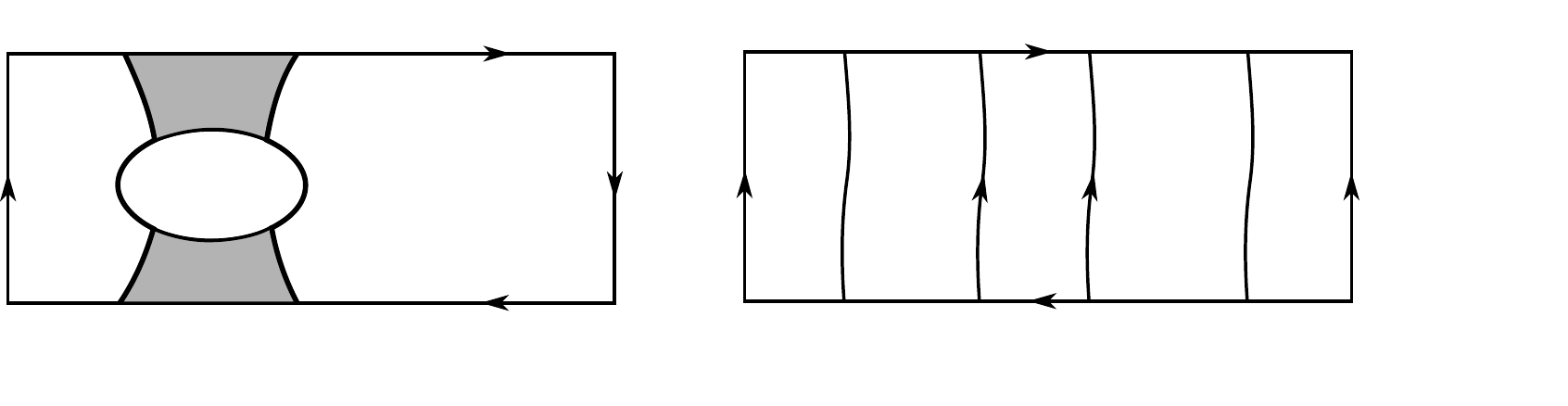\\
 \caption{}\label{figE}
\end{figure}

Combining several consecutive subdiagrams $\Gamma^i$, we obtain a rougher partition of the subdiagram $\Gamma_q$ into subdiagrams $\Delta_1,\dots,\Delta_m$,
with $\partial\Delta_j= f^jg^j_1(f^{j+1})^{-1}g^j_2,$ where $|f^j|<\zeta P$, $q_1=g_1^1\dots g_1^m$, $q_2=g_2^m\dots g_2^1$, and
\begin{equation}\label{rough}
2\zeta \gamma^{-1}P \le|g_1^j|<4\zeta \gamma^{-1}P \;\; (j=1,\dots,m)
\end{equation}
(Fig. \ref{figE} b)). Together with (\ref{t2}) this gives inequalities
\begin{equation}\label{f}
|f^jg^j_1(f^{j+1})^{-1}|<6\zeta\gamma^{-1} P<\frac34\alpha P<|t_2|<|p|
\end{equation}
Hence the statement of the lemma holds for
every $\Delta_j$, whence by (\ref{f}), $|g_2^j|\le (6\zeta\gamma^{-1} P)^2$. Note that by (\ref{rough}), $$m\le |q_1|/(2\zeta\gamma^{-1} P)\le P/(2\zeta\gamma^{-1} P)= \zeta^{-1}\gamma/2$$
Therefore
\begin{equation}\label{sum}
|q_2|=\sum_{j=1}^m|g_2^j|<m(6\zeta\gamma^{-1} P)^2\le (\zeta^{-1}\gamma/2)(6\zeta \gamma^{-1} P)^2=18\zeta \gamma^{-1}P^2
\end{equation}
Taking into account (\ref{shtrihi}) and (\ref{sum}), we obtain:
\begin{equation}\label{q}
|q|= |q'|+|q_2|+|q''|< (|p'|+2\gamma P)^2 + (|p''|+2\gamma P)^2 +18\zeta\gamma^{-1} P^2
\end{equation}

By inequality (\ref{t2}), the right-hand side of  (\ref{q}) does not exceed
$$(|p'|+ |p''|+4\gamma P +\sqrt{18\zeta\gamma^{-1}} P)^2 <(|p'|+|p''|+\frac34\alpha P)^2<(|p'|+|p''|+|t_2|)^2=|p|^2$$
The obtained inequality $|q|\le |p|^2$ proves the lemma by contradiction.
\endproof

We will use the following inductive definition for a {\it tower} of height $h\ge 0.$

\begin{defn} Let a path $p$ be labeled by a non-empty reduced word. Then it itself is a tower of
height $0$ with the base $p$. More accurately, this tower is a diagram of rank $0$
with the boundary $pq_0$, where $\Lab(q_0)\equiv \Lab(p)^{-1}$.

Assume a reduced diagram $\Delta_h$ has contour $pq_h$, where the labels of $p$ and $q_h$ are reduced words,
and assume that its  subdiagram $\Delta_{h-1}$ with a contour $pq_{h-1}$ is a tower of height $h-1\ge 0$
with base $p$. If the subdiagram $\Gamma_h$ with the contour $q_{h-1}^{-1}q_h$ has exactly one $\cal R$-cell
$\Pi_h$, and the degree of contiguity of $\Pi_h$ to $q_{h-1}$ in $\Gamma_h$ is at least $2\alpha$, then
$\Delta_h$ is a tower of height $h$ with the base $p$ (Fig. \ref{figF} a)).
\end{defn}

\begin{figure}
 \centering\hspace*{10mm}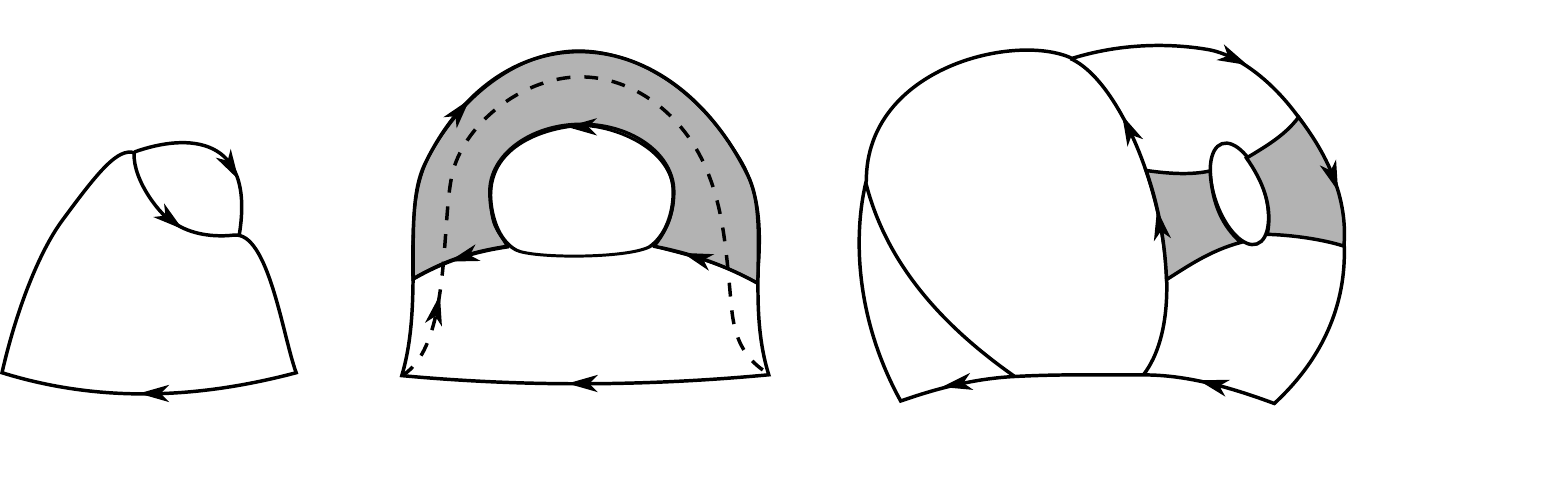\\
 \caption{}\label{figF}
\end{figure}

\begin{lem}\label{tower} The section $q_h$ is repelling in the tower $\Delta_h$ of height $h>0$.
\end{lem}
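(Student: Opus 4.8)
The plan is to induct on $h$, with the case $h=0$ vacuous. Since the label of $q_h$ is a reduced word by the definition of a tower, it remains only to rule out $\mathcal R$-cells that are $(1-\alpha)$-close to $q_h$. So fix $h\ge 1$, assume the assertion for $h-1$, and suppose for contradiction that some $\mathcal R$-cell $\Pi$ of $\Delta_h$ is $(1-\alpha)$-close to $q_h$; fix the associated subdiagram $\Gamma$ with contour $s_1t_1s_2t_2$, where $t_1\subseteq\partial\Pi$, $|t_1|\ge(1-\alpha)|\partial\Pi|$, $t_2\subseteq q_h$, $|s_1|,|s_2|<\zeta|\partial\Pi|$ and $\Pi\notin\Gamma$. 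The $\mathcal R$-cells of $\Delta_h$ are exactly $\Pi_1,\dots,\Pi_h$, so I distinguish the cases $\Pi=\Pi_h$ and $\Pi=\Pi_i$ for some $i<h$.

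First suppose $\Pi=\Pi_h$; this includes the base case $h=1$, where $\Pi_1$ is the only $\mathcal R$-cell. Deleting $\Pi_h$ from $\Gamma_h$ leaves a diagram of rank $0$, so every $\mathcal A$-edge of $\partial\Pi_h$ is joined there by a $0$-bond to an $\mathcal A$-edge of $q_{h-1}$ or of $q_h$ (not to another edge of $\partial\Pi_h$, since $\Lab(\partial\Pi_h)$ is the reduced word $A^{\pm n}$); this partitions $\partial\Pi_h$ into complementary subpaths $C_{h-1}$ and $C_h$ whose edges $0$-bond to $q_{h-1}$ and to $q_h$ respectively. The hypothesis that the degree of contiguity of $\Pi_h$ to $q_{h-1}$ in $\Gamma_h$ is at least $2\alpha$ gives $|C_{h-1}|\ge 2\alpha|\partial\Pi_h|$, hence $|C_h|\le(1-2\alpha)|\partial\Pi_h|$. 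On the other hand, along $t_1$ the diagram $\Gamma$ lies on the side of $\partial\Pi_h$ opposite to $\Pi_h$, so each edge of $t_1$ lying in $C_{h-1}$ forces the $q_{h-1}$-edge to which it $0$-bonds onto $\partial\Gamma=s_1t_1s_2t_2$; since $q_{h-1}$ is disjoint from $t_2$ and meets $\partial\Pi_h$ (hence $t_1$) only at the corners of $\Gamma_h$, these $q_{h-1}$-edges lie inside $s_1\cup s_2$, so that $|t_1\cap C_{h-1}|<2\zeta|\partial\Pi_h|$ and therefore $|C_h|\ge|t_1|-2\zeta|\partial\Pi_h|\ge(1-\alpha-2\zeta)|\partial\Pi_h|$. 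Together with $|C_h|\le(1-2\alpha)|\partial\Pi_h|$ this yields $\alpha\le 2\zeta$, contradicting the lowest parameter principle.

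Now suppose $\Pi=\Pi_i$ with $i<h$, so that $\Pi_i$ is an $\mathcal R$-cell of the subtower $\Delta_{h-1}$. The idea is to push the closeness of $\Pi_i$ to $q_h$ down across the interface section $q_{h-1}$ to obtain closeness of $\Pi_i$ to $q_{h-1}$ inside $\Delta_{h-1}$, which contradicts the inductive hypothesis. Since $t_1\subseteq\partial\Pi_i$ lies in $\overline{\Delta_{h-1}}$ while $t_2\subseteq q_h$ lies on $\partial\Gamma_h$, the closed curve $\partial\Gamma$ passes between the $\Delta_{h-1}$-side and the $\Gamma_h$-side of $q_{h-1}$ only along $s_1$ and $s_2$. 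Let $u_1\in q_{h-1}$ be the last point at which $s_1$ meets $q_{h-1}$ before reaching $t_1$ and $u_2\in q_{h-1}$ the first point at which $s_2$ meets $q_{h-1}$ after leaving $t_1$; then the subpath $\partial'$ of $\partial\Gamma$ running from $u_1$ along a tail of $s_1$, then along $t_1$, then along a head of $s_2$ to $u_2$ lies in $\overline{\Delta_{h-1}}$, and together with the suitable subpath of $q_{h-1}$ from $u_1$ to $u_2$ it bounds a subdiagram $D$ of $\Delta_{h-1}$; one chooses the side so that $\Pi_i\notin D$. The contour of $D$ then has the form (tail of $s_1$)(subpath of $q_{h-1}$)(head of $s_2$)$\,t_1$ with both $s$-pieces of length $<\zeta|\partial\Pi_i|$, so $D$ exhibits $\Pi_i$ as $(1-\alpha)$-close to $q_{h-1}$ in $\Delta_{h-1}$ — contradicting that $q_{h-1}$ is repelling there. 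This completes the induction.

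The step I expect to be the main obstacle is the geometry near the interface $q_{h-1}$: in the first case one must see that the face of $\partial\Pi_h$ pointing toward $q_h$ is captured by $t_1$ only at the cost of the short arcs $s_1,s_2$, and in the second case one must perform the surgery producing $D$ so that it really is a subdiagram of $\Delta_{h-1}$ avoiding $\Pi_i$, again losing only lengths of sub-arcs of $s_1$ and $s_2$, which are negligible against $(1-\alpha)|\partial\Pi|$ by the lowest parameter principle; the minor adjustments needed when $\Pi_h$ or $\Pi_i$ happens to run along $q_{h-1}$ are routine. What makes all of this work is the defining feature of a tower that $\Gamma_h$ contains the single $\mathcal R$-cell $\Pi_h$: hence $\Gamma_h\smallsetminus\Pi_h$ has rank $0$, the partition $\partial\Pi_h=C_{h-1}\cup C_h$ above is available, and $q_h$ is $0$-bonded to $q_{h-1}$ except over a cap of length at most $|\partial\Pi_h|$ lying on $\partial\Pi_h$.
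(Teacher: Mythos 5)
The overall strategy is the same as the paper's: by minimality of $h$ (equivalently, by induction on $h$), reduce to the case $\Pi = \Pi_h$, and then derive a contradiction with the tower condition. Your treatment of the case $\Pi = \Pi_i$ with $i<h$ — cutting $s_1,s_2$ at $q_{h-1}$ to exhibit $\Pi_i$ as $(1-\alpha)$-close to the repelling section $q_{h-1}$ of $\Delta_{h-1}$ — is exactly the argument the paper runs (it pushes to $q_i$, but this is the same reduction). Your treatment of the case $\Pi=\Pi_h$, however, is genuinely different from the paper's: the paper applies Lemmas~\ref{rep} and~\ref{nobeta} to the subdiagram $\Delta'=\Gamma$ (with smooth section $t_1$ and repelling section $t_2$), splitting into the $\beta$-cell and no-$\beta$-cell alternatives; you instead attempt a direct $0$-bond count in the rank-$0$ annular diagram $\Gamma_h\setminus\Pi_h$.

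The direct count has a gap precisely where the paper's $\beta$-cell alternative lives. Your key step is that the $0$-bonds from edges of $t_1\cap C_{h-1}$ to $q_{h-1}$ must be intercepted by $s_1\cup s_2$, hence $|t_1\cap C_{h-1}|<2\zeta|\partial\Pi_h|$. This is correct only if $\Gamma$ stays in $\Gamma_h$ (more precisely, if $\Gamma\cap q_{h-1}$ is confined to $s_1\cup s_2$). But the definition of closeness allows $s_1,s_2$ to cross $q_{h-1}$, so $\Gamma$ may engulf a chunk of $\Delta_{h-1}$, and then a $0$-bond from an edge of $t_1\cap C_{h-1}$ can terminate at an edge of $q_{h-1}$ lying in the \emph{interior} of $\Gamma$, never touching $s_1\cup s_2$. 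Since the part of $q_{h-1}$ swallowed by $\Gamma$ is bounded only quadratically in $|s_1|+|s_2|$ (via Lemma~\ref{dlina}), its length can be far larger than $2\zeta|\partial\Pi_h|$, so the inequality $|t_1\cap C_{h-1}|<2\zeta|\partial\Pi_h|$ fails. When $\Gamma\cap\Delta_{h-1}$ has rank $0$ your count does go through, which is why the argument feels right; but if $\Gamma$ contains $\mathcal R$-cells of $\Delta_{h-1}$ the count breaks. This is exactly the situation the paper isolates: in that case $\Delta'=\Gamma$ contains a $\beta$-cell $\pi\in\Delta_{h-1}$, and $\pi$ is then shown to be $\bar\beta$-close to $q_{h-1}$, contradicting the minimality of $h$; in the complementary no-$\beta$-cell case, Lemma~\ref{nobeta}(a) (rather than the naive count) produces the long arc of $t_1$ immediately close to $q_h$ that you need. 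To make your Case~1 rigorous you would need to add precisely this dichotomy, which amounts to re-deriving the mechanism of Lemmas~\ref{rep} and~\ref{nobeta}.
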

\proof Proving by contradiction we assume that $\Delta _h$ is a counterexample of minimal possible height $h\ge 1$. Suppose there is an $\cal R$-cell $\Pi$ and a subdiagram $\Delta'$
with contour $s_1t_1s_2t_2$, where $t_1$ is a section of $\partial\Pi$ of length $\ge (1-\alpha)|\partial\Pi|$,
$t_2$ is a subpath of $q_h$, $|s_1|, |s_2| <\zeta|\partial\Pi|$, and $\Delta'$ does not contain $\Pi$ (Fig. \ref{figF} b)).

Note that $\Pi=\Pi_i,$ that is $\Pi$ belongs to $\Delta_i$ but does not belong to $\Delta_{i-1}$
for some $i\ge 1.$
The paths $s_1$ and $s_2$ connecting $\partial\Pi$ with $q_h$ must have
common vertices with  $q_i$, and therefore there are subpaths $p_j$ of  $s_j$ ($j=1,2$) and  a subdiagram $\Gamma'$ in $\Delta_i$
with a contour $p_1t_1p_2t$, where $\Gamma'$ does not contain $\Pi$, $|p_j|\le |s_j|<\zeta|\partial\Pi|$ and
$t$ is a subpath of $q_i$. In other words, $\Pi$ is $(1-\alpha)$-close to $q_i$, and the minimality
of $h$ implies that $i=h$.

The same argument shows that if a cell $\Pi_j$, with $j<h$, is $(1-\alpha)$-close to the subpath $t_2$ of $q_h$
in $\Delta_h$, then it is $(1-\alpha)$-close to $q_{h-1}$ contrary to the choice of $h$. It follows that
the $\cal R$-cells of $\Delta'$ are not $(1-\alpha)$-close to $t_2$, and therefore $t_2$ is a repelling
section of $\partial\Delta'$.

The path $t_1$ is smooth in $\Delta'$ by  Lemma \ref{151}. If $\Delta'$ has no $\beta$-cells,
then we may apply Lemma \ref{nobeta} (a) to it because $2\zeta<(1-\alpha)\gamma$. We obtain a subpath $q$
of the section $t_1$ of $\partial\Pi$ with $|q|>(1-\alpha - 2\zeta\gamma^{-1})|\partial\Pi| >(1-2\alpha)|\partial\Pi|$
which is immediately close to $q_h$
This contradicts the definition of tower because the degree
of immediate contiguity of $\Pi$ to $q_{h-1}$ should be at least $2\alpha$.

Thus $\Delta'$ has a $\beta$-cell $\pi$, i.e., there are two disjoint contiguity subdiagrams $\Gamma_1$
and $\Gamma_2$ of $\pi$ to $t_1$ and $t_2$ with the sum of degrees $>\bar\beta$ and with the
contours $x^k_1y^k_1x^k_2y^k_2$  ($k=1,2$), respectively (Fig. \ref{figF} c)). The subpaths $y_2^1$ and $y_2^2$ of $t_1$ and $t_2$
both belong to $q_{h-1}$ since all the $\cal R$-cells of $\Delta'$ belong to $\Delta_{h-1}$
but $\Pi$ does not belong to it. It follows from Remark \ref{close} that $\pi$ is $\bar\beta$-close to $q_{h-1}$,
because the subdiagrams $\Gamma_1$ and $\Gamma_2$ can be included in a single subdiagram of "closeness" to $q_{h-1}$. This contradicts to the minimality of $h$ since $\bar\beta>1-\alpha$.

The lemma is proved by contradiction.
\endproof

\begin{lem} \label{bound} For a tower $\Delta_h$ of height $h$ with a base $p$, we have $|q_h|\le |p|^2$.
\end{lem}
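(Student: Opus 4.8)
The plan is to deduce the bound directly from Lemmas \ref{tower} and \ref{dlina}, after disposing of the degenerate base case by hand.

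First I would handle $h=0$ separately. By definition, a tower of height $0$ with base $p$ is a diagram of rank $0$ with boundary path $pq_0$ and $\Lab(q_0)\equiv\Lab(p)^{-1}$, so $|q_0|=|p|$. Since $p$ is labeled by a non-empty reduced word, $|p|\ge 1$, and therefore $|q_0|=|p|\le|p|^2$, as claimed.

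Now suppose $h>0$. By definition $\Delta_h$ is a \emph{reduced} diagram whose boundary contour is $pq_h$, where the labels of $p$ and $q_h$ are reduced words, and by Lemma \ref{tower} the section $q_h$ is repelling in $\Delta_h$. Hence $\Delta_h$, together with the distinguished section $q_h$, satisfies precisely the hypotheses of Lemma \ref{dlina} (with the roles of ``$\Delta$, $q$, $p$'' played by ``$\Delta_h$, $q_h$, $p$''), and that lemma immediately yields $|q_h|\le|p|^2$.

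Since all the substantive work has already been carried out in Lemmas \ref{tower} and \ref{dlina}, there is essentially no obstacle in this argument; the only thing worth recording explicitly is the elementary observation $|p|\ge 1$, which is what makes the rank-$0$ case $|q_0|=|p|\le|p|^2$ go through. One could alternatively absorb the $h=0$ case into Lemma \ref{dlina} itself, since a rank-$0$ tower is in particular a reduced diagram in which every edge of $q_0$ is joined to $p$ by a $0$-bond (so $|q_0|\le|p|$); but stating it separately keeps the bookkeeping transparent.
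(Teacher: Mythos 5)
Your proof is correct and takes essentially the same route as the paper's: invoke Lemma \ref{tower} to establish that $q_h$ is repelling and then conclude via Lemma \ref{dlina}. The only difference is that you explicitly dispose of the degenerate $h=0$ case (where Lemma \ref{tower} is not formally applicable), a small point the paper's one-line proof passes over; this is a reasonable bit of extra care.
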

\proof The path $q_h$ is repelling by Lemma \ref{tower}, and the statement follows from Lemma \ref{dlina}.
\endproof

\begin{rem} The stronger statement can be proved for towers in the same way if the exponent $n$ is large enough:
 $|q_h|\le |p|^{1+c}$ with a positive $c=c(n)\to 0$.
\end{rem}

\begin{rem} \label{red} Assume that the boundary label of a subdiagram $\Gamma$ of rank $0$ is $aa^{-1}aa^{-1}$ for a letter $a$,
and the boundary edges of $\Gamma$ corresponding to the first occurrences of $a$ and $a^{-1}$ are connected by a $0$-bond and so are the edges corresponding to the second occurrences of these letters (Fig. \ref{figG}). Then one can
make a diamond move, i.e. to replace $\Gamma$ by a diagram $\Gamma'$ of rank $0$ with the same label, but in $\Gamma'$,
a $0$-bond connects the boundary edges corresponding to the first $a$ and the last $a^{-1}$ and another $0$-bond corresponds to the  middle $a^{-1}a$. Obviously diamond moves preserve the $\cal R$-cells of the whole diagram,
and transform a reduced diagram to a reduced one.

\begin{figure}
 \centering\hspace*{10mm}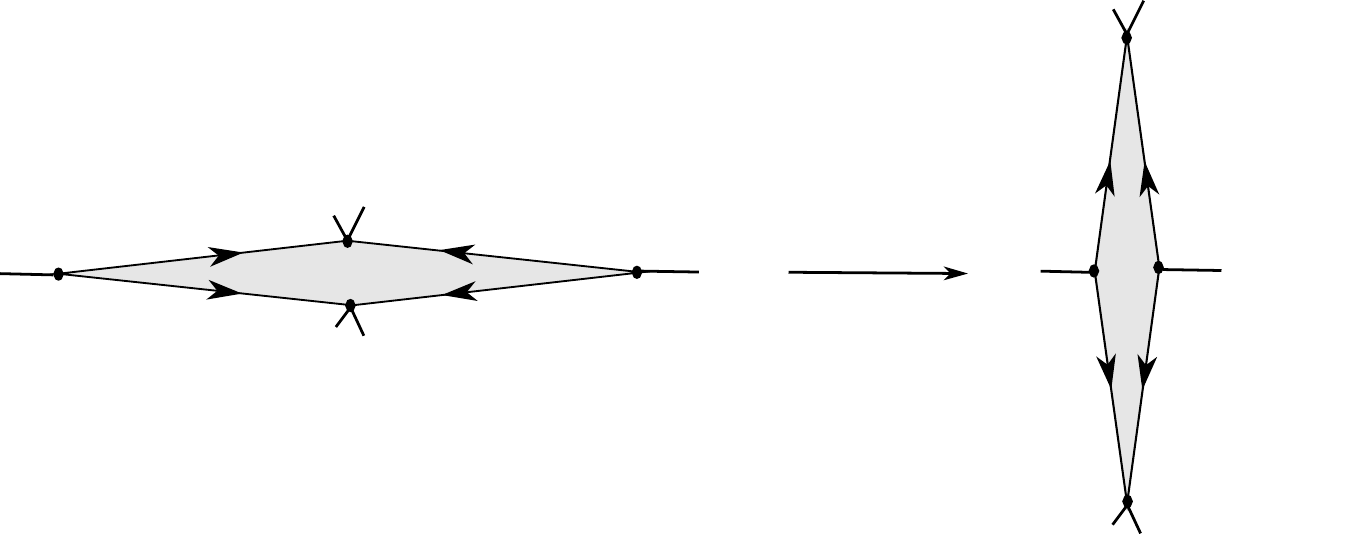\\
 \caption{Diamond move}\label{figG}
\end{figure}

We will use diamond moves as follows. Assume that $p$ is a boundary section of a reduced diagram $\Delta$,
its label is a reduced word, and a tower $\Delta_{h-1}$ with base $p$ and height $h-1$ is constructed as  a subdiagram of $\Delta$.
Let we have a cell $\Pi_h$, as in the definition of the tower, but the word $\Lab(q_h)$ is not reduced.
Then we can make a number of diamond moves making the label $q_h$ reduced in the modified diagram. These
moves increase the degree of immediate contiguity of $\Pi_h$ to $q_{h-1}.$
Therefore we will assume further, that if $\Delta_h$ is a {\em maximal (sub)tower} in $\Delta$ with base
$p$, then  the label of $q_h$ is reduced and there are no $\cal R$-cells $\Pi$ in $\Delta\backslash\Delta_h$ with the degree of immediate contiguity to $q_h$ at least $2\alpha$.
\end{rem}

\subsection{Algebraic corollaries and proof of Theorem \ref{main1}}

In this subsection we derive some algebraic results about free Burnside groups and prove Theorem \ref{main1}. Recall that $B(m,n)$ denotes the free Burnside group with basis $\cal A$ of arbitrary cardinality $m\ge 2$ and of large enough odd exponent $n$. As in the previous subsection, all van Kampen diagrams considered here are over the presentation (\ref{B}).

We will need an auxiliary infinite set of positive words $\cal W$ in the alphabet $\cal A$ of
arbitrary cardinality $m\ge 2$. It must satisfy the following conditions.

\begin{enumerate}
\item[(*)] If $W\in \cal W$ and $V$ is a subword of $W$ of length $\ge |W|/10$, then $V$
is not a subword of another word from $\cal W$ and $V$ occurs in $W$ as a subword only once.

\item[(**)] Every word $W\in \cal W$ is $11$-aperiodic: we assume that no non-empty word of the form $V^{11}$
is a subword of $W$.

\item[(***)] For every constant $C>0$, the subset of all words of length $>C$ from $\cal W$
has cardinality $\max(\aleph_0, m)$.
\end{enumerate}

Such a set ${\cal W}_2$ was constructed by D. Sonkin in \cite{S} for $m=2$. One may assume
that all the words in ${\cal W}_2$ are long enough, e.g. have length at least $1000$.  If $m>2$, then $\cal W$ is the union of the copies of ${\cal W}_2$
in all the 2-letter subalphabets of $\cal A$.

Recall that there is a standard way to define the free product $G$  of groups $G_1, G_2,\dots$ in a group variety $\cal V$. (Here the set of subscripts is not necessarily finite or countable.) The group $G\in \cal V$ is generated by the subgroups (isomorphic to) $G_i$-s, and arbitrary homomorphisms of these groups to a group $H\in \cal V$ must extend to a homomorphism $G\to H$. The group $G$ is the quotient of the usual free product $\star_i G_i$
(taken in the class of all groups) over the verbal subgroup $V(G)$ corresponding to
the laws defining the variety $\cal V$.

\begin{thm}\label{conj}
Let $n\in \mathbb N$ be a large enough odd number and let $m\ge 2$ be a cardinal number. Assume that for some positive integer $r$, we have $r$ arbitrary families $(g_{11},g_{21},\dots),\dots,$ $(g_{1r},g_{2r},\dots)$ of elements of equal cardinalities $\le \max(\aleph_0, m)$ in $B(m,n)$ (repetitions are allowed). Then
there exist elements $f_1,f_2,\dots$ of $B(m,n)$ such that each of the the subgroups $H_k$ ($k=1,\dots,r$)
generated by  the conjugates $h_{1k} =f_1g_{1k}f_1^{-1}, h_{2k}=f_2g_{2k}f_2^{-1},\dots,$
is canonically isomorphic to the free product of its cyclic subgroups
$<h_{1k}> $, $<h_{2k}>, \dots$
in the variety of groups satisfying the law $x^n=1$.
\end{thm}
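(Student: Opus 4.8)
The plan is to build the conjugators $f_i$ out of the auxiliary words of $\mathcal W$, so that each conjugate $h_{ik}=f_ig_{ik}f_i^{-1}$ carries a long ``repelling'' marker word, and then to show, by a van Kampen diagram argument over the presentation (\ref{B}), that the obvious epimorphism from the free Burnside product onto $H_k$ is injective. For each index $i$ I would pick a word $W_i\in\mathcal W$, all pairwise distinct, with $|W_i|$ sufficiently large compared with the geodesic lengths of all the elements $g_{ik}^{a}$ ($1\le a<n$, $1\le k\le r$); this is possible by (***), the greedy choice being legitimate because for every bound $C$ the set of words of $\mathcal W$ longer than $C$ still has cardinality $\max(\aleph_0,m)$. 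Put $f_i=W_i$ (after possibly multiplying by one fixed letter to kill trivial end-cancellation) and $h_{ik}=f_ig_{ik}f_i^{-1}$. Since conjugation is an automorphism of $B(m,n)$ we have $\mathrm{ord}(h_{ik})=\mathrm{ord}(g_{ik})$, so $\langle h_{ik}\rangle$ is the intended free factor, and the theorem reduces to injectivity of the canonical epimorphism $\phi_k\colon P_k\twoheadrightarrow H_k$, where $P_k$ is the free product of the $\langle h_{ik}\rangle$ in the variety $x^n=1$. Suppose $\phi_k$ is not injective: a cyclically reduced word $w=h_{i_1k}^{a_1}\cdots h_{i_lk}^{a_l}$ with $l\ge1$, $i_j\ne i_{j+1}$ cyclically, and $a_j\not\equiv0$ modulo the order of $h_{i_jk}$, equals $1$ in $B(m,n)$; for $l=1$ this says $g_{i_1k}^{a_1}=1$, absurd, so $l\ge2$ and at least two positions occur. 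Fixing reduced words $V_j$ representing $g_{i_jk}^{a_j}$, take a reduced van Kampen diagram $\Delta$ over (\ref{B}) with boundary label the cyclic word $W_{i_1}V_1W_{i_1}^{-1}\cdots W_{i_l}V_lW_{i_l}^{-1}$, chosen with the least possible number of $\mathcal R$-cells; its boundary splits into the $2l$ long ``$W$-sections'' (labeled $W_{i_j}^{\pm1}$) and the $l$ short ``$V$-sections'' (labeled $V_j$, with $|V_j|<|W_{i_j}|$), where adjacent $W$-sections $W_{i_j}^{-1},W_{i_{j+1}}$ meet without cancellation because the $W$'s are positive.

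The geometric core of the proof is the following \textbf{Claim}: \emph{no $\mathcal R$-cell of $\Delta$ is $(1-\alpha)$-close to a $W$-section, so each $W$-section is repelling; moreover no contiguity subdiagram of an $\mathcal R$-cell has degree $\ge\e$ simultaneously to a $W_i^{\pm1}$-section and to a $W_{i'}^{\pm1}$-section with $i\ne i'$.} To prove the first part I would argue that if an $\mathcal R$-cell $\Pi$ of rank $s$ — its contour labeled $A^{\pm n}$ with $|A|=s$ — were $(1-\alpha)$-close to a $W$-section $p$, then, passing to the joining subdiagram and using the contiguity estimates of Section 3.1 (short side arcs, Lemma \ref{cont}(a); comparison of contiguity arc lengths, Lemma \ref{cont}(b)--(c)), the reduced word $\mathrm{Lab}(p)$ — a subword of $W_i^{\pm1}$ — would be forced to contain a genuinely periodic subword of length comparable to $ns$, hence a subword of the form $V^{11}$ (as $n$ is large), contradicting (**). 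The bridging statement is handled the same way: high contiguity of one cell to two distinct $W$'s would produce a common subword, or a twice-repeated long subword of a single $W_i$, of length $\ge|W_i|/10$, contradicting (*).

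Granting the Claim, the endgame runs as follows. If $r(\Delta)=0$ the boundary word is freely trivial; but, by (*), the positivity of the $W$'s, and $V_j\ne1$ (since $a_j\not\equiv0$), the cyclic word $\prod_j W_{i_j}V_jW_{i_j}^{-1}$ has nonempty reduced form — a contradiction. So $r(\Delta)>0$, and I would peel the cells off the boundary: Lemma \ref{gamma} locates a $\gamma$-cell with contiguity $>1-\gamma$ to the boundary; the Claim forbids this contiguity from landing on the $W$-sections, so by Lemma \ref{nobeta}(c) it produces a nontrivial tower based on one of the short $V$-sections. Replacing that tower by its top — which is reduced and nonempty by minimality of $\Delta$, has length $\le|V_j|^2$ by Lemma \ref{bound}, and is again repelling by Lemma \ref{tower} — and iterating, one reduces $\Delta$ to a rank-$0$ subdiagram $\Delta^{\ast}$ whose boundary consists of the untouched (repelling) $W$-sections together with short repelling tower tops of total ``non-$W$'' length $\le\sum_j|V_j|^2\ll\sum_j|W_{i_j}|$. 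Then $\mathrm{Lab}(\partial\Delta^{\ast})$ is freely trivial while still containing all the long $W$-sections essentially intact — the tops are far too short to let the flanking $W$'s cancel through them, again by (*) and positivity — which is the same contradiction as in the $r(\Delta)=0$ case. Hence $\phi_k$ is injective; since the single choice of the $W_i$ serves all $k=1,\dots,r$, the theorem follows.

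The hard part will be the Claim — turning the elementary combinatorics (*)--(**) of $\mathcal W$ into the statement that $W$-sections repel cells, in particular propagating the near-periodicity of a contiguity arc of a cell through a joining subdiagram of positive rank all the way down to the literal reduced word $W_i$. This is exactly the purpose of the ``repelling section / tower'' technology introduced in Section \ref{trs} (it lets one substitute ``repelling'' for ``smooth'' in the book's estimates, e.g. Lemmas \ref{rep}, \ref{nobeta}, \ref{dlina}, \ref{tower}), and carrying it out rigorously — together with the bookkeeping of the inequalities $|W_i|\gg(\max_{a<n,\,k\le r}|g_{ik}^{a}|)^2$ needed in the endgame and the routine cancellation issues at the $W_iV_j$ junctions — is where the real work lies.
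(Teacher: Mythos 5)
Your setup — choosing the conjugators from $\mathcal W$ with $|W_i|$ dominated by the relevant exponents, using positivity and (*)--(**) to get long ``marker'' sections, and running a van~Kampen diagram argument with the repelling/tower machinery of Section~\ref{trs} — matches the paper's. The critical problem is the endgame, and it is not a detail: as described, your argument would prove that the \emph{ordinary} free product of the cyclic groups $\langle h_{ik}\rangle$ embeds in $B(m,n)$, which is false. Concretely, you lift a non-trivial element of the free Burnside product to a cyclically reduced word $w=h_{i_1k}^{a_1}\cdots h_{i_lk}^{a_l}$ in the free product and then try to show directly that $w\ne 1$ in $B(m,n)$ by driving $\Delta$ to rank $0$. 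But $w=(h_1h_2)^n$ is a legitimate such word (it is cyclically reduced in the free product and non-trivial there), and it \emph{does} equal $1$ in $B(m,n)$; your chain of deductions must therefore break somewhere.

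Where it breaks is the claim that the $\gamma$-cell (or the cell given by Lemma~\ref{nobeta}(b)) can always be absorbed into a tower over a single short $V$-section. Your Claim, even granted in full, only excludes a single contiguity subdiagram of degree $\ge 1-\alpha$ to \emph{one} $W$-section; it does not prevent a cell whose $>(1-\gamma)$ total contiguity to $\partial\Delta$ is spread across several $W$-sections and several $V$-sections. In fact, after the towers over the $p_s$'s are maximal, the cell $\Pi$ provided by Lemma~\ref{nobeta}(b) necessarily does exactly that: a subarc close to a single $\bar q_s$ has length $<2\alpha n|A|$, a subarc whose label sits inside a single $\Lab(\bar w_s\bar w'_{s+1})$ has length $<22|A|$ by~(**), and $2/5\,n|A|$ is far larger, so the arc must sweep across several markers. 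Such a cell is not a tower cell and cannot be peeled off to leave a rank-$0$ diagram. What the paper does instead at this point is the piece your proposal is missing: it introduces the composite type $\tau(P,\Delta)$ (recording both the period-exponents $m_s$ in the product $P$ and the ranks of cells in $\Delta$) and, using $A$-periodicity of $\Lab(t)$ plus the $|A|$-shift and property~(*), identifies the period $A$ with a subword $U$ of $P$ that represents an element of $H$, then performs the substitution $U\mapsto U^{1-n}$. This removes $\Pi$ and adds only cells of strictly smaller rank $i<|A|$ (by the comparison $i<(4n)^{-1}|W_{i_k}|<\frac34|W_{i_k}|<r(\Pi)$), so $\tau(P,\Delta)$ strictly drops — and the substituted relation $U^n=1$ is precisely one of the permitted free-Burnside-product relations in $H$. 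The contradiction is to the \emph{minimality of the type}, not to triviality of the word. Without this step you are stuck; with it, the proof is accounting for exactly those relations (like $(h_1h_2)^n=1$) that your plan tacitly pretends do not exist.

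A secondary remark: your encoding of the problem should not fix the exponents $a_j$ once and for all as "cyclically reduced." The paper's bookkeeping allows $P$ to be rewritten during the descent (replacing $(A'_{i_s})^{m_s}$ by $(A'_{i_s})^{m_s\pm n}$, merging adjacent parentheses with the same $W$, inserting the relation $h_{i_s}^{n_{i_s}}=1$), and the type $\tau(P)$ controls this. Your fixed normal form has no room for these moves, which are needed to keep the induction closed.
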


\proof We may assume that all the elements $g_{jk}$ are non-trivial in $B(m,n)$.
Let they be presented by some reduced words $U_{jk}$ in the generators $a_1, a_2\dots$.
By the part 1) of \cite[Theorem 19.4]{book}, any word $U_{jk}$ is  conjugate to a power of some period (of some rank) $A_{jk}$. Replacing each $A_{jk}$ by a cyclic permutation $A'_{jk}$
we may assume that
$U_{jk}\equiv V_{jk}(A'_{jk})^{d_{jk}}V_{jk}^{-1}$, the word $U_{jk}$ is reduced, and  $d_{jk} \mid n,$ whence $n=d_{jk}n_{jk}$, where $n_{jk}$ is the order of $g_{jk}$
since by \cite[Theorem 19.4 2)]{book}, every period $A_{jk}$ has order $n$ in $B(m,n)$.
We choose the elements $f_1,f_2,\dots$ to be presented by pairwise different words
$W_1,W_2,\dots$ from $\cal W$ such that
\begin{equation}\label{Wj}
|W_j|\ge 100 n^2\max_{1\le k\le r} |U_{jk}|^2
\end{equation}
 for every
$j=1,2,\dots$.
Such a choice is possible by property (***). From now we may omit the index $k$
in $g_{jk}, h_{jk}, H_k, U_{jk}, A_{jk}, A'_{jk}, V_{jk}, d_{jk}, n_{jk},$
and so on because after the conjugators $f_1,f_2,\dots$ are chosen independently of $k$,  the statement
of the theorem can be proved separately for every subgroup $H_1,\dots,H_r.$

Thus, from now we consider only one subgroup $H$.  A word $R(h_1,h_2,\dots)$ in
the generators of $H$ is equal to $R(W_1U_1W_1^{-1}, W_2U_2W_2^{-1},\dots)$, i.e.,
being rewritten over the alphabet $\cal A$ it has the form

\begin{equation}\label{P}
P\equiv (W_{i_1}V_{i_1}(A'_{i_1})^{m_1}V_{i_1}^{-1}W_{i_1}^{-1})(W_{i_2}V_{i_2}(A'_{i_2})^{m_2}V_{i_2}^{-1}W_{i_2}^{-1})\dots (W_{i_l}V_{i_l}(A'_{i_l})^{m_l}V_{i_l}^{-1}W_{i_l}^{-1})
\end{equation}
We define the {\it type} of the product (\ref{P}) as follows. Let $\sigma_t$ be the sum of the absolute values of the exponents $m_j$
over all the occurrences of the cyclic shifts $A'_{i_j}$ of periods $A_{i_j}$ of rank $t$ in the parentheses of the right-hand side of (\ref{P}),
$\tau_t=\tau_t(P)=\sigma_t/n$ and $\tau(P)=(\tau_1,\tau_2,\dots)$.

If $P$ is trivial in $B(m,n)$, then there is a diagram $\Delta$ with the contour $q$ labeled by $P$.
The type $\tau(\Delta)=(\tau_1,\tau_2,\dots,)$, where $\tau_i$ is the number of cells of rank $i$ in $\Delta$.
We define $\tau(P,\Delta)=\tau(P)+\tau(\Delta)$, where the sum is componentwise.

Our goal is to prove that arbitrary relation $R(h_1,h_2,...)=1$ follows
in $H$ from the relations of the form $h_j^{n_j}\equiv (W_jV_j(A'_j)^{d_j}V_j^{-1}W_j^{-1})^{n_j}=1$
and the relations of the form $v(h_1,\dots, h_s)^n=1$, where $v$ is any word.
Proving by contradiction, we choose the word $R(h_1,h_2,...)$, its form $P$ (\ref{P})
and a diagram $\Delta$ with boundary label $P$ such that the type $\tau(P,\Delta)$
is as low as possible. In particular, given boundary label $P$, the type of $\Delta$ is minimal, and so $\Delta$ is a reduced diagram.

If $W_{i_s}\equiv W_{i_{s+1}}$, then $A'_{i_s}\equiv A'_{i_{s+1}}$, $V_{i_s}\equiv V_{i_{s+1}}$, and the
(sub)product  \\ $$(W_{i_s}V_{i_s}(A'_{i_s})^{m_s}V_{i_s}^{-1}W_{i_s}^{-1})(W_{i_{s+1}}V_{i_{s+1}}(A'_{i_{s+1}})^{m_{s+1}}V_{i_{s+1}}^{-1}W_{i_{s+1}})$$
is freely equal to $W_{i_s}V_{i_s}(A'_{i_s})^{m_s+m_{s+1}}V_{i_{s+1}}^{-1}W_{i_s}^{-1}$. The new product $P'$
has type $\tau(P')\le\tau(P)$, and so we will assume that $W_{i_s}\ne W_{i_{s+1}}$
for every $s$. Similarly we may assume that $0<|m_s|<n$. Indeed, if $m_s\ge n$ (if $m_s\le -n$),
then one can add one cell of rank $t=r(A_{i_s})$ to $\Delta$ (and replace the obtained diagram by a
minimal one) and replace the
occurrence $(A'_{i_s})^{m_s}$ in $P$ by $(A'_{i_s})^{m_s-n}$ (by $(A'_{i_s})^{m_s+n})$, resp.). This transformation
corresponds to inserting of $h_{i_s}^{\pm n_{i_s}}$ in the word $R$, and it
decreases $\tau_t(P)$ by $1$ and increases $\tau_t(\Delta)$ by at most $1$, so $\tau(P,\Delta)$
does not increase.

\begin{figure}
 \centering\hspace*{10mm}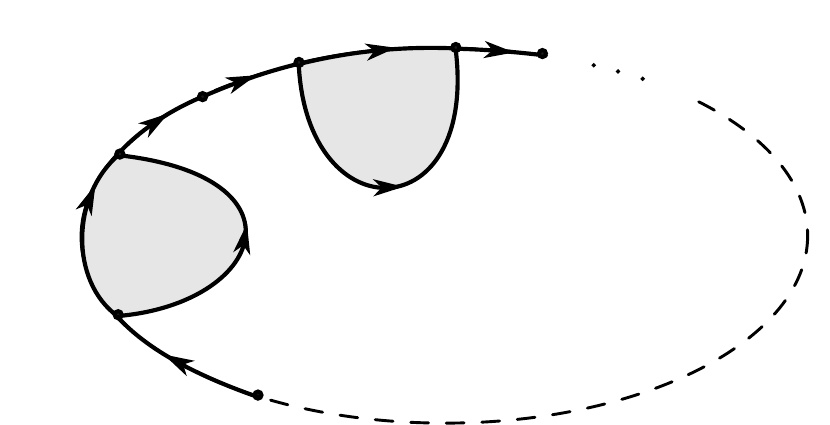\\
 \caption{}\label{figH}
\end{figure}

We have the boundary section $q = w_1 p_1 w'_1\dots w_lp_lw'_l$ in $\Delta$, where $\Lab(w_s)\equiv W_{i_s}$, $\Lab(p_s)\equiv V_{i_s}(A'_{i_s})^{m_s}V_{i_s}^{-1}$, and
$\Lab(w'_s) \equiv W_{i_s}^{-1}$ ($1\le s \le l$). Denote by $\Delta_1$ a maximal subdiagram of $\Delta$
which is a tower with the base $p_1$. Let $p_1q_1^{-1}$ be the contour of $\Delta_1$. Then we denote by $\Delta_2$
a maximal tower with the base $p_2$ in the subdiagram with the contour $w_1 q_1 w'_1\dots w_lp_lw'_l$,
it is bounded by $p_2q_2^{-1}$, and so on (see Fig. \ref{figH}). We obtain a reduced diagram $\Delta'$ with contour
$q'=w_1 q_1 w'_1\dots w_lq_lw'_l$, where the words $Q_s\equiv \Lab(q_s)$ are reduced and $\Delta'$ has no cells $\Pi$
with the degree of immediate contiguity to $q_j $ at least $2\alpha$ by Remark \ref{red}.
Also note that every $Q_s$ is nontrivial since $\Lab(p_s)=\Lab(q_s)$ in $B(m, n)$.
By Lemma \ref{dlina}, $|q_s|\le |p_s|^2<|V_{i_s}(A'_{i_s})^{m_s}V_{i_s}^{-1}|^2<(n|V_{i_s}A'_{i_s}V_{i_s}^{-1}|)^2$, and so by the choice (\ref{Wj}) of the words $W_1,W_2,\dots$, we have

\begin{equation}\label{vyrez}
|w_s|=|W_{i_s}|>100 |q_s|,\;\;\; s=1,\dots, l
\end{equation}

The possible cancelations in the word $W_{i_s}Q_sW_{i_s}^{-1}$ can affect a suffix of
length $<11|Q_s|$ in  $W_{i_s}$ since by (**), the word $W_{i_s}$ does not contain non-trivial
$11$-th powers.  The possible cancelations in the products $W_{i_s}^{-1}W_{i_{s+1}}$ can
touch less then $1/10$ of each  factors by (*). Thus after all the cancelations
in $\Lab(q)$ (they correspond to diamond moves in $\Delta'$), we will have a reduced
diagram $\bar\Delta$ with a reduced boundary label $\bar Q$ of $\bar q = \bar w_1 \bar q_1 \bar w'_1\dots \bar w_l\bar q_l\bar w'_l,$ where $\bar W_s\equiv \Lab (\bar w_s)$ and $\bar W'_s\equiv \Lab (\bar w'_s)^{-1}$ are subwords of $W_{i_s}^{\pm 1}$ with
length $>\frac34 |W_{i_s}|$, $|\bar q_s|< \frac1{80}\min (|\bar w_s|,|\bar w'_s|)$ for $s=1,\dots, l$,
and there are no cells immediately close in $\bar\Delta$ to $\bar q_s$ with degree $\ge 2\alpha$. In other words, The boundary label $\bar Q$ consists of long {\it traces}
$\bar W_s$ and $\bar W'_s$ of the conjugating words $W_{i_s}^{\pm 1}$ and of
short intervals $\bar Q_s$ between them ($s=1,\dots,l$).

Since the word $\bar Q$ is reduced and non-empty, we have $r(\bar\Delta)>0$, and by Lemma \ref{nobeta} (b),
$\bar\Delta$ has a cell $\Pi$ immediately close to $\bar q$ with degree $>2/5$.
We will use the cell $\Pi$ to decrease the type $\tau(P,\Delta)$ of the original counter-example.

Let $\Lab(\partial\Pi)$ have a period $A$, that is, the arc $t$ of $\partial\Pi$ immediately close to $\bar q$
is labeled by an $A$-periodic word of length $>\frac{2n}5|A|$. Any subarc of
length $>2\alpha n |A|$ of $t$ cannot be immediately close to some $\bar q_s$ since $\Delta_s$ is the maximal subtower of $\Delta$ with the base $p_s$. A subarc of $t$ whose label is a subword of some word $\Lab(\bar w_s\bar w'_{s+1})$ must be of length $<22|A|$ by the condition (**). Hence $t^{-1}$ has a subpath $z$ of length
$>(2/5-4\alpha)n|A|> \frac{n}{3}|A|$ whose label starts and ends with a whole word of the form $\Lab(\bar w_s)$ or $\Lab(\bar w'_{s+1})$.

Let $W_{i_k}^{\pm 1}$ be the longest among the words whose traces $\bar W_s,$ $\bar W'_s$ occur in the word $Z\equiv \Lab(z),$ and $W$ is its trace.  Without lost of generality we assume that $W\equiv \bar W_k$ for some $k$. so we have $Z\equiv Z_1WZ_2.$
But since the word $Z$ is $A$-periodic and $|W|<11|A|$ one can shift this occurrence of $W$ to the right (or to the
left): $Z\equiv Z_3WZ_4$ with $|Z_3|=|Z_1|+|A|.$ By the choice of $k$ and the inequality
(\ref{vyrez}), we have $|W|>\frac34 |W_{i_k}|> 75|\bar Q_s|$ for the labels $\bar Q_s$ of arbitrary subpath $\bar q_s$ occuring in $z$.
Hence the occurrence of $W$ in $Z_3WZ_4$ has to overlap  with a  trace $\bar W$ of some $W_{i_r}^{\pm 1}$ by a subword $V$ of length $|V|>\frac13 \min (|W|, |\bar W|),$
Since $\frac13\cdot \frac34 >\frac {1}{10}$, we have $W_{i_r}\equiv W_{i_k}$
by the property (*). It also follows that $r>k$.

 Thus, both occurrences of $W$ in $Z$ are the traces of the same word $W_{i_k}$  but with different occurrences in the product $P$. Therefore the  period $A,$
 is freely conjugate to a word $\bar A\equiv W_{i_k}\bar Q_k \bar W'_k \bar W_{k+1}\bar Q_{k+1} \bar W'_{k+1}\dots \bar W_{u}\bar Q_u W_{i_u}^{-1}$ for some $u>k$.

 Hence the word $\bar A$ is equal in $B(m,n)$ to the subword $$U\equiv W_{i_k}V_{i_k}(A'_{i_k})^{m_k}V_{i_k}^{-1}W_{i_k}^{-1}\dots W_{i_u}V_{i_u}(A'_{i_u})^{m_u}V_{i_u}^{-1}W_{i_u}^{-1}$$
of the product $P\equiv U_1UU_2$, and each of the factors $U_1$, $U$ and $U_2$ represent
an element from the subgroup $H$. Moreover $\bar A= U$ in  rank $i$, where $i$
is the maximum of the ranks of the towers $\Delta_j$ with the bases $p_j$, where $k\le j\le u $. Recall that by the choice of $k$ and (\ref{vyrez}), $|q_j|\le |W_{i_k}|/100$. Hence $|\partial\Delta_j|< |W_{i_k}|/50$, and so
$$i=max_{k\le j\le u} r(\Delta_j)< (\bar\beta n)^{-1} |W_{i_k}|/50< (4n)^{-1}|W_{i_k}|$$ by Lemma \ref{beta}.
On the other hand, $r(\Pi)=|A| > |W| > \frac34 |W_{i_k}|$, i.e., $r(\Pi)>i$.

We obtain that the boundary label of $\Pi$ is freely conjugate to $\bar A^n$ which is equal to $U^n$ in rank $i$, and  if we remove $\Pi$
together with $\Delta_k,\dots,\Delta_u$ from $\Delta$, we obtain a diagram $\Delta^1$, with boundary label freely equal to the product $P(1)$ obtained from the product $P$ by the replacement of $U$ by $\bar A^{1-n}$. Attaching $n-1$ copies of diagrams for the equality $\bar A = U$ in rank $i$, we obtain a diagram $\Delta^2$ with boundary label freely equal to $P(2)$ obtained from $P$ by the substitution $U\to U^{1-n}$. Note that the words $U$ and  $P(2)$ are words in the generators of $H$. Finally, we replace $\Delta^2$ by a diagram  $\Delta^3$ of minimal type with the same label $P(3)\equiv P(2).$
Note that $\tau(\Delta^3)<\tau(\Delta)$ since we removed a cell of rank $|A|$ and added a number of cells
of rank $\le i<|A|$. Moreover $\tau_{|A|}(\Delta^3)<\tau_{|A|}(\Delta)$. On the other hand, the transition
$U\to U^{1-n}$ can change the type of $P$ only for the components $\tau_j(P)$ with $j<|A|$. Indeed,
for $k\le j \le u$, we have by (\ref{vyrez}): $$r (A_{i_j})= |A_{i_j}|<|W_{i_j}|/100 <\frac43\frac{|\bar W_{j}|}{100}\le |A|/75=r(A)/75<r(A).$$

Therefore $\tau(P(3),\Delta^3)<\tau(P,\Delta)$, but the boundary label of $\Delta^3$ is $U_1U^{1-n}U_2$, which is equivalent to the word $P\equiv U_1UU_2$ modulo the relation $U^n=1$, where the words $U, U_1, U_2$ represent elements from the subgroup $H$. This contradicts to the minimality
of the type $\tau(P,\Delta)$ in our counter-example. The theorem is proved.
\endproof

\begin{cor}\label{Burns-cor}
\begin{enumerate}
\item[(a)] If under the hypothesis of Theorem \ref{conj}, all the elements
$g_{1k},g_{2k},\dots$ ($k\in\{1,\dots,r\}$) have order $n$, then the conjugate elements $h_{1k},h_{k2},...$ form
a basis in the free Burnside subgroup $H_k$. In particular, this is always the case if $n$ is prime.

\item[(b)] For every finite $m\ge 2$ and every big enough odd $n$, there is a free Burnside subgroup $H\le B(m,n)$ of infinite rank having non-empty intersection with every conjugacy class of $B(m,n)$.
\end{enumerate}
\end{cor}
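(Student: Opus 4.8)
The plan is to derive both parts quickly from Theorem \ref{conj}, using one elementary fact about varieties together with two standard facts about $B(m,n)$ recorded in \cite{book}. The variety fact is that in any variety the free product of free objects $F(X_i)$ is canonically the free object $F\bigl(\bigsqcup_i X_i\bigr)$ on the disjoint union of their bases (immediate from the universal property of free objects). The two Burnside facts, both already used in the proof of Theorem \ref{conj}, are that every period has order exactly $n$ in $B(m,n)$ and that every element of $B(m,n)$ is conjugate to a power of some period (\cite[Theorem 19.4]{book}).

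For part (a): conjugation preserves the order of an element, so if every $g_{ik}$ has order $n$ then so does every $h_{ik}=f_ig_{ik}f_i^{-1}$; hence each cyclic subgroup $\langle h_{ik}\rangle$ is isomorphic to $\mathbb Z/n\mathbb Z$, which is precisely the free Burnside group $B(1,n)$ (the free group of rank $1$ in the variety of groups satisfying $x^n=1$). By Theorem \ref{conj}, $H_k$ is the free product, in this variety, of these copies of $B(1,n)$, so by the variety fact $H_k$ is canonically isomorphic to $B(\kappa,n)$ with free basis $h_{1k},h_{2k},\dots$, where $\kappa$ is the common cardinality of the $r$ families. The final sentence of (a) follows because when $n$ is prime every non-trivial element of $B(m,n)$ has order $n$, and in the proof of Theorem \ref{conj} the elements $g_{jk}$ may be taken non-trivial, so the hypothesis ``all have order $n$'' is then automatic.

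For part (b): let $m$ be finite, so that $B(m,n)$ is countable, and let $A_1,A_2,\dots$ be an enumeration of all periods of all ranks; this enumeration is infinite, since for $m\ge2$ and large odd $n$ the group $B(m,n)$ has periods of arbitrarily large rank (standard; see \cite{book}). Apply Theorem \ref{conj} with $r=1$ to the family $(A_1,A_2,\dots)$ to obtain conjugators $f_1,f_2,\dots$, and put $h_j=f_jA_jf_j^{-1}$, $H=\langle h_1,h_2,\dots\rangle$. Each $A_j$, and therefore each $h_j$, has order $n$, so part (a) shows that $H$ is canonically $B(\aleph_0,n)$ with $\{h_j\}$ as a free Burnside basis --- a free Burnside subgroup of infinite rank. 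Finally, $H$ meets every conjugacy class of $B(m,n)$: the trivial class contains $1\in H$, while any non-trivial $g$ is conjugate to a power $A_j^{\,d}$ of some period, hence to $h_j^{\,d}=f_jA_j^{\,d}f_j^{-1}\in H$.

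There is no genuinely hard step once Theorem \ref{conj} is available; the only point that needs care is the choice of family in part (b). Conjugating arbitrary conjugacy-class representatives would fail when $n$ is composite, since those representatives can have order a proper divisor of $n$, whence the free product in the variety $x^n=1$ of the corresponding cyclic groups is not a free Burnside group; conjugating periods instead --- all of which have order $n$ --- avoids this, and one recovers every conjugacy class by passing to the appropriate power of a conjugated period. One also uses the standard fact that $B(m,n)$, $m\ge2$, has infinitely many periods.
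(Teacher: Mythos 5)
Your proof is correct and follows essentially the same route as the paper. The paper's proof of (b) says to apply Theorem \ref{conj} to ``the family of elements of order $n$'', justified by the fact that every non-trivial element of $B(m,n)$ lies in a cyclic subgroup of order $n$; your choice of the family of periods is just a concrete instance of this, using the equivalent form of the same [book, Theorem 19.4] facts (periods have order $n$, and every element is conjugate to a power of a period). Your explicit fleshing out of (a) via the universal property of free products in a variety is exactly what the paper's terse ``directly follows'' leaves implicit.
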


\proof The first claim directly follows from Theorem \ref{conj}. For (b), we just note
that every non-trivial element of $B(m,n)$ belongs to a cyclic subgroup of order $n$
(e.g., \cite[Theorem 19.4]{book}), and so it suffices to apply Theorem \ref{conj} to
the family of elements of order $n$.\endproof

\begin{thm}\label{commutators}
Let $n\in \mathbb N$ be a large enough odd number and let $m\ge 2$ be a cardinal number. Assume that for some positive integer $r$, we have $r$ arbitrary families $(g_{11},g_{21},\dots)$, $\dots$, $(g_{1r},g_{2r},\dots)$ of nontrivial elements of $B(m,n)$ with equal cardinalities $\le \max(\aleph_0, m)$. Then
there exist elements $f_1,f_2,\dots$ of $B(m,n)$ such that for every $k\in\{1,\dots,r\}$ the commutators
$h_{1k} =f_1g_{1k}f_1^{-1}g_{1k}^{-1}, h_{2k}=f_2g_{2k}f_2^{-1}g_{2k}^{-1},\dots $
freely generate a free Burnside
subgroup of exponent $n$.
\end{thm}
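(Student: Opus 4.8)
The plan is to prove Theorem \ref{commutators} by adapting the proof of Theorem \ref{conj}. As there, it suffices to find elements $f_1,f_2,\dots$ of $B(m,n)$ (chosen independently of $k$, so that the conclusion then follows for every $k$ separately) such that for each $k$ the assignment $x_j\mapsto h_{jk}=f_jg_{jk}f_j^{-1}g_{jk}^{-1}$ extends to an injective homomorphism from the free Burnside group on the symbols $\{x_j\}$ into $B(m,n)$; injectivity automatically forces each $h_{jk}$ to have order $n$, so that $\langle h_{jk}\mid j\rangle$ is then free Burnside of exponent $n$ with basis $\{h_{jk}\}$. The commutator word $W_jU_{jk}W_j^{-1}U_{jk}^{-1}$ will play the role played in the proof of Theorem \ref{conj} by the conjugate word $W_jU_{jk}W_j^{-1}$, where $U_{jk}$ is a fixed reduced word representing $g_{jk}$; the only structural change is that each ``$h$-block'' now carries an extra short tail $U_{jk}^{\pm1}$.

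Concretely, I would first record, exactly as in the proof of Theorem \ref{conj}, that every $g_{jk}$ is conjugate to a power of a period, so that $U_{jk}\equiv V_{jk}(A'_{jk})^{d_{jk}}V_{jk}^{-1}$ with $d_{jk}\mid n$ (by \cite[Theorem 19.4]{book}), and then choose $f_j=W_j\in\mathcal W$ pairwise distinct with $|W_j|\ge 100n^2\max_{1\le k\le r}|U_{jk}|^2$; such a choice exists by property (***). Fix $k$ and drop it from the notation. If the claim fails, pick a word $R$ in the $x_j$ which is nontrivial in the free Burnside group on $\{x_j\}$ but with $R(h_1,h_2,\dots)=1$ in $B(m,n)$, rewrite it over $\mathcal A$ as a product $P\equiv\prod_{s=1}^l(W_{i_s}U_{i_s}W_{i_s}^{-1}U_{i_s}^{-1})^{\pm1}$, take a van Kampen diagram $\Delta$ over the presentation (\ref{B}) with boundary label $P$, and among all such $R$, all their reduced-word representatives, and all such $\Delta$ choose one minimizing the type $\tau(P,\Delta)=\tau(P)+\tau(\Delta)$, where $\tau(\Delta)$ counts cells by rank and $\tau(P)$ counts, by rank, the total absolute exponent of periods occurring in the parentheses of $P$, exactly as in Theorem \ref{conj}. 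The rest of the argument then runs as in that proof: building a maximal tower on the base of each short block $U_{i_s}^{\pm1}$ and invoking Lemma \ref{bound} replaces it by a section of length $\le|U_{i_s}|^2\ll|W_{i_s}|$, and after the usual diamond moves one obtains a reduced diagram $\bar\Delta$ whose reduced boundary label consists of long traces $\bar W_s$ of length $>\frac34|W_{i_s}|$ separated by short intervals to which no $\mathcal R$-cell is immediately close with degree $\ge2\alpha$. By Lemma \ref{nobeta}(b), $\bar\Delta$ has an $\mathcal R$-cell $\Pi$ with an $A$-periodic subarc of $\partial\Pi$ of length $>\frac25|\partial\Pi|$ immediately close to $\partial\bar\Delta$; using $11$-aperiodicity of the $W_j$ (property (**)) to cap the $A$-periodic overlap with any single trace, and the maximal-tower condition to cap the overlap with each short interval, one extracts an $A$-periodic subarc spanning two occurrences of one and the same trace $W_{i_k}$ in $P$ (property (*) identifying them), so the period $A$ is freely conjugate to a subword $\bar A$ of $P$ which represents, in some rank $i<r(\Pi)$, an element $U$ of the subgroup $H$ equal to the value of a subproduct $R'$ of consecutive $h$-blocks of $R$. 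Replacing $\Pi$ together with the towers it encloses by diagrams for the equality $\bar A^{1-n}=U^{1-n}$ (in rank $i$) substitutes $R'$ by $(R')^{1-n}$ in $R$ — a change legal modulo $x^n=1$, so the resulting word still represents the same nontrivial element of the free Burnside group on $\{x_j\}$ — and yields a diagram of strictly smaller type, contradicting minimality.

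The main point requiring care, exactly as in the proof of Theorem \ref{conj}, is the type bookkeeping at this final surgery: since the components $\tau_k$ are compared from the top (highest rank), one must verify that removing $\Pi$, a cell of rank $r(\Pi)=|A|$, while inserting only cells of rank $\le i$ and replacing $U$ by $U^{1-n}$, affects only type-components of rank $<|A|$ — so that the highest-rank component that changes, the one of rank $|A|$, strictly decreases while higher ones are untouched. Making this precise amounts to checking $i<|A|$ and $|A_{i_j}|<|A|$ for every period $A_{i_j}$ occurring in the affected blocks, which is exactly where the quadratic size condition $|W_j|\ge100n^2\max_k|U_{jk}|^2$, Lemma \ref{beta}, and Lemma \ref{bound} (via Lemma \ref{dlina}) enter. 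The remaining verifications are routine: the label of $\bar\Delta$ is a nonempty reduced word — the long $W$-traces survive free reduction because consecutive conjugators are distinct and the $U$'s are comparatively short — so $r(\bar\Delta)>0$ and Lemma \ref{nobeta}(b) applies; and, $f_1,f_2,\dots$ having been fixed before $k$ was fixed, the conclusion holds for all $k=1,\dots,r$ simultaneously.
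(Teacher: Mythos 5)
Your proposal is correct and follows essentially the same route as the paper: the same choice of conjugators $W_j\in\mathcal W$ via (***), the same tower construction on the short $U$-subwords, the same application of Lemma~\ref{nobeta}(b) to produce a cell $\Pi$, the same use of (*) and (**) to identify the period of $\Pi$ with a subproduct of consecutive commutator blocks, and the same $R'\to(R')^{1-n}$ surgery to decrease type. The only small deviation is harmless: you carry over the refined type $\tau(P,\Delta)$ from Theorem~\ref{conj}, whereas the paper explicitly notes ``we do not introduce $\tau(P)$ now'' and minimizes $\tau(\Delta)$ alone --- since each generator $h_j$ occurs in $R$ only with exponent $\pm1$, the exponent-normalization step that motivated $\tau(P)$ in Theorem~\ref{conj} never arises here, so $\tau(\Delta)$ suffices (your version still works because the surgery removes a cell of rank $|A|$ while affecting only lower-rank components of both $\tau(P)$ and $\tau(\Delta)$).
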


\proof This is an analog of Theorem \ref{conj}, and we choose the words $U_j$ and $W_j$ as there.
Let $P$ be a product of the commutators of the form $(W_{i_j}U_{i_j}W_{i_j}^{-1}U_{i_j}^{-1})^{\pm 1}$. Our goal now is to prove that an equality $P=1$ in $B(m,n)$ follows from the relations of the subgroup $H=<h_1,h_2,\dots>$ of the form $v(h_1,h_2,\dots)^n=1.$ Again we consider the diagram $\Delta$ over the presentation of $B(m,n)$ with
minimal type that corresponds to a nontrivial equality $P=1$. (We do not introduce $\tau(P)$ now.) Its contour
$q$ has a factorization $q = \prod (w_jp_jw'_jp'_j)^{\pm 1},$ where the sections $w_j, w'_j$ have labels $W_{i_j}^{\pm 1}$, $\Lab(p_j)\equiv U_{i_j}$, and $\Lab(p'_j)$ is either  $U_{i_j}^{-1}$, or the reduced form of   $U_{i_j}^{-1}U_{i_{j+1}}$, or of $U_{i_{j-1}}^{-1}U_{i_{j}}.$ All these labels are nonempty
by the choice of the words $W_1,W_2,\dots$

Then, as in Theorem \ref{conj}, we construct the towers
based now on the subpaths $p_j$ and $p'_j$, and removing them we obtain a diagram $\bar\Delta$.
As before, we obtain a cell $\Pi$ with a boundary  arc $t$
which is immediately close to $\bar q,$ and $\Lab(t^{-1})$
starts and ends with some $\Lab(\bar w_s)^{\pm 1}$ or with  $\Lab(\bar w'_s)^{\pm 1}$. Recall also that
arbitrary subpath $\bar q_s$ has length $<2\alpha n|A|$ and $|\bar w_s|, |\bar w'_s|<11|A|.$ It follows that the arc
$z^{-1}$ of length $>(2n/5-44-12\alpha n)|A|>n|A|/3$ can be chosen starting and ending with different subpaths of the form $(\bar w_sp_s\bar w'_s)^{\pm 1}$. By the small cancellation argument (i.e.,we use (*) as earlier), such a word uniquely determines
a commutator $[W_{i_k},U_{i_k}]^{\pm 1}$ because $U_j\ne U_j^{-1}$ in $B(m,n)$ for the nonidentity element $U_j$ of this group having odd exponent. Now we consider the shift of the occurrence $\Lab(\bar w_sp_s\bar w'_s)^{\pm 1}$
in $\Lab(z^{-1})$ by $|A|$ and finish the proof as in Theorem \ref{conj}.
\endproof

To prove Theorem \ref{main1} we need the following particular case. We use the notation $[x,y]=xyx^{-1}y^{-1}$.

\begin{cor}\label{cor-com}
Let $B(m,n)$ be the free Burnside group of large enough odd exponent $n$ and at most countable rank $m\ge 2$. Then there exists a family $\mathcal Y=\{Y_i\}_{i\in \mathbb N}$ of finite subsets $Y_i\subset B(m,n)$ such that $|Y_i|\to \infty $ as $i\to \infty$ and for every non-trivial element $g\in B(m,n)$, the set of commutators $\{[y,g]\mid y\in Y_i\}$ is a basis of a free Burnside subgroup of $B(m,n)$ of exponent $n$ and rank $|Y_i|$ for all but finitely many $i$.
\end{cor}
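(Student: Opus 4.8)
The plan is to deduce this immediately from Theorem \ref{commutators} by a diagonal argument over an enumeration of the (countable) group $B(m,n)$.

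First I would observe that, since $m$ is at most countable, $B(m,n)$ is a countable group, so I may fix an enumeration $g^{(1)}, g^{(2)}, \dots$ of its non-trivial elements. For each $i\in\mathbb N$ I then apply Theorem \ref{commutators} with $r=i$ to the $i$ families $(g_{jk})_{1\le j\le i}$, $k=1,\dots,i$, in which every entry of the $k$-th family is the single element $g^{(k)}$ (thus within each family all $i$ entries coincide — repetitions are allowed by the theorem — and the common cardinality $i$ is finite, hence $\le\max(\aleph_0,m)$). Theorem \ref{commutators} produces elements $f_1,\dots,f_i\in B(m,n)$ such that for every $k\le i$ the commutators $[f_1,g^{(k)}],\dots,[f_i,g^{(k)}]$ freely generate a free Burnside subgroup of $B(m,n)$ of exponent $n$. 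I set $Y_i=\{f_1,\dots,f_i\}$.

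It then remains to check the two asserted properties. For the cardinality: if $f_j=f_{j'}$ for some $j\neq j'$, the corresponding commutators $[f_j,g^{(1)}]$ and $[f_{j'},g^{(1)}]$ would coincide, contradicting that they form part of a free basis of an infinite group; hence $|Y_i|=i$, so $|Y_i|\to\infty$. For the main property: given a non-trivial $g\in B(m,n)$, write $g=g^{(k_0)}$; then for every $i\ge k_0$ the $k_0$-th family used to build $Y_i$ consists of copies of $g$, so by the conclusion of Theorem \ref{commutators} the set $\{[y,g]\mid y\in Y_i\}$ has exactly $|Y_i|$ distinct elements and is a basis of a free Burnside subgroup of $B(m,n)$ of exponent $n$ and rank $|Y_i|$. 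Since this holds for all $i\ge k_0$, it holds for all but finitely many $i$, which is exactly the claim.

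I do not expect a serious obstacle here: essentially all of the content is packaged in Theorem \ref{commutators}, and what is left is the bookkeeping of the diagonal construction. The only points that require a moment's care are (i) recording that $B(m,n)$ is countable (so that an enumeration exists and every family has cardinality $\le\max(\aleph_0,m)$), and (ii) noting that freeness of the commutator basis forces the conjugators $f_1,\dots,f_i$ to be pairwise distinct, so that $|Y_i|$ genuinely equals the rank of the subgroup they determine.
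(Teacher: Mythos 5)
Your proof is correct and follows essentially the same diagonal argument as the paper: enumerate the (countable) group, and for each $i$ apply Theorem \ref{commutators} with $r=i$ to the constant families built from the first $i$ nontrivial elements, setting $Y_i=\{f_1,\dots,f_i\}$. The only additional bookkeeping you include (distinctness of the $f_j$, hence $|Y_i|=i$) is the same observation the paper makes when it notes the commutators are pairwise distinct.
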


\begin{proof}
Let $B(m,n)=\{1=b_0, b_1, b_2, \ldots\}$ and let $i\in \mathbb N$. By Theorem \ref{commutators} applied to the constant sequences $g_{jk}=b_k$, $k=1, \ldots , i$, there exist elements $f_1, \ldots, f_i$ such that for every $k=1, \ldots , i$, the commutators $[f_1,b_k], \ldots, [f_i, b_k]$ are pairwise distinct and form a basis of a free Burnside subgroup of $B(m,n)$. Let $Y_i=\{ f_1, \ldots , f_i\}$. The collection $\mathcal Y=\{ Y_i\}_{i\in \mathbb N}$ obviously satisfies the required property.
\end{proof}

\begin{rem} Note that free Burnside groups of exponent $n$ may contain subgroups which are not free in the variety ${\cal B}_n$ of all groups with the law $x^n=1$ . This follows from a theorem of P. Neumann and J. Wiegold (see \cite{N}, Theorem
43.6) for any exponent $n>2$. Moreover no nontrivial normal subgroup of $B(m, n)$ is free in the variety ${\cal B}_n$ if the exponent $n$ is a large enough odd integer
(see \cite{I} for composite exponents and \cite{Ols03} for prime ones).
\end{rem}

\begin{proof}[Proof of Theorem \ref{main1}]
We first assume that $m$ is at most countable. Let $\mathcal Y$ be the family of subsets chosen in accordance to Corollary \ref{cor-com}. Fix any $g\in G\setminus\{ 1\}$. Since $\lambda_G(g)$ is unitary,  we have
\begin{equation}\label{Be1}
\left\|\sum\limits_{y\in Y_i}\lambda_G(ygy^{-1})\right\| = \left\| \left(\sum\limits_{y\in Y_i}\lambda_{G}(ygy^{-1})\right)\lambda_G(g^{-1})\right\|= \left\|\sum\limits_{y\in Y_i}\lambda_{G}([y,g])\right\|.
\end{equation}
Let $H_{g,i}$ denote the subgroup of $G$ generated by the set $T_{g,i}=\{ [y,g]\mid y\in Y_i\}$. By part (a) of Lemma \ref{norms}, we have
\begin{equation}\label{Be2}
\left\|\sum\limits_{y\in Y_i}\lambda_{G}([y,g])\right\| = \left\|\sum\limits_{y\in Y_i}\lambda_{H_{g,i}}([y,g])\right\|.
\end{equation}
By Corollary \ref{cor-com}, $H_{g,i}$ is free Burnside of exponent $n$ with basis $T_{g,i}$ for all but finitely many $i$. Hence the sequence $\{ (H_{g,i}, T_{g,i})\}_{i\in \mathbb N}$ has infinitesimal spectral radius by Corollary \ref{bisr}. Combining this with (\ref{Be1}) and (\ref{Be2}) yields
$$
\lim_{i\to\infty}\frac1{|Y_i|}\left\|\sum\limits_{y\in Y_i}\lambda_G(ygy^{-1})\right\| =0.
$$
Thus Lemma \ref{AL} applies.

For an uncountable group $G$,  $C^\ast_{red}(G)$ is the union of $C^\ast_{red}(H)$ over all countable subgroups $H$ of $G$. If $G$ is generated by a set $X$, then every countable subgroup $H\le G$ belongs to a subgroup $K_H$ generated by a countable subset of $X$. Replacing $H$ with $K_H$ if necessary and applying this to $G=B(m,n)$, we obtain that $C^\ast_{red} (G)$ is the union of subalgebras isomorphic to $C^\ast_{red} (B(m,n))$ for at most countable cardinals $m$. Thus the general case of the theorem follows from the countable one.
\end{proof}


\section{$C^\ast$-simple limits of relatively hyperbolic groups}


\subsection{Relatively hyperbolic groups}
Let $G$ be a group generated by a subset $X$. In this section we denote by $|g|_X$ the word length of an element $g\in G$ with respect to $X$.

We recall one of many equivalent definitions of relatively hyperbolic groups; for a discussion of other definitions and their relationship see \cite{Hru,Osi06}. Let $G$ be a group, $\Hl $ a collection subgroups of $G$. Let also $X$ be a subset of $G$ such that $G$ is generated by $X$ together with
the union of all $H_\lambda $; such a subset $X$ is called a \emph{relative generating set} of $G$ with respect to $\Hl$. Then the group $G$ is naturally a quotient of the free product
\begin{equation}
F=\left( \ast _{\lambda\in \Lambda } H_\lambda  \right) \ast F(X),
\label{F}
\end{equation}
where $F(X)$ is the free group with the basis $X$. Let
\begin{equation}\label{H}
\mathcal H=\bigsqcup\limits_{\lambda\in \Lambda} (H_\lambda
\setminus \{ 1\} ) .
\end{equation}
Here we think of $H_\lambda$ as subgroups of $F$, so the union in (\ref{H}) is indeed disjoint. By abuse of notation, we will identify $\mathcal H$ and $H_\lambda$ with their images under the natural homomorphism $F \to G$. Note that the restriction of this map to $\mathcal H$ is not necessarily injective.

Suppose that the kernel of
the natural homomorphism $F\to G$ is the normal closure of a subset
$\mathcal R$ in the group $F$. In this case we say that $G$ has {\it relative
presentation}
\begin{equation}\label{G}
\langle X,\; H_\lambda, \lambda\in \Lambda \; | \; \mathcal R
\rangle .
\end{equation}
If $|X|<\infty $ and $|\mathcal R|<\infty $, the
relative presentation (\ref{G}) is said to be {\it finite}.
Further for every word $W$ in the alphabet $X^{\pm 1} \cup \mathcal H$ such that
$W=_G1$ in $G$, there exists an expression
\begin{equation}
W{=_F} \prod\limits_{i=1}^k f_i^{-1}R_i^{\e_i}f_i \label{prod}
\end{equation}
with the equality in the group $F$, where $R_i\in \mathcal R$, $\e_i=\pm 1$, and
$f_i\in F $ for $i=1, \ldots , k$. The
{\it relative area} of $W$, denoted $Area^{rel}(W)$, is the smallest possible number
$k$ in a representation of the form (\ref{prod}).

A group $G$ is {\it hyperbolic relative to a collection of
subgroups} $\Hl $, called {\it peripheral subgroups} (or \emph{peripheral structure}) of $G$, if there exists a finite relative presentation (\ref{G}) and a constant $C$ such that for any $n\in \mathbb N$ and any word $W$ of length at most $n$ in the alphabet $X^{\pm 1}\cup
\mathcal H$ representing the identity in $G$, we have
$Area^{rel} (W)\le Cn $. In particular, $G$ is an ordinary {\it hyperbolic group} if $G$ is hyperbolic relative to the empty family of peripheral subgroups (or relative to the trivial subgroup).

We will need several basic facts about relatively hyperbolic groups.

\begin{lem}[\cite{Osi06}, Theorem 1.4]\label{maln}
Let $G$ be a group hyperbolic relative to a collection of subgroups $\Hl $. Then the following conditions hold.
\begin{enumerate}
\item[(a)] For every distinct $\lambda, \mu \in \Lambda $ and every $g\in G$, we have $|H_\lambda \cap H_\mu^g |<\infty $.

\item[(b)] For every $\lambda \in \Lambda $ and every $g\in G\setminus H_\lambda $, we have $|H_\lambda \cap H_\lambda ^g|<\infty $.
\end{enumerate}
\end{lem}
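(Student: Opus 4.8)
The plan is to argue by contradiction using the linear relative Dehn function built into the definition of relative hyperbolicity, together with the structure theory of van Kampen diagrams over a finite relative presentation. Fix a finite relative presentation $\langle X, H_\lambda, \lambda\in\Lambda \mid \mathcal R\rangle$ of $G$ and a constant $C$ with $Area^{rel}(W)\le C|W|$ whenever $W=_G 1$. It is convenient to treat (a) and (b) at once: suppose there are infinitely many pairwise distinct $h_1,h_2,\dots$ lying in $H_\lambda\cap H_\mu^{g}$, where either $\lambda\ne\mu$ (case (a)) or $\mu=\lambda$ and $g\notin H_\lambda$ (case (b)); I want to derive a contradiction.

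First I would, for each $n$, write $h_n=g\,h_n^{*}\,g^{-1}$ with $h_n^{*}\in H_\mu$ and fix an $X$-geodesic word $U$ representing $g$. Then
\[
W_n\ \equiv\ a_n\,U\,b_n\,U^{-1},
\]
where $a_n\in\mathcal H$ is the letter $h_n^{-1}\in H_\lambda\setminus\{1\}$ and $b_n\in\mathcal H$ is the letter $h_n^{*}\in H_\mu\setminus\{1\}$, represents $1$ in $G$ and has length $M:=2+2|g|_X$, independent of $n$. Hence there is a reduced van Kampen diagram $\Delta_n$ over the relative presentation with boundary label $W_n$ and at most $CM$ cells coming from $\mathcal R$ (the remaining cells being $\mathcal H$-cells, i.e.\ labelled by relations of the peripheral subgroups). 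Write $\partial\Delta_n=e_np_nf_np_n'$, where $e_n$ is a single $\mathcal H_\lambda$-edge with label $a_n$, $f_n$ a single $\mathcal H_\mu$-edge with label $b_n$, and $p_n,p_n'$ are paths of length $\le|g|_X$ over $X^{\pm1}$.

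Now I would invoke the key structural facts about such diagrams (precisely the tools developed in \cite{Osi06} for proving this kind of statement): after the usual reductions, an $\mathcal H$-edge of the boundary whose label is nontrivial in its peripheral factor must be \emph{connected} — through a chain of $\mathcal H$-cells of the same type — to another $\mathcal H$-edge of the diagram. Since $\mathcal R$ is a fixed finite set, the $\mathcal H$-letters occurring in relators of $\mathcal R$ form a fixed finite subset of $\mathcal H$; hence for all but finitely many $n$ the edge $e_n$ cannot be connected to an $\mathcal H$-edge of any $\mathcal R$-cell, and the only remaining candidate is $f_n$. Thus $e_n$ is connected to $f_n$, which forces $\lambda=\mu$ — so for all but finitely many $n$ no such $h_n$ can exist when $\lambda\neq\mu$, proving (a). In case (b) one is left to analyse the connecting chain of $\mathcal H_\lambda$-cells joining $e_n$ to $f_n$: reading off its two $X$-labelled sides expresses the portion of $\partial\Delta_n$ between $e_n$ and $f_n$, whose label is $g$, in a form which — using that $\Delta_n$ is reduced and that the chain has length bounded by $CM\cdot\max_{R\in\mathcal R}|R|$ — pins $g$ down to lie in $H_\lambda$ itself, contradicting $g\notin H_\lambda$.

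I expect this last step — extracting $g\in H_\lambda$ from the connecting band of $\mathcal H_\lambda$-cells, and more generally making precise the ``reduced implies connected'' dichotomy for $\mathcal H$-components — to be the main obstacle: it is where one genuinely needs the finer lemmas of \cite{Osi06} on connected components of $\mathcal H$-edges and the accompanying length estimates, rather than just the linear relative isoperimetric inequality. An alternative, more geometric route that handles (a) and (b) uniformly is to use Bowditch's characterisation of relative hyperbolicity (see \cite{Hru}) by an action of $G$ on a connected fine hyperbolic graph $K$ with finitely many orbits of edges, finite edge stabilisers, and the $H_\lambda$ occurring (up to conjugacy) as the infinite vertex stabilisers: if $v_\lambda$ is the vertex with $\operatorname{Stab}(v_\lambda)=H_\lambda$, then $H_\lambda\cap H_\mu^{g}=\operatorname{Stab}(v_\lambda)\cap\operatorname{Stab}(g v_\mu)$, the vertices $v_\lambda$ and $g v_\mu$ are distinct (different orbits when $\lambda\ne\mu$, and $v_\lambda=g v_\lambda$ only for $g\in H_\lambda$), and the common stabiliser of two distinct vertices permutes the set of geodesics between them — finite by fineness — with kernel embedding into a product of finite edge stabilisers, hence is finite.
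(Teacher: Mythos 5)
This lemma is not proved in the paper: it is quoted verbatim from Osin's memoir \cite[Theorem 1.4]{Osi06}, so there is no internal argument to compare against. What you offer are two sketches, and they fare differently.

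Your first sketch follows the route that \cite{Osi06} itself takes: linear relative Dehn function, van Kampen diagrams over a finite relative presentation, and the dichotomy for $\mathcal H$-components of the boundary. The shape is right, but the substantive content --- that an isolated $H_\lambda$-component of $\partial\Delta$ has label in a fixed finite subset depending only on the presentation, and the surgery extracting $g\in H_\lambda$ from a band joining two connected $H_\lambda$-components in case (b) --- is exactly where the proof lives, and you flag it as a gap rather than close it. There is also a small imprecision: you take $U$ to be an $X$-geodesic, but in the general definition $X$ is only a relative generating set and need not generate $G$ on its own; one should take $U$ geodesic over $X\cup\mathcal H$ and then handle the (fixed, finite) set of $\mathcal H$-letters of $U$ as additional boundary components, which is harmless for large $n$ but must be addressed. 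So the first sketch is the expected strategy, not yet a proof.

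Your Bowditch-style alternative, by contrast, is complete and correct, and handles (a) and (b) uniformly. The two vertices $v_\lambda$ and $gv_\mu$ are distinct: in case (a) because they lie in different $G$-orbits, in case (b) because $v_\lambda=gv_\lambda$ iff $g\in H_\lambda$. Their common stabilizer permutes the geodesics between them, a set which is finite by fineness; the kernel of this action fixes the two endpoints and preserves distance from $v_\lambda$, hence fixes one geodesic pointwise and thus lies in a finite edge stabilizer; so the common stabilizer is finite-by-finite, hence finite. The one caveat worth noting is that this trades the combinatorial machinery of \cite{Osi06} for the equivalence of Osin's definition with Bowditch's fine-graph characterization (see \cite{Hru}), which itself requires work --- so the proof is not ``free'' of heavy input, but it is a genuinely different and arguably cleaner route to the statement.
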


The first claim of the next result is a simplification of \cite[Corollary 1.14]{DS}. The second claim follows immediately from the first one as every hyperbolic group is hyperbolic relative to the empty set of subgroups; it is also a particular case of \cite[Theorem 2.40]{Osi06}, which is proved for all (not necessary finitely generated) relatively hyperbolic groups. In fact, the first claim can also be proved for all relatively hyperbolic groups by using the methods of \cite{Osi06}. However we do not need this in our paper, so we leave this generalization to the reader.

\begin{lem}\label{240}
Let $G$ be a finitely generated group hyperbolic relative to a collection of subgroups $\Hl\cup \{H\}$. Suppose that $H$ is hyperbolic relative to a collection $\{ K_\mu\}_{\mu\in M}$. Then $G$ is hyperbolic relative to $\Hl \cup \{ K_\mu\}_{\mu\in M}$. In particular, if $H$ is hyperbolic then $G$ is hyperbolic relative to $\Hl$.
\end{lem}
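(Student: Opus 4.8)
The plan is to build from the given data an explicit finite relative presentation of $G$ with respect to $\Hl\cup\{K_\mu\}_{\mu\in M}$ and to check that it has a linear relative isoperimetric function; by the definition of relative hyperbolicity recalled above this yields the first assertion, the second being the special case $M=\emptyset$ (where ``$H$ hyperbolic'' means ``$H$ hyperbolic relative to the empty family''). First I would fix a finite generating set $X$ of $G$ (possible since $G$ is finitely generated) and a finite relative presentation $\la X,\,H_\lambda\ (\lambda\in\Lambda),\,H\mid\mathcal R\ra$ of $G$ with respect to $\Hl\cup\{H\}$ for which $Area^{rel}(W)\le C|W|$ whenever $W=_G1$; and, since $H$ is in particular finitely presented relative to $\{K_\mu\}_{\mu\in M}$, a finite subset $Y\subseteq H$ and a finite relative presentation $\la Y,\,K_\mu\ (\mu\in M)\mid\mathcal S\ra$ of $H$ with $Area^{rel}(V)\le C'|V|$ whenever $V=_H1$.

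Next I would write down the candidate. Put $Z=X\cup Y\subseteq G$; then $Z$ together with the $H_\lambda$ and the $K_\mu$ generates $G$, because $X$ and the $H_\lambda,H$ generate $G$ while $Y$ and the $K_\mu$ generate $H$. Fix a rewriting $\sigma$ sending each letter $h\in H\setminus\{1\}$ to a word over $Y^{\pm1}\cup\bigsqcup_\mu(K_\mu\setminus\{1\})$ representing $h$ in $H$, normalized so that $\sigma(y)\equiv y$ for $y\in Y^{\pm1}$, $\sigma(k)\equiv k$ for $k\in K_\mu\setminus\{1\}$, and $\sigma(h^{-1})\equiv\sigma(h)^{-1}$; on the finitely many remaining elements of $H$ that occur in the relators of $\mathcal R$ let $\sigma$ be arbitrary of bounded length $\le L$. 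Set $\mathcal R'=\{\sigma(R):R\in\mathcal R\}$, where $\sigma(R)$ replaces each $H$-letter of $R$ by its $\sigma$-image. Then $\mathcal P=\la Z,\,H_\lambda\ (\lambda\in\Lambda),\,K_\mu\ (\mu\in M)\mid\mathcal R'\cup\mathcal S\ra$ is a finite relative presentation, and a routine check shows it presents $G$: imposing $\mathcal S$ identifies $\la Y,\,K_\mu\ra$ canonically with $H$, after which $\mathcal R'$ becomes $\mathcal R$ and one recovers the original presentation of $G$.

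The heart of the matter is the linear relative isoperimetric estimate for $\mathcal P$. Given a word $W$ of length $n$ over $Z^{\pm1}\cup\bigsqcup_\lambda(H_\lambda\setminus\{1\})\cup\bigsqcup_\mu(K_\mu\setminus\{1\})$ with $W=_G1$, I would replace each letter of $W$ lying in $Y^{\pm1}$ or in some $K_\mu\setminus\{1\}$ by the corresponding single element of $H\setminus\{1\}$, getting $\widehat W$ over $X^{\pm1}\cup\bigsqcup_\lambda(H_\lambda\setminus\{1\})\cup(H\setminus\{1\})$ with $|\widehat W|\le n$ and $\widehat W=_G1$, and take a relative van Kampen diagram $\Delta$ for $\widehat W$ over $\la X,H_\lambda,H\mid\mathcal R\ra$ with at most $Cn$ $\mathcal R$-cells, which I may assume has no two $H$-cells sharing an edge (merge adjacent ones). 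The crucial observation is that the total perimeter $P_H$ of the $H$-cells of $\Delta$ is $O(n)$: an $H$-edge of $\Delta$ borders at most one $H$-cell (by the last assumption) and no $H_\lambda$-cell (their boundaries carry no $H$-letters), hence it borders the outer face or an $\mathcal R$-cell, and since $\widehat W$ has at most $n$ $H$-letters and each of the $\le Cn$ $\mathcal R$-cells has at most $\max_{R\in\mathcal R}|R|$ edges, there are $O(n)$ $H$-edges, so $P_H=O(n)$. I would then subdivide each $H$-edge labeled $h$ into a path labeled $\sigma(h)$: by the normalization of $\sigma$ the boundary label becomes $W$, each $\mathcal R$-cell becomes an $\mathcal R'$-cell, each $H_\lambda$-cell is unchanged, and each $H$-cell $\Pi$ acquires a boundary label $w'_\Pi$ over $Y^{\pm1}\cup\bigsqcup_\mu(K_\mu\setminus\{1\})$ with $w'_\Pi=_H1$ and $|w'_\Pi|\le L|\partial\Pi|$; filling each $\Pi$ by a relative van Kampen diagram for $w'_\Pi$ over $\la Y,K_\mu\mid\mathcal S\ra$ with $\le C'|w'_\Pi|$ $\mathcal S$-cells produces a relative van Kampen diagram for $W$ over $\mathcal P$ with at most $Cn+C'L\,P_H=O(n)$ cells from $\mathcal R'\cup\mathcal S$. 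Hence $Area^{rel}(W)=O(n)$, as required.

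The main obstacle I anticipate is precisely this isoperimetric estimate: one must make the rewriting $\sigma$ globally consistent on edges shared by several cells (which is what forces the normalization conditions above) and, more essentially, bound the total boundary length of the $H$-cells produced by the filling of $\widehat W$ linearly in $n$, which genuinely uses both the finiteness of $\mathcal R$ and the linearity of the relative Dehn function of $G$ (a purely algebraic rewriting of an expression of the form $(\ref{prod})$ fails here, since the conjugators $f_i$ are uncontrolled in length). Everything else is standard manipulation with relative presentations; alternatively, the result can be quoted from \cite[Corollary 1.14]{DS} or \cite[Theorem 2.40]{Osi06}.
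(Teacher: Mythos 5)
Your proposal gives a direct proof, whereas the paper gives none: the lemma is stated without a proof environment, with the preceding text noting that the first claim ``is a simplification of \cite[Corollary 1.14]{DS}'', that the second claim ``is a particular case of \cite[Theorem 2.40]{Osi06}'', and that a direct proof ``by using the methods of \cite{Osi06}'' exists but is left to the reader. Your sketch essentially carries out that direct proof. The strategy is sound: start from a finite relative presentation $\langle X,\,H_\lambda,\,H\mid\mathcal R\rangle$ of $G$ with linear relative Dehn function, rewrite the $H$-letters via a bounded map $\sigma$ into $Y^{\pm1}\cup\bigsqcup_\mu(K_\mu\setminus\{1\})$, and bound the total perimeter $P_H$ of the $H$-cells in a chosen relative van Kampen diagram linearly in $|W|$ so that refilling those cells over $\langle Y,\,K_\mu\mid\mathcal S\rangle$ costs only $O(|W|)$ additional cells. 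You correctly identify $P_H=O(n)$ as the crux and the correct reason it holds: once adjacent $H$-cells are merged, every $H$-edge has a side on $\partial\Delta$ or on an $\mathcal R$-cell, and both of these carry only $O(n)$ edges in total. Two details are glossed over. First, after merging two $H$-cells along a shared edge $e$ you should also delete $e$ from the $1$-skeleton so the resulting region remains a disc; this is what rules out an $H$-edge both of whose sides lie on a single $H$-cell, which would otherwise escape your count. Second, the normalization of $\sigma$ can be inconsistent (e.g.\ some $y\in Y^{\pm1}$ and some $k\in K_\mu\setminus\{1\}$ could represent the same element of $H$), so the round trip $W\mapsto\widehat W\mapsto\sigma(\widehat W)$ need only return a word equal to $W$ in $G$ after $O(n)$ extra $\mathcal S$-cells rather than return $W$ literally; this is harmless but should be stated. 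Both points are standard to repair, and you rightly observe at the end that one may instead simply quote \cite[Corollary 1.14]{DS} or \cite[Theorem 2.40]{Osi06}, which is what the paper does.
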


The next theorem is the main result of \cite{Osi06a}.

\begin{thm}\label{Q}
Let $G$ be a relatively hyperbolic group with peripheral collection
$\Hl $ and let $X$ be a relative generating set of $G$ with respect to $\Hl$. Suppose that $Q$ is a subgroup of $G$ such that the following
conditions hold.
\begin{enumerate}
\item[(Q1)] $Q$ is generated by a finite set $T$.

\item[(Q2)] There exist $\lambda, c\in \mathbb R$ such that for any
element $q\in Q$, we have $|q|_T\le \lambda |q|_{X\cup \mathcal
H}+c$.

\item[(Q3)] For any $g\in G\setminus Q$, we have
$|Q\cap Q^g|< \infty $.
\end{enumerate}
Then $G$ is hyperbolic relative to $\Hl\cup\{ Q\}$.
\end{thm}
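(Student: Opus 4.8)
The plan is to verify directly the two defining ingredients of relative hyperbolicity for the enlarged peripheral collection $\Hl\cup\{Q\}$: a finite relative presentation, and a linear relative isoperimetric (Dehn) function. Equivalently, one could check that the associated coned--off Cayley graph is hyperbolic and fine; I will phrase things via relative presentations, in the style of \cite{Osi06}.

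\emph{The relative presentation.} Start from a finite relative presentation $\langle X,\ H_\lambda\ (\lambda\in\Lambda)\ |\ \mathcal R\rangle$ of $G$ with respect to $\Hl$. For each $t\in T$ fix a word $V_t$ over $X^{\pm1}\cup\mathcal H$ with $V_t=_G t$, and set $\mathcal R'=\mathcal R\cup\{tV_t^{-1}\mid t\in T\}$. I claim $\langle X,\ H_\lambda,\ Q\ |\ \mathcal R'\rangle$ is a finite relative presentation of $G$ with respect to $\Hl\cup\{Q\}$: it is finite since $T$ and $\mathcal R$ are, and it presents $G$ because any word $W$ over $X^{\pm1}\cup\mathcal H\cup(Q\setminus\{1\})$ with $W=_G1$ can first be rewritten, one $Q$--syllable at a time, over the alphabet $X^{\pm1}\cup\mathcal H\cup T^{\pm1}$ (this is free in the free product $(\ast_{\lambda} H_\lambda)\ast Q\ast F(X)$ since $T$ generates $Q$), then, using the new relators $tV_t^{-1}$, to a word over $X^{\pm1}\cup\mathcal H$ that still equals $1$ in $G$ and hence is a consequence of $\mathcal R$.

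\emph{Geometric input from (Q1)--(Q3).} Since $G$ is hyperbolic relative to $\Hl$, the Cayley graph $\G$ is a hyperbolic metric space, and Osin's bounded--coset--penetration estimates for geodesic polygons in $\G$ are available. The trivial inequality $|q|_{X\cup\mathcal H}\le\bigl(\max_{t\in T}|t|_{X\cup\mathcal H}\bigr)\,|q|_T$ together with (Q2) shows that the inclusion $(Q,|\cdot|_T)\hookrightarrow\G$ is a quasi--isometric embedding; hence $Q$, and every left coset $gQ$, is quasiconvex in $\G$. Combining quasiconvexity of cosets with hyperbolicity of $\G$ and the almost malnormality condition (Q3) in the usual way yields a constant $\kappa$ with
\[
\diam\bigl(N_\kappa(g_1Q)\cap N_\kappa(g_2Q)\bigr)\le\kappa\qquad\text{whenever }g_1Q\ne g_2Q ,
\]
with $N_\kappa(\cdot)$ denoting the $\kappa$--neighbourhood; that is, distinct cosets of $Q$ have uniformly bounded coarse intersection in $\G$ --- otherwise a long common coarse subsegment of $g_1Q$ and $g_2Q$ would force $|Q\cap Q^g|=\infty$ for a suitable $g$, contradicting (Q3). (Coarse intersections of a coset $gQ$ with a coset of some $H_\lambda$ are automatically bounded, since the latter have diameter $1$ in $\G$.)

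\emph{The isoperimetric estimate and the main obstacle.} Let $W=u_0q_1u_1\cdots q_ku_k$ be a word over $X^{\pm1}\cup\mathcal H\cup(Q\setminus\{1\})$ with $W=_G1$ and $|W|=n$, where the $u_i$ are over $X^{\pm1}\cup\mathcal H$, each $q_j\in Q\setminus\{1\}$, and $k+\sum_i|u_i|=n$. To build a van Kampen diagram over $\langle X,H_\lambda,Q\mid\mathcal R'\rangle$ of relative area $O(n)$, I would: (i) merge any consecutive $q_j$'s lying in a common coset and join $q_j$--edges belonging to the same coset through $Q$--faces (which contribute $0$ to relative area); (ii) replace each remaining $q_j$--edge by a geodesic path $\sigma_j$ of $\G$ inside the coset $g_jQ$, reducing the problem to filling a geodesic polygon in $\G$ with sides the $u_i$ and the $\sigma_j$; (iii) use the thinness of this polygon, together with quasiconvexity of cosets and the bounded--coarse--intersection bound of the previous paragraph, to control the $\sigma_j$: each $\sigma_j$ should follow the $u_i$--sides except for boundedly many bounded--length excursions near other $\sigma_i$'s, which after a careful (inductive, region--by--region rather than one--shot) bookkeeping bounds $\sum_j|\sigma_j|$ linearly in $n$; (iv) apply the linear relative Dehn function of $(G,\Hl)$ to the resulting word over $X^{\pm1}\cup\mathcal H$ (of length $O(n)$), and add, for each $j$, the cost $|q_j|_T+O(|\sigma_j|)+O(1)=O(|\sigma_j|)+O(1)$ --- here (Q2) is used again, via $|q_j|_T\le\lambda|\sigma_j|+c$ --- of converting $\sigma_j$ back to the single $Q$--edge $q_j$ using the relators $tV_t^{-1}$ and $\mathcal R$. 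Summation gives $Area^{rel}(W)\le Cn$, and with a finite relative presentation and a linear relative Dehn function in hand, $G$ is hyperbolic relative to $\Hl\cup\{Q\}$. The delicate point --- and the main obstacle --- is step (iii): a single letter from $Q\setminus\{1\}$ may represent an arbitrarily long element of $G$, so one must show that the ``expensive'' portions of the jumps are either absorbed into $Q$--faces or, by (Q3), confined to a region of total size $O(n)$, and, crucially, that the resulting estimate is genuinely \emph{linear} rather than merely polynomial; this is where the bulk of the work in \cite{Osi06a} lies.
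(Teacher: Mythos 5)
The paper does not prove Theorem~\ref{Q}; it is stated as ``the main result of \cite{Osi06a}'' and used as a black box, so there is no in-paper proof to compare your attempt against. Your sketch nevertheless outlines the strategy one would expect from \cite{Osi06a}: set up the enlarged finite relative presentation, use (Q2) to show the inclusion $(Q,|\cdot|_T)\hookrightarrow\G$ is a quasi-isometric embedding and hence that the cosets $gQ$ are quasiconvex in the relative Cayley graph, derive uniformly bounded coarse coset intersections from (Q3) together with quasiconvexity, and then establish a linear relative isoperimetric inequality for words over $X^{\pm1}\cup\mathcal H\cup(Q\setminus\{1\})$. This is consistent in spirit with Osin's argument, and the reductions you describe in steps (i), (ii), and (iv) are essentially correct.

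However, as you yourself flag, step (iii) is not actually carried out: quasiconvexity plus bounded coarse intersection do not, by themselves, immediately produce a linear bound on $\sum_j|\sigma_j|$ in terms of $n$; the ``careful (inductive, region-by-region) bookkeeping'' you gesture at is precisely where the substance of the proof lies, and nothing in your outline verifies that the total length of the excursions is linear rather than merely polynomial. There are also unaddressed technicalities: geodesics of $\G$ between two points of a coset $gQ$ need not lie inside $gQ$ (only in a bounded neighbourhood, via quasiconvexity), which matters when converting $\sigma_j$ back to a $Q$-edge; and one needs to be careful that ``lengths'' in $\G$ interact correctly with the relative area count, since $\G$ is not locally finite. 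So what you have is an honest and reasonably accurate high-level outline, with the central estimate left as an acknowledged gap.
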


Recall that an element $g$ of a relatively hyperbolic group $G$ is \emph{loxodromic} if it has infinite order and is not conjugate to an element of any of the peripheral subgroups of $G$. By \cite[Theorem 4.3]{Osi06a}, every loxodromic element $g\in G$ is contained in a unique maximal virtually cyclic subgroup of $G$ denoted $ E_G(g)$; moreover, we have
\begin{equation}\label{EGg}
E_G(g)=\{ x\in G\mid \exists n\in \mathbb N \; x^{-1}g^nx=g^{\pm n}\}.
\end{equation}

We will need two corollaries of Theorem \ref{Q}. The first one was proved in \cite{Osi06a} by verifying the assumptions of Theorem \ref{Q} for $Q=E_G(g)$.

\begin{cor}[{\cite[Corollary 1.7]{Osi06a}}]\label{Eg}
Suppose that a group $G$ is hyperbolic relative to a collection of
subgroups $\Hl $. Then for every loxodromic element $g\in G$,  $G$ is also hyperbolic relative to $\Hl\cup \{ E_G(g)\}$.
\end{cor}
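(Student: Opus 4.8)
The plan is to invoke Theorem \ref{Q} with $Q=E_G(g)$, keeping $\Hl$ as the peripheral collection and $X$ as the relative generating set; so I need to check conditions (Q1)--(Q3) for this choice of $Q$. Condition (Q1) is immediate: by \cite[Theorem 4.3]{Osi06a} the subgroup $E_G(g)$ is virtually cyclic (it is the unique maximal virtually cyclic subgroup of $G$ containing the loxodromic element $g$), and virtually cyclic groups are finitely generated; let $T$ be any finite generating set of $E_G(g)$.

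Condition (Q3) will follow from (\ref{EGg}) together with virtual cyclicity. Suppose, towards a contradiction, that $|E_G(g)\cap E_G(g)^x|=\infty$ for some $x\in G\setminus E_G(g)$. An infinite subgroup of a virtually cyclic group has finite index, so $E_G(g)\cap E_G(g)^x$ has finite index in each of $E_G(g)$ and $E_G(g)^x$; hence it contains $g^a$ for some $a\neq 0$ and $xg^bx^{-1}$ for some $b\neq 0$, and $\langle g^a\rangle\cap\langle xg^bx^{-1}\rangle$ is still infinite, whence $g^{a'}=xg^{b'}x^{-1}$ for some $a',b'\neq 0$. The stable translation length $\tau(h)=\lim_{n\to\infty} |h^n|_{X\cup\mathcal H}/n$ is conjugation invariant and satisfies $\tau(h^k)=|k|\,\tau(h)$; since $\tau(g)>0$ for the loxodromic element $g$ (this is part of the content of (Q2) below), the equality $g^{a'}=xg^{b'}x^{-1}$ forces $|a'|=|b'|$. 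Thus $x^{-1}g^{a'}x=g^{\pm a'}$, and (\ref{EGg}) yields $x\in E_G(g)$, a contradiction.

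Condition (Q2) is the heart of the matter: one must show that $E_G(g)$ is undistorted in $G$, i.e. that there are constants $\lambda,c$ with $|q|_T\le\lambda|q|_{X\cup\mathcal H}+c$ for every $q\in E_G(g)$. The first step is to prove that $\langle g\rangle$ itself is undistorted, equivalently that $|g^n|_{X\cup\mathcal H}\ge\ell\,|n|$ for some $\ell>0$; this is the standard fact that a bi-infinite path labelled by powers of a loxodromic element is a quasigeodesic in the relative Cayley graph $\G$, and it is precisely the place where relative hyperbolicity is used (it can be extracted from the properties of loxodromic elements established in \cite{Osi06} and \cite{Osi06a}). Granting this, since $\langle g\rangle$ has finite index in $E_G(g)$ the estimate is upgraded to all of $E_G(g)$ by a routine coarse-geometry argument: writing each $q\in E_G(g)$ as $q=g^{n(q)}t$ with $t$ ranging over a finite transversal of $\langle g\rangle$ in $E_G(g)$, one compares $|q|_T$, $|q|_{X\cup\mathcal H}$, and $|g^{n(q)}|_{X\cup\mathcal H}$ using the triangle inequality and the finiteness of the transversal.

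With (Q1)--(Q3) verified, Theorem \ref{Q} shows that $G$ is hyperbolic relative to $\Hl\cup\{E_G(g)\}$, which is the assertion. I expect the main obstacle to be (Q2), and more precisely the positivity of the stable translation length of a loxodromic element in $\G$; conditions (Q1) and (Q3) are essentially formal once the undistortion of $\langle g\rangle$ (and hence $\tau(g)>0$) is in hand.
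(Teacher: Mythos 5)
Your proposal is correct and takes exactly the route the paper indicates: the paper does not prove Corollary \ref{Eg} itself but cites \cite{Osi06a}, noting explicitly that the proof there proceeds "by verifying the assumptions of Theorem \ref{Q} for $Q=E_G(g)$," which is precisely what you do. Your verifications of (Q1) via virtual cyclicity, (Q3) via the stable-translation-length argument together with (\ref{EGg}), and (Q2) via quasigeodesity of $\langle g\rangle$ in $\G$ and the finite-index upgrade to $E_G(g)$ are all sound and match the standard argument.
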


The second corollary is new.

\begin{cor}\label{mfs}
Let $G$ be a group, $A\subseteq G\setminus \{ 1\}$ a finite subset. Let
$$
P=G\ast F(X)\ast F(Y)\ast F(Z),
$$
where $F(X)$, $F(Y)$, $F(Z)$ are free groups with bases $X=\{ x_1, \ldots , x_n\}$, $Y=\{ y_a\mid a\in A\}$, and $Z=\{ z_a\mid a\in A\}$, respectively. Then the set
\begin{equation}\label{T}
T=\{ y_ax_iax_i^{-1} z_a\mid a\in A,\, i= 1, \ldots n\}
\end{equation}
is a basis of a free subgroup $F$ of $P$ of rank $n|A|$. Moreover, $P$ is hyperbolic relative to $\{G,F\}$.
\end{cor}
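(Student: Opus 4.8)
I would argue by inspecting normal forms in the free product $P=G\ast F(X)\ast F(Y)\ast F(Z)$. Since every $a\in A$ is non-trivial in $G$, the element $t_{a,i}=y_ax_iax_i^{-1}z_a$ is already in normal form, with five syllables lying respectively in $F(Y),F(X),G,F(X),F(Z)$. Given a reduced word $t_1^{\varepsilon_1}\cdots t_k^{\varepsilon_k}$ over $T^{\pm 1}$ with $k\ge 1$, I would examine the cancellation at each junction: at a $(+,+)$ or $(-,-)$ junction the adjacent syllables lie in $F(Z)$ and $F(Y)$ (resp.\ in $F(Y)$ and $F(Z)$), so no cancellation occurs; at a $(+,-)$ or $(-,+)$ junction the adjacent syllables form a pair $z\cdot z^{-1}$ (resp.\ $y^{-1}\cdot y$), and even if this pair cancels — which forces the two generators to share the same $A$-label — reducedness of the word forces the two $F(X)$-indices to differ, so the next pair $x^{-1}\cdot x$ merges into a single non-trivial $F(X)$-syllable and cancellation halts. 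Hence every $G$-syllable $a_j^{\varepsilon_j}$ survives, separated from its neighbours by non-trivial $F(X)$-syllables; in particular the word is non-trivial in $P$. The same inspection shows $(a,i)\mapsto t_{a,i}$ is injective, so $|T|=n|A|$ and $\mathrm{rk}(F)=n|A|$.

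\textbf{Second claim ($P$ is hyperbolic relative to $\{G,F\}$).} First I would observe that $P=G\ast F(X\cup Y\cup Z)$ is hyperbolic relative to $\{G\}$: the relative presentation $\langle\, X\cup Y\cup Z;\ G\ \mid\ \varnothing\,\rangle$ is finite (the free part has $n+2|A|<\infty$ generators and there are no relators), and the relative Dehn function is identically zero. I would then add $F$ to the peripheral structure by applying Theorem \ref{Q} with $Q=F$ and relative generating set $X\cup Y\cup Z$. Condition (Q1) is immediate since $T$ is finite. For (Q2): by the normal-form analysis above a $T$-reduced word of $T$-length $\ell$ has at least $\ell$ syllables in $P=G\ast F(X\cup Y\cup Z)$, while any word in $(X\cup Y\cup Z)^{\pm 1}\cup\mathcal H$ representing $q$, after grouping maximal runs of letters lying in the same free factor, gives an expression of $q$ whose syllable length is at most its word length; hence $|q|_{X\cup Y\cup Z\cup\mathcal H}\ge |q|_T$, so (Q2) holds with $\lambda=1$, $c=0$. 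The remaining condition (Q3) — that $|F\cap F^g|<\infty$ (equivalently $F\cap F^g=\{1\}$, as $F$ is torsion-free) for every $g\notin F$ — is again handled via normal forms: one checks that a cyclically reduced element of $F$ has a cyclic $P$-normal form in which each generator-occurrence $t_{a_j,i_j}^{\varepsilon_j}$ leaves a recognizable trace (its surviving $G$-syllable, the adjacent $F(X)$-syllables, and — for the outermost occurrences of each maximal run with a fixed $A$-label — the collar letters $y_{a_j}^{\pm 1}$ or $z_{a_j}^{\pm 1}$), so the cyclic sequence of generator-occurrences of $w\in F$, hence its $F$-conjugacy class, is determined by its $P$-conjugacy class. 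Thus cyclically reduced elements of $F$ that are conjugate in $P$ are conjugate in $F$; combining this with the fact that the centralizer in a free product of a non-trivial element of infinite order is infinite cyclic (generated by its root) and that a primitive element of a free group is not conjugate to a proper power, one gets that the $P$-root of any $w\in F\setminus\{1\}$ lies in $F$, so $C_P(w)\le F$. Then $g^{-1}wg=f^{-1}wf$ with $f\in F$ forces $gf^{-1}\in C_P(w)\le F$, i.e.\ $g\in F$, establishing (Q3). Theorem \ref{Q} now yields that $P$ is hyperbolic relative to $\{G,F\}$.

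\textbf{Main obstacle.} The bulk of the work, and the step I expect to be delicate, is the normal-form bookkeeping underlying (Q3): one must verify carefully that the $y_a$/$z_a$ collars together with the distinctness of the $F(X)$-indices suffice to recover the generator sequence (equivalently, the $F$-conjugacy class) of $w$ from its $P$-normal form, and that this recovery is compatible with passing to roots — this is precisely where the specific shape of the generators in $T$ is used. The reduction of everything else to Theorem \ref{Q} (and the verification of (Q1), (Q2), and the relative hyperbolicity of $P$ over $\{G\}$) is routine.
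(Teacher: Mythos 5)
Your proposal follows the same overall blueprint as the paper: compute syllable normal forms in the free product $P$ to show that $T$ is free and that the inclusion $F\hookrightarrow P$ is Lipschitz in the word metrics, observe that $P$ is hyperbolic relative to $\{G\}$ by definition, and then invoke Theorem~\ref{Q} to promote the malnormal quasi-isometrically embedded subgroup $F$ to a peripheral subgroup. Your verification of freeness and of (Q2) matches the paper's in substance. Where you diverge is in the verification of (Q3). The paper isolates a ``core'' $c(t_{a,i})=(x_i,a,x_i^{-1})$ of each generator and proves a two-part combinatorial claim: (a) in a reduced $T$-word the core of every letter survives into the $P$-normal form, and (b) conversely, any core-shaped subsequence of the normal form of an element of $F$ is the trace of a unique generator occurrence. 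Malnormality is then immediate: if $pup^{-1}\in F$ with $u\in F\setminus\{1\}$, pass to a power of $u$ long enough that a middle core of $u$ survives conjugation by $p$; by (b) this surviving core is the trace of a generator in the $T$-expansion of $pup^{-1}$, and comparing the prefixes ending at that core gives $p\in F$ directly. Your route instead goes through the conjugacy theorem for cyclically reduced elements in free products, the root/centralizer structure of $P$ (deducing $C_P(w)\le F$ from the claim that the $P$-root of $w\in F\setminus\{1\}$ lies in $F$), and then the standard $g f^{-1}\in C_P(w)$ trick. This is a legitimate alternative, but it is longer and the crucial step — compatibility of the ``recovery'' of the $T$-word with taking roots, i.e.\ that a primitive cyclically reduced element of $F$ is primitive in $P$ — is exactly the kind of bookkeeping you flag as the main obstacle and do not carry out. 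The paper's surviving-core argument avoids roots and centralizers altogether and is worth internalizing as the cleaner path: it turns (Q3) into the single assertion that a sufficiently central core of $u$ is immune to conjugation, which is easy to make precise from claim (a) alone.
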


\begin{proof}
Throughout this proof, by the \emph{normal form} of an element of $p\in P$ we mean the normal form of $p$ with respect to the decomposition of $P$ into the free product of $G$ and cyclic subgroups generated by elements of $X\cup Y\cup Z$. That is, the normal form of $p$ is a (possibly empty) sequence $(p_1, \ldots, p_m)$ of non-trivial elements $p_1, \ldots, p_m\in P$ such that $p=p_1\cdots p_\ell$, every $p_i$ belongs to $G$ or to a cyclic factor $\langle w\rangle $ for some $w\in X\cup Y\cup Z$, and no $p_i$, $p_{i+1}$ belong to the same factor for $i=1, \ldots, \ell-1$. Recall that the normal form of any product $pq\cdots z$ of elements $p,q, \ldots , z\in P$ can be obtained from the concatenation of the normal forms of $p,q, \ldots , z$ by the standard reduction process which involves cancellation and consolidation. For details we refer to \cite{LS}. We say that a subsequence $(x_1, \ldots , x_k)$ of the normal form of one of the multiples $p,q, \ldots , z$ \emph{survives} in $pq\cdots z$ if it remains untouched by the reduction process. In particular, $(x_1, \ldots , x_k)$ is a subsequence of the normal form of $pq\cdots z$.

Given $a\in A$ and $i\in \{1, \ldots, n\}$, let $t_{a,i}=y_ax_iax_i^{-1} z_a$. Obviously $(y_a, x_i, a, x_i^{-1}, z_a)$ is the normal form of $t_{a,i}$. We call the subsequence $c(t_{a,i})= (x_i, a, x_i^{-1})$ the \emph{core} of $t_{a,i}$. The following claim is quite obvious and can be easily proved by induction on $k$. This is straightforward and we leave it as an exercise for the reader.

{\noindent \bf Claim. } \emph{Let $f=s_1\ldots s_k$, where $s_i\in T\cup T^{-1}$, be a freely reduced word in the alphabet $T\cup T^{-1}$.}
\begin{enumerate}
\item[(a)] \emph{For every $i\in \{ 1, \ldots, k\}$, the core $c(s_i)$ survives in $f$. We call the corresponding subsequence of the normal form of $f$ the {\rm trace} of $s_i$.}

\item[(b)] \emph{Suppose that the normal form of $f$ contains a subsequence $(g_1,g_2,g_3)=c(t)$ for some $t\in T\cup T^{-1}$. Then $(g_1,g_2,g_3)$ is a trace of some $s_i=t$.}
\end{enumerate}

The first part of the claim easily implies that  $F$ is indeed free with basis $T$. Moreover, for every $p\in P$ with normal form of length $\ell\ge 1$, we have
$$
|p|_T<\ell \le |p |_{G\cup X\cup Y\cup Z}.
$$
Hence the embedding of metric spaces $(F, |\cdot |_T)\to (G, |\cdot |_{G\cup X\cup Y\cup Z})$ induced by the inclusion $F\le G$ is Lipschitz. Obviously $P$ is hyperbolic relative to $G$ by the definition.  Thus it remains to show  that $F$ is malnormal; then application of Theorem \ref{Q} completes the proof.

Assume that $u,w\in F\setminus \{ 1\}$ and let $p$ be an element of $P$ such that $pup^{-1}=w$. Let $u=r_1\ldots r_m$, where $r_1, \ldots , r_m\in T\cup T^{-1}$ and suppose that $r_1\ldots r_m$ is reduced as a word in $T\cup T^{-1}$. By the first part of the claim the cores $c(r_i)$ survive in $u$. Passing to powers of $u$ and $w$ if necessary, we can assume that the the normal form of $u$ is long enough to guarantee the existence of $1\le i\le m$ such that the core $c(r_i)$ survives in $pup^{-1}$. Let $(g_1,g_2,g_3)$ be the corresponding subsequence of the normal form of $pup^{-1}$. Since $pup^{-1}\in F$, we have $pup^{-1}=s_1\cdots s_k$ for some freely reduced word $s_1\cdots s_k$ in the alphabet $T\cup T^{-1}$. By the second part of the claim, $(g_1,g_2,g_3)$ is the trace of some $s_j$. Therefore, $pr_1\cdots r_i=s_1\cdots s_j$ (note that the core $c(t)$ uniquely defines the element $t\in T\cup T^{-1}$). This implies that $p\in F$ and hence $F$ is malnormal.
\end{proof}

\subsection{Small cancellation and Dehn filling in relatively hyperbolic groups}

In this subsection we briefly review the results from \cite{Osi10,Osi07,Osi06a} necessary for the proof of Theorem \ref{main2}. We begin with a reformulation of \cite[Lemma 4.4]{Osi06a}.

\begin{lem}\label{ah}
Let $G$ be a group hyperbolic relative to a collection of subgroups $\Hl$. Then for every $\lambda\in \Lambda$ and every $a\notin H_\lambda$, $ah$ is loxodromic for all but finitely many $h\in H_\lambda$.
\end{lem}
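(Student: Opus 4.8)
The plan is to deduce this directly from \cite[Lemma 4.4]{Osi06a}, after observing that the notion of a loxodromic element used here coincides with that of a \emph{hyperbolic element} in the terminology of that paper; so in the write-up the lemma will be a reformulation with a one-line reference. For the reader who wants to see the mechanism, I would also include the following sketch of the underlying argument.

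I would argue by contradiction, assuming that $ah$ is non-loxodromic for infinitely many $h\in H_\lambda$. Since an element of $G$ fails to be loxodromic exactly when it has finite order or is conjugate into one of the peripheral subgroups, and since the number of peripheral subgroups is finite while finite-order elements of $G$ not conjugate into a peripheral subgroup have order bounded in terms of $G$ alone, a pigeonhole argument lets me pass to an infinite subset of the ``bad'' $h$'s on which one of the following holds: (i) there is a fixed integer $N_0$ with $(ah)^{N_0}=1$ for all of them, or (ii) there is a fixed $\mu\in\Lambda$ and, for each such $h$, an element $f_h$ of minimal length in $X^{\pm1}\cup\mathcal H$ with $ah=f_h\ell_hf_h^{-1}$, $\ell_h\in H_\mu$.

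In either case I would work with a geodesic polygon in the relative Cayley graph $\G$: in case (i) the cycle $1\to a\to ah\to\cdots\to(ah)^{N_0}=1$, with its $N_0$ edges labelled by $h$ interleaved with $N_0$ geodesics representing $a$; in case (ii) the cycle $1\to f_h\to f_ha\to f_hah\to f_h(ah)f_h^{-1}=\ell_h\to 1$, whose sides are a geodesic for $f_h$, a geodesic for $a$, the edge labelled $h$, a geodesic for $f_h^{-1}$, and the edge labelled $\ell_h^{-1}$. The decisive point is that, using $a\notin H_\lambda$ together with the fact that the peripheral subgroups are almost malnormal (Lemma \ref{maln})---and, in case (ii), the minimality of $f_h$---one of the $H_\lambda$-components coming from the factors $h$ is an \emph{isolated} component of the polygon. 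The quantitative estimate on isolated components of geodesic polygons from \cite{Osi06} then bounds the $X$-length of that component by a constant depending only on $G$ and $a$, because the polygon has only boundedly many sides. Since the component is labelled by an element of $H_\lambda$, this forces that element to lie in a fixed ball of the finitely generated subgroup $\langle X\rangle\le G$, and there are only finitely many such elements, contradicting the infinitude of the bad set.

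I expect the main obstacle to be precisely the step asserting that an isolated $H_\lambda$-component can always be found among those coming from the factors $h$, and the careful choice of the polygon (in particular of $f_h$) that makes this true; this is exactly the part that uses $a\notin H_\lambda$ and Lemma \ref{maln} in an essential way. Since all of this is already carried out in \cite{Osi06a}, in practice I would present the lemma with a reference to \cite[Lemma 4.4]{Osi06a} rather than reproducing the argument.
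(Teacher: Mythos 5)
Your main plan---to present the lemma as a reformulation of \cite[Lemma 4.4]{Osi06a} and simply cite it---is exactly what the paper does. However, you miss the one substantive point that the paper's remark addresses: as stated in \cite{Osi06a}, Lemma 4.4 requires the additional hypothesis $|a|_{X\cup\mathcal H}=1$, so a bare citation does not cover an arbitrary $a\notin H_\lambda$. The paper fixes this by observing that one may enlarge the finite relative generating set $X$ to contain $a$, which is harmless because relative hyperbolicity with respect to $\Hl$ does not depend on the choice of finite relative generating set (\cite[Theorem 3.24]{Osi06}). Without this normalization step, your ``one-line reference'' quotes a result whose hypotheses are not literally satisfied. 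The geometric sketch you add (isolated $\mathcal H$-components of geodesic polygons, almost malnormality from Lemma \ref{maln}) is broadly in the right spirit of the argument in \cite{Osi06a}, but since you intend to rely on the citation anyway, the missing reduction to $|a|_{X\cup\mathcal H}=1$ is the actual gap to close.
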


Note that in \cite[Lemma 4.4]{Osi06a} the element $a$ is required to satisfy $|a|_{X\cup \mathcal H}=1$. This can always be achieved by adding $a$ to $X$. Indeed if $G$ satisfies the definition of relative hyperbolicity discussed above with relative generating set $X$, then it satisfies this definition with any other relative generating set (with respect to $\Hl$) in place of $X$, see \cite[Theorem 3.24]{Osi06}.

Lemma \ref{ah} can be used to derive the following.

\begin{cor}[{\cite[Corollary 4.5]{Osi06a}}]\label{loxex}
If an infinite group is hyperbolic relative to a collection of proper subgroups, then it contains loxodromic elements. \end{cor}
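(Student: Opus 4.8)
The plan is to reduce everything to Lemma \ref{ah}, distinguishing two cases according to whether or not $G$ has an infinite peripheral subgroup. First I would record the elementary observation underlying the whole argument: an element of $G$ is \emph{not} loxodromic precisely when it has finite order or is conjugate into some $H_\lambda$. Consequently, if every peripheral subgroup of $G$ is finite, then an element of $G$ is loxodromic if and only if it has infinite order, since no conjugate of an infinite order element can lie in a finite subgroup.

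In the main case — some peripheral subgroup $H_\lambda$ is infinite — I would argue as follows. Since $H_\lambda$ is a proper subgroup of $G$, I may choose $a\in G\setminus H_\lambda$. Lemma \ref{ah} then guarantees that $ah$ is loxodromic for all but finitely many $h\in H_\lambda$, and since $H_\lambda$ is infinite at least one such $h$ exists; this element $ah$ is the required loxodromic element. This is the only place where relative hyperbolicity, as opposed to ordinary hyperbolicity, is genuinely used, and it is the substance of the proof.

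In the remaining case every peripheral subgroup is finite. Here I would first observe that $G$ is then an (infinite) hyperbolic group: for $\Lambda=\emptyset$ this is the definition, and in general one strips off the peripheral subgroups one at a time by means of Lemma \ref{240} (applicable because a finite relative presentation forces $G$ to be generated by a finite set together with finitely many finite subgroups, hence finitely generated). An infinite hyperbolic group contains an element $g$ of infinite order, and by the observation of the first paragraph such a $g$ is loxodromic. Combining the two cases finishes the proof.

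I expect the only delicate point to be precisely the reduction "hyperbolic relative to finite subgroups $\Rightarrow$ hyperbolic" in the last case, which rests on Lemma \ref{240} and on the finiteness of the peripheral collection; in the setting relevant to this paper this is standard. Everything else is immediate from Lemma \ref{ah} and the definition of a loxodromic element.
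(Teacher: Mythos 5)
Your proof is correct and is precisely the derivation the paper hints at (the paper itself gives no proof; it only cites \cite[Corollary 4.5]{Osi06a} and remarks that Lemma~\ref{ah} "can be used to derive" the statement). The main case, where some $H_\lambda$ is infinite, is exactly the intended application of Lemma~\ref{ah}: pick $a\in G\setminus H_\lambda$ using properness and note that "all but finitely many" is nonvacuous because $H_\lambda$ is infinite. Your preliminary observation that, when all peripherals are finite, loxodromic is equivalent to infinite order is also correct and is what makes the second case go through.

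Two small remarks on the residual case where every $H_\lambda$ is finite. First, the peeling-off step should be justified by the version of Lemma~\ref{240} that does not assume finite generation; the paper itself points out that the second claim of Lemma~\ref{240} is a special case of \cite[Theorem 2.40]{Osi06}, which holds for all relatively hyperbolic groups, so you need not worry about whether $G$ is finitely generated or about inducting through an infinite $\Lambda$ (one can discard trivial $H_\lambda$'s without changing the notion of loxodromic, and in all applications in this paper $\Lambda$ is in any case finite). Second, you rely on the classical fact that an infinite hyperbolic group contains an element of infinite order; this is not stated in the paper, so it would be worth a citation (it is standard Gromov theory), but it is not a gap. Overall the argument is sound and matches the intended route.
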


Recall that two loxodromic elements $g_1, g_2$ of a group $G$ are called \emph{commensurable} if some non-trivial powers of $g_1$ and $g_2$ are conjugate in $G$. The following notion introduced in \cite{Osi10} plays an important role in the proof of Theorem \ref{main3}.

\begin{defn}A subgroup $S$ of a relatively hyperbolic group $G$ is \emph{suitable} (with respect to the given peripheral structure of $G$) if there exist two non-commensurable loxodromic elements $g_1, g_2\in S$ such that $E_G(g_1)\cap E_G(g_2)=\{ 1\}$.
\end{defn}

The next lemma provides an equivalent characterization. It is proved in \cite{AMO}, see Lemma 3.3 and Proposition 3.4 there.

\begin{lem}\label{AMO-suit}
Let $G$ be a relatively hyperbolic group. A subgroup $S\le G$ is suitable if and only if it is not virtually cyclic, contains at least one loxodromic element, and does not normalize any non-trivial finite subgroup of $G$.
\end{lem}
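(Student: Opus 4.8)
The plan is to prove the two implications separately; the ``only if'' direction is an easy consequence of the general structure of loxodromic elements, while the ``if'' direction is the substantial one and is where the small cancellation technology enters. \emph{Necessity:} suppose $S$ is suitable, witnessed by non-commensurable loxodromic elements $g_1,g_2\in S$ with $E_G(g_1)\cap E_G(g_2)=\{1\}$. Then $S$ contains the loxodromic element $g_1$; if $S$ were virtually cyclic it would contain a finite-index infinite cyclic subgroup, so some non-trivial powers of $g_1$ and $g_2$ would be powers of a common element and hence conjugate, contradicting non-commensurability; and if $S$ normalized a non-trivial finite subgroup $F$, then for $i=1,2$ the set $\langle g_i\rangle F$ would be a subgroup containing $\langle g_i\rangle$ with index at most $|F|$, hence virtually cyclic, so by uniqueness of the maximal virtually cyclic subgroup $E_G(g_i)$ containing $g_i$ we would get $F\le E_G(g_i)$ and therefore $F\le E_G(g_1)\cap E_G(g_2)=\{1\}$, a contradiction.

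\emph{Sufficiency.} Assume $S$ is not virtually cyclic, contains a loxodromic element $g$, and normalizes no non-trivial finite subgroup. The first step is to produce inside $S$ two non-commensurable loxodromic elements. By Corollary \ref{Eg}, $G$ is hyperbolic relative to $\Hl\cup\{E_G(g)\}$. Since $S\cap E_G(g)$ is virtually cyclic while $S$ is not, choose $a\in S\setminus E_G(g)$; after adding $a$ to the relative generating set, Lemma \ref{ah} applied to the peripheral subgroup $E_G(g)$ shows that $ah$ is loxodromic with respect to $\Hl\cup\{E_G(g)\}$ for all but finitely many $h\in E_G(g)$, and since $\{g^k\}$ is an infinite subset of $E_G(g)$ we may fix $k$ with $g_2:=ag^k\in S$ loxodromic for this enlarged peripheral structure. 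No non-trivial power of $g_2$ is then conjugate into $E_G(g)$, so $g_2$ is loxodromic for $\Hl$ and is not commensurable with $g$ (a common power would be conjugate to a power of $g$, hence conjugate into $E_G(g)$). Moreover $E_G(g)\cap E_G(g_2)$ is finite: an infinite subgroup of the virtually cyclic group $E_G(g)$ contains an infinite cyclic subgroup, any generator of which is commensurable with $g$ and likewise with $g_2$, forcing $g\sim g_2$.

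Next, put $N=\bigcap\{E_G(h)\mid h\in S\ \text{loxodromic}\}$. Conjugation by any $s\in S$ permutes the loxodromic elements of $S$ and, by (\ref{EGg}), carries $E_G(h)$ to $E_G(shs^{-1})$, so $S$ normalizes $N$; and $N\le E_G(g)\cap E_G(g_2)$ is finite, hence $N=\{1\}$ by hypothesis. Writing $F=E_G(g)\cap E_G(g_2)$, the subgroups $F\cap E_G(h)$ of the finite group $F$ have total intersection $F\cap N=\{1\}$, so finitely many of them already intersect trivially; thus there are loxodromic elements $g=g_1,g_2,\dots,g_\ell\in S$ with $\bigcap_{i=1}^\ell E_G(g_i)=\{1\}$. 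It remains to collapse $g_1,\dots,g_\ell$ to two: using the small cancellation / Dehn filling technique for relatively hyperbolic groups — specifically the control of $E_G$ of products of large powers of loxodromic elements from \cite{Osi06a,Osi10} — one checks that for sufficiently generic large exponents the elements $h_1=g_1^{N_1}\cdots g_\ell^{N_\ell}$ and $h_2=g_1^{M_1}\cdots g_\ell^{M_\ell}$ are non-commensurable loxodromic elements of $S$ with $E_G(h_1)\cap E_G(h_2)=\bigcap_i E_G(g_i)=\{1\}$, so $S$ is suitable. (The full equivalence is of course \cite[Lemma 3.3 and Proposition 3.4]{AMO}.)

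The one genuinely technical point is this last reduction: controlling $E_G(h_j)$ for products of large powers of several loxodromic elements is exactly what requires the small cancellation machinery over relatively hyperbolic groups, whereas everything preceding it is elementary manipulation with virtually cyclic subgroups, commensurability, and Corollary \ref{Eg} together with Lemma \ref{ah}.
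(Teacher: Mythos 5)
The paper does not actually prove this lemma; it simply cites \cite{AMO} (Lemma~3.3 and Proposition~3.4 there), so you are proving something the paper takes as given. That said, your argument is worth examining on its merits.

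Your ``only if'' direction is correct and complete. In the ``if'' direction, the construction of a second loxodromic element $g_2=ag^k\in S$ via Corollary~\ref{Eg} and Lemma~\ref{ah}, the verification that $g_2$ is non-commensurable with $g$, and the observation that $E_G(g)\cap E_G(g_2)$ is finite, are all correct. The gap is in what you do afterwards. You pass to $N=\bigcap\{E_G(h)\mid h\in S\text{ loxodromic}\}$, deduce $N=\{1\}$, extract finitely many $g_1,\dots,g_\ell\in S$ with $\bigcap_i E_G(g_i)=\{1\}$, and then assert that for generic large exponents the products $h_1=g_1^{N_1}\cdots g_\ell^{N_\ell}$, $h_2=g_1^{M_1}\cdots g_\ell^{M_\ell}$ satisfy $E_G(h_1)\cap E_G(h_2)=\bigcap_i E_G(g_i)$. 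This last equality is not a consequence of anything you have stated, and it is not clear it holds as written (there is no reason for $E_G(h_j)$ to even contain $\bigcap_i E_G(g_i)$ a priori, let alone for the intersection on the left to collapse to exactly that group). You flag this as the ``technical point,'' but it is the whole of the remaining content, and the appeal to small cancellation over relatively hyperbolic groups does not give this specific identity without substantial further work.

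The step you need is already available in the paper and eliminates the detour entirely: Lemma~\ref{KG}. Since $S$ is not virtually cyclic, contains a loxodromic element, and normalizes no non-trivial finite subgroup of $G$, Lemma~\ref{KG}(a) gives $K(S)=\{1\}$, and Lemma~\ref{KG}(b) then produces a loxodromic $g\in S$ with $E_G(g)=\langle g\rangle\times K(S)=\langle g\rangle$, an infinite cyclic (in particular torsion-free) group. Running your construction with this particular $g$ yields $g_2\in S$ loxodromic and non-commensurable with $g$, and $E_G(g)\cap E_G(g_2)$ is a finite subgroup of the torsion-free group $\langle g\rangle$, hence trivial. So $S$ is suitable, with witnesses $g$ and $g_2$. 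Your reduction through $N$ and the product-of-powers step are then unnecessary.
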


Recall also that a subgroup (respectively, an element) of $G$ is called \emph{parabolic } if it is conjugate to a subgroup (respectively, an element) of $H_\lambda$ for some $\lambda \in \Lambda$.

\begin{cor}\label{Ksuit}
Let $G$ be a torsion free group hyperbolic relative to a collection of subgroups $\Hl$, $K$ a non-cyclic non-parabolic subgroup of $G$. Then $K$ is suitable.
\end{cor}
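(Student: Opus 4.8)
The plan is to verify the three conditions in the characterization of suitable subgroups provided by Lemma~\ref{AMO-suit}: that $K$ is not virtually cyclic, that $K$ contains a loxodromic element, and that $K$ does not normalize a non-trivial finite subgroup of $G$. Two of these are immediate from the hypothesis that $G$ is torsion free. Since $G$ has no non-trivial finite subgroups at all, the third condition holds vacuously. For the first, $K$ is itself torsion free, being a subgroup of a torsion free group; and a torsion free virtually cyclic group is infinite cyclic (the finite-by-$\mathbb Z$ and finite-by-$D_\infty$ alternatives force the finite part to be trivial and exclude $D_\infty$). As $K$ is non-cyclic by hypothesis, it is therefore not virtually cyclic.

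The only real content is producing a loxodromic element of $K$, and I would argue this by contradiction. Suppose $K$ contains no loxodromic element. Since $G$ is torsion free, every non-trivial element of $K$ has infinite order; failing to be loxodromic, it must then be parabolic. Fix a non-trivial $a\in K$ (such $a$ exists because $K$ is non-cyclic, hence non-trivial). By parabolicity, $gag^{-1}\in H_\lambda$ for some $\lambda\in\Lambda$ and $g\in G$. Because $K$ is non-parabolic, $gKg^{-1}$ is not contained in $H_\lambda$, so there is $b\in K$ with $gbg^{-1}\notin H_\lambda$.

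Now I would invoke Lemma~\ref{ah} for the element $gbg^{-1}\notin H_\lambda$: the product $(gbg^{-1})h$ is loxodromic for all but finitely many $h\in H_\lambda$. Since $gag^{-1}$ has infinite order, the powers $(gag^{-1})^n$ with $n\in\mathbb N$ constitute an infinite subset of $H_\lambda$; hence for some $n$ the element $(gbg^{-1})(gag^{-1})^n=g(ba^n)g^{-1}$ is loxodromic. As being loxodromic (infinite order, not conjugate into any peripheral subgroup) is invariant under conjugation, $ba^n$ is loxodromic, and $ba^n\in K$ --- contradicting our assumption. Therefore $K$ contains a loxodromic element, and Lemma~\ref{AMO-suit} now yields that $K$ is suitable.

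The step I expect to require the most care is the passage from ``$K$ is non-parabolic'' to the existence of a single $b\in K$ whose appropriate conjugate escapes the fixed peripheral subgroup $H_\lambda$ (together with the trichotomy elliptic/parabolic/loxodromic for elements, which in the torsion free case reduces to parabolic-or-loxodromic); everything else is bookkeeping, with Lemma~\ref{ah} carrying the analytic weight. One could alternatively route the argument through an induced peripheral structure on $K$ using Lemma~\ref{maln} and Theorem~\ref{Q}, but the direct argument above appears shorter and uses only what has already been recalled.
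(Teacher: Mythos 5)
Your proof is correct and follows essentially the same route as the paper's: reduce to producing a loxodromic element via Lemma~\ref{AMO-suit}, use the loxodromic-or-parabolic dichotomy for non-trivial elements of a torsion free relatively hyperbolic group, and apply Lemma~\ref{ah} after using non-parabolicity of $K$ to find an element escaping the peripheral subgroup. The only differences are cosmetic (you argue by contradiction and carry the conjugator $g$ along, whereas the paper conjugates $K$ once and argues directly), so there is nothing substantive to flag.
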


\begin{proof}
As every torsion free virtually cyclic group is cyclic, $K$ is not virtually cyclic. Consider any $x\in K\setminus\{1\}$. Since $G$ is torsion free, $x$ is either loxodromic or parabolic. In the former case $K$ is suitable by Lemma \ref{AMO-suit}. In the latter case, passing from $K$ to its conjugate if necessary, we can assume that $x\in H_\lambda$ for some $\lambda\in \Lambda$. (Observe that passing to conjugate subgroups preserves suitability.) Since $K$ is not parabolic, there exists $a\in K\setminus H_\lambda$. Then by Lemma \ref{ah}, there exists $n$ such that $ax^n$ is loxodromic. As $ax^n\in K$, we obtain that $K$ is suitable by Lemma \ref{AMO-suit} again.
\end{proof}

Finally, we will need another lemma, which is a combination of Lemma 3.8 and Lemma 3.3 in \cite{AMO}; it was also proved in more general settings in \cite[Theorem 2.23]{DGO}.

\begin{lem}\label{KG}
Let $G$ be a relatively hyperbolic group, $S\le G$. Assume that $S$ is not virtually cyclic and contains loxodromic elements. Then the following hold.
\begin{enumerate}
\item[(a)] There exists a maximal normal finite subgroup of $G$ normalized by $S$, which we denote by $K(S)$.
\item[(b)] $S$ contains a loxodromic element $g$ such that $E_G(g)=\langle g\rangle \times K(S)$.
\end{enumerate}
\end{lem}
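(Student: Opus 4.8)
The final statement is, as the paragraph preceding it indicates, essentially a repackaging of \cite[Lemmas 3.3 and 3.8]{AMO} (and of \cite[Theorem 2.23]{DGO}), so the plan is to reduce to those results while isolating the part of the argument that runs directly from facts already available here. First I would record the properties of the elementary closure $E_G(g)$ of a loxodromic element $g$ that the argument rests on: $E_G(g)$ is the unique maximal virtually cyclic subgroup of $G$ containing $g$ and is described by (\ref{EGg}); by Corollary \ref{Eg} together with Lemma \ref{maln}(b) applied to $E_G(g)$, it is almost malnormal, i.e. $E_G(g)\cap E_G(g)^x$ is finite for every $x\notin E_G(g)$, and in particular $N_G(E_G(g))=E_G(g)$ because $E_G(g)$ is infinite; finally, since $E_G(g)$ is virtually cyclic, every infinite-order element $x\in E_G(g)$ is commensurable with $g$, hence loxodromic, and satisfies $E_G(x)=E_G(g)$ by maximality.

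\emph{Part (a).} The point is that every finite subgroup of $G$ normalized by $S$ is trapped inside one fixed finite subgroup. If $F\le G$ is finite and normalized by $S$, then for each loxodromic $g\in S$ the subgroup $\langle g\rangle F$ is finite-by-cyclic, hence virtually cyclic and contains $g$, hence $\langle g\rangle F\subseteq E_G(g)$; thus $F\subseteq\bigcap\{E_G(g):g\in S\text{ loxodromic}\}$. Next I would check that not all loxodromic elements of $S$ share the same elementary closure: if they all had $E_G(g)=E$, then for $s\in S$ the loxodromic element $sgs^{-1}$ would satisfy $sEs^{-1}=E_G(sgs^{-1})=E$, so $s\in N_G(E)=E$, forcing $S\subseteq E$ and contradicting that $S$ is not virtually cyclic. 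Fixing loxodromic $g_1,g_2\in S$ with $E_G(g_1)\ne E_G(g_2)$, the intersection $Q:=E_G(g_1)\cap E_G(g_2)$ is finite, since an infinite-order element of $Q$ would be loxodromic with $E_G$ equal to both $E_G(g_1)$ and $E_G(g_2)$. Hence every finite subgroup of $G$ normalized by $S$ lies in $Q$; as the subgroup generated by two such subgroups is again finite (it sits in $Q$) and normalized by $S$, the subgroup $K(S)$ generated by all of them is finite, normalized by $S$, and is the unique maximal finite subgroup of $G$ normalized by $S$. This gives (a).

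\emph{Part (b).} From (a), $K(S)\subseteq E_G(h)$ for every loxodromic $h\in S$, since $\langle h\rangle K(S)$ is virtually cyclic. What remains is to produce a single loxodromic $g\in S$ whose elementary closure is as small as possible, namely realizes the internal direct product $E_G(g)=\langle g\rangle\times K(S)$, and this is exactly where I would invoke \cite[Lemma 3.8]{AMO} (equivalently \cite[Theorem 2.23]{DGO}): starting from two non-commensurable loxodromic elements of $S$ (available because $S$ is not virtually cyclic, as in (a)), one forms a loxodromic $g$ as a product of sufficiently large powers of these and, if necessary, of one further non-commensurable loxodromic element of $S$; the relative small cancellation and Dehn filling machinery of \cite{Osi06a,Osi10} forces the elementary closure of such a $g$ to be virtually cyclic of type (finite)-by-$\mathbb Z$ with the $\mathbb Z$-part carried by $g$ and the finite part contained in the intersection of the elementary closures of the chosen elements and normalized by $S$. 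After replacing $g$ by a suitable power and multiplying by a central element so that $g$ centralizes this finite part and generates the cyclic quotient, one gets $E_G(g)=\langle g\rangle\times D$ with $D$ finite and normalized by $S$; then $D\subseteq K(S)$ by (a), while $K(S)\subseteq E_G(g)$ is torsion, so $K(S)\subseteq D$, whence $D=K(S)$ and $g$ is as required.

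\emph{Main obstacle.} The genuinely hard step is the construction of $g$ in (b): controlling the elementary closure of a long product of large powers of non-commensurable loxodromic elements and showing it collapses to an infinite cyclic group times a finite subgroup normalized by $S$. This is the content of \cite[Lemma 3.8]{AMO} and \cite[Theorem 2.23]{DGO}, resting on the relatively hyperbolic small cancellation theory of \cite{Osi06a,Osi10}; I would therefore quote it as a black box rather than reprove it, keeping the original content of the argument limited to the elementary bookkeeping of the two preceding paragraphs.
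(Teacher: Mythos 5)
Your proposal is correct and takes essentially the same approach as the paper: the paper proves this lemma purely by citation to \cite[Lemmas 3.3 and 3.8]{AMO} and \cite[Theorem 2.23]{DGO}, and you rely on the same references for the substantive part (b). The one difference is that you additionally spell out an elementary direct argument for part (a) using the almost malnormality of $E_G(g)$ and the observation that $S$ cannot fix a single elementary closure; this is a valid, self-contained bonus that the paper leaves implicit in the citation.
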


\begin{rem}\label{S=G} If $G$ is not virtually cyclic  and is hyperbolic relative to a collection of proper subgroups, then $G$ contains loxodromic elements by Corollary \ref{loxex}, and we can apply this lemma to $S=G$. In particular, we can apply Lemma \ref{KG} in the case when a relatively hyperbolic group $G$ contains a suitable subgroup as the existence of such a subgroup automatically implies that $G$ is not virtually cyclic and all peripheral subgroups are proper.
\end{rem}

We now state a theorem from \cite{Osi10}, which was proved by means of small cancellation theory in relatively hyperbolic groups. The idea of generalizing small cancellation theory to groups acting on hyperbolic spaces goes back to Gromov's paper \cite{Gro}. In the case of hyperbolic groups, it was formalized by several people including the first author \cite{Ols93}; later the second author generalized this approach to the case of relatively hyperbolic groups \cite{Osi10}.

\begin{thm}\label{sct}
Let $G$ be a group hyperbolic relative to a collection of subgroups
$\Hl $, $S$ a suitable subgroup of $G$, $W$ a finite subset of $G$. Then there exists an epimorphism $\eta \colon G\to \overline{G}$ such
that:
\begin{enumerate}
\item[(a)] The restriction
of $\eta $ to $\bigcup\limits_{\lambda \in \Lambda} H_\lambda $ is injective. Henceforth we think of $H_\lambda$ as a subgroup of $\overline{G}$.
\item[(b)] $\overline{G}$ is hyperbolic relative to $\Hl$.
\item[(c)] $\eta (W)\subseteq \eta (S)$.
\item[(d)] $\eta(S)$ is suitable in $\overline{G}$.
\item[(e)] If $G$ is torsion free, then so is $\overline{G}$.
\item[(f)] Every $H_\lambda$ is a proper subgroup of $\overline{G}$ and $\overline{G}$ is not virtually cyclic.
\item[(g)] $K(\overline{G})=\{1\}$.
\end{enumerate}
\end{thm}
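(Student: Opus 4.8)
The plan is to realize $\eta$ as the quotient of $G$ by the normal closure of a carefully chosen finite symmetrized set of relators, and to feed that set into the small cancellation machinery over relatively hyperbolic groups from \cite{Osi10}. Fix a relative generating set $X$ of $G$ with respect to $\Hl$. Since $S$ is suitable, it contains non-commensurable loxodromic elements $g_1,g_2$ with $E_G(g_1)\cap E_G(g_2)=\{1\}$. A standard ping-pong argument (carried out in \cite{Osi10}) shows that a non-virtually-cyclic subgroup containing a loxodromic element contains arbitrarily many pairwise non-commensurable loxodromic elements with pairwise trivial intersections of their $E_G$-subgroups; using this I also fix two further loxodromic elements $a_1,a_2\in S$ so that $a_1,a_2,g_1,g_2$ are pairwise non-commensurable. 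The elements $g_1,g_2$ will be kept in reserve to witness suitability of the quotient, while $a_1,a_2$ will be used to build the relators.

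Writing $W=\{w_1,\dots,w_k\}$, I choose a large integer $\ell$ and, after inspecting $W$, large pairwise distinct rapidly growing exponents $n_{i,1},\dots,n_{i,2\ell}$, and set
$$
R_i \;=\; w_i^{-1}\,a_1^{\,n_{i,1}}a_2^{\,n_{i,2}}a_1^{\,n_{i,3}}\cdots a_2^{\,n_{i,2\ell}},\qquad i=1,\dots,k,
$$
regarded as a word over $X^{\pm1}\cup\mathcal H$ after rewriting $w_i,a_1,a_2$ in that alphabet. Let $\mathcal R$ be the symmetrized closure of $\{R_1,\dots,R_k\}$. Because $a_1$ and $a_2$ are independent loxodromic elements, any common subword of two distinct members of $\mathcal R$ that is long relative to $\ell^{-1}$ times the length of a relator would force either commensurability of $a_1$ and $a_2$ or a coincidence among the chosen exponents, while common pieces arising from the prefixes $w_i^{-1}$ have length bounded by $\max_i|w_i|_{X\cup\mathcal H}+O(1)$. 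Hence for $\ell$ large enough $\mathcal R$ satisfies the small cancellation condition $C'(\mu)$ over $(G,\Hl)$ for any prescribed $\mu>0$, and since the exponents along each $R_i$ are pairwise distinct, no $R_i$ is conjugate to a proper power.

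Now let $\overline G=G/\langle\langle\mathcal R\rangle\rangle$ and let $\eta\colon G\to\overline G$ be the quotient map. The results of \cite{Osi10} on small cancellation quotients of relatively hyperbolic groups then give: $\eta$ is injective on every $H_\lambda$, which is (a); $\overline G$ is hyperbolic relative to the images of $\Hl$, which is (b); if $G$ is torsion free then, since no relator is a proper power, $\overline G$ is torsion free, which is (e); and the reserved loxodromic elements survive the quotient, in the sense that $\eta(g_1),\eta(g_2)$ are non-commensurable loxodromic elements of $\overline G$ with $E_{\overline G}(\eta(g_j))=\eta(E_G(g_j))$ and with $\eta$ separating $E_G(g_1)$ from $E_G(g_2)$, so that $E_{\overline G}(\eta(g_1))\cap E_{\overline G}(\eta(g_2))=\{1\}$. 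As $g_1,g_2\in S$, the subgroup $\eta(S)$ contains this non-commensurable loxodromic pair with trivial intersection of $E_{\overline G}$-subgroups, i.e. $\eta(S)$ is suitable in $\overline G$; this is (d). Property (c) is built in: in $\overline G$ the relation $R_i=1$ reads $\eta(w_i)=\eta\!\bigl(a_1^{\,n_{i,1}}a_2^{\,n_{i,2}}\cdots a_2^{\,n_{i,2\ell}}\bigr)\in\langle\eta(a_1),\eta(a_2)\rangle\subseteq\eta(S)$, so $\eta(W)\subseteq\eta(S)$.

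Properties (f) and (g) then follow formally. Since $\eta(S)$ is suitable it is not virtually cyclic, hence neither is $\overline G$; and $\overline G$ contains the loxodromic element $\eta(g_1)$, which is not conjugate into any peripheral subgroup, so no $H_\lambda$ can coincide with $\overline G$, giving (f). For (g), by Remark \ref{S=G} Lemma \ref{KG} applies to $\overline G$, so $K(\overline G)$ is the maximal finite normal subgroup of $\overline G$; in particular $\eta(S)$ normalizes it, but by Lemma \ref{AMO-suit} a suitable subgroup normalizes no non-trivial finite subgroup, whence $K(\overline G)=\{1\}$. The technical heart — and the main obstacle — is entirely the input from \cite{Osi10}: verifying the small cancellation condition for $\mathcal R$ and, above all, establishing the ``survival'' of the reserved loxodromic elements, i.e. that $\eta(g_j)$ stays loxodromic with $E_{\overline G}(\eta(g_j))=\eta(E_G(g_j))$ and that $\eta$ is injective on $E_G(g_1)\cup E_G(g_2)$ across the two subgroups. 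Granting those, properties (a)--(g) are obtained as above, essentially by bookkeeping.
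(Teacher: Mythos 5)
Your proposal takes essentially the same approach as the paper: parts (a)--(e) are obtained by citing \cite[Theorem~2.4]{Osi10}, and (f), (g) follow from the suitability of $\eta(S)$ in (d) via Lemma~\ref{AMO-suit} together with Lemma~\ref{KG}. Your expanded sketch of the relator construction underlying that citation is plausible, but as you acknowledge yourself the substantive steps (verification of the small cancellation condition and survival of the reserved loxodromic elements) are deferred right back to \cite{Osi10}, and your derivation of (f), (g) coincides with the paper's.
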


\begin{proof}
Parts (a)--(e) were proved in \cite[Theorem 2.4]{Osi10}. Parts (f) and (g) follow from the existence of a suitable subgroup guaranteed by (d) (see Lemma \ref{AMO-suit}).
\end{proof}

The first two parts of the next theorem were proved in \cite{Osi07}. For details and relation to Thurston's theory of hyperbolic Dehn filling of $3$-manifolds we refer to \cite{Osi07}.

\begin{thm}\label{Df}
Let $G$ be a group hyperbolic relative to a collection of subgroups $\Hl $, $S$ a suitable subgroup of $G$. Then there exist finite subsets $\mathcal F_\lambda \subseteq H_\lambda\setminus \{ 1\}$ such that for every collection of subgroups $N_\lambda \lhd H_\lambda$ satisfying $N_\lambda \cap \mathcal F_\lambda =\emptyset $, $\lambda\in \Lambda$, the following conditions hold.
\begin{enumerate}
\item[(a)] For every $\lambda\in \Lambda $, we have $H_\lambda \cap N=N_\lambda$, where $N$ is the normal closure of $\bigcup_{\lambda\in \Lambda} N_\lambda$ in $G$. This is equivalent to the injectivity of the natural map $H_\lambda/N_\lambda \to G/N$. In what follows we think of $H_\lambda/N_\lambda $ as subgroups of $G/N$.
\item[(b)] The group $G/N$ is hyperbolic relative to the collection $\{H_\lambda/N_\lambda\}_{\lambda \in \Lambda}$.
\item[(c)] If $G$ and all quotient groups $H_\lambda/N_\lambda$ are torsion free, then so is $G/N$.

\item[(d)] The image of $S$ in $G/N$ is suitable.
\end{enumerate}
\end{thm}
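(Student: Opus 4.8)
The plan is as follows. Parts (a) and (b) follow from \cite[Theorem 1.1]{Osi07}, which produces finite subsets $\mathcal{F}_\lambda\subseteq H_\lambda\setminus\{1\}$ such that, whenever $N_\lambda\lhd H_\lambda$ satisfies $N_\lambda\cap\mathcal{F}_\lambda=\emptyset$ for every $\lambda$, the natural maps $H_\lambda/N_\lambda\to G/N$ are injective and $G/N$ is hyperbolic relative to $\{H_\lambda/N_\lambda\}_{\lambda\in\Lambda}$. For part (c) I would enlarge the $\mathcal{F}_\lambda$ and use the additional fact, available from the Dehn filling technology of \cite{Osi07} (cf. also \cite{DGO}), that for sufficiently deep fillings every finite subgroup of $G/N$ is conjugate either into some $H_\lambda/N_\lambda$ or to the image under the quotient map $\eta\colon G\to G/N$ of a finite subgroup of $G$. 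Under the hypotheses of (c) --- $G$ and every $H_\lambda/N_\lambda$ torsion free --- each alternative forces the subgroup to be trivial, so $G/N$ is torsion free.

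For part (d), recall that since $S$ is suitable there are non-commensurable loxodromic elements $g_1,g_2\in S$ with $E_G(g_1)\cap E_G(g_2)=\{1\}$. I would enlarge the subsets $\mathcal{F}_\lambda$ once more so that the images $\bar g_i=\eta(g_i)$ satisfy: (i) $\bar g_1$ and $\bar g_2$ are loxodromic and non-commensurable in $G/N$; and (ii) $E_{G/N}(\bar g_i)=\eta(E_G(g_i))$ for $i=1,2$, with $\eta(E_G(g_1))\cap\eta(E_G(g_2))=\eta(E_G(g_1)\cap E_G(g_2))=\{1\}$. Granting (i) and (ii) we get $E_{G/N}(\bar g_1)\cap E_{G/N}(\bar g_2)=\{1\}$, so the non-commensurable loxodromic elements $\bar g_1,\bar g_2$, which lie in the image $\eta(S)$ of $S$ in $G/N$, witness by definition that $\eta(S)$ is suitable; this is (d). One could also finish via Lemma \ref{AMO-suit}: (i) shows $\eta(S)$ is not virtually cyclic and contains a loxodromic element, and if $\eta(S)$ normalized a non-trivial finite subgroup $\bar F$ then, the conjugation action of $\eta(S)$ on $\bar F$ having finite image, suitable powers of $\bar g_1$ and $\bar g_2$ would centralize $\bar F$, whence $\bar F\subseteq E_{G/N}(\bar g_1)\cap E_{G/N}(\bar g_2)=\{1\}$ by $(\ref{EGg})$, a contradiction.

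The main obstacle is property (ii): one must choose the $\mathcal{F}_\lambda$ deep enough that the finitely many relevant loxodromic elements keep their maximal virtually cyclic subgroups under $\eta$, and that the near-malnormality recorded by $E_G(g_1)\cap E_G(g_2)=\{1\}$ passes to the quotient $G/N$. In the torsion free case this is considerably easier, since $E_G(g_i)$ and $E_{G/N}(\bar g_i)$ are then infinite cyclic and $\eta$ is injective on $E_G(g_i)$; the general case uses the full force of the small cancellation theory over relatively hyperbolic groups developed in \cite{Osi07}, and the whole scheme parallels the proof that $\eta(S)$ remains suitable in Theorem \ref{sct}(d), with the Dehn filling input of \cite{Osi07} playing the role that the small cancellation input of \cite{Osi10} plays there. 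Everything else --- parts (a), (b), the finite subgroup argument for (c), and the final verification of suitability --- is routine.
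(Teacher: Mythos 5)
Your treatment of parts (a), (b), (c) matches the paper's: (a), (b) are directly from \cite{Osi07}, and your sketch of (c) is the paper's argument in substance (the paper invokes \cite[Theorem 7.19(f)]{DGO} to lift a torsion element of $G/N$ to an elliptic element of $G$, then \cite[Theorem 1.14]{Osi06} forces that lift to be torsion or parabolic, and either alternative gives triviality under the torsion-freeness hypotheses of (c)).

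For (d), however, you have correctly identified the obstacle but not overcome it; your step (ii) --- that after a sufficiently deep filling one has $E_{G/N}(\bar g_i)=\eta(E_G(g_i))$ and that the intersection relation $E_G(g_1)\cap E_G(g_2)=\{1\}$ descends --- is not established, and it is not something that falls out of \cite{Osi07} without additional argument. Note also that your remark that the torsion-free case is ``considerably easier'' is misleading: even there, injectivity of $\eta$ on $E_G(g_i)$ only gives $E_{G/N}(\bar g_i)\supseteq\eta(E_G(g_i))$, and the reverse containment is exactly the hard part. The paper sidesteps (ii) entirely by a different device: first it uses Lemma~\ref{KG} to choose the loxodromic $g\in S$ so that $E_G(g)=\langle g\rangle$ is infinite cyclic, and then, via Corollary~\ref{Eg}, it \emph{enlarges the peripheral collection} to $\Hl\cup\{E_G(f),E_G(g)\}$ before filling, taking $N_\lambda=\{1\}$ on the two new factors. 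After the filling, $E=\eta(E_G(g))$ and $D=\eta(E_G(f))$ are embedded peripheral subgroups of $G/N$ by part (a), and everything one needs --- $E\cap D=\{1\}$, loxodromicity of $\bar g$ with respect to the smaller collection $\{H_\lambda/N_\lambda\}$, and the fact that any finite subgroup of $G/N$ normalized by $T=\eta(S)$ lies in the infinite cyclic $E$ and is therefore trivial --- follows from the packaged almost-malnormality of peripheral subgroups (Lemma~\ref{maln}), after which Lemma~\ref{AMO-suit} gives suitability of $T$. This replaces the delicate ``preservation of $E_G$'' claim by general properties of peripheral subgroups; if you tried to carry out (ii) directly you would in effect be forced to reinvent this peripheral trick, so you should adopt it rather than leave (ii) as an acknowledged gap.
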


\begin{proof}
As we already mentioned, the first two claims of the theorem were proved in \cite{Osi07}. To prove part (c) we note that $\Hl$ is hyperbolically embedded in $G$ in the terminology of \cite{DGO} by \cite[Proposition 4.28]{DGO}. Thus we can apply Theorem 7.19 from \cite{DGO} in our case. Let $\e$ denotes the natural homomorphism $G\to G/N$. Part (f) of Theorem 7.19 from \cite{DGO} states that every
element of $G/N$ acting elliptically (i.e., with bounded orbits) on $\Gamma(G/N, \e(X\cup \mathcal H))$ is an image of an element of $G$ acting elliptically on $\G$. Here by $\Gamma (H,Y)$ we denote the Cayley graph of a group $H$ with respect to a generating set $Y$. Thus if $\bar g\in G/N$ has finite order, there is a preimage $g\in G$ of $\bar g$ that acts elliptically on $\G$. However, according to \cite[Theorem 1.14]{Osi06} every such an element $g$ has finite order or is conjugate to an element of some $H_\lambda $. In the former case we get $g=1$ as $G$ is torsion free; hence $\bar g=1$. In the latter case we again obtain $\bar g=1$ as $H_\lambda/N_\lambda $ is torsion free.

The last claim of Theorem \ref{Df} can be derived from parts (a) and (b) as follows. By Lemmas \ref{AMO-suit} and \ref{KG}, there exists a loxodromic elements $g\in S$ such that $E_G(g)=\langle g\rangle $. By the definition of a suitable subgroup, there must be another loxodromic element $f\in S$ not commensurable with $g$. Since $f$ and $g$ are non-commensurable, we can apply Corollary \ref{Eg} twice and conclude that $G$ is hyperbolic relative to $\Hl\cup \{E_G(f), E_G(g)\}$. Increasing the finite subsets $\mathcal F_\lambda$ if necessary, we can assume that parts (a) and (b) of the theorem hold for this extended collection of peripheral subgroups and normal subgroups $N_\lambda \lhd H_\lambda $ and $\{ 1\} \lhd E_G(f)$, $\{ 1\} \lhd E_G(g)$. Thus $G/N$ is hyperbolic relative to
\begin{equation}\label{HlE12}
\{ H_\lambda/N_\lambda \}_{\lambda \in \Lambda }\cup \{ E,D\},
\end{equation}
where $E$, $D$ are the isomorphic images of $E_G(f)$ and $E_G(g)$.

Let $T$ be the image of $S$ in $G/N$. Note that $E\cap D$ is trivial by Lemma \ref{maln} applied to the peripheral subgroups $E$ and $D$ of $G/N$. As $T$ contains infinite virtually cyclic subgroups $E,D\le T$ with trivial intersection, $T$ cannot be virtually cyclic. Further if $K$ is a finite subgroup of $G/N$ normalized by $T$, then some finite index subgroup of $E$ centralizes $K$. Hence $K\le E$ by Lemma \ref{maln} applied to the peripheral subgroup $E$. Consequently, $K=\{1\}$ as $E$ is infinite cyclic.

By Lemma \ref{240}, $G/N$ is also hyperbolic relative to $\{ H_\lambda/N_\lambda \}_{\lambda \in \Lambda }$. Let $h\in E$ denote the image of $g$ in $G/N$. By Lemma \ref{maln} applied to the collection (\ref{HlE12}), the element $h$ is not conjugate to an element of any $H_\lambda/N_\lambda$. This means that $h$ is loxodromic with respect to $\{ H_\lambda/N_\lambda \}_{\lambda \in \Lambda }$. Thus $T$ contains a loxodromic (with respect to $\{ H_\lambda/N_\lambda \}_{\lambda \in \Lambda }$) element, is not virtually cyclic, and does not normalize any non-trivial finite subgroup of $G/N$. Hence $T$ is suitable in $G/N$ by Lemma \ref{AMO-suit}.
\end{proof}

\subsection{Proof of Theorem \ref{main3}}

We will derive Theorem \ref{main3} from Proposition \ref{Pij} and the following algebraic result in the same manner as we derived Theorem \ref{main1} from Corollaries \ref{bisr} and \ref{cor-com}. We say that two sequences  $\mathcal G=\{ (G_i,X_i)\}$, $\mathcal H=\{ (H_i,X_i)\}$ of groups $G_i$, $H_i$ and their generating sets $X_i$, $Y_i$ are \emph{asymptotically isomorphic} (written $\mathcal G\cong _{as} \mathcal H$) if for all but finitely many $i$ there exist isomorphisms $G_i\to H_i$ that take $X_i$ to $Y_i$. It is clear that the property of having infinitesimal spectral radius is preserved by asymptotic isomorphisms.

\begin{prop}\label{limrh-prop}
Let
\begin{equation}\label{colP}
\mathcal P=\{ (P_{ij},X_{ij})\mid (i,j)\in \mathbb N\times \mathbb N\}
\end{equation}
be a collection of groups $P_{ij}$ and their finite generating sets $X_{ij}$ such that:
\begin{enumerate}
\item[(P$_1$)] $|X_{ij}|=i$ for any $i,j\in \mathbb N$;
\item[(P$_2$)] for every $i\in \mathbb N$, $\lim_{j\to \infty}{\rm girth} (P_{ij},X_{ij})= \infty$;
\item[(P$_3$)] for every $i,j\in \mathbb N$, $P_{ij}$ has non non-cyclic free subgroups.
\end{enumerate}
Let also $H$ be a non-virtually cyclic  hyperbolic group and let $C$ be a countable group without non-cyclic free subgroups. Then there exists a quotient group $G$ of $H$, a family $\{Y_i\}_{i\in \mathbb N}$ of subsets $Y_i\subseteq G$ of cardinality $|Y_i|=i$,  and functions $u\colon G\times \mathbb N\to G$ and $j\colon G\times \mathbb N\to \mathbb N$ with the following properties.
\begin{enumerate}
\item[(a)] $C$ embeds in $G$.
\item[(b)] $G$ has no non-cyclic free subgroups.
\item[(c)] If $H$, $C$ and all $P_{ij}$ are torsion free (respectively, if $C$ and all $P_{ij}$ are torsion), then so is $G$.
\item[(d)] Given $g\in G\setminus \{ 1\}$, let
    $$
    T_{g,i}=\{ u(g,i)ygy^{-1} \mid y\in Y_i\}
    $$
    and let $H_{g,i}$ be the subgroup of $G$ generated by $T_{g,i}$. Then for every $g\in G\setminus\{ 1\}$, we have $\{ (H_{g,i}, T_{g,i})\}\cong_{as}\{ (P_{ij(g,i)}, X_{ij(g,i)})\}$.
\end{enumerate}
\end{prop}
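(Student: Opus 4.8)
The plan is to realise $G$ as a direct limit $G=\varinjlim_k G_k$ of a chain of epimorphisms $G_0=H\ast C\twoheadrightarrow G_1\twoheadrightarrow G_2\twoheadrightarrow\cdots$, keeping throughout the invariants that $G_k$ is hyperbolic relative to a finite peripheral collection containing an isomorphic copy of $C$ and that the image $\bar H$ of $H$ in $G_k$ is a suitable subgroup. We may assume $E(H)=1$ (replace $H$ by $H/E(H)$, still non-virtually-cyclic hyperbolic; a quotient of it is a quotient of $H$). Then $\bar H$ is suitable in $H\ast C$ by Lemma \ref{AMO-suit}: it is not virtually cyclic, it contains loxodromic elements (an infinite-order element of $H$ is loxodromic in $H\ast C$ relative to $\{C\}$), and by the Kurosh theorem and malnormality of free factors any finite subgroup of $H\ast C$ normalised by $H$ lies in $E(H)=1$. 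Fix an enumeration $G_0=\{1=a_0,a_1,a_2,\dots\}$ (possible since $H$ is finitely generated and $C$ countable) and a finite exhaustion $W_1\subseteq W_2\subseteq\cdots$ of $C$, and interleave the following four families of steps so that each occurs for every index; at the end put $G=\varinjlim_k G_k$.

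The main step is \emph{building the $i$-th layer}. Let $L$ be the current group and $A=\{a_\ell:\ell\le i,\ a_\ell\ne1\text{ in }L\}$. Form $P=L\ast F(Y_i)\ast F(U)$ with $Y_i=\{y_1,\dots,y_i\}$ and $U=\{u_a:a\in A\}$ free bases. A multi-element variant of Corollary \ref{mfs} (via the same ``core survives'' normal-form argument) shows that for each $a\in A$ the set $\{u_a y_s a y_s^{-1}:s=1,\dots,i\}$ freely generates a subgroup $F_a$ of rank $i$, that $\langle\bigcup_{a\in A}F_a\rangle=\ast_{a\in A}F_a$, and that this free product satisfies (Q1)--(Q3); so by Theorem \ref{Q}, $P$ is hyperbolic relative to the old collection together with $\ast_aF_a$, and $\bar H$ remains suitable in $P$. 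By (P$_2$) choose, for each $a\in A$, an index $j(a,i)$ with $\mathrm{girth}(P_{i,j(a,i)},X_{i,j(a,i)})$ exceeding the maximal length, with respect to $\bigsqcup_a\{u_a y_s a y_s^{-1}\}$, of an element of the finite exceptional set that Theorem \ref{Df} attaches to $\bar H$ and the peripheral subgroup $\ast_aF_a$. Let $N\lhd\ast_aF_a$ be the kernel of the homomorphism $\ast_aF_a\to\ast_aP_{i,j(a,i)}$ sending, for each $a$, the basis $\{u_a y_s a y_s^{-1}\}_s$ onto $X_{i,j(a,i)}$; a non-trivial element of $N$ has basis-length at least $\min_a\mathrm{girth}(P_{i,j(a,i)})$, hence misses the exceptional set. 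Dehn filling (Theorem \ref{Df}, filling every other peripheral subgroup trivially) produces a quotient hyperbolic relative to the old collection together with $\ast_aP_{i,j(a,i)}$ — equivalently, by Lemma \ref{240}, with the $P_{i,j(a,i)}$ as separate peripheral subgroups — in which $\bar H$ is still suitable, $C$ and all earlier peripheral subgroups remain embedded, and the image of $F_a$ is isomorphic to $P_{i,j(a,i)}$ with $\{u_a y_s a y_s^{-1}\}_s$ mapping to $X_{i,j(a,i)}$. A final application of Theorem \ref{sct} with $S=\bar H$ and $W=Y_i\cup U$ pushes these auxiliary generators into the image of $H$.

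The other steps are routine. \emph{Absorbing $C$}: for each $m$, apply Theorem \ref{sct} with $S=\bar H$, $W=W_m$; combined with the absorptions ending each layer-step, after all steps $G$ is generated by the image of $H$, hence is a quotient of $H$, while (a) holds since every step keeps $C$ embedded (Theorem \ref{sct}(a), Theorem \ref{Df}(a)). \emph{Killing free subgroups}: for each pair $(a_p,a_q)$, if $\langle\bar a_p,\bar a_q\rangle$ is free of rank $2$ then, being non-elementary and non-parabolic (no current peripheral subgroup — $C$, a $P_{i,j}$, or a virtually cyclic one — contains a non-cyclic free subgroup, by (P$_3$) and the hypothesis on $C$), it contains a loxodromic $c=w(\bar a_p,\bar a_q)$ with $w$ a non-trivial reduced word; use Corollary \ref{Eg} to make $E(c)$ peripheral and then Theorem \ref{Df} to fill $E(c)$ by a large power of $c$, afterwards removing $E(c)$ via Lemma \ref{240}; this forces $w(\bar a_p,\bar a_q)^N=1$, a relation that descends to $G$. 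As every $2$-generated subgroup of $G$ is an image of some $\langle\bar a_p,\bar a_q\rangle$, this gives (b). \emph{Killing infinite order} (torsion case only): for each $a_\ell$ with $\bar a_\ell$ of infinite order — hence loxodromic, all peripheral subgroups being torsion — fill $E(\bar a_\ell)$ by $\bar a_\ell^{\,N}$; this gives the torsion half of (c), the torsion-free half being automatic from Theorem \ref{sct}(e) and Theorem \ref{Df}(c). Finally (d): given $g\ne1$ in $G$, let $\ell=\min\{\ell':a_{\ell'}\mapsto g\}$; then $a_\ell\ne1$ in every $G_k$, so $a_\ell\in A$ at the layer-step for every $i\ge\ell$, and, putting $u(g,i)$ equal to the image in $G$ of the conjugator $u_{a_\ell}$ used there and $j(g,i)=j(a_\ell,i)$ for $i\ge\ell$ (arbitrary for $i<\ell$), the subgroup $H_{g,i}=\langle u(g,i)\,y\,g\,y^{-1}:y\in Y_i\rangle$ equals the image of $F_{a_\ell}\cong P_{i,j(g,i)}$ with $T_{g,i}$ corresponding to $X_{i,j(g,i)}$; this copy survives every later quotient because it is peripheral and every later filling is trivial on it (Theorem \ref{sct}(a), Theorem \ref{Df}(a)). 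Hence $\{(H_{g,i},T_{g,i})\}\cong_{as}\{(P_{i,j(g,i)},X_{i,j(g,i)})\}$.

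The hard part is twofold. First, one must prove the multi-element variant of Corollary \ref{mfs} producing the peripheral subgroup $\ast_{a\in A}F_a$ in which each $F_a$ is freely generated by the \emph{literal} conjugates $u_a\,y_s\,a\,y_s^{-1}$ built from a shared set $Y_i$ and individual conjugators $u_a$ — this is exactly what allows a single $g$-independent family $\{Y_i\}$ to serve all requirements of type (d) at once. Second, one must carry the peripheral structure faithfully through the entire infinite interleaving of small-cancellation quotients and Dehn fillings, verifying at each step that $C$ and every $P_{i,j(a_\ell,i)}$ created at an earlier layer remains peripheral and embedded, so that (d) is not destroyed in the limit; this rests precisely on the injectivity of these maps on peripheral subgroups and on the fact that a peripheral subgroup can always be filled trivially.
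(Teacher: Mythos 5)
Your overall strategy is the same as the paper's: build $G$ as a direct limit of relatively hyperbolic groups via an interleaving of small-cancellation quotients (Theorem \ref{sct}) and Dehn fillings (Theorem \ref{Df}), making one new free subgroup peripheral per ``layer'' and filling it to $P_{i,j}$, keeping $C$ embedded and the image of $H$ suitable throughout. The differences in bookkeeping (a single peripheral $\ast_a F_a$ split afterwards by Lemma \ref{240}, and killing rank-2 free subgroups by imposing a relation $w^N=1$ rather than by forcing the map to be onto from $\delta_{n-1}(F_n)$, as in the paper's property (Q$_4$)) are legitimate variants.

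However, there is a genuine gap in the key technical step, the ``multi-element variant of Corollary \ref{mfs}.'' You work in $P=L\ast F(Y_i)\ast F(U)$ with the single $a$-dependent conjugator on one side, $t_{a,s}=u_a y_s a y_s^{-1}$, and assert that ``the same core-survives normal-form argument'' shows $\bigcup_a\{t_{a,s}\}$ generates a malnormal free product $\ast_aF_a$. This is not correct. With a prefix $u_a$ but no $a$-dependent suffix, the cores $c(t_{a,s})=(y_s,a,y_s^{-1})$ and $c(t_{b,s}^{-1})=(y_s,b^{-1},y_s^{-1})$ merge whenever they meet: for $a\ne b$,
$$t_{a,s}\,t_{b,s}^{-1}\;=\;u_a\,y_s\,(ab^{-1})\,y_s^{-1}\,u_b^{-1},$$
so the normal form is $(u_a,\,y_s,\,ab^{-1},\,y_s^{-1},\,u_b^{-1})$ and neither core survives; moreover the surviving triple $(y_s,ab^{-1},y_s^{-1})$ can accidentally equal $c(t_{ab^{-1},s})$ when $ab^{-1}\in A$, so part (b) of the Claim in Corollary \ref{mfs} also fails. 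The whole point of taking $T=\{y_a x_i a x_i^{-1} z_a\}$ in the paper, with \emph{both} an $a$-dependent prefix $y_a$ \emph{and} an $a$-dependent suffix $z_a$, is precisely to insulate the cores: across a junction $t_{a,i}\,t_{b,i}^{-1}$ the middle syllable $z_a z_b^{-1}\ne 1$ blocks the merge, which is what makes the ``core survives'' bookkeeping, and hence the malnormality proof and the verification of (Q1)--(Q3) for Theorem \ref{Q}, go through. The paper then recovers the single-conjugator form required by the statement only at the very end, setting $u(g,i)=z_a y_a$ and observing that $T_{g,i}$ is the image of $z_a T_a z_a^{-1}=\{z_a y_a x_i a x_i^{-1}\}$; this is a conjugation performed after the peripheral structure has already been set up and filled, at a stage where it costs nothing. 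Your variant may or may not be provable by a more delicate normal-form analysis, but as written the proof is not there: the argument you appeal to does not apply, and you flag this step yourself as ``the hard part'' without supplying the needed modification. The fix is to run the construction with the two-sided conjugators $y_a,z_a$ as in Lemma \ref{GtoQ} and Corollary \ref{mfs}, and only at the last bookkeeping step define $u(g,i)$ to be the image of $z_a y_a$.

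Two smaller remarks. First, your ``killing free subgroups'' step implicitly uses that a rank-2 free subgroup of the current stage is non-parabolic and hence contains a loxodromic; that is fine via Corollary \ref{Ksuit} in the torsion-free case (and you correctly note (b) is vacuous in the torsion case), but you should say so. Second, when you fill the single peripheral $\ast_aF_a$ by the normal closure $N$ of $\bigcup_a N_a$, the bound ``a non-trivial element of $N$ has basis-length at least $\min_a\mathrm{girth}(P_{i,j(a,i)})$'' is correct, but deserves a one-line justification via the normal form theorem for free products (a nontrivial element of $N$ must have some syllable in some $N_a$).
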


To prove this proposition, we need a couple of lemmas.
The first one will be used to deal with the torsion free case.

\begin{lem} \label{GtoQ}
Let (\ref{colP}) be a collection of torsion free groups and generating sets satisfying (P$_1$) and (P$_2$). Let $G$ be a non-cyclic torsion free finitely generated group hyperbolic relative to a collection of proper subgroups $\Hl$. Suppose that for every $\lambda\in \Lambda$, $H_\lambda$ has no non-cyclic free subgroups. Then for any finite subset $A\subseteq G\setminus\{1\}$, any free subgroup $R\le G$, and any $n\in \mathbb N$, there exists an epimorphism $\e\colon G\to Q$ such that the following conditions hold.
\begin{enumerate}
\item[(Q$_1$)] The restriction of $\e$ to $H_\lambda$ is injective for all $\lambda \in \Lambda$. Henceforth we think of $H_\lambda$ as subgroups of $Q$.
\item[(Q$_2$)] There exist $k\in \mathbb N$, $x_1, \ldots, x_n\in Q$, and a set of elements $\{y_a, z_a\in Q\mid a\in A\}$,  such that for any $a\in A$, the set
    $$
    T_a=\{ y_ax_i\e(a)x_i^{-1}z_a \mid i=1, \ldots, n\}
    $$
    admits a bijection to $X_{nk}$ that extends to an isomorphism $\la T_a\ra \to P_{nk}$.
\item[(Q$_3$)] $Q$ is hyperbolic relative to $\Hl \cup \{ \la T_a\ra \}_{a\in A}$.
\item[(Q$_4$)] Either $R$ is cyclic or the restriction of $\e$ to $R$ is surjective.
\item[(Q$_5$)] $Q$ is torsion free.
\end{enumerate}
\end{lem}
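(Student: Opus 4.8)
The plan is to realize $Q$ as a two-stage modification of an auxiliary group: first a small cancellation quotient (Theorem \ref{sct}) that makes the construction a quotient of $G$ while installing a free peripheral subgroup, then a Dehn filling (Theorem \ref{Df}) that imposes the relations of a suitable $P_{nk}$ on that peripheral subgroup; the girth hypothesis (P$_2$) is exactly what makes the filling admissible.

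\textbf{Stage 1 (small cancellation).} Form $P=G\ast F(X)\ast F(Y)\ast F(Z)$ with $X=\{x_1,\dots,x_n\}$, $Y=\{y_a\mid a\in A\}$, $Z=\{z_a\mid a\in A\}$, and set $T_a^0=\{y_ax_iax_i^{-1}z_a\mid i\le n\}$, $T=\bigcup_{a\in A}T_a^0$. By Corollary \ref{mfs}, $T$ is a basis of a free subgroup $F\le P$ of rank $n|A|$ and $P$ is hyperbolic relative to $\{G,F\}$; thus $F=\ast_{a\in A}F_a$ with $F_a=\langle T_a^0\rangle$ free of rank $n$, and by Lemma \ref{240} $P$ is hyperbolic relative to $\Hl\cup\{F\}$. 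Put $S=R$ if $R$ is non-cyclic and $S=G$ otherwise. I would check that $S$ is suitable in $P$ with respect to $\Hl\cup\{F\}$ by Lemma \ref{AMO-suit}: it is not virtually cyclic, and it normalizes no non-trivial finite subgroup since $P$, being a free product of torsion free groups, is torsion free; finally it contains a loxodromic element because (i) an element $g\in S$ loxodromic with respect to $\Hl$ exists --- by Corollary \ref{Ksuit} when $S=R$ (a non-cyclic $R\le G$ is non-parabolic, as no $H_\lambda$ contains a non-cyclic free subgroup) and by Corollary \ref{loxex} when $S=G$ --- and (ii) such a $g$ stays loxodromic in $P$, since conjugate elements of the free factor $G$ are already conjugate in $G$, while $g\in F^{p^{-1}}$ would give $g\in G\cap F^{p^{-1}}=\{1\}$ by Lemma \ref{maln}(a) applied to the peripheral collection $\{G,F\}$ of $P$. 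Now apply Theorem \ref{sct} to $P$, $S$ and the finite set $W=X\cup Y\cup Z\cup Y_G$, where $Y_G$ is a finite generating set of $G$: we get $\eta\colon P\to\overline P$ injective on each $H_\lambda$ and on $F$, with $\overline P$ torsion free and hyperbolic relative to $\Hl\cup\{\eta(F)\}$, and with $\eta(W)\subseteq\eta(S)\subseteq\eta(G)$. Since $P$ is generated by $W$, this forces $\overline P=\eta(G)$, so $\eta|_G\colon G\to\overline P$ is onto; and when $S=R$ the inclusion $\eta(Y_G)\subseteq\eta(R)$ gives in addition $\eta(R)=\overline P$.

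\textbf{Stage 2 (Dehn filling).} Let $F'=\eta(F)$, free of rank $n|A|$, with $F'=\ast_{a\in A}F'_a$, $F'_a=\eta(F_a)$ free of rank $n$ on the basis $\eta(T_a^0)$. Apply Theorem \ref{Df} to $\overline P$, its peripheral collection $\Hl\cup\{F'\}$, and the suitable subgroup $\eta(S)$ (suitable by Theorem \ref{sct}(d)); this yields finite sets $\mathcal F_\lambda\subseteq H_\lambda\setminus\{1\}$ and $\mathcal F\subseteq F'\setminus\{1\}$. Put $M=\max\{|f|_{\eta(T)}\mid f\in\mathcal F\}$ and, using (P$_2$) with $i=n$, choose $k$ with ${\rm girth}(P_{nk},X_{nk})>M$. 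For each $a$ fix a bijection of $\eta(T_a^0)$ with $X_{nk}$, let $R_a\lhd F'_a$ be the kernel of the induced epimorphism $F'_a\to P_{nk}$, and set $N_{F'}=\langle\langle\bigcup_a R_a\rangle\rangle_{F'}$ and $N_\lambda=\{1\}\lhd H_\lambda$. Then $F'/N_{F'}\cong\ast_{a\in A}P_{nk}$ with each $F'_a/R_a\cong P_{nk}$ a free factor (so $F'_a\cap N_{F'}=R_a$), and any non-trivial $w\in N_{F'}$ has $\eta(T)$-length $\ge{\rm girth}(P_{nk},X_{nk})>M$: its free product normal form over $\ast_a F'_a$ must contain a syllable in some $R_a\setminus\{1\}$, which by itself already has that length. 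Hence $N_{F'}\cap\mathcal F=\emptyset$ and $N_\lambda\cap\mathcal F_\lambda=\emptyset$, so Theorem \ref{Df} produces $N\lhd\overline P$ with $H_\lambda\cap N=\{1\}$ and $F'\cap N=N_{F'}$ such that $Q:=\overline P/N$ is torsion free (Theorem \ref{Df}(c), since $\overline P$, the $H_\lambda$, and $\ast_{a}P_{nk}$ are torsion free, each $P_{nk}$ being torsion free) and hyperbolic relative to $\Hl\cup\{F'/N_{F'}\}$. With $\pi\colon\overline P\to Q$, set $\e=\pi\circ\eta|_G\colon G\to Q$; it is onto, so $Q$ is a quotient of $G$ and (Q$_5$) holds.

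\textbf{Stage 3 (reading off (Q$_1$)--(Q$_4$)).} (Q$_1$): $\eta$ is injective on $H_\lambda$ and $\pi$ is injective on $\eta(H_\lambda)$ because $\eta(H_\lambda)\cap N=\{1\}$. (Q$_2$): with $x_i=\pi\eta(x_i)$, $y_a=\pi\eta(y_a)$, $z_a=\pi\eta(z_a)$ in $Q$, the set $T_a=\{y_ax_i\e(a)x_i^{-1}z_a\mid i\le n\}$ is the $\pi\eta$-image of $T_a^0$, so $\langle T_a\rangle=\pi(F'_a)=F'_a/(F'_a\cap N)=F'_a/R_a\cong P_{nk}$, and the bijection $T_a\to X_{nk}$ is well defined since ${\rm girth}(P_{nk},X_{nk})\ge 3$. (Q$_3$): the free product $F'/N_{F'}=\ast_{a\in A}\langle T_a\rangle$ is hyperbolic relative to $\{\langle T_a\rangle\}_{a\in A}$, so Lemma \ref{240} applied to the finitely generated group $Q$ gives that $Q$ is hyperbolic relative to $\Hl\cup\{\langle T_a\rangle\}_{a\in A}$. (Q$_4$): if $R$ is non-cyclic then $\e|_R=\pi\circ\eta|_R$ is onto because $\eta(R)=\overline P$. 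I expect the two steps needing genuine care to be the verification that $S$ is suitable in $P$ (where the relative malnormality from Corollary \ref{mfs} via Lemma \ref{maln}, and the no-non-cyclic-free-subgroup hypothesis on the $H_\lambda$, are essential) and the length estimate guaranteeing $N_{F'}\cap\mathcal F=\emptyset$ (the sole place where (P$_2$) is used); the rest is bookkeeping through the two quotient maps $\eta$ and $\pi$.
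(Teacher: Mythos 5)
Your proof is correct and uses exactly the same toolkit as the paper---Corollary \ref{mfs} to embed $G$ in $P$ with a free malnormal $F$, Theorem \ref{sct} (small cancellation), and Theorem \ref{Df} (Dehn filling)---but you apply the two quotient steps in the opposite order. The paper first Dehn-fills $P$ with peripheral collection $\Hl\cup\{F_a\}_{a\in A}$, replacing each rank-$n$ free factor $F_a$ by a copy of $P_{nk}$ to obtain $G_1$, and only then applies small cancellation to $G_1$ so that the result becomes a quotient of $G$; the suitability of the image of $S$ in $G_1$, needed for this second step, is supplied by Theorem \ref{Df}(d). You instead apply small cancellation to $P$ first (with $F$ as a single peripheral subgroup), obtaining $\overline P=\eta(G)$ with $\eta(F)$ still free and $\eta(S)$ still suitable by Theorem \ref{sct}(d), and only then Dehn-fill $\eta(F)$ by the kernel $N_{F'}$ of the map to $\ast_{a\in A}P_{nk}$, finally splitting the single peripheral $\ast_a P_{nk}$ into the $\langle T_a\rangle$'s by Lemma \ref{240}. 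Both orders work because each step preserves what the other step needs: small cancellation is injective on peripherals and preserves suitability, and Dehn filling is harmless on the untouched peripherals. Your length estimate for $N_{F'}\cap\mathcal F=\emptyset$ (bounding from below the $\eta(T)$-length of a nontrivial element of $N_{F'}$ by the length of a shortest nontrivial syllable lying in some $R_a$) is the single-peripheral analogue of the paper's choice of $N_a$ avoiding $\mathcal F_a$, and is the step where (P$_2$) enters in both versions. The only micro-gap, which the paper shares, is that one should choose $k$ with girth at least $\max(M+1,3)$ rather than merely $>M$ to ensure the $n$ elements of $T_a$ are genuinely distinct so the claimed bijection exists.
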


\begin{proof}
Let
\begin{equation}\label{P2}
P=G\ast F(X)\ast F(Y)\ast F(Z),
\end{equation}
where $X=\{ \bar x_1, \ldots, \bar x_n\}$, $Y=\{ \bar y_a| a\in A\}$, $Z=\{ \bar z_a\mid a\in A\}$. For $a\in A$, let
$$
\overline T_a=\{ \bar y_a\bar x_ia\bar x_i^{-1}\bar z_a\mid i=1, \ldots, n\}
$$
and let $F$ be the subgroup of $P$ generated by $\overline T=\bigcup_{a\in A} \overline T_a$.

By Corollary \ref{mfs}, $F$ is free with basis $\overline T$ and $P$ is hyperbolic relative to $\{G, F\}$. Note that  $F=\ast_{a\in A} F_a$, where $F_a=\langle \overline T_a\rangle$. It follows immediately from the definition of relative hyperbolicity that $F$ is hyperbolic relative to $\{ F_a\}_{a\in A}$. Also $G$ is hyperbolic relative to $\Hl$ by our assumption. Applying Lemma \ref{240} several times, we obtain that $P$ is hyperbolic relative to the peripheral collection
\begin{equation}\label{HlFa}
\Hl \cup \{ F_a\}_{a\in A}.
\end{equation}

Note that if $R$ is not cyclic, then it is a suitable subgroup of $P$ (with respect to (\ref{HlFa})) by Corollary \ref{Ksuit}. Indeed it is clear that $R$ is not conjugate to any subgroup of $F_a$. Similarly it cannot be conjugate to a subgroup of any $H_\lambda$ since $H_\lambda$ does not contain non-cyclic free subgroups by our assumption. Similarly $G$ is a suitable subgroup of $P$ by Corollary \ref{Ksuit}. We let $S=R$ if $R$ is non-cyclic and $S=G$ otherwise. In both cases $S$ is suitable in $P$ with respect to (\ref{HlFa}).

Let $\mathcal F_a \subseteq F_a\setminus\{ 1\}$ and $\mathcal F_\lambda \subseteq H_\lambda\setminus\{ 1\}$  be the finite sets provided by Theorem \ref{Df} applied to the relatively hyperbolic group $P$ with peripheral collection (\ref{HlFa}) and the suitable subgroup $S$. Recall that $|\overline T_a|=n=|X_{nj}|$ for all $j\in \mathbb N$. By (P$_2$) there exists $k\in \mathbb N$ with the following property: for every $a\in A$, a bijection $\overline T_a\to X_{nk}$ extends to a homomorphism $F_a\to P_{nk}$ such that the kernel of this homomorphism, denoted $N_a$, does not intersect $\mathcal F_a$. Let $G_1$ be the quotient group of $P$ obtained as in Theorem \ref{Df} applied to the peripheral collection (\ref{HlFa}) and normal subgroups $\{1\} \lhd H_\lambda$ for $\lambda\in \Lambda$ and $N_a\lhd F_a$ for $a\in A$. Since the restriction of the natural homomorphism $P\to G_1$ to $H_\lambda$ is injective for every $\lambda\in \Lambda$, we keep the notation $H_\lambda$ for the image of $H_\lambda$ in $G_1$. Thus $G_1$ is hyperbolic relative to the collection
\begin{equation}\label{HlFaq}
\Hl \cup \{ F_a/N_a\cong P_{nk}\}_{a\in A},
\end{equation}
where the isomorphism $F_a/N_a\cong P_{nk}$ takes $\overline T_a$ to $X_{nk}$.

By part (d) of Theorem \ref{Df}, the image $S_1$ of $S$ in $G_1$ is a suitable subgroup of $G_1$ with respect to the peripheral collection (\ref{HlFaq}). Since $G$ is finitely generated, so are $P$ and $G_1$. Let $W$ be a finite generating set of $G_1$. Let $Q$ be  the quotient group of $G_1$ obtained by applying Theorem \ref{sct} to the relatively hyperbolic group $G_1$, finite subset $W$, and the suitable subgroup $S_1$.

Let $\e$ denote the composition of the natural embedding $G\hookrightarrow P$ and the natural homomorphisms $P\to G_1\to Q$. By part (c) of Theorem \ref{sct}, the restriction of $\e$ to $S$ is surjective. In particular, $\e $ is an epimorphism and we obtain (Q$_4$). We define $x_i=\e (\bar x_i)$, $y_a=\e (\bar y_a)$, $z_a=\e (\bar z_a)$. Properties (Q$_1$)--(Q$_3$) follow immediately from the construction of $G_1$ and parts (a), (b) of Theorem \ref{sct}. Finally note that $P$ is torsion free as so is $G$. Therefore $G_1$ is torsion free by part (c) of Theorem \ref{Df}. Consequently $Q$ is torsion free by part (e) of Theorem \ref{sct}.
\end{proof}

To deal with the torsion case, we need a slightly modified version.

\begin{lem} \label{GtoQ-tor}
Let (\ref{colP}) be a collection of torsion groups and generating sets satisfying (P$_1$) and (P$_2$). Let $G$ be a finitely generated group, which is hyperbolic relative to a collection of proper torsion subgroups $\Hl$. Assume that $G$ is not virtually cyclic and $K(G)=\{ 1\}$. Then for any finite subset $A\subseteq G\setminus\{1\}$, any element $t\in G$, and any $n\in \mathbb N$, there exists an epimorphism $\e\colon G\to Q$ satisfying (Q$_1$)--(Q$_3$) and the following condition.
\begin{enumerate}
\item[(Q$_4^\ast$)] $\e(t)$ has finite order.
\end{enumerate}
\end{lem}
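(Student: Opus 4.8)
The plan is to follow the proof of Lemma \ref{GtoQ} almost verbatim, the two differences being that suitability of the ``large'' subgroup can no longer be read off from Corollary \ref{Ksuit} (since $G$ need not be torsion free), and that $\e(t)$ must be made to have finite order in place of controlling a free subgroup. First I would form, exactly as in \eqref{P2},
\begin{equation*}
P=G\ast F(X)\ast F(Y)\ast F(Z),
\end{equation*}
with $X=\{\bar x_1,\dots,\bar x_n\}$, $Y=\{\bar y_a\mid a\in A\}$, $Z=\{\bar z_a\mid a\in A\}$, and set $\overline T_a=\{\bar y_a\bar x_i a\bar x_i^{-1}\bar z_a\mid i=1,\dots,n\}$, $F_a=\langle\overline T_a\rangle$. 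By Corollary \ref{mfs} the set $\overline T=\bigcup_{a\in A}\overline T_a$ freely generates $F=\ast_{a\in A}F_a$ and $P$ is hyperbolic relative to $\{G,F\}$; since $F$ is hyperbolic relative to $\{F_a\}_{a\in A}$ and $G$ is hyperbolic relative to $\Hl$, Lemma \ref{240} shows $P$ is hyperbolic relative to $\Hl\cup\{F_a\}_{a\in A}$.

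Next I would check that $G$, viewed as a free factor of $P$, is suitable with respect to $\Hl\cup\{F_a\}_{a\in A}$; this is the step I expect to be the main obstacle, as it replaces the appeal to Corollary \ref{Ksuit}. It is not virtually cyclic by hypothesis. By Corollary \ref{loxex} it contains an element $g$ loxodromic in $G$; since $g$ has infinite order, Lemma \ref{maln}(a) for the peripheral pair $\{G,F\}$ of $P$ shows $g$ is not conjugate in $P$ into $F$, hence into any $F_a$, while the conjugacy theory of free products shows $g$ is not conjugate in $P$ into any $H_\lambda$ (as it is not so conjugate in $G$); thus $g$ is loxodromic in $P$. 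Finally, by the Kurosh subgroup theorem every finite subgroup of $P$ is conjugate into the free factor $G$ (the complementary free factor being torsion free), and, using that $wGw^{-1}\cap G\ne\{1\}$ forces $w\in G$, a non-trivial finite subgroup of $P$ normalized by $G$ would coincide with a non-trivial finite normal subgroup of $G$, contradicting $K(G)=\{1\}$; hence $G$ normalizes no non-trivial finite subgroup of $P$, and $G$ is suitable by Lemma \ref{AMO-suit}. I would then deal with $t$: if $t$ has finite order in $G$ nothing need be done, and otherwise $t$ is loxodromic in $G$ (the peripheral subgroups being torsion, an infinite-order element is loxodromic or parabolic, and a parabolic one has finite order), hence loxodromic in $P$ by the same argument; choosing a loxodromic element of $G$ not commensurable with $t$ — possible because $G$ is suitable and commensurability is transitive on loxodromic elements — one checks that $G$ stays suitable after adjoining $E_P(t)$ to the peripheral structure of $P$ via Corollary \ref{Eg}.

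The remaining steps are the Dehn filling and small cancellation of Lemma \ref{GtoQ}. I would apply Theorem \ref{Df} to the resulting relatively hyperbolic structure of $P$ (with or without $E_P(t)$) and the suitable subgroup $G$, then fill with $N_\lambda=\{1\}$, with $N_a\lhd F_a$ the kernel of the epimorphism $F_a\to P_{nk}$ determined by a bijection $\overline T_a\to X_{nk}$ (legitimate by (P$_1$), and with $k$ chosen once and for all large enough, using (P$_2$), that each $N_a$ misses the finite set supplied by Theorem \ref{Df}), and — when present — with $N_{E_P(t)}\lhd E_P(t)$ a finite-index normal subgroup missing its finite set, which exists because $E_P(t)$ is infinite virtually cyclic, hence residually finite. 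Writing $G_1=P/N$, Theorem \ref{Df} gives that $G_1$ is hyperbolic relative to $\Hl\cup\{F_a/N_a\}_{a\in A}$ together with the finite group $E_P(t)/N_{E_P(t)}$ if adjoined, that each $H_\lambda$ embeds and each $F_a/N_a\cong P_{nk}$ carries $\overline T_a$ onto $X_{nk}$, that the image $S_1$ of $G$ is suitable, and that the image of $t$ has finite order; by Lemma \ref{240} the finite peripheral subgroup may be discarded, so $G_1$ is hyperbolic relative to $\Hl\cup\{F_a/N_a\}_{a\in A}$ with $S_1$ still suitable (the description \eqref{EGg} of $E_{G_1}(\cdot)$ being intrinsic). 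Finally I would apply Theorem \ref{sct} to $G_1$ with this collection, the suitable subgroup $S_1$, and $W$ a finite generating set of the finitely generated group $G_1$, getting $\eta\colon G_1\to Q$ with $\eta|_{H_\lambda}$, $\eta|_{F_a/N_a}$ injective, $Q$ hyperbolic relative to $\Hl\cup\{F_a/N_a\}$, and $\eta(W)\subseteq\eta(S_1)$; then $\e\colon G\to Q$, the composite $G\hookrightarrow P\to G_1\xrightarrow{\eta}Q$, is onto because $\e(G)=\eta(S_1)=Q$, is injective on each $H_\lambda$ (giving (Q$_1$)), and with $x_i=\e(\bar x_i)$, $y_a=\e(\bar y_a)$, $z_a=\e(\bar z_a)$ the set $T_a=\{y_ax_i\e(a)x_i^{-1}z_a\mid i=1,\dots,n\}$ is the $\e$-image of $\overline T_a$, so $T_a\leftrightarrow X_{nk}$ extends to an isomorphism $\langle T_a\rangle\xrightarrow{\ \sim\ }P_{nk}$ (giving (Q$_2$)), $Q$ is hyperbolic relative to $\Hl\cup\{\langle T_a\rangle\}_{a\in A}$ (giving (Q$_3$)), and $\e(t)$ is the $\eta$-image of a finite-order element, hence of finite order (giving (Q$_4^\ast$)). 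Besides the suitability step, the only point needing attention is that suitability persists both under the adjunction of $E_P(t)$ and under the Dehn filling, which rests on transitivity of commensurability and the intrinsic formula \eqref{EGg}.
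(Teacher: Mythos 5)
Your proof is correct and rests on the same ingredients as the paper's (Corollary \ref{mfs}, Lemma \ref{AMO-suit}, Theorem \ref{Df}, Theorem \ref{sct}), but you reorganize the order of operations. You adjoin $E_P(t)$ to the peripheral structure of $P$ (when $t$ has infinite order) \emph{before} Dehn filling, so a single application of Theorem \ref{Df} at the level of $P$ simultaneously produces the subgroups $F_a/N_a\cong P_{nk}$ and forces the image of $t$ to have finite order, after which one application of Theorem \ref{sct} finishes. The paper instead performs the Dehn filling and the small cancellation step first, obtaining a group $G_2$ that is the exact analogue of the group $Q$ from Lemma \ref{GtoQ}, and only then examines the image $s$ of $t$ in $G_2$, applies Corollary \ref{Eg} to adjoin $E_{G_2}(s)$, and invokes Theorem \ref{Df} a \emph{second} time to kill a power of $s$. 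Both routes work; yours saves one quotient step, while the paper's deferred treatment more transparently parallels Lemma \ref{GtoQ}. The one genuinely new check your ordering demands is that $G$ stays suitable in $P$ after adjoining $E_P(t)$: your justification (take two non-commensurable loxodromics in $G$, observe at most one can be commensurable with $t$, and note the remaining conditions of Lemma \ref{AMO-suit} are unaffected by enlarging the peripheral collection) is correct. Your Kurosh-theorem argument for why $G$ normalizes no non-trivial finite subgroup of $P$ is a spelled-out version of what the paper asserts in a single sentence.
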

\begin{proof}
We proceed in three steps. First we define $P$, $\overline T_a$, and $F_a$ as in the proof of Lemma \ref{GtoQ} and repeat the first step of that proof. As a result, we obtain a quotient group $G_1$ of $P$ hyperbolic relative to the collection
\begin{equation}\label{HlFa1}
\Hl \cup \{ F_a/N_a\cong P_{nk}\}_{a\in A},
\end{equation}
where the isomorphisms $F_a/N_a\cong P_{nk}$ are induced by bijections $\overline T_a\to X_{nk}$, as in Lemma \ref{GtoQ}.

By Corollary \ref{loxex}, $G$ contains an element $g$ which is loxodromic with respect to  $\Hl$.  Obviously every element of $G$ that is loxodromic with respect to $\Hl$ remains loxodromic in $P$ with respect to (\ref{HlFa}). Since $K(G)=\{1\}$, $G$ does not normalize any non-trivial finite subgroup of $P$. Hence $G$ is suitable in $P$ with respect to the collection (\ref{HlFa}) by Lemma \ref{AMO-suit}. By part (d) of Theorem \ref{Df} we can assume that the image of $G$ in $G_1$, which we denote by $H$, is suitable in $G_1$ with respect to the collection (\ref{HlFa1}).

The second step is also similar to the one from the proof of Lemma \ref{GtoQ}. Let $W$ be a finite generating set of $G_1$. We apply Theorem \ref{sct} to the relatively hyperbolic group $G_1$ with peripheral collection (\ref{HlFa1}), finite subset $W$, and suitable subgroup $S=H$. The resulting quotient of $G_1$ is also a quotient of $G$ by part (c) of Theorem \ref{sct}. We denote this quotient group by $G_2$. By parts (a) and (b) of Theorem \ref{sct}, $G_2$ is hyperbolic relative to the natural image of the collection (\ref{HlFa1}). Since the map $G_1\to G_2$ is injective on peripheral subgroups, by a slight abuse of notation we can think of (\ref{HlFa1}) as the collection of peripheral subgroups of $G_2$.

Let $s$ denote the image of $t$ in $G_2$. If $s$ already has finite order in $G_2$, we let $Q=G_2$.  If $s$ as infinite order, then $s$ is necessarily loxodromic as all subgroups $H_\lambda$ and $F_a/N_a\cong P_{nk}$ are torsion. By Corollary \ref{Eg}, $s$ is contained in a virtually cyclic subgroup $E_{G_2}(s)$ such that $G_2$ is hyperbolic relative to
\begin{equation}\label{HlFa2}
\Hl \cup \{ F_a/N_a\cong P_{nk}\}_{a\in A}\cup \{ E_{G_2}(s)\},
\end{equation}
Since $E_{G_2}(s)$ is virtually cyclic, $\la s\ra$ has finite index in  $E_{G_2}(s)$. Hence there exists $m\in \mathbb N$ such that $\la s^m\ra$ is normal in $E_{G_2}(s)$. Passing to a multiple of $m$ if necessary, we can ensure that $\la s^m\ra$ misses any fixed finite subset of $G_2$. Hence we can apply Theorem \ref{Df} to the group $G_2$ with peripheral collection (\ref{HlFa2}) and normal subgroups $\{ 1\}\lhd H_\lambda$ for $\lambda \in \Lambda$, $\{ 1\}\lhd F_a/N_a$ for $a\in A$, and $\la s^m\ra\lhd E_{G_2}(s)$. Let $Q=G_2/N$ be the resulting quotient group, where $N$ is the normal closure of $s^m$ in $G_2$.

Obviously the natural image of $t$ has finite order in $Q$. As in the proof of Lemma \ref{GtoQ}, deriving conditions  (Q$_1$)--(Q$_3$) from Theorems \ref{sct} and \ref{Df} is straightforward.
\end{proof}

\begin{proof}[Proof of Proposition \ref{limrh-prop}]
Without loss of generality we can assume that $C$ is non-cyclic. Recall that every countable torsion group embeds in a finitely generated torsion group \cite{Phi}. Similarly every countable torsion free group without non-cyclic free subgroups embeds in a finitely generated torsion free group without non-cyclic free subgroups; this is an immediate consequence of the main theorem of \cite{OO}. Thus we can also assume that $C$ is finitely generated.

Recall that every hyperbolic group $H$ contains a maximal finite normal subgroup $K(H)$ \cite{Ols93}. Since hyperbolicity is a quasi-isometric invariant, $H/K(H)$ is also hyperbolic and not virtually cyclic. Thus passing to $H/K(H)$ if necessary, we can assume that $K(H)=\{1\}$. In this case, $H\ast C$ is hyperbolic relative to $C$ and the subgroup $H$ is suitable in $H\ast C$ by Lemma \ref{AMO-suit}. Applying Theorem \ref{sct} to the relatively hyperbolic group $H\ast C$ with peripheral collection $\{ C\}$, a finite generating set $W$ of $C$, and the suitable subgroup $S=H$, we obtain a quotient group $G_0$ of $H$ such that $C$ embeds in $G_0$ as a proper subgroup, $G_0$ is hyperbolic relative to $C$, and $G_0$ is torsion free whenever $H$ and $C$ are.

The desired group $G$ will be the limit of a sequence of groups and epimorphisms
\begin{equation}\label{indlim}
H\to G_0\stackrel{\e_1}\longrightarrow G_1 \stackrel{\e_2}\longrightarrow G_2\stackrel{\e_3}\longrightarrow\ldots
\end{equation}
constructed by induction. In what follows we denote by $\delta_n\colon G_0\to G_{n}$ the composition $\e_{n}\circ\cdots \circ \e_1$, by $X_0$ a finite generating set of $G_0$, and by $X_{n}$ the natural image of $X_0$ in $G_{n}$.

We begin with the torsion free case. Thus we assume that $H$, $C$, and all groups in $\mathcal P$ are torsion free. Let $\{ F_1, F_2, \ldots \}$ be the set of all free subgroups of $G_0$ of rank $2$. Suppose that we have already constructed a quotient group $G_{n-1}$ for some $n>0$, which is torsion free and hyperbolic relative to a certain family of proper subgroups $\Hl$ such that no $H_\lambda$ has non-cyclic free subgroups (for $i=0$, we have $\Hl=\{C\}$; the peripheral collection will increase at every step). Let $G_{n}$ be the quotient group obtained by applying Lemma \ref{GtoQ} to the relatively hyperbolic group $G_{n-1}$ with peripheral collection $\Hl$, finite set
\begin{equation}\label{Adef}
A=\{ a\in G_{n-1} \mid 0<|a|_{X_{n-1}}\le n\} ,
\end{equation}
and the free subgroup $R=\delta_{n-1}(F_{n})$. Let $\e_{n}\colon G_{n-1}\to G_{n}$ denote the corresponding epimorphism. The peripheral collection of $G_n$ is given by (Q$_3$). Note that the new subgroup $\langle T_a\rangle$ in this peripheral collection is isomorphic to some $P_{ij}$ and hence does not contain non-cyclic free subgroups by (P$_3$). The other inductive assumptions for $G_{n}$ follow immediately from (Q$_1$)--(Q$_3$) and (Q$_5$).

Let $G$ be the limit of the sequence (\ref{indlim}). That is, $G=G_0/N$, where $N=\bigcup\limits_{i=1}^\infty {\rm Ker}\, \delta_i $. Let $X$ denote the image of $X_0$ in $G$. Obviously $G$ is generated by $X$. Properties (Q$_1$) and (Q$_3$) imply by induction that $C\cap {\rm Ker}\,\delta_n=\{1\}$ for every $n$. Hence $C\cap N=\{1\}$ and thus $C$ embeds in $G$. Furthermore, if $K$ is a non-cyclic free subgroup of $G$, then it contains a free subgroup $F\le K$ of rank $2$. Let $F_n$ be a preimage of $F$ in $G_0$. As the isomorphism $F_n\to F$ factors through $\delta_{n-1}(F_n)$, the latter subgroup is non-cyclic. Hence by (Q$_4$) applied at step $n$ we have $\delta_n(F_n)=G_n$ and hence $F=G$. However this contradicts the fact that $C$ embeds in $G$ (recall that we assume $C$ to be non-cyclic). Thus $G$ has no non-cyclic free subgroups. It is also clear that $G$ is torsion free as so are all groups $G_n$ by (Q$_5$). These arguments prove (a)--(c).

It remains to show that $G$ satisfies (d). At step $n$ of the inductive construction, we have a subset $\{ x_1, \ldots, x_n\} \subseteq G_n$ and elements $y_a,z_a\in G_n$ provided by (Q$_2$)  (here $a$ ranges in the subset $A\subseteq G_{n-1}$ given by (\ref{Adef})).

Fix any non-trivial element $g\in G$. Let $n\ge |g|_X$ be a natural number and let $a$ be a preimage of $g$ in $G_{n-1}$ of length $|a|_{X_{n-1}}=|g|_X\le n$. Then (Q$_2$) and (\ref{Adef}) guarantee that there exists a bijection $$
T_a=\{ y_ax_i\e_n(a)x_i^{-1}z_a \mid i=1, \ldots,n \}\to X_{nk(n)}
$$
for some $k(n)$ that extends to an isomorphism $\la T_a\ra \to P_{nk(n)}$. Since at $n$th step of the inductive construction $\la T_a\ra$ becomes a peripheral subgroup, it remains untouched by the subsequent factorizations according to (Q$_1$). Hence the natural map from $\la T_a\ra$ to $G$ is injective. Denote by $Y_n$ the image of $\{ x_1, \ldots, x_n\}$ in $G$. For $n\ge |g|_X$, we also let $u(g,n)$ be the image of $z_ay_a$ in $G$ and $j(g,n)=k(n)$; for $n<|g|_X$ we define $u(g,n)$ and $j(g,n)$ arbitrarily. Then the set
$$
T_{g,n}=\{ u(g,i)ygy^{-1} \mid y\in Y_n\}
$$
is the image of
$$
z_aT_az_a^{-1}=\{ z_ay_ax_i\e_n(a)x_i^{-1} \mid i=1, \ldots,n \}
$$
in $G$ and for every $n\ge |g|_X$ we have a sequence of natural isomorphisms
$$
\langle T_{g,n}\rangle \cong \la T_a\ra \cong P_{nj(g,n)}.
$$
Their composition gives an isomorphism $\langle T_{g,n}\rangle\cong P_{nj(g,n)}$ that sends $T_{g,n}$ to $X_{nj(g,n)}$. This finishes the proof in the torsion free case.

The proof for torsion groups is similar. We start with a collection $\mathcal P$ of torsion groups and enumerate all elements of $G_0=\{ 1=g_0, g_1, \ldots \}$. Suppose that $G_{n-1}$ is already constructed for some $n>0$ and is hyperbolic relative to a certain collection of torsion subgroups $\Hl$. Define $G_{n}$ to be the group obtained by applying Lemma \ref{GtoQ-tor} to the relatively hyperbolic group $G_{n-1}$ with peripheral collection $\Hl$, the element $t=\delta_{n-1}(g_n)$ (we let $\delta_0=id_{G_0}$), and the set $A$ defined by (\ref{Adef}). As above, verification of the inductive assumptions for $G_n$ is straightforward.

Again let $G$ be the limit of the sequence (\ref{indlim}). Claims (a) and (d) are proved exactly as in the torsion free case. Further let $g\in G$ and let $g_n$ be a preimage of $g$ in $G_0$. Then the image of $g_n$ in $G_n$ becomes of finite order by (Q$_4^\ast$). Hence $|g|<\infty$. This gives (c). Clearly there is no need to prove (b) in the torsion case.
\end{proof}

We are now ready to prove Theorem \ref{main3}. The argument used below is the same as we used in the proof of Theorem \ref{main1} in Section 3.

\begin{proof}[Proof of Theorem \ref{main3}]
Let $\mathcal P$ be a collection of torsion (respectively, torsion free) groups given by Proposition \ref{Pij}. Let $G$ be the torsion (respectively, torsion free) group constructed in Proposition \ref{limrh-prop} and let $H_{g,i}$, $T_{g,i}$ be given by part (d) of Proposition \ref{limrh-prop}. Fix any $g\in G\setminus\{ 1\}$. Since $\lambda_G(u(g,i))$ is unitary,  we have
\begin{equation}\label{G1}
\left\|\sum\limits_{y\in Y_i}\lambda_G(ygy^{-1})\right\| = \left\|\lambda_G(u(g,i)) \sum\limits_{y\in Y_i}\lambda_{G}(ygy^{-1})\right\|= \left\|\sum\limits_{y\in Y_i}\lambda_{G}(u(g,i)ygy^{-1})\right\|.
\end{equation}
Further by part (a) of Lemma \ref{norms}, we obtain
\begin{equation}\label{G2}
\left\|\sum\limits_{y\in Y_i}\lambda_{G}(u(g,i)ygy^{-1})\right\| = \left\|\sum\limits_{y\in Y_i}\lambda_{H_{g,i}}(u(g,i)ygy^{-1})\right\| = \left\|\sum\limits_{t\in T_{g,i}}\lambda_{H_{g,i}}(t)\right\| .
\end{equation}
By Proposition \ref{limrh-prop} (d) and Proposition \ref{Pij} (c), the sequence $\{ (H_{g,i}, T_{g,i})\}_{i\in \mathbb N}$ has infinitesimal spectral radius. Combining this with (\ref{G1}) and (\ref{G2}) yields
$$
\lim_{i\to \infty} \frac1{|Y_i|}\left\|\sum\limits_{y\in Y_i}\lambda_G(ygy^{-1})\right\|=0.
$$
Thus Lemma \ref{AL} applies and $C_{red}^\ast (G)$ is simple with unique trace.
\end{proof}

Now we derive Corollary \ref{main2}.

\begin{proof}[Proof of Corollary \ref{main2}]
Recall that there are $2^{\aleph_0}$ pairwise non-isomorphic finitely generated torsion groups. This fact immediately follows from the main result of \cite{Ols79} or from Grigorchuk's results about growth functions of torsion groups \cite{Gri}. It can also be derived from the embedding theorem proved by Phillips in \cite{Phi} or from results about so-called $\Pi$-graded groups obtained in the joint paper of the authors \cite{OO08} (the latter paper is probably the most elementary).

On the other hand, every countable group has only countably many finitely generated subgroups. Combining these facts with Theorem \ref{main3}, we obtain that every non-virtually cyclic  hyperbolic group has continuously many torsion quotients whose reduced $C^\ast$-algebra is simple and has unique trace. In particular, we obtain part (a) of Corollary \ref{main2}.

The same argument works in the torsion free case as there are continuously many finitely generated torsion free groups without free subgroups (one can take these groups to be solvable of derived length $3$, see \cite{Hall})
\end{proof}

Recall that the reduced $C^\ast$-algebra of a non-virtually cyclic hyperbolic group $H$ is simple and has unique trace if and only if $H$ contains no non-trivial finite normal subgroups; an analogous result also holds for relatively hyperbolic groups \cite{AM,Har88}. This obviously implies that the group $G$ constructed in the proof of Proposition \ref{limrh-prop} are limits of $C^\ast$-simple relatively hyperbolic groups. Similarly using results of \cite{book}, it is not hard to show that the free Burnside groups $B(m,n)$ of large odd exponent and rank $m\ge 2$ are limits of $C^\ast$-simple hyperbolic groups $G(i)$ (the statement about hyperbolicity of these groups is written down explicitly in \cite{Iva}). We note that these facts alone are not sufficient to derive $C^\ast$-simplicity and uniqueness of trace.

\begin{ex}\label{non-simple}
It was noted in \cite{Osi02} that there exists a sequence of hyperbolic groups and epimorphisms $H_1\to H_2\to \ldots$ that converges to the wreath product $G=\mathbb Z_2 {\, \rm wr\,} \mathbb Z$, where
$$
H_n=\left\langle t, a_{-n}, \ldots, a_n\; \; \left|  \; \;
\begin{array}{c}
t^{-1}a_kt=a_{k+1}, \;\; k=-n, \ldots, n-1 \\
a_i^2=1, \; [a_i,a_j]=1, \;\; i,j= -n, \ldots , n \\
\end{array}
\right.\right\ra
$$
It is easy to see that every $H_n$ is an HNN-extensions of a finite abelian group
$$
A_n= \la a_{-n}, \ldots, a_n\mid a_i^2=1, \; [a_i,a_j]=1, \; i,j= -n, \ldots , n \ra
$$
with stable letter $t$. Hence $H_n$ is virtually free and, in particular, hyperbolic. It is also easy to see that
\begin{equation}\label{Aint}
t^{-2n-1}A_nt^{2n+1}\cap A_n=\{ 1\}.
\end{equation}
Recall that every finite group acting on a tree without inversions must fix a vertex. If $N$ is a finite subgroup of $H_n$, then it fixes a vertex of the Bass-Serre tree $T$ associated to the HNN-extension structure of $H_n$. If $N$ is also normal, then it must fix all $T$ as the action of $H_n$ on vertices of $T$ is transitive. On the other hand, (\ref{Aint}) means that the pointwise $H_n$-stabilizer of a pair of vertices of $T$ is trivial.  Therefore $H_n$ has no nontrivial finite normal subgroups. Thus $C^\ast_{red}(H_n)$ is simple and has unique trace for every $n\in \mathbb N$. However $G$ is amenable and hence $C^\ast_{red}(G)$ is not simple and has many traces. Similar examples of sequences converging to lacunary hyperbolic amenable groups can be found in \cite{OOS}.
\end{ex}

\addcontentsline{toc}{section}{References}

\vspace{1cm}

\noindent \textbf{Alexander Olshanskii: } Department of Mathematics, Vanderbilt University, Nashville 37240, U.S.A.\\
E-mail: \emph{alexander.olshanskiy@vanderbilt.edu}\\

\smallskip

\noindent \textbf{Denis Osin: } Department of Mathematics, Vanderbilt University, Nashville 37240, U.S.A.\\
E-mail: \emph{denis.osin@gmail.com}

\end{document}